\DeclareSymbolFontAlphabet{\mathrsfs}{rsfs}
\DeclareMathSymbol{\emptyset}{\mathord}{symbols}{59}
                    \DeclareMathSymbol{\boxtimes}{\mathbin}{AMSa}{"02}
\let\@secnumfont\bfseries
\def\section{\@startsection{section}{1}%
  \z@{4\linespacing\@plus\linespacing}{\linespacing}%
  {\bfseries\centering}}
\def\introsection{\@startsection{section}{1}%
  \z@{3\linespacing\@plus\linespacing}{\linespacing}%
  {\bfseries\centering}}
\def\subsection{\@startsection{subsection}{2}%
   \z@{1.25\linespacing\@plus.7\linespacing}{.5\linespacing}%
   {\normalfont\bfseries}}
\def\subsectionsinline{\def\subsection{\@startsection{subsection}{2}%
  \z@{1\linespacing\@plus.7\linespacing}{-.5em}%
  {\normalfont\bfseries}}}
\theoremstyle{definition}
\newtheorem{definition}[equation]{Definition}
\newtheorem*{definition*}{Definition}
\newtheorem*{example*}{Example}
\newtheorem*{problem*}{\color{blue}Problem}
\newtheorem*{exercise*}{Exercise}
\newtheorem*{question*}{\color{blue}Question}
\newtheorem*{project*}{\color{blue}Project}
\newtheorem*{construction*}{Construction}
\theoremstyle{remark}
\newtheorem{remark}[equation]{Remark}
\newtheorem*{note*}{Note}
\newtheorem*{notation*}{Notation}
\newtheorem*{remark*}{Remark}
\newtheorem*{data*}{Data}
\theoremstyle{plain}
\newtheorem{theorem}[equation]{Theorem}
\newtheorem{corollary}[equation]{Corollary}
\newtheorem{lemma}[equation]{Lemma}
\newtheorem{proposition}[equation]{Proposition}
\newtheorem*{theorem*}{Theorem}
\newtheorem*{corollary*}{Corollary}
\newtheorem*{lemma*}{Lemma}
\newtheorem*{proposition*}{Proposition}
\newtheorem*{conjecture*}{Conjecture}
\newtheorem*{claim*}{Claim}
\newtheorem*{proposal*}{Proposal}
\newtheorem*{conclusion*}{Conclusion}
\newtheorem*{hypothesis*}{Hypothesis}
\newtheorem*{assumption*}{Assumption}
\newenvironment{proof*}[1][\proofname]{
  \begin{proof}[#1]}{  \renewcommand\qedsymbol{\relax}
\end{proof}}
\numberwithin{equation}{section}
\definecolor{refkey}{rgb}{0,.6,.4}
\renewcommand{\:}{\colon}
\renewcommand{\AA}{{\mathbb A}}
\newcommand{\CC}{{\mathbb C}}
\DeclareMathOperator{\End}{End}
\DeclareMathOperator{\Hom}{Hom}
\DeclareMathOperator{\id}{id}
\DeclareMathOperator{\pt}{pt}
\newcommand{\QQ}{{\mathbb Q}}
\newcommand{\RR}{{\mathbb R}}
\newcommand{\ZZ}{{\mathbb Z}}
\newcommand{\chiup}{\raise.5ex\hbox{$\chi$}}
\newcommand{\cir}{S^1}
\newcommand{\inv}{^{-1}}
\DeclareRobustCommand{\mstrut}{^{\vphantom{1*\prime y\vee M}}}
\newcommand{\temsquare}{\raise3.5pt\hbox{\boxed{ }}}
\newcommand{\zmod}[1]{\ZZ/#1\ZZ}
\newcommand{\zt}{\zmod2}
\DeclareMathOperator{\SO}{SO}
\let\O\relax
\DeclareMathOperator{\O}{O}
\DeclareMathOperator{\GL}{GL}
\newcounter{alphthm}
\newcounter{alphthmp}
\theoremstyle{plain}
\newtheorem{mainthm}[alphthm]{Theorem}
\newtheorem{mainthmp}[alphthmp]{Theorem}
\renewcommand{\cir}{\ensuremath{S^1}}
\let\O\relax\DeclareMathOperator{\O}{O}
\definecolor{refkey}{rgb}{0,.8,.2}\definecolor{labelkey}{rgb}{1,0,0} 
\DeclareMathOperator{\Alg}{Alg}
\DeclareMathOperator{\Bord}{Bord}
\DeclareMathOperator{\Cat}{Cat}
\DeclareMathOperator{\FC}{Fus}
\DeclareMathOperator{\Gr}{Gr}
\DeclareMathOperator{\SC}{FSCat}
\DeclareMathOperator{\Sti}{St}
\DeclareMathOperator{\Vect}{Vect}
\DeclareMathOperator{\sVect}{sVect}
\newcommand{\Alez}{\AA^k_{\le0}}
\newcommand{\BC}{\Alg_2(\cc)}
\newcommand{\BGLo}{B\!\GL_{n-1}\!\RR}
\newcommand{\BGL}{B\!\GL_n\!\RR}
\newcommand{\Bd}[1]{B^\delta _{-#1}}
\newcommand{\Bmo}{B\mstrut _{-1}}
\newcommand{\Bnp}{\Bord_{n,\partial}}
\newcommand{\Bo}[1]{B^1_{-#1}}
\newcommand{\Bz}[1]{B^0_{-#1}}
\newcommand{\DF}{|F|^2}
\newcommand{\Dd}{\Delta ^{\!\vee}}
\newcommand{\FL}{{}_{\Phi}\hspace{-.2pt}\Phi }
\newcommand{\FR}{\Phi _\Phi }
\newcommand{\FSb}{F(S^1_b)}
\newcommand{\FS}{F(S^0)}
\newcommand{\Fd}{F^d}
\newcommand{\Fs}{\DF}
\newcommand{\Ft}{F_{\langle 1,2,3  \rangle}}
\newcommand{\Lx}[1]{L_{x,-(#1)}}
\newcommand{\OCfd}{\Omega \sCfd}
\newcommand{\OC}{\Omega \sC}
\newcommand{\PL}{{}_{\Psi}\!\Psi }
\newcommand{\PR}{\Psi _\Psi }
\newcommand{\Pd}{\Phi^{\!\vee}}
\newcommand{\QZ}{\QQ/\ZZ}
\newcommand{\Sb}{S^1_b}
\newcommand{\Spl}{S^{+}}
\newcommand{\St}{S^{\otimes}}
\newcommand{\TC}{\Alg_1(\cc)}
\newcommand{\TPd}{\TP^d}
\newcommand{\TP}{T_\Phi }
\newcommand{\TSb}{T(S^1_b)}
\newcommand{\TSn}{T(S^1_n)}
\newcommand{\Tt}{T_{\langle 1,2,3  \rangle}}
\newcommand{\Vc}{\Vect\mstrut _{\CC}}
\newcommand{\Xd}[1]{X^\delta _{-#1}}
\newcommand{\Xo}[1]{X^1_{-#1}}
\newcommand{\Xz}[1]{X^0_{-#1}}
\newcommand{\ac}{\alpha _c}
\newcommand{\ao}{\alpha _1}
\newcommand{\bC}{\mathbb{C}}
\newcommand{\bF}{\overline{F}}
\newcommand{\bR}{\beta ^R}
\newcommand{\bX}{\partial X}
\newcommand{\bao}{\ao^{\SO}}
\newcommand{\bd}{\beta ^d}
\newcommand{\bfot}{\Bord^{\textnormal{fr}}_{\langle 1,2,3  \rangle}}
\newcommand{\bftb}{\Bord^{\textnormal{fr}}_{3,\partial }}
\newcommand{\bft}{\Bord^{\textnormal{fr}}_3}
\newcommand{\bnon}{\bno\to\bn}
\newcommand{\bno}{\Bord_{n-1}}
\newcommand{\bn}{\Bord_n}
\newcommand{\bp}{\beta (+)}
\newcommand{\btbf}{\Bord_{2,\partial }^{\textnormal{fr}}}
\newcommand{\btf}{\Bord_2^{\textnormal{fr}}}
\newcommand{\bt}{\Bord_2}
\newcommand{\cM}{\mathcal{M}}
\newcommand{\cc}{\Cat\mstrut _{\CC}}
\newcommand{\ck}{\Cat\mstrut _{\Bbbk}}
\newcommand{\ciF}{\;\circ\mstrut_{\Phi }\;}
\newcommand{\cet}{\check{\eta }}
\newcommand{\cx}{\check{x}}
\newcommand{\dual}{^\vee}
\newcommand{\eFd}{\eF^d}
\newcommand{\eF}{\widetilde{F}}
\newcommand{\eqcat}{\simeq_{\textnormal{cat}}}
\newcommand{\fpm}{b_{\pm}}
\newcommand{\gdj}{\gamma ^\delta _{-j}}
\newcommand{\ghT}{(g\circ h)^\textnormal{T}}
\newcommand{\mBo}{\mB\mstrut _{-1}}
\newcommand{\mB}{\mathring{B}}
\newcommand{\mM}{\mathring{M}}
\newcommand{\mX}{\mathring{X}}
\newcommand{\mYb}{P}
\newcommand{\mY}{\mathring{Y}}
\newcommand{\mb}{\mathring{b}}
\newcommand{\mg}{\mathring{\gamma }}
\newcommand{\mo}{^{\textnormal{mo}}}
\newcommand{\mps}{\mathring{\psi }^\delta _{-j}}
\newcommand{\mx}{\mathring{x}}
\newcommand{\m}{(-)}
\newcommand{\op}{^{\textnormal{op}}}
\newcommand{\p}{(+)}
\newcommand{\rev}{^{\textnormal{rev}}}
\newcommand{\sCfd}{\sC^{\textnormal{fd}}}
\newcommand{\sC}{\mathscr{C}}
\newcommand{\sH}{\mathscr{H}}
\newcommand{\sVc}{\sVect\mstrut _{\CC}}
\newcommand{\sX}{\mathscr{X}}
\newcommand{\tF}{\tau\mstrut _{\le2}F}
\newcommand{\tGr}{\widetilde{\Gr}_{n-1}}
\newcommand{\tT}{\widetilde{T}}
\newcommand{\triv}[1]{\underline{\RR^{#1}}}
\newcommand{\tstar}{topological${}^*$}
\newcommand{\uEnd}{\underline{\End}}
\newcommand{\uE}[2]{\uEnd^{#1}(#2)}
\newcommand{\uHom}{\underline{\Hom}}
\newcommand{\uH}[3]{\uHom^{#1}(#2,#3)}
\newcommand{\uR}{\underline{\RR}}
\newcommand{\vep}{\varepsilon}
\newcommand{\squig}{{\scriptstyle\sim\mkern-3.9mu}}
\newcommand{\rsquigend}{{\scriptstyle\rule{.1ex}{0ex}\rhd}}
\newcounter{sqindex}
\begin{document}

\abovedisplayskip18pt plus4.5pt minus9pt
\belowdisplayskip \abovedisplayskip
\abovedisplayshortskip0pt plus4.5pt
\belowdisplayshortskip10.5pt plus4.5pt minus6pt
\baselineskip=15 truept
\marginparwidth=55pt

\makeatletter
\renewcommand{\tocsection}[3]{%
  \indentlabel{\@ifempty{#2}{\hskip1.5em}{\ignorespaces#1 #2.\;\;}}#3}
\renewcommand{\tocsubsection}[3]{%
  \indentlabel{\@ifempty{#2}{\hskip 2.5em}{\hskip 2.5em\ignorespaces#1%
    #2.\;\;}}#3} 
\renewcommand{\tocsubsubsection}[3]{%
  \indentlabel{\@ifempty{#2}{\hskip 5.5em}{\hskip 5.5em\ignorespaces#1%
    #2.\;\;}}#3} 
\renewcommand\@captionfont{\normalfont\small}
\makeatother

\setcounter{tocdepth}{2}




 \title[Gapped boundary theories in 3d]{Gapped Boundary Theories in Three Dimensions} 
 \author[D. S. Freed]{Daniel S.~Freed}
 \thanks{This material is based upon work supported by the National Science
Foundation under Grant Number DMS-1611957.  Parts of this work were performed
at the Aspen Center for Physics, which is supported by National Science
Foundation grant PHY-1607611.  We also thank the IAS/Park City Mathematics
Institute and the Mathematical Sciences Research Institute, which is
supported by National Science Foundation Grant 1440140.}
 \address{Department of Mathematics \\ University of Texas \\ Austin, TX
78712} 
 \email{dafr@math.utexas.edu}

 \author[C. Teleman]{Constantin Teleman} 
 \address{Department of Mathematics \\ University of California \\ 970 Evans
Hall \#3840 \\ Berkeley, CA 94720-3840}  
 \email{teleman@math.berkeley.edu}

 \date{August 9, 2021}
 \begin{abstract} 
 We prove a theorem in 3-dimensional topological field theory: a
Reshetikhin-Turaev theory admits a nonzero boundary theory iff it is a
Turaev-Viro theory.  The proof immediately implies a characterization of
fusion categories in terms of dualizability.  Our results rely on a (special
case of) the cobordism hypothesis with singularities.  The main theorem
applies to physics, where it implies an obstruction to a gapped 3-dimensional
quantum system admitting a gapped boundary theory.  Appendices on bordism
multicategories, on 2-dualizable categories, and on internal duals may be of
independent interest.
 \end{abstract}
\maketitle


{\small\tableofcontents}


Quantum mechanical theories bifurcate into gapped and gapless theories.  The
classical notion of a local boundary condition for a partial differential
equation has a quantum analog---a boundary theory.  There is a basic
question: Does a gapped quantum system~$S$ admit a gapped boundary theory?
We formulate and prove a mathematical theorem (Theorem~\ref{thm:5}
in~\S\ref{subsec:1.2}) which addresses this question for a large class of
(2+1)-dimensional systems.  The route from a gapped quantum system to the
theorem goes via a low energy effective extended topological field theory,
whose existence we merely assume.  The absence of a gapped boundary theory
implies the presence of gapless edge modes---conduction on the boundary---an
important feature of quantum Hall systems, for example.
 
Let $F\:\bft\to\sC$ be a 3-dimensional topological field theory: a
homomorphism from a bordism multicategory of framed manifolds to a symmetric
monoidal 3-category~$\sC$.  We impose hypotheses on~$\sC,F$ to model
Reshetikhin-Turaev theories~\cite{RT1,RT2,T}, whose key invariant is a
modular fusion category~$C$, the value of~$F$ on the bounding framed circle.
Theorem~\ref{thm:5} asserts that if $F$~admits a nonzero boundary theory,
then $C$~is the Drinfeld center of a fusion\footnote{We do not assume that a
fusion category has a simple unit: our `fusion' is \cite{EGNO}'s
`multifusion'.} category~$\Phi $.  With extra assumptions on the
codomain~$\sC$, we conclude (Theorem~\ref{thm:57} in~\S\ref{subsec:1.2}) that
the entire theory $F$~is isomorphic to the Turaev-Viro theory~$\TP$ based
on~$\Phi $.  Conversely, given a fusion category~$\Phi $, the Turaev-Viro
theory~$\TP$ has a nonzero boundary theory built from the (regular) left
$\Phi $-module~$\Phi $.  The class of~$C$ in the Witt group~\cite{DMNO},
which is nonzero when $C$~is not the Drinfeld center of a fusion category, is
{almost} a complete obstruction to the existence of a nonzero boundary
theory; see~Remark~\ref{thm:60}(4).

A corollary of our proof is a characterization of fusion categories
(Theorem~\ref{thm:46} in~\S\ref{subsec:1.3}).  Let $\cc$ be the symmetric
monoidal 2-category of finitely cocomplete $\CC$-linear categories and right
exact $\CC$-linear functors, and let $\TC$ be the Morita 3-category of tensor
categories.  (In this paper `tensor category' means `algebra object
in~$\cc$'.)  Then $\Psi \in \TC$ is a fusion category iff $\Psi $~is
3-dualizable and the regular left $\Psi $-module is 2-dualizable.  The
forward direction (`only if') is proved in~\cite{DSS}.
 
A key feature of our approach is the use of fully extended field theories.
Naive analogs of Theorem~\ref{thm:5} fail in the traditional context of
$(1,2,3)$-theories; see Remark~\ref{thm:58}.

Here is a brief outline of the paper.  Section~\ref{sec:1} contains
background and the statements of the main theorems.  In section~\ref{sec:2}
we discuss preliminaries about bordism multicategories and algebras in
multicategories.  The proofs of the main theorems are deferred
to~\S\ref{sec:7}.  The application to gapped quantum systems is the subject
of~\S\ref{sec:3}.  We provide several appendices with background material of
interest independent of our main theorems.  Appendix~\ref{sec:4} contains a
detailed definition of objects and morphisms in bordism multicategories.  In
Appendix~\ref{sec:8} we prove a characterization of finite semisimple abelian
categories in the 2-category~$\cc$.  Appendix~\ref{sec:5} proves a criterion
for internal duals in tensor categories, developed in a more general context.
Appendix~\ref{sec:6} proves the complete reducibility of fusion categories,
which is implicit in~\cite{EGNO}: a fusion category is Morita equivalent to a
direct sum of fusion categories with simple unit.
 
Papers related to the problems considered here include~\cite{KK,KS,FSV,Le,KZ}.

We warmly thank Pavel Etingof, Theo Johnson-Freyd, Victor Ostrik, Emily
Riehl, Claudia Scheimbauer, Chris Schommer-Pries, Sam Raskin, Noah Snyder,
Will Stewart, and Kevin Walker for discussions related to this work.  We also
thank the referees for their careful readings and detailed suggestions; they
catalyzed many improvements.

   \section{Mathematical background and statement of main theorem}\label{sec:1}

  \subsection{RT theories and TV theories}\label{subsec:1.1}

In the late 1980s Witten~\cite{W1} and Reshetikhin-Turaev~\cite{RT1,RT2}
introduced new invariants of closed 3-manifolds and generalizations of the
Jones invariants of knots.  Witten's starting point is the classical
Chern-Simons invariant, which he feeds into the physicists' path integral,
whereas Reshetikhin-Turaev begin with an intricate algebraic structure: a
quantum group.  Later, quantum groups were replaced by modular fusion
categories~\cite{T}, which were originally introduced\footnote{The version
in~\cite{MS} uses a central charge in~$\QQ/24\ZZ$, whereas the version
standard in mathematics, which we use, only has a central charge
in~$\QQ/8\ZZ$.} in the context of 2-dimensional conformal field
theory~\cite{MS}.  These disparate approaches are reconciled in
\emph{extended} topological field theory~\cite{F1}.  In modern terms~\cite{L1}
this extended field theory is a symmetric monoidal functor
  \begin{equation}\label{eq:1}
     F\mstrut _{\langle 1,2,3 \rangle}\:\bfot\longrightarrow \cc 
  \end{equation}
with domain the 2-category\footnote{In this paper we use discrete categories:
for example $\cc$~is a $(2,2)$-category (as opposed to a more general
$(\infty ,2)$-category).  Some of our exposition in this section applies to
$(\infty,n) $-categories though we just write `$n$-categories'.} of
3-framed\footnote{A \emph{3-framing} of a 3-manifold is a global parallelism,
a trivialization of its tangent bundle.  For a manifold~$M$ of
dimension~$k<3$ it is a trivialization of the inflated tangent bundle
$\triv{3-k}\oplus TM\to M$.} 1-, 2-, and 3-dimensional bordisms; the codomain
is a certain 2-category of complex linear categories; see
Definition~\ref{thm:30}.  The value of~$\Ft$ on the bounding 3-framed
circle~$S^1_b$ is the modular fusion category that defines the theory.  We
call~\eqref{eq:1} a \emph{Reshetikhin-Turaev} (RT) theory.

  \begin{remark}[]\label{thm:1}
 The RT theories in the original references factor through the bordism
2-category of manifolds equipped with a $(w_1,p_1)$-structure~\cite{BHMV},
that is, an orientation and a trivialization of the first Pontrjagin
class~$p_1$.  There is a unique isomorphism class of $(w_1,p_1)$-structures
on a circle, so no distinction between bounding and nonbounding circles.
``Spin Chern-Simons theories'' require a trivialization of the second
Stiefel-Whitney class~$w_2$ as well.  For those theories the codomain should
include $\zt$-gradings; see Remark~\ref{thm:45}.
  \end{remark}

A \emph{fully extended} topological field theory has domain
$\bft=\Bord^{\textnormal{fr}}_{\langle 0,1,2,3 \rangle}$, the 3-category of
3-framed bordisms of dimension~$\le3$.  There is no canonical codomain for
these theories, so for now we posit an arbitrary symmetric monoidal
3-category~$\sC$.  The \emph{cobordism hypothesis}---conjectured by
Baez-Dolan~\cite{BD}, proved by Hopkins-Lurie in 2~dimensions and by
Lurie~\cite{L1} in all dimensions; see also~\cite{AF}---asserts that a fully
extended theory
  \begin{equation}\label{eq:2}
     F\:\bft\longrightarrow \sC 
  \end{equation}
is determined by its value~$F(+)$ on a positively oriented 3-framed point.
Furthermore, a \emph{3-dualizable} object of~$\sC$ determines a unique
theory~\eqref{eq:2}, up to a contractible space of choices.  A symmetric
monoidal 3-category~$\sC$ has a \emph{fully dualizable part}~$\sCfd\subset
\sC$ whose objects are 3-dualizable and whose morphisms have all adjoints.
We say \emph{$\sC$~has duals} if $\sC=\sCfd$.  A functor~\eqref{eq:2} factors
through~$\sCfd$.  Given a general RT theory~\eqref{eq:1} it is still an open
problem to construct~$\sC$ and an extension~\eqref{eq:2}, or even better to
construct a single~$\sC$ which works for all RT theories.  (However,
see~\cite{He} for a special case in the framework of bicommutant categories.)

There is a subclass of RT theories, the \emph{Turaev-Viro (TV)
theories}~\cite{TV}, which are fully extended.  Let $\FC$ be the symmetric
monoidal 3-category whose objects are \emph{fusion} categories; see
Definition~\ref{thm:49}.   We remark that throughout this paper we take~
$\CC$ as the ground field.

  \begin{theorem}[Douglas-Schommer-Pries-Snyder~\cite{DSS}]\label{thm:51}
 $\FC$~has duals, i.e., $\FC=\FC^{\textnormal{fd}}$.
  \end{theorem}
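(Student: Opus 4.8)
The plan is to prove that every fusion category $\Psi$, viewed as an object of the Morita 3-category $\FC$, is 3-dualizable with all adjoints. The overall strategy is a descent down the tower of dualizability: first object-level duals, then 1-morphism adjoints, then 2-morphism adjoints. For the object-level statement, one exhibits the dual of $\Psi$ as the opposite category $\Psi\op$ (or $\Psi\rev$), with evaluation and coevaluation 1-morphisms given by $\Psi$ regarded as a $(\Psi\otimes\Psi\op)$-module and as a $(\Psi\op\otimes\Psi)$-module respectively; the S-diagram (zigzag) identities then reduce to the statement that the two composite bimodules $\Psi\otimes_{\Psi\op\otimes\Psi}\Psi$ and its mirror are each equivalent to the regular bimodule $\Psi$, which holds because $\Psi$ is a separable (indeed semisimple) algebra object in $\cc$. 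Finiteness of $\Psi$ is what makes these relative tensor products (bar constructions) converge in $\cc$.

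Next one must show that every 1-morphism in $\FC$ — that is, every finite semisimple bimodule category between fusion categories — admits both a left and a right adjoint. The key input is that a bimodule category over fusion categories is dualizable as a bimodule precisely because it is finite semisimple: one takes the opposite category as the adjoint, with unit and counit bimodule functors built from the (co)evaluation maps internal to $\cc$, using that finite semisimple $\CC$-linear categories are dualizable objects of $\cc$ (this is essentially the content proved in the excerpt's Appendix~\ref{sec:8}). The zigzag identities are again checked at the level of bimodule categories using semisimplicity, i.e. the splitting of the relevant idempotent (co)units. Finally, at the top level, every bimodule functor between finite semisimple bimodule categories has a (two-sided) adjoint simply because any $\CC$-linear functor between finite semisimple categories is exact and has biadjoints, and adjunctions of linear functors are automatically compatible with the bimodule structure. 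Assembling these three layers gives $\Psi\in\FC^{\textnormal{fd}}$, hence $\FC=\FC^{\textnormal{fd}}$.

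The main obstacle is the careful bookkeeping of adjoint 1-morphisms: one has to verify not merely that an adjoint bimodule category exists abstractly, but that the unit and counit are bimodule functors and that they satisfy the triangle identities \emph{in the Morita 3-category}, which requires controlling coherence data one level up. Equivalently, the subtle point is establishing that finite semisimplicity of a bimodule $M$ over fusion categories is exactly the condition that makes $M$ a dualizable 1-morphism — separability of the fusion categories guarantees the relative tensor products behave well, but one must check that the resulting (co)evaluation 2-morphisms are isomorphisms rather than merely idempotent-split maps. This is where the hypothesis `fusion' (as opposed to merely `finite tensor') is used decisively, and it is the heart of the argument; the object-level and 2-morphism-level statements are comparatively formal once finiteness and semisimplicity are in hand.
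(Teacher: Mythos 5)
The paper does not prove this statement: it is imported verbatim from Douglas--Schommer-Pries--Snyder~\cite{DSS}, so there is no internal proof to compare yours against. Judged on its own terms, your outline reproduces the correct three-layer architecture of the argument in~\cite{DSS} --- object-level duals, then adjoints of 1-morphisms, then adjoints of 2-morphisms --- and the ingredients you name at each layer are the right ones: the dual object is the monoidal opposite $\Psi\mo$ with (co)evaluation given by $\Psi$ as a module over $\Psi\boxtimes\Psi\mo$; adjoints of finite semisimple bimodule categories are the functor categories of~\eqref{eq:29} (Proposition~\ref{thm:24}); and bimodule functors between finite semisimple module categories have linear biadjoints that are automatically module functors (compare Lemma~\ref{moduleadjoint}).

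As a proof, however, the proposal has a genuine gap exactly at the point you flag as ``the heart of the argument'' but do not supply. Object-level 1-dualizability in a Morita category is essentially formal once the relative Deligne--Kelly tensor products exist: every algebra object is 1-dualizable with dual the opposite algebra, so separability is not what makes the S-diagrams commute there. Where semisimplicity and rigidity are genuinely needed is at levels two and three: one must show that the unit and counit of the adjunction between $M$ and $\Hom_B(M,B)$ themselves admit adjoints, and this reduces to the statement that a fusion category over $\CC$ is a \emph{separable} algebra object of $\cc$ --- equivalently, that the multiplication $\Psi\boxtimes\Psi\mo\to\Psi$ splits as a bimodule functor, or that the relevant relative tensor products are again finite semisimple. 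That separability is a theorem (Etingof--Nikshych--Ostrik/M\"uger in characteristic zero) and is the technical core of~\cite{DSS}; your sketch asserts that finite semisimplicity of $M$ makes the (co)evaluation 2-morphisms ``isomorphisms rather than merely idempotent-split maps'' but gives no argument. Until that step is filled in, the proposal is an accurate table of contents for the proof rather than a proof.
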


\noindent
 In particular, a fusion category~$\Phi $ is 3-dualizable in~$\FC$.  The
cobordism hypothesis implies that there is a fully extended topological field
theory
  \begin{equation}\label{eq:3}
     T_\Phi \:\bft\longrightarrow \FC,
  \end{equation}
unique up to equivalence, whose value on a chosen framed point~$+$ is~$T_\Phi
\p=\Phi $.  A TV theory is a fully extended theory with codomain~ $\FC$; its
truncation to $\bfot$ is an RT theory.  Examples include 3-dimensional gauge
theory for a finite group~$G$, in which case~$\Phi $ is the fusion category
of finite rank complex vector bundles over~$G$ with convolution product;
there is also a version twisted by a cocycle for a class in~$H^3(G;\QZ)$, as
in~\cite{DW}.  Special toral Chern-Simons theories are also TV theories.  

  \begin{remark}[]\label{thm:66}
 The original state sum construction~\cite{TV} is quite different from the
construction with the cobordism hypothesis, but nevertheless we use
`Turaev-Viro theory' to identify this class of topological field theories.
Also, the construction in~\cite{TV} is for unoriented manifolds and a
$(2,3)$-theory, whereas we use framed manifolds and a fully extended
$(0,1,2,3)$-theory.   
  \end{remark}

  \subsection{Definitions and terminology}\label{subsec:1.5}

The definitions and terminology for abelian categories are standard;
see~\cite[\S1]{EGNO} for example.  For tensor categories there is tremendous
variation in the literature, so we spell out our usage here.  The term
`modular fusion category' is standard; see \cite[\S8.13]{EGNO}, for example.

  \begin{definition}[]\label{thm:30}
 \ 

 \begin{enumerate}[label=\textnormal{(\roman*)}]

 \item $\cc$~is the symmetric monoidal 2-category defined as
follows.\footnote{The symbol `Rex' is sometimes used in place of `$\cc$'; it
emphasizes the right exactness of 1-morphisms.}  Its objects are finitely
cocomplete $\CC$-linear categories.  1-morphisms in~$\cc$ are right exact
$\CC$-linear functors---functors that preserve finite colimits---and
2-morphisms are natural transformations.  The symmetric monoidal structure is
the Deligne-Kelly tensor product~$\boxtimes$; see~\cite{K,D,Fr}.

 \item A \emph{tensor category} is an algebra object in~$\cc$.

 \item $\TC$~is the symmetric monoidal 3-category defined as follows.  Its
objects are tensor categories.  A 1-morphism $M\:A\to B$ is an object~$M\in
\cc$ equipped with the structure of a $(B,A)$-bimodule category.  A
2-morphism $M'\to M$ is a 1-morphism in~$\cc$ which respects the bimodule
structure.  A 3-morphism is a natural transformation of functors.
 
 \item $\BC$ is the symmetric monoidal 4-category defined as follows.  Its
objects are braided tensor categories, a 1-morphism $M\:A\to B$ is an
object~$M\in \TC$ equipped with the compatible structure of a
$(B,A)$-bimodule category, etc.  (See~\cite[Definition-Proposition~1.2]{BJS}.)

 \end{enumerate}
  \end{definition}

  \begin{remark}[]\label{thm:48}
 \ 
 
 \begin{enumerate}

 \item We do not assume that a tensor category has internal duals (rigidity).
See Appendix~\ref{sec:5} for a discussion of internal duals in tensor
categories.  Also, we do not assume that the tensor unit of a tensor category
is a simple object.

 \item The algebraic theory of tensor categories is exposited in the text
Etingof-Gelaki-Nikshych-Ostrik~\cite{EGNO} (where rigidity and simple unit
are included in the definition of `tensor category').  The theory of Morita
higher categories, such as~$\TC$ and~$\BC$, is developed by
Haugseng~\cite{H}, Johnson-Freyd-Scheimbauer~\cite{JS},
Gwilliam-Scheimbauer~\cite{GS}, among others.
Douglas-Schommer-Pries-Snyder~\cite{DSS} define a 3-category of tensor
categories which is a subcategory of~$\TC$; in particular, they assume
rigidity.  Tensor categories and braided tensor categories in an infinite
setting are explored in~\cite{BJS}, and in an $\infty $-setting in~\cite{L2}.
 
 \item The symmetric monoidal structure on $\TC$ is Deligne-Kelly tensor
product, as in~$\cc$.  Composition of 1-morphisms in~$\TC$ is the
\emph{relative} Deligne-Kelly tensor product: tensor product of module
categories over a tensor category.  Its existence is discussed in
\cite[Remark~3.2.1]{BZBJ} and \cite[Example~8.10]{JS}.

 \item For a 3-category~$\sC$ set $\OC=\End_{\sC}(1)$, the endomorphism
2-category of the tensor unit object.  There is a canonical identification
  \begin{equation}\label{eq:41}
     \Omega \TC= \cc. 
  \end{equation}

 \item A modular fusion category is invertible as an object of~$\BC$;
see~\cite{BJSS}.

 \end{enumerate}
  \end{remark}

The 3-category~$\FC$ of fusion categories is introduced in~\cite{DSS}.

  \begin{definition}[]\label{thm:49}
 \  

 \begin{enumerate}[label=\textnormal{(\roman*)}]

 \item $\SC$~is the full subcategory of~$\cc$ whose objects are finite
semisimple abelian categories.

 \item A \emph{fusion category} is a finite semisimple rigid tensor
category. 

 \item $\FC$~is the symmetric monoidal 3-category subcategory of~$\TC$ whose
objects are fusion categories and whose 1-morphisms are finite semisimple
abelian bimodule categories.

 \end{enumerate}
  \end{definition}

  \begin{remark}[]\label{thm:50}
 \ 
 \begin{enumerate}

 \item $\SC\subset \cc$ is closed under Deligne-Kelly tensor
product~\cite[\S5]{Fr}.

 \item We do not assume that a fusion category has simple unit.  (\cite{EGNO}
use `multifusion' for Definition~\ref{thm:49}(ii) and reserve `fusion' for
the case of a simple unit.)

 \item The loop category of~$\FC$ is
  \begin{equation}\label{eq:73}
     \Omega \FC= \SC .
  \end{equation}

 \end{enumerate}
  \end{remark}

A companion result to Theorem~\ref{thm:51} asserts that the symmetric
monoidal 2-category $\SC$ has duals.  We also need the following result,
which we prove in Appendix~\ref{sec:8}. 

  \begin{theorem}[]\label{thm:53}
 If~$C\in \cc$ is 2-dualizable, then $C$~is finite semisimple abelian.
  \end{theorem}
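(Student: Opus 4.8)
The plan is to analyze what 2-dualizability of an object $C \in \cc$ forces, working down the tower of structure: first the existence of a dual $C\dual$ with evaluation and coevaluation 1-morphisms, then the existence of adjoints for those 1-morphisms (since 2-dualizability in a 2-category with duals means the object is dualizable and the structure 1-morphisms have both adjoints). The key point is that $C$ dualizable in $\cc$ with its Deligne--Kelly monoidal structure means $C\dual \simeq \uHom(C,\mathrm{Vect}_{\CC})$ realized inside $\cc$, so $C\boxtimes C\dual$ carries an algebra structure (it is $\uEnd(C)$, or rather $C$ becomes a module over this algebra), and more importantly the coevaluation $\mathrm{Vect}_{\CC}\to C\boxtimes C\dual$ picks out an object — the ``diagonal bimodule'' / identity object — whose existence as a right-exact functor out of $\mathrm{Vect}_{\CC}$ already says $C$ is compactly generated in an appropriate sense. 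The standard slogan (cf.\ the dualizability story for linear categories) is that $C$ dualizable in $\cc$ $\iff$ $C$ is equivalent to modules over a finite-dimensional algebra, i.e.\ $C$ is a finite $\CC$-linear category; I would either cite this or reprove the direction I need.

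First I would establish that $C$ dualizable forces $C$ to be a \emph{finite} $\CC$-linear category: the coevaluation $u\:\mathrm{Vect}_{\CC}\to C\boxtimes C\dual$ together with the zig-zag identities exhibits $\id_C$ as a right-exact functor factoring through $C\boxtimes C\dual$, and unwinding this (using that $\boxtimes$ is the Deligne--Kelly tensor product and $C\dual$ is the category of right-exact functors $C\to\mathrm{Vect}_{\CC}$) shows $C$ has a compact generator and hence $C\simeq A\text{-mod}$ for a (possibly non-unital, but here unital) finite-dimensional $\CC$-algebra $A$. Then I would use the extra data of 2-dualizability — the adjoints of $\ev$ and $u$ — to force semisimplicity. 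The mechanism: the existence of a right adjoint to the coevaluation (equivalently, a two-sided adjoint, since in a 2-category with duals left and right adjoints of the structure maps both exist) produces a ``trace'' or Frobenius-type structure on $A$, and more sharply forces the multiplication bimodule $A$ over $A\otimes A\op$ to be dualizable, i.e.\ $A$ is a separable $\CC$-algebra. By the Wedderburn/Maschke theory over $\CC$, a finite-dimensional separable $\CC$-algebra is a product of matrix algebras, so $A\text{-mod}$ is finite semisimple abelian. An alternative, perhaps cleaner, route is to show directly that 2-dualizability of $C$ makes the identity bimodule $C$ a 1-dualizable object of $\TC$ over the algebra $\uEnd(C)$ and invoke the Douglas--Schommer-Pries--Snyder-type criterion, but the self-contained algebra argument seems more transparent for an appendix.

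The main obstacle I anticipate is the passage from ``$C$ is finite $\CC$-linear'' to ``$C$ is \emph{semisimple}'': mere dualizability (existence of $C\dual$) only gives finiteness, and one genuinely needs the adjoints of the evaluation/coevaluation 1-morphisms to extract separability of $A$. Concretely, the delicate step is showing that the right adjoint of $\ev\:C\dual\boxtimes C\to\mathrm{Vect}_{\CC}$ exists as a right-exact functor and that its composite around a zig-zag, combined with the analogous adjoint for $u$, yields a splitting of the multiplication map $A\otimes_{\CC}A\op\to A$ as $A$-bimodules. This is where one must be careful about left versus right exactness (right adjoints of right-exact functors between finite categories are left-exact, not automatically right-exact) — the finiteness of $C$ is exactly what rescues this, since over a finite-dimensional algebra the relevant Hom functors are exact. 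Once separability is in hand, finite semisimplicity of $C$, and hence the statement of Theorem~\ref{thm:53}, follows immediately from classical representation theory over~$\CC$.
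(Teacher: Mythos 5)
Your endgame (separability of a finite-dimensional algebra, then Wedderburn) is fine, but the proposal has a genuine gap at its first step: the assertion that mere $1$-dualizability of $C$ in $\cc$ yields a compact \emph{projective} generator and hence $C\simeq A\textnormal{-mod}$ for a finite-dimensional algebra $A$. In the finitely cocomplete (non-presentable) setting the coevaluation object $u(\mathbf 1)\in C\boxtimes C\dual$ is a priori only a cokernel of a map between finite sums of pure tensors $x_i\boxtimes\xi_i$; the zig-zag identity then exhibits each $y\in C$ as a colimit built from the $x_i$ and the vector spaces $\langle\xi_i\,|\,y\rangle$. This fails to give what you need in two ways. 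First, those vector spaces need not be finite-dimensional: Hom-finiteness of $C$ is itself a consequence of $2$-dualizability, not of $1$-dualizability --- in the paper's proof it is extracted (Lemma~\ref{thm:81}) from the formula $\Delta^R=\langle\ |\ \rangle\circ(\mathrm{Id}\boxtimes S_C^{-1})$ for the right adjoint of the coevaluation, which uses the Serre automorphism. Second, even granting Hom-finiteness, you only obtain a generator, not a projective one, so the Morita-type conclusion $C\simeq A\textnormal{-mod}$ does not follow; projectivity of the coevaluation object (its being a direct summand of a pure tensor $\Xi\boxtimes X$) is again deduced from the right exactness of $\mathrm{Hom}(\Delta(\mathbf 1),-)$, i.e.\ from the adjoints supplied by $2$-dualizability. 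The paper explicitly records that from $1$-dualizability plus Hom-finiteness one only gets that $C$ and $C\op$ are pre-abelian and balanced --- not abelian, let alone module categories. So you have located the difficulty in the wrong place: finiteness and abelianness are not free, and the adjoint data must be used before, not after, the reduction to algebra.

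For comparison, the paper never passes through $A\textnormal{-mod}$ at all: it identifies $C\dual$ with $C\op$ via a Yoneda-style adjunction (Lemma~\ref{thm:70}), proves Hom-finiteness and bi-exactness of $\mathrm{Hom}_C$ from the adjoints of the coevaluation (Lemmas~\ref{thm:81} and~\ref{thm:71}), and then gets splitting of all monos and epis, hence abelianness and semisimplicity, by decomposing objects until their endomorphism algebras are division rings. That route also works over an arbitrary field, whereas your Wedderburn/separability step as written leans on $\Bbbk=\CC$. If you want to salvage your architecture, you would need to first prove, using the two-sided adjoints of evaluation and coevaluation, that the coevaluation object is a summand of a pure tensor and that $\mathrm{Hom}$ is bi-exact --- at which point you have essentially reproduced the paper's argument and the detour through $A$ is unnecessary.
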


\noindent
 There are many variations of this theorem, such as~\cite{Se,Ti}; see
\cite[Appendix]{BDSV} for a survey.

  \subsection{Relations between RT and TV theories}\label{subsec:1.4}

Let 
  \begin{equation}\label{eq:71}
     T\:\bft\longrightarrow \FC
  \end{equation}
be a TV~theory.  Then the associated modular fusion category~$T(\Sb)$, the
value of~$T$ on the bounding 3-framed circle, is the \emph{Drinfeld center}
of~$T\p$.  It is the modular fusion category of the associated RT~theory,
which is the truncation
  \begin{equation}\label{eq:72}
     \Tt\:\bfot\longrightarrow \cc. 
  \end{equation}
  Also, the value~ $\TSn$ on the nonbounding 3-framed circle is the
\emph{Drinfeld cocenter}\footnote{Objects of the Drinfeld cocenter of a
fusion category~$\Phi$ are pairs~$(x,\gamma )$ in which $x$~is an object
of~$\Phi$ and the functor $\gamma \:x\otimes -\to -\otimes x^{**}$ is a
twisted half-braiding.}  of~$T\p$, a module category over~$\TSb$.  (Notice
that $\TSn$~is equivalent to~$\TSb$ if $T\p$~is spherical.)  See
\cite[\S3.2.2]{DSS} for an exposition.  For a general RT~theory~\eqref{eq:1}
there does not exist a fusion category whose Drinfeld center is the modular
fusion category~$\Ft(\Sb)$.

  \begin{remark}[]\label{thm:4}

 The double $\DF=FF\dual$ of an RT theory~$F$ is the truncation of a TV
theory, as we now explain.  ($F\dual$~is the dual theory to~$F$ in the
symmetric monoidal category of theories.) Let $B$ be a modular fusion
category.  Suppose $F\:\bft\to\sC$ is an extension of an RT
theory~\eqref{eq:1} with $\FSb=B$, where we assume the hypotheses on~$\sC$ in
Theorem~\ref{thm:5} below.  Use the cobordism hypothesis to define theories
  \begin{equation}\label{eq:4}
     F\dual,\Fs\:\bft\longrightarrow \sC 
  \end{equation}
which are characterized by
  \begin{equation}\label{eq:5}
  \begin{aligned}
  F\dual(+) &= F(+)\dual \\
  \Fs\p &= F\p\otimes F\p\dual = \FS.
  \end{aligned}
  \end{equation}
Here $F\p\dual$ is the dual object to $F\p\in \sC$.  The theory~$F\dual$ may
be defined as the composition of~$F$ with the involution $\bft\to\bft$ which
reverses the first ``arrow of time''; see~\S\ref{subsubsec:2.1.6}.  Using
this description identify the braided fusion category~$F\dual(\Sb)$
with~$B\rev$, which is the same underlying fusion category~$B$ equipped with
the inverse braiding.  It follows that $\Fs(S^1_b)\cong B\boxtimes B\rev$,
which by \cite[Proposition~8.20.12]{EGNO} is braided tensor equivalent to the
Drinfeld center of~$B$.
  \end{remark}

  \subsection{Existence of boundary theories}\label{subsec:1.2}

Let 
  \begin{equation}\label{eq:69}
     F\:\bft\longrightarrow  \sC 
  \end{equation}
be a 3-dimensional 3-framed topological field theory, as in~\eqref{eq:2}.
Lurie~\cite[Example~4.3.22]{L1} defines an extended bordism
3-category~$\bftb$ and an inclusion $\bft\to\bftb$.
(See~\S\ref{subsec:4.3} for a definition of objects in~$\bftb$.)  A
\emph{boundary theory for~$F$} is an extension
  \begin{equation}\label{eq:55}
      \eF\:\bftb\to\sC
  \end{equation}
of~$F$ to a symmetric monoidal functor.  The cobordism hypothesis with
singularities implies that $\eF$~ is determined by a 3-dualizable object
$F(+)\in \sC$ together with a 2-dualizable 1-morphism $1\to F(+)$.  To
isolate the data of the boundary theory, let
  \begin{equation}\label{eq:6}
     \tF\:\Bord^{\textnormal{fr}}_2\longrightarrow \sC 
  \end{equation}
be the truncation of~$F$ to \emph{2-framed} 0-, 1-, and 2-dimensional
bordisms.  It is a \emph{once categorified topological field theory}.  Then
the data of a {boundary theory} for~$F$ is a natural
transformation\footnote{There are also boundary theories $\tF\to 1$.  They
are extension of~$F$ to a variation of~$\bftb$.}
  \begin{equation}\label{eq:7}
     \beta \:1\longrightarrow \tF 
  \end{equation}
of symmetric monoidal functors on~$\Bord^{\textnormal{fr}}_2$.  More
precisely, $\beta $~is an \emph{oplax} natural transformation in the sense of
Johnson-Freyd and Scheimbauer~\cite{JS}.  They apply the cobordism hypothesis
with singularities~\cite[\S4.3]{L1} to prove that the data of an extended
theory~\eqref{eq:55} is equivalent to the data of the pair consisting
of~\eqref{eq:69} and~\eqref{eq:7}; see~\cite[Theorem~7.15]{JS}.  Furthermore,
that data is determined by the values~$F(+)$ and $\beta (+)\:1\to F(+)$ on a
point, which satisfy maximal dualizability constraints.  We discuss natural
transformations in pictorial terms in~\S\ref{subsubsec:2.1.7}.

Our main result is the following.  

  \begin{mainthm}[]\label{thm:5}
 Let $\sC$~be a symmetric monoidal 3-category whose fully dualizable
part~$\sC^{\textnormal{fd}}$ contains the 3-category~$\FC$ of fusion
categories as a full subcategory.  Let $ F\:\bft\to\sC$ be a 3-framed
topological field theory such that

 \begin{enumerate}[label=\textnormal{(\alph*)}]

 \item $F(S^0)$~is isomorphic in~$\sC$ to a fusion category, and

 \item $ F(S^1_b)$ is invertible as an object in the 4-category $
\Alg_2(\OC)$ of braided tensor categories.

 \end{enumerate} 
\noindent 
 Assume $F$~extends to $\eF\:\bftb\to\sC$ such that the associated boundary
theory $\beta \:1\to \tF$ is nonzero.  Then $F(S^1_b)$~is braided tensor
equivalent to the Drinfeld center of a fusion category~$ \Phi $, which may be
taken to be $\End_{\sCfd}\bigl(\beta (+)\bigr)$.
  \end{mainthm}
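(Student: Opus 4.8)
\begin{proof*}[Proof proposal]
The plan is to strip the field-theory language off the hypotheses, leaving a single algebraic datum; to recognize that datum as a fusion category~$\Phi$; and then to identify the circle invariant $F(\Sb)$ with the Drinfeld center of~$\Phi$. First I would invoke the cobordism hypothesis with singularities in the form used in the text, Johnson-Freyd--Scheimbauer~\cite[Theorem~7.15]{JS} building on~\cite[\S4.3]{L1}: the extension~$\eF$, equivalently the oplax transformation $\beta\:1\to\tF$, amounts to a single 1-morphism $b:=\beta(+)\:1\to F(+)$ in~$\sC$ that is 2-dualizable, and ``nonzero'' means $b\neq0$. Write $A:=F(+)$ and $\Phi:=\End_{\sCfd}(b)$, the category of 2-endomorphisms of~$b$ made monoidal by vertical composition, with tensor unit~$\id_b$; its $\CC$-linear, finitely cocomplete structure is inherited through the embedding $\cc\simeq\Omega ^2\FC\hookrightarrow\Omega ^2\sC$ supplied by the hypothesis $\FC\subseteq\sCfd$. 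Thus~$\Phi$ is a tensor category in the sense of Definition~\ref{thm:30}, and $\Phi\neq0$ since $b\neq0$.

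The second step is to show that~$\Phi$ is a fusion category. Dualizability of~$b$ descends to~$\Phi$: looping the 2-dualizable 1-morphism~$b$ down one categorical level---using the left and right adjoints of~$b$ and the adjointability of the corresponding unit and counit---equips~$\Phi$ with the structure of a 2-dualizable object of~$\cc$, whence Theorem~\ref{thm:53} shows that~$\Phi$ is finite semisimple abelian. Those same adjoints of~$b$ furnish internal duals on the tensor category $\End(b)$; this is where I would apply the criterion for internal duals in tensor categories proved in Appendix~\ref{sec:5}. Hence~$\Phi$ is a finite semisimple rigid tensor category, i.e.\ a fusion category (Definition~\ref{thm:49}). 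Should the unit of~$\Phi$ fail to be simple, the complete reducibility of fusion categories from Appendix~\ref{sec:6} lets one pass to indecomposable summands and run the remaining argument componentwise.

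The third step is to identify $F(\Sb)$ with $Z(\Phi)$. Here I would use the identification $F(\Sb)\simeq\Tr(\id_A)=\dim_{\sC}(A)$, the dimension of~$A$ in~$\sC$ with its canonical braided structure, which hypothesis~(b) declares invertible in $\Alg_2(\OC)$, i.e.\ nondegenerate. A bulk circle can be pushed onto the boundary, where it becomes a circle in the boundary theory; since that theory assigns~$\Phi$ to a point, the corresponding bordism produces a braided tensor functor $\rho\:F(\Sb)\to Z(\Phi)$. Conceptually~$\rho$ is the comparison $\dim_{\sC}(A)\to\dim_{\sC}(\iota\Phi)$ built from~$b$ and its adjoint, followed by $\dim_{\sC}(\iota\Phi)=\iota\bigl(\dim_{\FC}\Phi\bigr)=\iota\bigl(Z(\Phi)\bigr)$ as recorded in~\S\ref{subsec:1.4}, where $\iota\:\FC\hookrightarrow\sC$ is the given inclusion. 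Hypothesis~(a), that $F(S^0)=A\otimes A^\vee$ is a fusion category, together with the 2-dualizability of~$b$ (and Appendix~\ref{sec:6} to absorb decomposable pieces), forces the $A$-module underlying~$b$ to be a Morita equivalence between~$A$ and~$\Phi$; since the Drinfeld center is a Morita invariant of fusion categories, $\rho$ is fully faithful. Essential surjectivity follows from~(b): $\rho$ is then a dominant braided functor between nondegenerate braided fusion categories of equal global dimension, hence an equivalence.

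The hard part will be the third step: converting the formal fact that~$b$ is a nonzero 2-dualizable 1-morphism over the fusion-like bulk~$A$ into the statement that~$A$ and~$\Phi$ have the same Drinfeld center, and then matching that center with the topological invariant $F(\Sb)$. This is where the hypotheses are consumed: (a) to guarantee that the bulk is semisimple and rigid enough that a nonzero 2-dualizable boundary is a Morita equivalence rather than merely a module; (b) to promote the comparison~$\rho$ from a functor to an equivalence; and the cobordism hypothesis with singularities to produce~$\rho$ in the first place. By contrast the second step is a comparatively formal assembly of Theorem~\ref{thm:53}, Theorem~\ref{thm:51}, and the appendices on internal duals and on complete reducibility, the only delicate point there being to match 2-dualizability of the 1-morphism~$b$ in~$\sC$ with 2-dualizability of~$\Phi$ as an object of~$\cc$.
\end{proof*}
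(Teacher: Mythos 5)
Your first two steps track the paper's argument: $\Phi$ is taken to be the (internal) endomorphisms of $\beta(+)$, semisimplicity comes from $2$-dualizability via Theorem~\ref{thm:53} (or, under the full-subcategory hypothesis, immediately from~\eqref{eq:74}), and rigidity is verified through the Frobenius criterion of Appendix~\ref{sec:5} using adjoints of bordisms --- this is exactly Proposition~\ref{thm:20}, split into Lemmas~\ref{thm:19} and~\ref{thm:18}. Your third step, however, has genuine gaps. First, you assert that hypothesis~(a) plus $2$-dualizability of~$b$ ``forces the $A$-module underlying $b$ to be a Morita equivalence between $A$ and $\Phi$'' with $A=F(+)$. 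This does not typecheck: only $F(S^0)\cong F(+)\otimes F(+)^\vee$ is assumed isomorphic to a fusion category; $F(+)$ itself is just a $3$-dualizable object of~$\sC$ and need not lie in~$\FC$, so ``Morita equivalence between $A$ and $\Phi$'' is not a statement about fusion categories. The correct statements live at the level of~$S^0$: the paper shows $M=F(e)$ is an invertible $\bigl(1,\Xi\bigr)$-bimodule and $N=\beta(S^0)$ an invertible $\bigl(\Xi,\Phi\boxtimes\Phi\mo\bigr)$-bimodule. Second, even there, ``nonzero module $\Rightarrow$ Morita equivalence'' requires that $\Xi$ be \emph{indecomposable}; this is Lemma~\ref{thm:13}, which is where hypothesis~(b) is first consumed (invertibility of $F(\Sb)$ forces the double to be invertible, hence $\Xi$ indecomposable, hence nonzero modules are faithful by the remark after \cite[Definition~7.12.9]{EGNO}). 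Your proposal never supplies this step; you reserve~(b) for a different purpose at the end.

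The second gap is the braided structure on the comparison functor. The paper does not obtain a braided functor directly from ``pushing the circle to the boundary''; it constructs a \emph{tensor} equivalence $\alpha\:\End_\Xi(M)\to\End_{\TP(S^0)}(\Phi)$ by tensoring with the invertible bimodule~$N$ (so invertibility is automatic and needs no global-dimension count), and then proves separately, via the doubling argument of Lemma~\ref{thm:29}, that $\alpha$ preserves the braiding. Your alternative --- ``a dominant braided functor between nondegenerate braided fusion categories of equal global dimension is an equivalence'' --- presupposes three things you have not established: that $\rho$ is braided, that it is dominant, and that the global dimensions agree. None of these is free, and the braidedness in particular is the delicate point the paper isolates in Lemma~\ref{thm:29}. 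As written, step~3 is a heuristic for the statement rather than a proof of it.
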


\noindent
 The hypothesis that~$\FC$ is a full subcategory of~$\sC^{\textnormal{fd}}$
ensures that Theorem~\ref{thm:5} applies to TV theories.  That hypothesis
implies that 
  \begin{equation}\label{eq:74}
     \OCfd= \SC; 
  \end{equation}
see~\eqref{eq:41}.  Since $\SC\subset \cc$, the $\langle 1,2,3 \rangle$~
truncation~$F_{\langle 1,2,3 \rangle}$ of~$F$ has the form~\eqref{eq:1}.
Hypotheses~(a) and~(b) express that $\Ft$~is an RT theory.  A modular fusion
category is invertible as an object in the 4-category of braided tensor
categories~\cite{S-P,BJSS}; hypothesis~(b) captures this central feature of
RT theories.  It remains to explain what it means that $\beta $~is
\emph{nonzero}.  Observe that the value of~$\beta $ on a closed 1-manifold is
an object in a finite semisimple complex linear abelian category, and the
value of~$\beta $ on a closed 2-manifold is a vector in a finite dimensional
complex vector space.  We require that $\beta $~take a nonzero value on some
nonempty closed 1-~or 2-manifold.

  \begin{remark}[]\label{thm:60}
 \ 
 \begin{enumerate}
 \item The unit in our fusion category~$\Phi $ may not be simple.  We prove
(Corollary~\ref{thm:34}) that $\Phi $~is Morita equivalent to a fusion
category~$\Phi _0$ with simple unit.  The category~$\Phi $ is canonical in
terms of~$\eF=(F,\beta )$, whereas $\Phi _0$~is only determined up to Morita
equivalence. 

 \item The relationship between $F(S^0)$ and~$\Phi $ is explained in
Lemma~\ref{thm:28} (in which $\Xi $~is a fusion category isomorphic
to~$F(S^0)$). 

 \item
 Theorem~\ref{thm:5} gives an obstruction to a nonzero topological boundary
theory: the modular fusion category~$\FSb$ must be the Drinfeld center of a
fusion category.  There is a simpler obstruction, even if we only assume
$F$~is a (2,3)-theory: the central charge~$c$---which is defined
modulo~24---must vanish.  To see this, let $Y$~be a closed 3-framed surface
of genus~$g\ge 3$.  Then $\pi _0$~ of the automorphism group of~$Y$ is a
central extension of the mapping class group by~$\ZZ$, and the central~$\ZZ$
acts\footnote{See~\cite{MR} for the case of $(w_1,p_1)$-theories (as in
Remark~\ref{thm:1}), where the generator of the appropriate centrally
extended mapping class group acts as~$e^{2\pi ic/24}$.  A framing
trivializes~$(w_1,w_2,p_1/2)$; the extra factor of~2 explains the discrepancy
between~$e^{2\pi ic/24}$ here and $e^{2\pi ic/12}$~ in the text.} on the
state space~$F(Y)$ by the character~$e^{2\pi ic/12}$.  If $\beta $~is a
nonzero boundary theory, then $\beta (Y)\in F(Y)$ is an invariant vector,
which we may suppose is nonzero for some~$g$, and so the center must act
trivially.

 \item  The modular fusion category~$F(\Sb)$ does not determine the theory~$F$
completely~\cite{BK}.  For example, the $E_8$~Chern-Simons theory at level~1
is invertible and the modular fusion category is trivial, whereas the theory
is nontrivial---its central charge is 8 mod 24---and it is not a
Turaev-Viro theory.  Therefore, by Theorem~\ref{thm:57} below, it does not
admit a nonzero boundary theory.

\item  At first glance it might seem that the hypotheses of Theorem~\ref{thm:5},
which lead to~\eqref{eq:74}, are too restrictive: we might rather have
nonsemisimple categories be possible values of~$\Ft$.  However, this is ruled
out by Theorem~\ref{thm:53} and a dimensional reduction argument, such as in
the proof of Lemma~\ref{thm:19}. 

 \item A converse to Theorem~\ref{thm:5} follows from the cobordism hypothesis.
Namely, if $\Phi $~is a fusion category, then the Turaev-Viro theory~$\TP$
defined in~\eqref{eq:3} has a canonical boundary theory built from the
regular left $\Phi $-module~$\Phi $, as in Theorem~\ref{thm:57} below. 

 \item In forthcoming work with Claudia Scheimbauer~\cite{FST}, we construct
an extension of any RT~theory which satisfies the hypotheses of
Theorem~\ref{thm:5}. 

 \end{enumerate}
 \end{remark}

 \noindent We prove Theorem~\ref{thm:5} in~\S\ref{subsec:7.1}.

Following a suggestion of Theo Johnson-Freyd, we can improve
Theorem~\ref{thm:5} by making a slightly stronger assumption on~$\sC$.  We
spell out that assumption in the following definition, which expresses the
co-completeness of $\sC$ under very special finite colimits coming from
tensor products.

  \begin{definition}[]\label{thm:69}
 Let $\sC$~be a symmetric monoidal 3-category whose fully dualizable
part~$\sC^{\textnormal{fd}}$ contains the 3-category~$\FC$ of fusion
categories as a full subcategory.  Then $\sC$ is \emph{fusion tensor
cocomplete} if the following holds for all objects~$x,y,z\in \sC$:

 \begin{enumerate}[label=\textnormal{(\alph*)}]

 \item for every triple~$(\Phi ,M,N)$ consisting of a fusion category $\Phi
$, viewed as an algebra object in $\End_\sC(1)$; a left $\Phi$-module $M$ in
$\sC(x,y)$; and a right $\Phi$-module $N$ in $\End_\sC(1)$, the relative
tensor product $N\boxtimes_\Phi M$ exists as a colimit of the bar resolution;

 \item for all~$H\in \sC(y,z)$ the natural map $(H\circ
M)\boxtimes_\Phi N \to H\circ (M\boxtimes_\Phi N)$ is an equivalence;

 \item for all~$K\in \sC(z,x)$ the natural map $M\boxtimes_\Phi (N\circ K)
\to (M\boxtimes_\Phi N)\circ K$ is an equivalence.

 \end{enumerate}
  \end{definition}

  \begin{mainthmp}[]\label{thm:57}
 In the context of Theorem~\ref{thm:5} assume in addition that $\sC$~is
fusion tensor cocomplete.  Then $F$~is isomorphic to the Turaev-Viro
theory~$\TP$, and the boundary theory is determined by the regular left
module category~$\Phi $.
  \end{mainthmp}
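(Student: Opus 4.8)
The plan is to combine Theorem~\ref{thm:5}, which already produces the fusion category $\Phi =\End_{\sCfd}\bigl(\beta (+)\bigr)$ and identifies $F(\Sb)$ with the Drinfeld center $Z(\Phi )$, with the cobordism hypothesis applied to the point $F(+)\in \sC$. By Theorem~\ref{thm:51} the Turaev-Viro theory $\TP\:\bft\to\FC$ exists and is characterized by $\TP(+)=\Phi $; since $\FC\subset \sCfd$ is full, $\TP$ may equally be regarded as a theory with codomain $\sC$. So it suffices to produce an equivalence $F(+)\simeq \Phi $ in $\sC$, compatible with the boundary data, and then invoke uniqueness in the cobordism hypothesis with singularities (\cite[Theorem~7.15]{JS}) to conclude $(F,\beta )\cong (\TP,\beta _{\mathrm{reg}})$, where $\beta _{\mathrm{reg}}$ is the boundary theory attached to the regular left $\Phi $-module $\Phi $.

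The key step is therefore the reconstruction of $F(+)$ from the boundary datum $\beta (+)\:1\to F(+)$. Write $\Phi =\End_{\sCfd}\bigl(\beta (+)\bigr)$; then $\beta (+)$ is canonically a $(F(+),\Phi )$-bimodule, i.e.\ a $1$-morphism $\Phi \to F(+)$ in $\sC$ (after the identification $1\to \Phi $ given by the algebra structure on $\Phi $). The claim is that this bimodule exhibits a Morita equivalence, so that $F(+)\simeq \Phi $ as algebra objects in $\OC=\cc$. One direction — that $\beta (+)$ has a dual $1$-morphism and the composite $\beta (+)\dual\circ \beta (+)$ recovers $\Phi $ — is essentially the $2$-dualizability of $\beta (+)$, part of the hypotheses carried through the proof of Theorem~\ref{thm:5}. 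The reverse composite $\beta (+)\circ \beta (+)\dual$ is an algebra in $\End_\sC(F(+))$, and one must show it is the unit, equivalently that $\beta (+)$ is "surjective" as a bimodule. This is exactly where the new hypothesis enters: fusion tensor cocompleteness (Definition~\ref{thm:69}) guarantees that the relevant relative tensor products over $\Phi $ exist as bar-resolution colimits and that composition in $\sC$ commutes with them (conditions (b) and (c)), so that the bar complex computing $\beta (+)\boxtimes_\Phi \beta (+)\dual$ can be evaluated and seen to collapse to $1_{F(+)}$. Concretely I would run the standard Morita-theoretic argument: build the $(\Phi ,\Phi )$-bimodule $\beta (+)\dual\circ \beta (+)$, identify it with the regular bimodule $\Phi $ using $\Phi =\End(\beta (+))$, then use the bar resolution of $\Phi $ over $\Phi $ together with (b),(c) to show $\beta (+)\circ_\Phi \bigl(\beta (+)\dual\circ \beta (+)\bigr)\circ_\Phi \cdots$ telescopes, yielding an inverse bimodule.

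Once $F(+)\simeq \Phi $ compatibly with $\beta (+)$, the cobordism hypothesis with singularities finishes the proof on the nose: the extended theory $\eF=(F,\beta )$ is determined by the pair $\bigl(F(+),\beta (+)\bigr)$ up to a contractible space of choices, and the Turaev-Viro theory $\TP$ with its regular boundary module realizes the isomorphic pair $\bigl(\Phi ,\,\Phi \text{ as left }\Phi \text{-module}\bigr)$; hence $(F,\beta )\cong (\TP,\beta _{\mathrm{reg}})$, and in particular $F\cong \TP$ as in~\eqref{eq:3}. The main obstacle is the middle step — verifying that $\beta (+)$ is an invertible bimodule, i.e.\ that the bar-resolution colimits furnished by fusion tensor cocompleteness actually compute the expected Morita inverse and that the unit/counit of the putative adjunction are equivalences. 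This is a genuinely categorical computation rather than a formality, because $\Phi $ need not have simple unit (so one cannot shortcut via semisimplicity of $\Phi $-mod on the nose), though Corollary~\ref{thm:34} reducing $\Phi $ to a Morita-equivalent $\Phi _0$ with simple unit can be used to transport the statement to a more familiar setting if needed. Everything else — the existence of $\TP$, the identification $F(\Sb)\simeq Z(\Phi )$, and the final appeal to uniqueness — is either Theorem~\ref{thm:5}, Theorem~\ref{thm:51}, or \cite[Theorem~7.15]{JS}.
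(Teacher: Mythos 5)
Your overall skeleton matches the paper's: reduce to producing an equivalence $F(+)\simeq\Phi$ in $\sC$, build it from $\beta(+)$ viewed as a $(F(+),\Phi)$-bimodule together with its adjoint, use fusion tensor cocompleteness to form the relative composites over $\Phi$, and then invoke the cobordism hypothesis (with singularities) to upgrade the object-level equivalence to an isomorphism $(F,\beta)\cong(\TP,\beta_{\mathrm{reg}})$. The direction $\beta(+)^R\circ_\Phi\beta(+)\simeq\Phi$ is indeed essentially formal from $\Phi=\uEnd^R\bigl(\beta(+)\bigr)$, exactly as in the paper's computation $h\circ g=\FL_\Phi$.

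The genuine gap is the other composite. You say the bar complex computing $\beta(+)\circ_\Phi\beta(+)^R$ ``can be evaluated and seen to collapse to $1_{F(+)}$,'' but no formal telescoping argument can establish this: if $\beta$ were the zero boundary theory the composite would be zero, not the identity, so any correct proof must feed in both the nonvanishing of $\beta$ and the RT hypotheses on $F$. Fusion tensor cocompleteness only guarantees that the relative composites \emph{exist} and commute with composition in $\sC$; it says nothing about their value. The paper's actual argument for $g\circ h=\id_{F(+)}$ is a duality trick: transpose $g\circ h$ along the dual of $\beta(+)^R$ to a 1-morphism $\ghT\:1\to F(S^0)$, identify $\ghT=\beta(S^0)\boxtimes_{\Phi\boxtimes\Phi\mo}\Phi$, and rewrite this as the composite $1\xrightarrow{\TP(c)}\TP(S^0)\xrightarrow{N}\Xi\xrightarrow{\sim}F(S^0)$. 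Lemma~\ref{thm:28} --- which is where the nonvanishing of $\beta$ and Lemma~\ref{thm:13} enter --- says $N$ is a Morita equivalence, so $\ghT$ is precisely the coevaluation of the duality between $F(+)$ and $F(-)$, which is equivalent to $g\circ h=\id_{F(+)}$. Without some version of this step (or another mechanism that uses $\beta\neq0$), your proposed Morita-invertibility of $\beta(+)$ is asserted rather than proved; the appeal to Corollary~\ref{thm:34} does not substitute for it, since the issue is not the simplicity of the unit of $\Phi$ but the surjectivity of $\beta(+)$ onto $F(+)$.
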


\noindent 
 We prove Theorem~\ref{thm:57} in \S\ref{subsec:7.2}.

  \begin{remark}[]\label{thm:68}
 A more obvious hypothesis on~$\sC$---namely that \textnormal{(i)}~for
all~$x,y\in \sC$ the 2-category $\sC(x,y)$ of 1-morphisms is finitely
cocomplete, and \textnormal{(ii)} composition of 1-morphisms in~$\sC$ is
finitely cocontinuous---is too strong for many applications.  (We thank
Sam Raskin for this observation.)
  \end{remark}

  \begin{remark}[]\label{thm:58}
 If $F$~is isomorphic to the tensor unit theory, then for any Turaev-Viro
representative~$\TP$ the fusion category~$\Phi $ is endomorphisms of a finite
semisimple abelian category~$M$.  Under the isomorphism
$\TP\xrightarrow{\;\sim \;}1$, executed via the Morita trivialization
of~$\End(M)$, the regular left $\Phi $-module goes over to the finite
semisimple abelian category~$M$.  Note that the $(1,2)$~part of the
$(0,1,2)$-theory based on~$M$ assigns a semisimple commutative algebra
to~$\Sb$.  

If from the beginning we work with $(1,2,3)$-theories~\eqref{eq:1}, then the
conclusion of Theorem~\ref{thm:57} can fail.  For example, consider a
$(1,2)$-theory with $\beta (S^1_b)=\CC[x]/(x^2)$, a nonsemisimple commutative
algebra.  This theory is not extendable to a (0,1,2)-theory with values in
$\Omega \FC=\SC$, so does not arise as in previous paragraph.
  \end{remark}

  \begin{remark}[]\label{thm:45}
 There is a generalization of RT theories~\eqref{eq:1} with codomain a
2-category of ``super'' complex linear categories.  Some developments in the
theory of these categories---which are either enriched over the
category~$\sVc$ of super vector spaces or are a module category
over~$\sVc$---especially for fusion supercategories, may be found
in~\cite{GK,BE,U}.  Our main theorems generalize to allow supercategories in
the codomain~\cite{FT}.
  \end{remark}

  \subsection{A characterization of fusion categories}\label{subsec:1.3}

En route to proving Theorem~\ref{thm:5}, we prove the following
characterization of fusion categories.

  \begin{mainthm}[]\label{thm:46}
 Let $\Psi \in \TC$ be a tensor category.  Then $\Psi $~is a fusion
category iff

 \begin{enumerate}[label=\textnormal{(\roman*)}]

 \item $\Psi $~is 3-dualizable in~$\TC$, and
 
 \item $\Psi $~as a left $\Psi $-module is 2-dualizable as a 1-morphism
in~$\TC$.

 \end{enumerate} 
  \end{mainthm}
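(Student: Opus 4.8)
The ``only if'' direction is precisely Theorem~\ref{thm:51} of Douglas--Schommer-Pries--Snyder (3-dualizability of fusion categories) together with the fact, also from \cite{DSS}, that for a fusion category $\Psi$ the regular bimodule $\Psi$ is a 2-dualizable 1-morphism in $\FC\subset\TC$; so the real content is the converse. The plan is to assume (i) and (ii) and extract successively: finite semisimplicity of the underlying category, rigidity (internal duals), and finiteness of the set of simple objects, thereby landing in Definition~\ref{thm:49}(ii). The organizing principle is that 2-dualizability of a 1-morphism in a 3-category is a condition on a \emph{2-category} of 2-morphisms, so by passing to loop (endomorphism) categories we can convert the hypotheses on $\Psi\in\TC$ into hypotheses on objects of $\cc$ and then invoke Theorem~\ref{thm:53}.

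First I would unwind hypothesis (i). Since $\Psi$ is 3-dualizable in $\TC$, it has a dual $\Psi\dual$, and the evaluation and coevaluation 1-morphisms have adjoints; iterating, all the structural 1- and 2-morphisms built from $\Psi$ are themselves dualizable/have adjoints. In particular the unit 1-morphism ${}_\Psi\Psi_\Psi$ and its relatives are nice. The crucial reduction: $\Omega\TC=\cc$ by \eqref{eq:41}, and 3-dualizability of $\Psi$ forces $\End_{\TC}(\Psi)\simeq \Psi\boxtimes\Psi\op\text{-mod}$ (the Morita dual description of endomorphisms) to be a 2-dualizable object-structure; more directly, one shows the underlying category of $\Psi$, obtained as $\uHom_\Psi(\Psi,\Psi)$ or as a Hom-category in $\TC$ between $1$ and $\Psi$-twisted objects, is 2-dualizable in $\cc$. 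Then Theorem~\ref{thm:53} gives that this underlying category is finite semisimple abelian. This is the step I expect to be the main obstacle: carefully identifying which category in $\cc$ ``is'' the underlying category of $\Psi$ and checking it is genuinely 2-dualizable — the bookkeeping of adjoints and duals across two levels of the Morita 3-category, and making sure one does not secretly assume rigidity, is the delicate part. (The companion Appendix~\ref{sec:5} on internal duals and Appendix~\ref{sec:8} are presumably designed to supply exactly the lemmas needed here.)

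Next I would use hypothesis (ii) to get rigidity. The regular left module $\Psi\colon 1\to\Psi$ being 2-dualizable as a 1-morphism means in particular it has a left and a right adjoint in the 3-category $\TC$, i.e.\ there is a right $\Psi$-module ${}^*\Psi$ together with unit and counit 2-morphisms satisfying the triangle identities, and moreover those 2-morphisms are themselves dualizable. Adjunctions of module categories of this shape are exactly what encode internal duals: evaluating the counit $\Psi\boxtimes_\Psi {}^*\Psi\to 1$ on objects produces, for each $x\in\Psi$, a dual object $x\dual$ with the required evaluation/coevaluation maps in $\Psi$; the triangle identities for the adjunction become the zig-zag identities for $x\dual$. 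This is the criterion proved in Appendix~\ref{sec:5}, specialized to the tensor category $\Psi$ acting on itself. Hence $\Psi$ is rigid. Finally, finite semisimplicity of the underlying abelian category (from step one) plus the existence of the unit object forces the set of isomorphism classes of simple objects to be finite — a finite semisimple $\CC$-linear abelian category has finitely many simples by definition of ``finite'' in Definition~\ref{thm:49} (via Theorem~\ref{thm:53}'s conclusion) — and one checks the monoidal unit decomposes into finitely many simple summands (not assumed simple, per Remark~\ref{thm:48}(1)). Assembling: $\Psi$ is a finite semisimple rigid tensor category, i.e.\ a fusion category by Definition~\ref{thm:49}(ii). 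Conversely, a fusion category satisfies (i) by Theorem~\ref{thm:51} and (ii) by the module-dualizability statement of \cite{DSS}, completing the equivalence. \qed
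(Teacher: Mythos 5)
Your forward direction and your overall decomposition of the converse (semisimplicity via Theorem~\ref{thm:53}, rigidity via the internal-duals criterion of Appendix~\ref{sec:5}) match the shape of the paper's proof, which runs through Proposition~\ref{thm:20} (= Lemma~\ref{thm:19} $+$ Lemma~\ref{thm:18}). But your first step contains a genuine error: you derive 2-dualizability in $\cc$ of the underlying category of $\Psi$ --- hence finite semisimplicity --- from hypothesis~(i) alone. That implication is false, and the paper's own Remark~\ref{thm:47} is the counterexample: for $A=\CC[x]/(x^2)$ the tensor category of finite dimensional $A$-$A$-bimodules is Morita equivalent to $\Vc$, hence 3-dualizable in $\TC$, yet its underlying category is not semisimple, so by Theorem~\ref{thm:53} it is not 2-dualizable in $\cc$. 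Object-level 3-dualizability is Morita invariant while semisimplicity of the underlying category is not, so no bookkeeping of adjoints will extract the latter from the former; semisimplicity must come from hypothesis~(ii). The paper's mechanism is to feed (i) and (ii) into the cobordism hypothesis with singularities to produce $\eF\:\bftb\to\TC$ with $F(+)=\Psi$ and $\beta(+)=\PL$, to identify $\Phi:=\uEnd^R\bigl(\beta(+)\bigr)\cong\Psi\boxtimes_\Psi\Psi\cong\Psi$ using Proposition~\ref{thm:56}, and then to observe (Lemma~\ref{thm:19}) that $\Phi$ is the point-value of the dimensional reduction of $\eF$ along an interval colored by $\beta$ and $\beta^R$ at its two ends --- a fully extended 2-dimensional theory --- hence 2-dualizable in $\cc$, hence finite semisimple abelian by Theorem~\ref{thm:53}.

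Your rigidity step points at the right tool (Theorem~\ref{thm:B1}) but is too quick: the mere existence of adjoints of $\PL$ satisfying triangle identities does not hand you internal duals object by object. In the categorical setting the counit $\vep=\eta^R$ and the comultiplication $\Delta=\nabla^R$ always exist (see~\eqref{eq:64}); what must actually be \emph{verified} is the Frobenius condition --- perfectness of the pairing $B=\vep\circ\nabla$, Definition~\ref{frob} --- together with the bimodule property of $\Delta$ (Proposition~\ref{bimod}). The paper checks these in Lemma~\ref{thm:18} by computing right adjoints of the relevant framed bordisms and observing that the bordisms realizing $f$ and $f^\vee$ are invertible in $\bft$. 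So the architecture of your argument is salvageable, but both halves require the fully extended field theory supplied by the cobordism hypothesis: it is what converts the two dualizability hypotheses into the concrete 2-dualizability and Frobenius statements you need.
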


\noindent
 The forward direction, proved in~\cite{DSS}, is stated as
Theorem~\ref{thm:51}.  We prove the converse in~\S\ref{subsec:2.2}.

  \begin{remark}[\cite{BJS}]\label{thm:47}
 Let $A=\CC[x]/(x^2)$ be the non-semisimple algebra of dual numbers.  The
tensor category~$\Psi $ of finite dimensional $A$-$A$ bimodules is Morita
equivalent to $\Vc$, so is 3-dualizable, but it is not a fusion category: for
example, it does not have internal duals.  This example illustrates that
`fusion' is not a Morita invariant notion.  The dualizability in
Theorem~\ref{thm:46}(i) is Morita invariant, whereas the regular module in
Theorem~\ref{thm:46}(ii) is not.  For example, under the Morita equivalence
which sends~$\Psi $ to~$\Vc$, the regular left module~$\PL$ is sent to the
linear category of $A$-modules, which is not the regular left module
over~$\Vc$.  We regard Theorem~\ref{thm:5} as a Morita invariant variant of
Theorem~\ref{thm:46}.
  \end{remark}

   \section{Preliminaries}\label{sec:2}

  \subsection{Bordism $n$-categories}\label{subsec:2.1}

As a preliminary, we recall features of bordism multicategories and explain
how they are encoded in the pictures we draw.  In Appendix~\ref{sec:4} we
give a formal and precise account valid in all dimensions; the heuristic
exposition here is focused on the low dimensional cases of interest.
See~\cite{BM,CS,AF} for complete constructions of the bordism multicategory.

  \begin{figure}[ht]
  \centering
  \includegraphics[scale=.65]{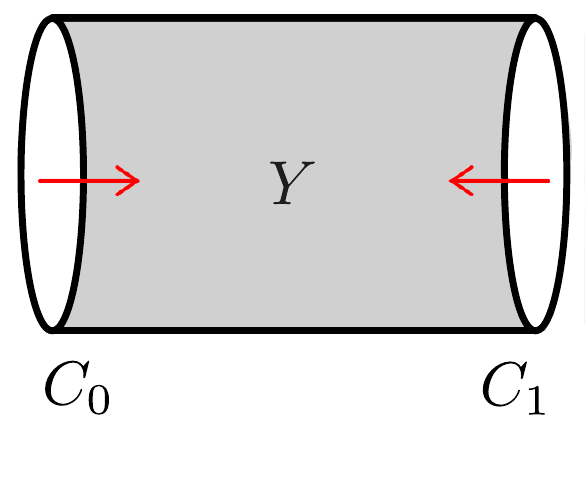}
  \vskip -.5pc
  \caption{A 2-morphism $C_0\amalg C_1\xrightarrow{\;Y\;}\emptyset ^1$ in
$\bt$}\label{fig:1}
  \end{figure}

   \subsubsection{Arrows of time}\label{subsubsec:2.1.1}
 Begin with $\bt$, the 2-category of unoriented bordisms of dimension~$\le
2$.  An endomorphism $\emptyset ^0\xrightarrow{\;C\;}\emptyset ^0$ of the
empty 0-manifold is a closed 1-manifold.  In the bordism 2-category its
tangent bundle is stabilized to the rank~2 vector bundle $\uR\oplus TC\to C$,
where $\uR\to C$ is the trivial bundle of rank~1 with its canonical
orientation.  This orientation is called an ``arrow of time''.  In a
1-morphism $C_0\amalg C_1\xrightarrow{\;Y\;}\emptyset ^1$, such as the one
depicted in Figure~\ref{fig:1}, the arrows of time distinguish incoming and
outgoing boundary components.  An object~$P$ in $\bt$ is a finite set of
points with inflated tangent bundle $\RR\oplus \RR\to P$.  Figure~\ref{fig:2}
is a 1-morphism $P_0\amalg P_1\xrightarrow{\;C\;}\emptyset ^0$.  Note that
the manifolds~$P_0,P_1$ have two ordered arrows of time; the ordering is
depicted in our figures by the number of arrowheads.  (In our conventions the
indexing is by codimension, so the single-headed arrow has index~$-1$ and the
double-headed arrow has index~ $-2$.)  The single-headed arrow of time is
constrained to be compatible with the single arrow of time of~$C$; that is,
corresponding trivial summands augmenting the tangent bundle are identified
at~$\partial C$.  The single-headed arrow of time in Figure~\ref{fig:2}
carries no information---it evokes the standard orientation of the trivial
real line bundle over~$C$---whereas the double-headed arrows of time in
Figure~\ref{fig:2} distinguish incoming and outgoing boundary components.  By
contrast, the arrows of time in Figure~\ref{fig:5} do carry information; they
depict the standard basis of~$\RR\oplus \RR$, so give meaning to the depicted
framings.

  \begin{figure}[ht]
  \centering
  \includegraphics[scale=.65]{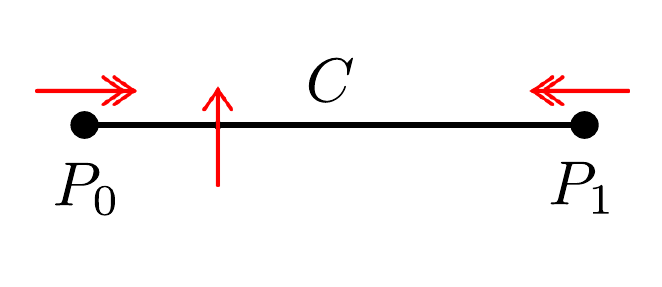}
  \vskip -.5pc
  \caption{A 1-morphism $P_0\amalg P_1\xrightarrow{\;C\;}\emptyset ^0$ in
  $\bt$}\label{fig:2}
  \end{figure}

   \subsubsection{Constancy data}\label{subsubsec:2.1.5}
 A 2-morphism in $\bt$ is a compact 2-manifold~$Y$ with corners and arrows of
time together with \emph{constancy data}~\cite[Definition~5.1]{CS}.  Namely,
if $Y=\mY_0\amalg \mY_{-1}\amalg \mY_{-2}$ is the partition into boundaries
and corners, so $\dim \mY_{-k}=2-k$, then there is a free involution
specified on~$\mY_{-2}$ with quotient~$\mYb$ and an embedding
  \begin{equation}\label{eq:42}
      [0,1]\times \mYb\hookrightarrow \overline{Y_{-1}} 
  \end{equation}
such that the arrows of time are constant along the image of $[0,1]\times
\{p\}$ for all $p\in \mYb$.  Thus Figure~\ref{fig:3}(a) is legal: the dashed
vertical edges comprise the image of the constancy embedding~\eqref{eq:42}.
The constancy gives rise to an interpretation of Figure~\ref{fig:3}(a) as a
2-morphism
  \begin{equation}\label{eq:10}
     \xymatrix@C+22pt{P_0\amalg P_1\rtwocell<5>^{C_0
     }_{C_1\amalg C_2}{^{\qquad \;\;Y}}& \emptyset ^0} ,
  \end{equation}
essentially by collapsing the dashed edges.  On the other hand,
Figure~\ref{fig:3}(b) is not allowed because there is no embedding
$[0,1]\times \{P_i\}\hookrightarrow Y_{-1}$ for which the specified arrows of
time are constant.

  \begin{figure}[ht]
  \centering
  \includegraphics[scale=.65]{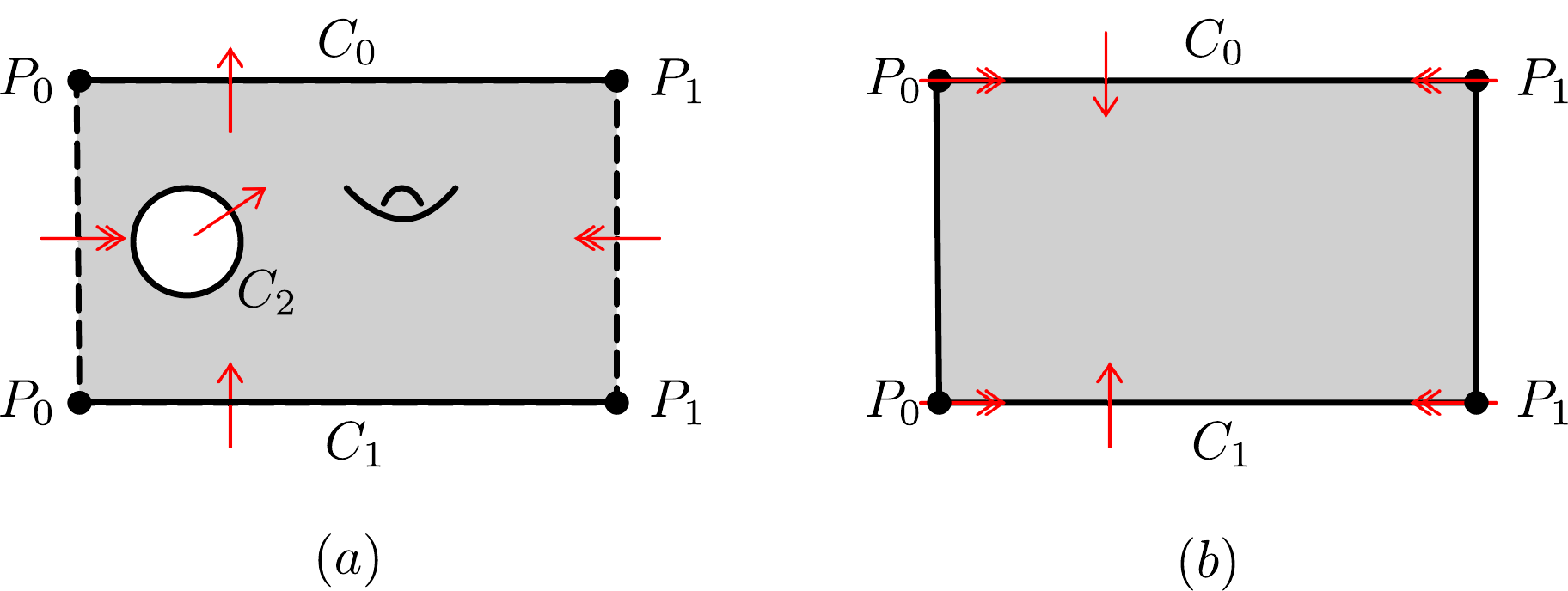}
  \vskip -.5pc
  \caption{(a) a legal 2-morphism and (b) not a 2-morphism}\label{fig:3}
  \end{figure}

A formal specification of constancy data is~\eqref{eq:48}, a consequence of
Definition~\ref{thm:37}. 

   \subsubsection{Tangential structures}\label{subsubsec:2.1.2}
 The main point to emphasize is that in a bordism $n$-category of manifolds
of dimension~$\le n$, the tangential structure is on the \emph{stabilized}
rank~$n$ tangent bundle.  In particular, Figure~\ref{fig:4} is a valid
1-morphism in~$\btf$, the bordism 2-category of \emph{2-framed} manifolds.
In the figure the arrows of time are notated as earlier.  The
2-frame~$f_1,f_2$ is depicted as a long line segment~($f_1$) followed by a
short line segment~$(f_2)$.  There is no relationship imposed between the
arrows of time and the tangential structure (2-framing).  

  \begin{figure}[ht]
  \centering
  \includegraphics[scale=.65]{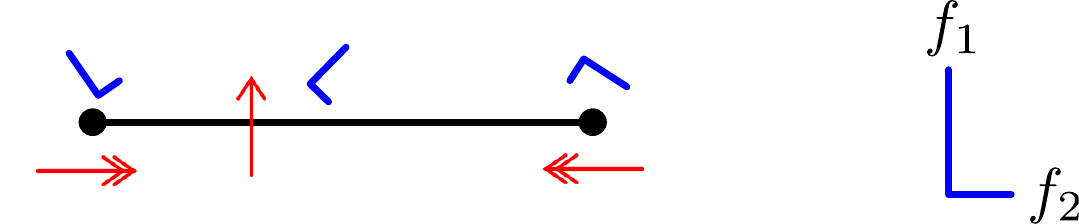}
  \vskip -.5pc
  \caption{A 2-framed 1-morphism}\label{fig:4}
  \end{figure}

  \begin{remark}[]\label{thm:36}
 The relative positions of the framing and the arrows of time does have
significance, for example in Figure~\ref{fig:5} below.
  \end{remark}

  \begin{figure}[ht]
  \centering
  \includegraphics[scale=.65]{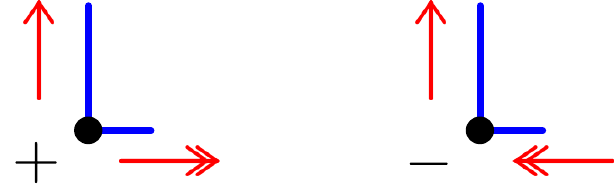}
  \vskip -.5pc
  \caption{The standard points}\label{fig:5}
  \end{figure}

   \subsubsection{Two conventions}\label{subsubsec:2.1.3}
 We depict objects and morphisms in~$\btf$ as manifolds with corners embedded
in the Euclidean plane---the plane of the paper/screen---together with arrows
of time, constancy data, and orthonormal 2-framings.  The first convention is
that we identify two such which are related by either a translation or a
translation composed with a reflection about either a horizontal or a
vertical line.  (If any such identification exists it is unique, since it
preserves the 2-framings.)  Take the standard points~$+,-$ to be those
depicted in Figure~\ref{fig:5}.  Although our pictures lie in $\btf$, our
arguments in the proof are for $\bft$.  (Only in the proof of
Lemma~\ref{thm:18} do we use truly 3-dimensional pictures.)  The second
convention, then, is that we embed $\btf$ in~$\bft$ by adding a trivial line
bundle\footnote{For remarks on conventions, see Remark~\ref{thm:77}.  We
index by codimension, so for $\bft$ the indices are~$-1,-2.-3$.  We might
have used `$f_{-1},f_{-2},f_{-3}$' for the framings, but that would have been
a step too far.} to the (inflated) rank~2 tangent bundle of each $k$-morphism
in~ $\btf$.  Furthermore, we inflate orthonormal 2-framings to orthonormal
3-framings by prepending the standard basis~$f_0$ of the trivial line bundle
to the 2-framing~$f_1,f_2$.  In the pictures we regard~$f_0$ and the aligned
arrow of time as pointing \emph{into} the paper/screen.  (One should then
slide the indices $f_0,f_1,f_2 \mathbin{\squig\squig\squig\rsquigend}
f_1,f_2,f_3$ to normal positions, but we will not do so in the text or
figures.)

   \subsubsection{Duals and adjoints}\label{subsubsec:2.1.6}
 A topological bordism $n$-category or $(\infty ,n)$-category has all duals
and adjoints.  Duals are formed by reversing an arrow of time.  This can be
done at any depth, which reflects the $O(1)^{\times n}$-action discussed in
\cite[Remark~4.4.10]{L1} and \cite[\S4]{BS-P}.  For example, the two standard
points in~$\btf$, depicted in Figure~\ref{fig:5}, are dual by reversing the
double-headed arrow of time.  Right and left adjoints of morphisms are
constructed by a more complicated prescription which we specify
in~\S\ref{subsubsec:4.2.5}.

   \subsubsection{Coloring with a boundary theory}\label{subsubsec:2.1.4}
 We now briefly describe a bordism 2-category $\btbf$ into which $\btf$
embeds.  (Use the second convention of~\S\ref{subsubsec:2.1.3} to extend
objects, 1-morphisms, and 2-morphisms to the 3-category $\bftb$.)  A sketch of
this construction appears in~\cite[Example 4.3.22]{L1}; details for
$\Bord^{\textnormal{fr}}_{n,\partial }$, $n\in \ZZ^{\ge 1}$, are provided
in~\S\ref{subsec:4.3}.

  \begin{figure}[ht]
  \centering
  \includegraphics[scale=.65]{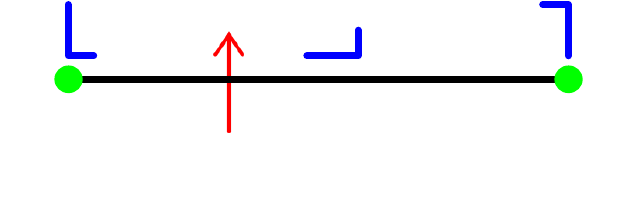}
  \vskip -.5pc
  \caption{A 1-morphism in $\btbf$}\label{fig:6}
  \end{figure}

The 0-morphisms in $\btbf$ are the same as 0-morphisms in $\btf$: a finite
set of 2-framed points equipped with arrows of time.  There are new
1-morphisms, such as the one depicted in Figure~\ref{fig:6}.  The two
boundary points are colored and the extra arrow of time at colored boundary
points is replaced by the conditions that the frame vector~$f_1$ be tangent
to the colored boundary and that $f_2$~ be an inward normal.\footnote{This is
appropriate for a boundary theory $1\to \tF$; for a boundary theory $\tF\to
1$ we require that $f_2$~be an outward normal.}  Effectively, we have a
1-framing at those points.  There are, of course, new 2-morphisms, such as
the one depicted in Figure~\ref{fig:7}.  In terms of the partition
$Y=\mY_0\amalg \mY_{-1}\amalg \mY_{-2}$, there is a submanifold with boundary
$\mB \mstrut _{-1} \amalg \mB \mstrut _{-2}\subset \mY\mstrut _{-1}\amalg
\mY\mstrut _{-2}$ that is colored with the boundary condition; in
Figure~\ref{fig:7} we have $\mB \mstrut _{-2}=\mY\mstrut _{-2}$, and
$\mY\mstrut _{-1}\setminus \mB \mstrut _{-1}$ consists of three open line
segments.  There are no arrows of time on~$\mB \mstrut _{-1}$.  The constancy
data~\eqref{eq:42}, vacuous for Figure~\ref{fig:7}, is as
in~\S\ref{subsubsec:2.1.5} with $\mY\mstrut _{-2}\setminus \mB \mstrut _{-2}$
replacing~$\mY\mstrut _{-2}$.  (Figure~\ref{fig:14} below illustrates the
constancy data.)

  \begin{figure}[ht]
  \centering
  \includegraphics[scale=.65]{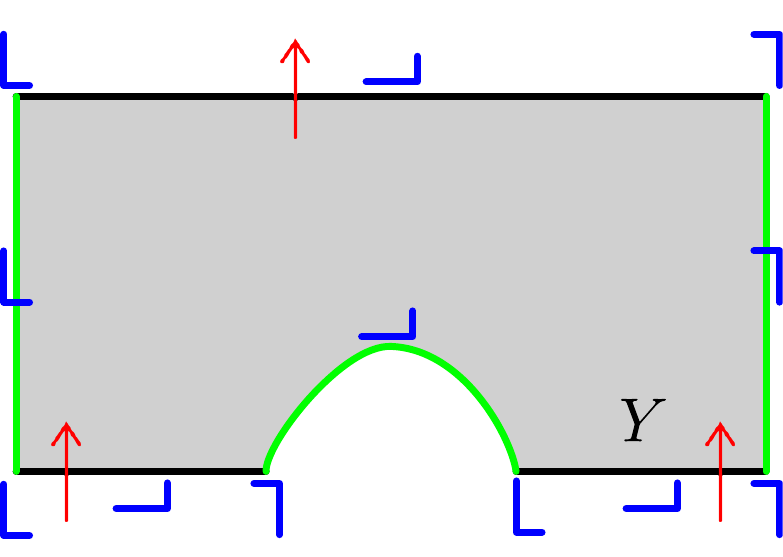}
  \vskip -.5pc
  \caption{A 2-morphism in $\btbf$}\label{fig:7}
  \end{figure}

  \begin{remark}[]\label{thm:9}
  A boundary theory~\eqref{eq:7} for an $n$-dimensional
theory is essentially an $(n-1)$-dimensional theory.  This explains why at a
boundary component ``colored'' with a boundary theory there is one fewer
arrow of time and a constraint on the tangential structure.
  \end{remark}

  \begin{remark}[]\label{thm:10}
 There does not exist a 2-morphism in $\btf$ from which Figure~\ref{fig:7} is
obtained by coloring a subset of the boundary (with inward arrow of time). 
  \end{remark}

  \begin{remark}[]\label{thm:33}
 Intuitively, the colored boundary components include a time direction---they
are timelike---which motivates our convention about which frame vector is
constrained; see Remark~\ref{thm:77}. 
  \end{remark}

   \subsubsection{Natural transformations}\label{subsubsec:2.1.7}
 
A boundary theory~$\beta $ of a topological field theory~$F$ is a natural
transformation of functors out of a bordism category; see~\eqref{eq:7}.  The
pair~$\eF=(F,\beta )$ is best encoded as a functor out of~$\Bnp$, as
in~\S\ref{subsubsec:2.1.4}.  In this section we introduce new ``pictures''
which encode the idea of a natural transformation, and then too an algorithm
for converting them to morphisms in~$\Bnp$.  We do not use these new pictures
in the sequel, which justifies the sparse provisional account here of a
single example. 

  \begin{remark}[]\label{thm:79}
 We leave open the question of whether there is a single bordism category
which includes~$\Bnp$ as well as the new pictures. 
  \end{remark}

  \begin{figure}[ht]
  \centering
  \includegraphics[scale=.6]{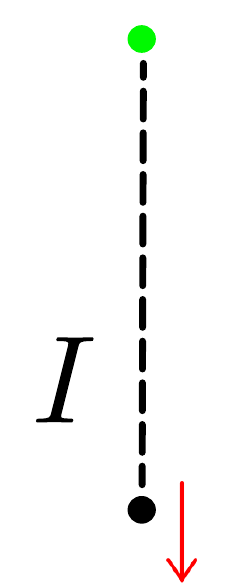}
  \vskip -.5pc
  \caption{The new picture~$I$}\label{fig:35}
  \end{figure}

Introduce the manifold~$I=[0,1]$, embellished as in Figure~\ref{fig:35}.  We
view it as 1-framed, with framing vector aligned with the arrow of time.  Let
$X$~be a $k$-morphism in $\btf$, $k\in \{0,1,2\}$, and consider the Cartesian
product~$I\times X$.  For example, in Figure~\ref{fig:36}(a) we have the
1-morphism $X=\id_{+}$, the identity map on the $+$~point.  Using our
numbering convention $0, 1, 2$ in $\bft$---which replaces the more uniform
convention $-1, -2, -3$ used in Appendix~\ref{sec:4}---we choose to number
the directions in $\btf$ as~$0, 2$ and the direction in~$I$ as~$1$.  Thus in
Figure~\ref{fig:36}(a) the frame vector~$f_0$ and 0-arrow of time point into
the paper/screen.  The frame vector~$f_2$ and 2-arrow of time point to the
right.  The Cartesian product~$I\times X$ is Figure~\ref{fig:35}(b), with
arrows of time and constant 3-framing as indicated.

  \begin{figure}[ht]
  \centering
  \includegraphics[scale=.7]{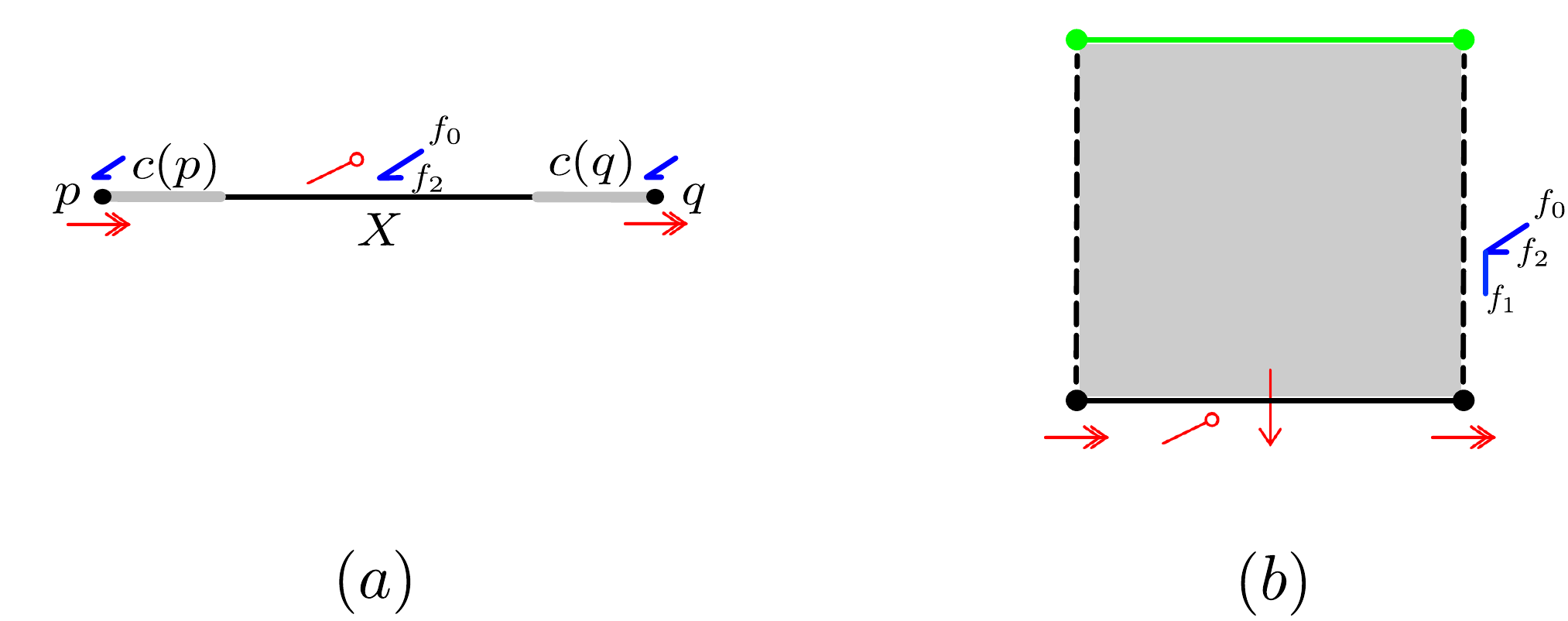}
  \vskip -.5pc
  \caption{(a)~$\id_+$\quad (b)~$I\times X$}\label{fig:36}
  \end{figure}

We now proceed in three steps to convert Figure~\ref{fig:35}(b) to a
2-morphism in~$\bftb$.

 \begin{enumerate}

 \item Rotate the frame through angle~$\pi /2$ in the $f_1$-$f_2$ plane: send
$f_2\to f_1$ and $f_1\to -f_2$.  Use our standard pictorial representation of
the frame vectors~$f_1,f_2$ and omit the pictorial representation of~$f_0$; it
points into the paper/screen everywhere in subsequent pictures.

  \begin{figure}[ht]
  \centering
  \includegraphics[scale=.7]{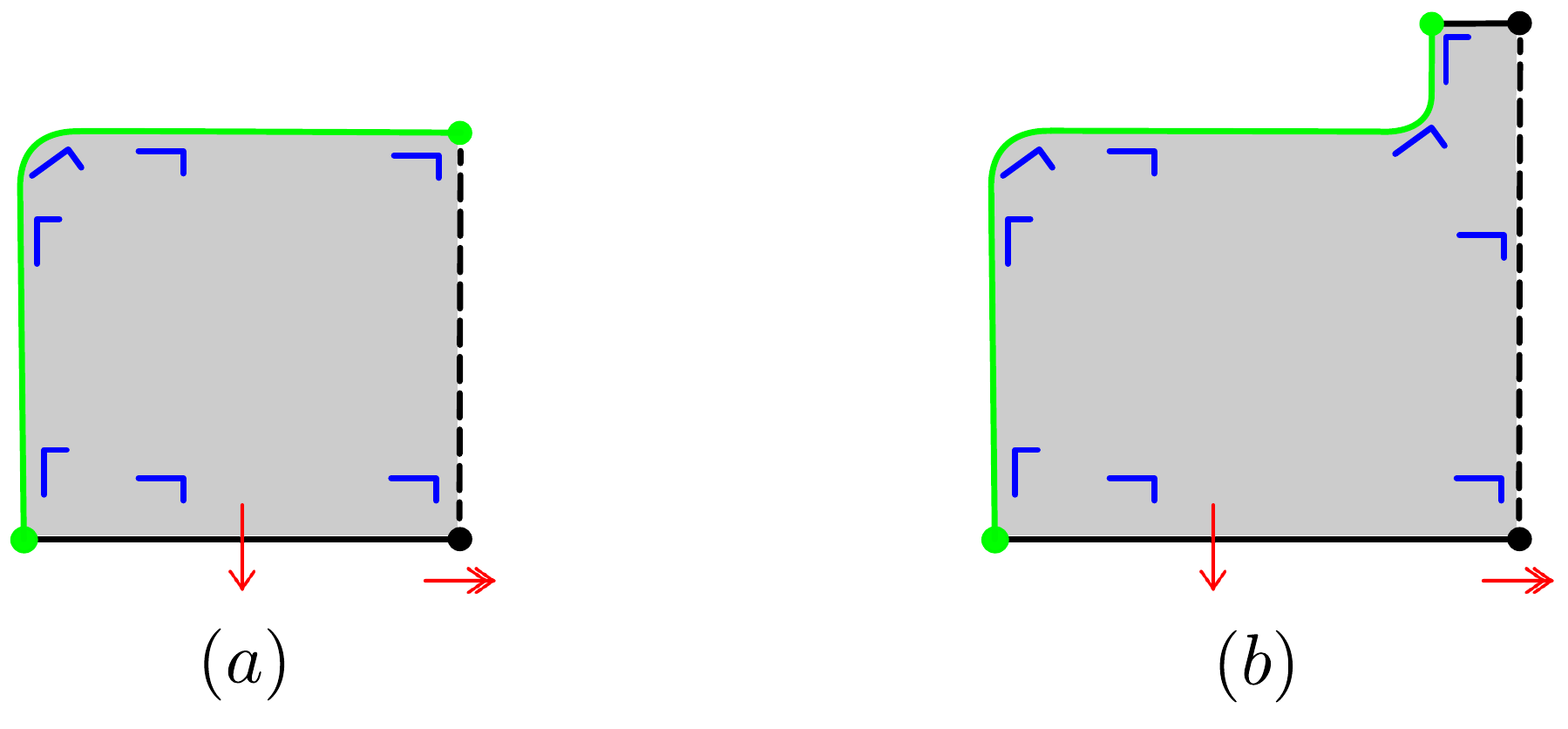}
  \vskip -.5pc
  \caption{Two maneuvers:  Figure~\ref{fig:36}(b) $\mathbin{\squig\squig\squig\rsquigend}$ a 2-morphism
  in~$\bftb$}\label{fig:37} 
  \end{figure}

 \item Now convert to a 2-morphism in $\bftb$.  Let $c(\bX)=c(p)\amalg c(q)$
be a collar neighborhood of the boundary, as in Figure~\ref{fig:36}(a).  For
the incoming boundary point~$p$, ``pull'' the colored boundary in
Figure~\ref{fig:36}(b) ``down'' through $c(p)\times I$ and rotate the framing
accordingly, as depicted in Figure~\ref{fig:37}(a).  Let $I'=[1,2]$, viewed
as glued ``above'' $I=[0,1]$.  For the outgoing boundary point~$q$, ``pull''
the colored boundary in Figure~\ref{fig:36}(b) ``up'' through $c(q)\times I'$
and rotate the framing accordingly, as depicted in Figure~\ref{fig:37}(b).
There is a new edge which connects to the dotted edge.  The arrows of time on
the new edge and on the new vertex are fixed by the constancy condition along
the dotted edge.  The result can be redrawn in a more standard form; see
Figure~\ref{fig:38}.

  \begin{figure}[ht]
  \centering
  \includegraphics[scale=.75]{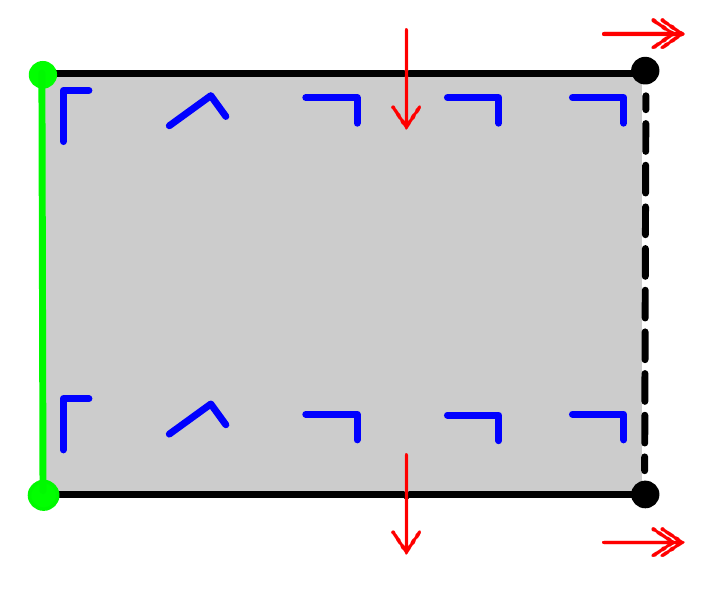}
  \vskip -.5pc
  \caption{The 2-morphism in Figure~\ref{fig:37}(b) redrawn}\label{fig:38}
  \end{figure}

 \item Finally, in a tubular neighborhood of the outgoing dotted boundary,
rotate the frame through angle~$\pi /2$ in the $f_1$-$f_2$~plane by the
inverse of the rotation in step~(1): send $f_1\to f_2$ and $f_2\to -f_1$.
The result is depicted in Figure~\ref{fig:39}.
 \end{enumerate}

  \begin{figure}[ht]
  \centering
  \includegraphics[scale=.8]{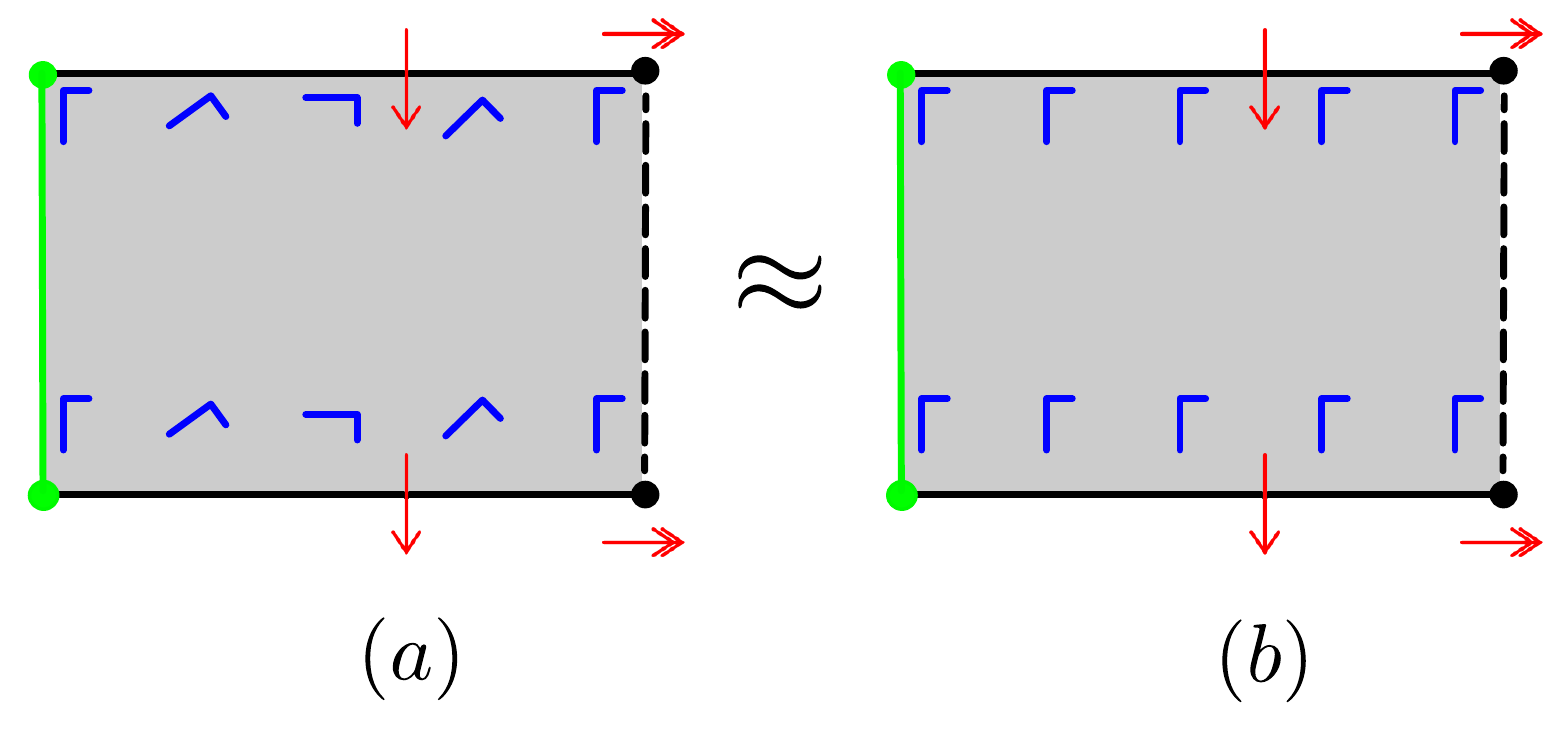}
  \vskip -.5pc
  \caption{(a) The third maneuver (b) The 2-morphism~$\id_{b_+}$ in $\bftb$}\label{fig:39}
  \end{figure}

The 2-morphism represented by Figure~\ref{fig:36}(b) is derived from the
diagram 
  \begin{equation}\label{eq:98}
     \xymatrix@R+1em@C+1em{
     \emptyset ^0\ar[r]^-{\id_{\emptyset
     ^0}}\ar[d]_-{b_+}&\emptyset ^0\ar[d]^-{b_+}\ar@2{->}[dl]\\ 
     +\ar[r]_-{\id_+}&+ } 
  \end{equation}
which is a map $b_+\circ \id_{\emptyset ^0}\Longrightarrow \id_+\circ b_+$,
namely the identity map $\id_{b_+}\:b_+\Longrightarrow b_+$.  This is
precisely the 2-morphism depicted in Figure~\ref{fig:38}. 

  \begin{remark}[]\label{thm:80}
 It is instructive to carry this out for the coevaluation~$c$ and the
evaluation~$e$.  
  \end{remark}

  \subsection{A higher categorical preliminary}\label{subsec:2.3}

   \subsubsection{Internal homs}\label{subsubsec:2.3.4}
 Let $\cM$ be a (weak) 2-category.  A 1-morphism $x\xrightarrow{\;f\;}y$
in~$\cM$ defines a functor $\cM(z,x)\xrightarrow{\;f\circ -\;}\cM(z,y)$ for
any $z\in \cM$.  We call its right adjoint, if it exists, the \emph{right
internal hom} functor: 
  \begin{equation}\label{eq:24}
     \begin{gathered} \xymatrix@C+5ex{\cM(z,x)\ar@<.5ex>[r]^{f\circ -}&\cM(z,y)
     \ar@<.5ex>[l]^{\uH Rf-}}.  \end{gathered} 
  \end{equation}
The \emph{left internal hom} is defined as a right adjoint for right
composition for any~$w\in \cM$: 
  \begin{equation}\label{eq:25}
     \begin{gathered} \xymatrix@C+5ex{\cM(y,w)\ar@<.5ex>[r]^{-\circ f
     }&\cM(x,w) \ar@<.5ex>[l]^{\uH Lf-}}.  \end{gathered} 
  \end{equation}
See \cite{W} and~\cite[\S16]{MaSi} for discussions.\footnote{We thank Emily
Riehl for correspondence and for pointing us to~\cite{W}.  The nonstandard
terminology and notation are our responsibility.  The name `right internal
hom' is apt if~$z=x$, and `left internal hom' is apt if~$w=y$.} 
 
If $\cM$~is a symmetric monoidal 2-category, and the 1-morphism~$f$ has right
and left adjoints~$f^R,f^L$ in~$\cM$, then there are natural isomorphisms
  \begin{equation}\label{eq:26}
     \begin{aligned} \uH Rfg &\cong  f^R\circ g\;\in \cM(z,x),\qquad &&g\:z\to y,
      \\ \uH Lfh &\cong  h\circ f^L\;\in \cM(y,w),\qquad &&h\:x\to w. \\
      \end{aligned} 
  \end{equation}
If $h=g=f$, hence $z=x$ and $w=y$, then we use the notations 
  \begin{equation}\label{eq:27}
     \begin{aligned} \uE Rf :=  \uH Rff\cong f^R\circ f\;\in \cM(x,x), \\ \uE
     Lf := \uH Lff\cong 
     f\circ f^L\;\in \cM(y,y). \\ \end{aligned} 
  \end{equation}

The 1-morphism $\uE Rf$ is an algebra object in the 1-category~$\cM(x,x)$,
and $\uE Lf$~is an algebra object in~$\cM(y,y)$.  The unit of~$\uE Rf$ is the
unit $1\to f^R\circ f$ of the adjunction, and the multiplication uses the
counit $f\circ f^R\to 1$ of the adjunction: 
  \begin{equation}\label{eq:28}
     \uE Rf\circ \uE Rf = f^R\circ (f\circ f^R)\circ f\longrightarrow
     f^R\circ f = \uE Rf. 
  \end{equation}
The formulas for~$\uE Lf$ are similar. 

  \begin{remark}[]\label{thm:22}
 This discussion applies to $n$-categories, $n\ge3$, by taking 2-categorical
slices.  
  \end{remark}

  \begin{remark}[]\label{thm:26}
 A symmetric monoidal functor preserves internal homs, internal
endomorphisms, and the composition laws. 
  \end{remark}

   \subsubsection{Internal homs in~$\FC$ and~$\TC$}\label{subsubsec:2.3.1}
 Recall from Definition~\ref{thm:30} that $\FC$ is a symmetric monoidal
3-category whose objects are fusion categories.  Our convention, different
than some references, is that a $(B,A)$-module category~$M$ is a 1-morphism
$M\:A\to B$ in~$\FC$.  This convention renders tensor products and
compositions in the same order.  The following results are generalized in
\cite[\S5.1]{BJS}. 

  \begin{proposition}[\protect{\cite[\S3.2.1]{DSS}}]\label{thm:24}
 Let $A,B\in \FC$ and $M\:A\to B$ a finite semisimple $(B,A)$-bimodule
category.  Then $M$~has right and left adjoints, and we can take them to be
  \begin{equation}\label{eq:29}
     \begin{aligned} M^R &= \Hom_B(M,B), \\ M^L &= \Hom_A(M,A). \\
      \end{aligned} 
  \end{equation}
  \end{proposition}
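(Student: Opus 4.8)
The plan is to produce explicit units and counits for the claimed adjunctions and check the triangle identities, working entirely inside the 3-category $\FC$ where the relevant composition is the relative Deligne--Kelly tensor product of bimodule categories. First I would recall that for a fusion category $A$ the regular bimodule $A\:A\to A$ is the identity 1-morphism, and that a finite semisimple $(B,A)$-bimodule category $M$ is in particular a finite semisimple abelian category, so the internal-hom categories $\Hom_B(M,B)$ and $\Hom_A(M,A)$—categories of $B$-linear (resp. $A$-linear) module functors—are again finite semisimple abelian and carry the evident bimodule structures: $\Hom_B(M,B)$ is an $(A,B)$-bimodule via precomposition by the $A$-action on $M$ and postcomposition by the $B$-action on $B$, hence a 1-morphism $M^R\:B\to A$ in $\FC$, and symmetrically for $M^L=\Hom_A(M,A)\:B\to A$.

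Next I would construct the structure maps for the right adjunction $(M,M^R)$. The counit $\ev\:M\circ_B M^R\to B$ (a map of $(B,B)$-bimodule categories, i.e. a 2-morphism $\id_B\Rightarrow \id_B$ composed appropriately) is evaluation, $(m,\varphi)\mapsto \varphi(m)$, which descends from $M\boxtimes M^R$ to the relative tensor product $M\boxtimes_B M^R$ by $B$-bilinearity. The unit $\mathrm{coev}\:A\to M^R\circ_A M$ sends the monoidal unit to the image under the canonical map $A\to \Hom_A(M,M)\simeq M^R\boxtimes_B M$ induced by the $A$-action on $M$; here one uses that for finite semisimple module categories the natural functor $M^R\boxtimes_B M\to \Hom_A(M,M)$ is an equivalence, which is where semisimplicity and finiteness of $M$ are essential (this is the content of \cite[\S3.2.1]{DSS} and is generalized in \cite[\S5.1]{BJS}). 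The two triangle identities then reduce to the statement that evaluating the coevaluation and re-evaluating recovers the identity—a diagram chase with module functors that is formally identical to the classical dual-basis computation for a finitely generated projective module over a ring. The left adjunction $(M^L,M)$ is handled by the mirror-image argument, interchanging the roles of the $A$- and $B$-actions.

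The main obstacle is not the triangle identities themselves but verifying that the relative tensor products appearing in the units and counits actually compute correctly—that is, that $M^R\boxtimes_B M\simeq \uHom_A(M,M)$ and $M\boxtimes_B M^R\simeq \uHom_B(M,M)$ as bimodule categories, with the evaluation and coevaluation morphisms being the expected ones. In the finite semisimple setting this follows because both sides are semisimple and one can check the equivalence on simple objects by a dimension count (or invoke that $\FC$ has duals, Theorem~\ref{thm:51}, so adjoints exist abstractly and one only needs to identify them with the stated formulas). I would therefore organize the proof as: (1) set up the bimodule structures on $M^R,M^L$; (2) define $\ev$ and $\mathrm{coev}$ and check they are 2-morphisms in $\FC$; (3) establish the key equivalence $M^R\boxtimes_B M\simeq \uHom_A(M,M)$ via semisimplicity; (4) verify the two triangle identities; (5) repeat for the left adjoint. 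Throughout, the only input beyond formal adjunction calculus is the compatibility of the relative Deligne--Kelly tensor product with internal-hom categories for finite semisimple module categories, which I would cite from \cite{DSS} and \cite{BJS} rather than reprove.
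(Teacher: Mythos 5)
The paper does not prove this proposition; it is imported from \cite[\S3.2.1]{DSS}, and the only adjunction data the paper itself writes down appear in the proof of Corollary~\ref{thm:25}: the unit is the $(A,A)$-bimodule map $A\to \Hom_B(M,B)\boxtimes_B M\cong \End_B(M)$ sending $a$ to right multiplication by~$a$, and the counit $M\boxtimes_A\Hom_B(M,B)\to B$ is evaluation. Your plan---exhibit exactly these two maps, verify the triangle identities by a dual-basis computation, and isolate the equivalence $\Hom_B(M,B)\boxtimes_B N\simeq \Hom_B(M,N)$ for finite semisimple module categories as the one nonformal input---is the standard argument, so the strategy is the intended one.

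Several of your formulas, however, carry the wrong index, and one claimed equivalence is false. (i)~The composite $M\circ M^R$ is $M\boxtimes_A M^R$, balanced over~$A$, not $M\boxtimes_B M^R$; correspondingly the evaluation functor descends because $\varphi(m\cdot a)=(a\cdot\varphi)(m)$, i.e.\ by $A$-balancedness rather than ``$B$-bilinearity''. (ii)~The unit lands in $M^R\boxtimes_B M\simeq \End_B(M)=\Hom_B(M,M)$, the category of \emph{left $B$-module} endofunctors of~$M$ (right multiplication by $a\in A$ commutes with the $B$-action), not in $\Hom_A(M,M)$. (iii)~Your ``main obstacle'' paragraph asserts $M\boxtimes_B M^R\simeq \uHom_B(M,M)$ as a second key equivalence. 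Even after correcting the subscript, $M\boxtimes_A M^R$ is not an endofunctor category of~$M$: for $A=\Vc$ and $M=B$ the regular module one gets $B\boxtimes B$, which is not equivalent to $\End_B(B)\simeq B$ once $B$ has more than one simple object. The two correct identities of this shape are those of~\eqref{eq:31}, namely $M^R\boxtimes_B M\simeq \End_B(M)$ and $M\boxtimes_A M^L\simeq \End_A(M)$; the counit composite $M\boxtimes_A M^R$ need not be identified with anything---the triangle identities use only the evaluation functor out of it together with the single equivalence in~(ii). With these repairs your sketch is correct.
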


  \begin{corollary}[]\label{thm:25}
 If also $C,D\in \FC$ and $N\:C\to B$, $P\:A\to D$ are finite semisimple
bimodule categories, then
  \begin{equation}\label{eq:30}
     \begin{aligned} \uH RMN &= \Hom_B(M,N), \\ \uH LMP &= \Hom_A(M,P). \\
      \end{aligned} 
  \end{equation}
In particular, 
  \begin{equation}\label{eq:31}
     \begin{aligned} \uE RM &= \End_B(M), \\ \uE LM &= \End_A(M). \\
      \end{aligned} 
  \end{equation}
Furthermore, the algebra structure on~$\uE RM$ \textnormal{(}and $\uE
LM$\textnormal{)} is composition of module functor endomorphisms.
  \end{corollary}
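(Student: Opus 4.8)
The plan is to deduce everything from the general formula~\eqref{eq:26} once the explicit adjoints of Proposition~\ref{thm:24} are inserted. Since $M$ has right and left adjoints by Proposition~\ref{thm:24}, I apply~\eqref{eq:26} inside the $3$-category $\FC$ (legitimate by Remark~\ref{thm:22}) to the $1$-morphism $f=M\:A\to B$, with $g=N\:C\to B$ and $h=P\:A\to D$, to get natural equivalences
\[
\uH RMN\;\cong\;M^R\circ N\in\FC(C,A),\qquad
\uH LMP\;\cong\;P\circ M^L\in\FC(B,D),
\]
where $\circ$ denotes composition of $1$-morphisms in $\FC$, i.e.\ the relative Deligne--Kelly tensor product. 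So the remaining task is to recognize these two composites as the asserted categories of module functors.

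By~\eqref{eq:29} we may take $M^R=\Hom_B(M,B)$ and $M^L=\Hom_A(M,A)$, turning the composites into $\Hom_B(M,B)\boxtimes_B N$ and $P\boxtimes_A\Hom_A(M,A)$. The key point is then the natural equivalence of $(A,C)$-bimodule categories
\[
\Hom_B(M,B)\boxtimes_B N\;\simeq\;\Hom_B(M,N),
\]
together with its mirror image $P\boxtimes_A\Hom_A(M,A)\simeq\Hom_A(M,P)$. I would produce the first from the evident evaluation functor $\phi\boxtimes n\mapsto\bigl(m\mapsto\phi(m)\cdot n\bigr)$, using $B\boxtimes_B N\simeq N$, and argue that it is an equivalence because the finite semisimple $B$-module category $M$ is dualizable with dual $\Hom_B(M,B)$ --- a standard fact about module categories over fusion categories, closely tied to Proposition~\ref{thm:24}; see also \cite[\S5.1]{BJS}. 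Specializing $N=M$ and $P=M$ then gives $\uE RM\simeq\End_B(M)$ and $\uE LM\simeq\End_A(M)$, which is~\eqref{eq:31}.

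It remains to match algebra structures, for which I would chase~\eqref{eq:27}--\eqref{eq:28} through the identifications just made. The unit $1_A\to M^R\circ M\simeq\End_B(M)$ of the adjunction $M\dashv M^R$ picks out the identity $B$-module endofunctor of $M$; and in~\eqref{eq:28} the arrow $M\circ M^R\to 1_B$ is the counit of that adjunction, which in the model of Proposition~\ref{thm:24} is the evaluation pairing of $B$-module functors. Tracing this through the equivalence $\Hom_B(M,B)\boxtimes_B M\simeq\End_B(M)$ turns the induced multiplication into composition of $B$-module functor endomorphisms, so $\uE RM$ is the algebra $\bigl(\End_B(M),\circ,\id_M\bigr)$; the case of $\uE LM$ is identical with $A$ in place of $B$.

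I expect the main obstacle to be the equivalence $\Hom_B(M,B)\boxtimes_B N\simeq\Hom_B(M,N)$ and its compatibility with the algebra structure: this is precisely where finite semisimplicity enters (through the dualizability of $M$ as a $B$-module category), and one must also confirm that the abstractly defined adjunction counit of Proposition~\ref{thm:24} really is the expected evaluation map, since that is what pins down the multiplication on $\uE RM$ and $\uE LM$.
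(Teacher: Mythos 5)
Your proposal is correct and follows essentially the same route as the paper: combine \eqref{eq:26} with the explicit adjoints of Proposition~\ref{thm:24}, then identify the unit with $\id_M$ and the counit with evaluation so that \eqref{eq:28} becomes composition of module functor endomorphisms. The equivalence $\Hom_B(M,B)\boxtimes_B N\simeq\Hom_B(M,N)$ that you flag as the main obstacle and sketch via dualizability is exactly what the paper outsources to \cite[Proposition~2.4.10]{DSS}.
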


  \begin{proof}
 All but the final assertion follow from Proposition~\ref{thm:24} and
\cite[Proposition 2.4.10]{DSS}.  For the algebra structure on~$\uE RM$, the
unit $1\to M^R\circ M$ is the $(A,A)$-bimodule map
  \begin{equation}\label{eq:32}
     A\longrightarrow \Hom_B(M,B)\boxtimes _BM\cong \End_B(M) 
  \end{equation}
which maps $a\in A$ to right multiplication by~$a$; in particular $1\in
A$~maps to the identity endomorphism of~$M$.  Since the counit $M\circ M^R\to
1$ is the evaluation map, the multiplication~\eqref{eq:28} on~$\uE RM$ 
  \begin{equation}\label{eq:33}
  \begin{split}
     \End_B(M)\boxtimes_A \End_B(M) \cong \Hom_B(M,B)\boxtimes _B\bigl(M\boxtimes
     _A\Hom_B(M,B) \bigr)\boxtimes _BM\qquad \qquad \\\longrightarrow
     \Hom_B(M,B)\boxtimes _BB\boxtimes _BM\cong \End_B(M) 
  \end{split}
  \end{equation}
is the usual composition of endomorphisms of~$M$. 
  \end{proof}

We prove a partial converse to Proposition~\ref{thm:24} in~$\TC$, which for
convenience we state for \emph{right} adjoints only. 

  \begin{proposition}[]\label{thm:56}
 Suppose $A,B\in \TC$ and $M\:A\to B$ has a right adjoint~$N$.  Then $N\cong
\Hom_B(M,B)$ as $(A,B)$-bimodule categories. 
  \end{proposition}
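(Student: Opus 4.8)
The plan is to exploit the uniqueness of adjoints together with the explicit description of one particular adjoint already available from Corollary~\ref{thm:25}. The subtlety is that $M$ is \emph{a priori} only a $1$-morphism in $\TC$ with a right adjoint $N$, and we are not given that $M$ is finite semisimple, so Proposition~\ref{thm:24} does not apply directly. Instead I would argue that $\Hom_B(M,B)$ makes sense as an $(A,B)$-bimodule category for any $1$-morphism $M\:A\to B$ in $\TC$ (it is the internal hom $\uHom_B(M,B)$ computed in the module $2$-category $B\text{-mod}$, which exists once $B$ is a nice enough tensor category), and that it is naturally a right adjoint to $M$.

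First I would set up the internal hom: for $M\:A\to B$ in $\TC$, the category $\Hom_B(M,B)$ of right $B$-module functors $M\to B$ carries a left $A$-action (from the right $A$-action on $M$) and a right $B$-action (from the left $B$-action on $B$), hence is a $1$-morphism $\Hom_B(M,B)\:B\to A$. Next I would produce the unit and counit. The unit is the $(A,A)$-bimodule functor $A\to \Hom_B(M,B)\boxtimes_B M$ sending $a$ to the "multiplication by $a$" element, exactly as in~\eqref{eq:32} of the proof of Corollary~\ref{thm:25}; the counit $M\boxtimes_A \Hom_B(M,B)\to B$ is evaluation. One then checks the triangle identities — these are the standard module-category computations and I would not grind through them, citing the relevant lemma in~\cite{DSS} or~\cite{BJS} that $\Hom_B(M,B)$ with these maps is always a right adjoint to $M$ in $\TC$ (this is the content being generalized in \cite[\S5.1]{BJS}).

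Given that, the proposition is immediate from uniqueness of adjoints in a bicategory: if $N$ is any right adjoint of $M\:A\to B$ in the $2$-category underlying $\TC$, then $N$ is canonically equivalent to $\Hom_B(M,B)$ as a $1$-morphism $B\to A$, i.e.\ as an $(A,B)$-bimodule category, via the comparison $1$-morphism built from the two units and counits. Concretely the equivalence is $N\simeq N\boxtimes_A A\xrightarrow{\,1\boxtimes\eta\,} N\boxtimes_A \Hom_B(M,B)\boxtimes_B M \boxtimes_A \cdots$; more cleanly, it is the mate $N\to \Hom_B(M,B)$ induced by the adjunction data, and its inverse is the analogous mate in the other direction, the two composites being identities by the triangle identities for the two adjunctions. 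All of these comparison maps are maps of $(A,B)$-bimodule categories because units and counits of adjunctions in $\TC$ are by definition $2$-morphisms in $\TC$, i.e.\ respect all the module structures.

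The main obstacle is the first step: verifying that $\Hom_B(M,B)$ exists as an object of $\cc$ (finite cocompleteness, right-exactness of the relevant functors) and genuinely serves as a right adjoint \emph{in $\TC$} without any semisimplicity hypothesis on $M$, $A$, or $B$ — in particular that the bar-type colimits defining the relative tensor products $\Hom_B(M,B)\boxtimes_B M$ and $M\boxtimes_A\Hom_B(M,B)$ behave well enough for the triangle identities to hold. Once the existence and adjointness of $\Hom_B(M,B)$ is in hand, the identification $N\cong \Hom_B(M,B)$ is a formal consequence of uniqueness of adjoints, and the bimodule compatibility is automatic.
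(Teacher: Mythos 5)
Your overall strategy---exhibit $\Hom_B(M,B)$ as a right adjoint of $M$ and then invoke uniqueness of adjoints---is logically sound in its second half, but the first half is exactly where the proposal breaks down, and you have correctly located the problem yourself without resolving it. In the generality of $\TC$ (no rigidity, no semisimplicity, not even a guarantee that $\Hom_B(M,B)$ lands in $\cc$ or that the relative tensor products $\Hom_B(M,B)\boxtimes_B M$ and $M\boxtimes_A\Hom_B(M,B)$ exist as finite colimits), the claim that $\Hom_B(M,B)$ equipped with the unit~\eqref{eq:32} and the evaluation counit is always a right adjoint of $M$ is not something you can outsource to \cite{DSS} (which assumes finite semisimple bimodules throughout) or to \cite{BJS} without checking their hypotheses. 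Indeed, the proposition is deliberately phrased as a conditional---``suppose $M$ has a right adjoint''---precisely because existence of the adjoint is not automatic here; an argument that begins by independently constructing the adjoint is proving a stronger statement than the one asserted, and that stronger statement is not available.

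The paper's proof avoids the obstacle entirely and is worth internalizing: it never constructs an adjunction for $\Hom_B(M,B)$, but instead takes the \emph{given} adjunction $(M,N)$, rewrites it as natural equivalences
$\Hom_A(X,N\boxtimes_B Y)\simeq\Hom_B(M\boxtimes_A X,Y)$
functorial in $X$ and $Y$, and then specializes to the regular modules $X=A$, $Y=B$. The left side becomes $\Hom_A(A,N)\simeq N$ and the right side becomes $\Hom_B(M,B)$ by definition, with the residual right $A$-action on $X=A$ and right $B$-action on $Y=B$ supplying the $(A,B)$-bimodule structures on both sides. This is a Yoneda-style evaluation of the adjunction rather than a comparison of two adjunctions, so no existence or colimit issues ever arise. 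If you want to salvage your route, you would need to add semisimplicity hypotheses (at which point you are back in the setting of Proposition~\ref{thm:24}), which defeats the purpose of this ``partial converse.''
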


  \begin{proof}
 The adjunction can be expressed as isomorphisms 
  \begin{equation}\label{eq:78}
     \Hom_A(X,N\boxtimes _BY)\xrightarrow{\;\;\sim \;\;}\Hom_B(M\boxtimes
     _AX,Y) 
  \end{equation}
which are functorial in left $A$-module categories~$X$ and right $A$-module
categories~$Y$.  Choose~$X=A$ and~$Y=B$ to obtain the desired isomorphism
$N\xrightarrow{\;\sim \;}\Hom_B(M,B)$.  The isomorphism intertwines the
right $A$-action on~$X$ and the right $B$-action on~$Y$. 
  \end{proof}

   \section{Proofs}\label{sec:7}

  \subsection{Proof of Theorem~\ref{thm:46}}\label{subsec:2.2} 

  \begin{figure}[ht]
  \centering
  \includegraphics[scale=.6]{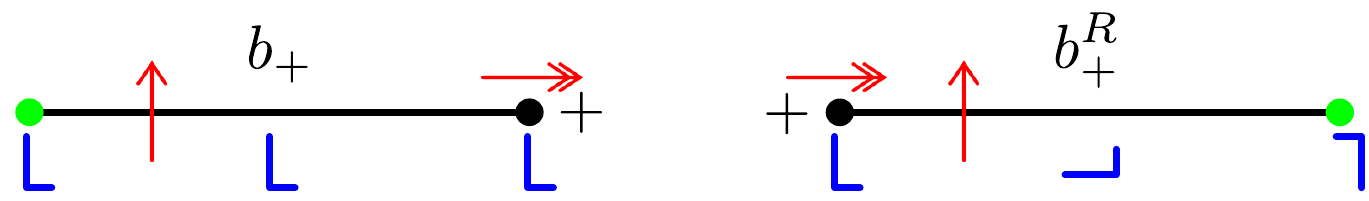}
  \vskip -.5pc
  \caption{The 1-morphisms whose $\eF$-images are~$\beta (+)$
and~$\bR(+)$}\label{fig:13}
  \end{figure}

To begin, assume given a symmetric monoidal 3-category~$\sC$ such that
$\OCfd\subset \cc$ and a symmetric monoidal functor $\eF\:\bftb\to\sC$.  The
restriction of~$\eF$ to $\bft$ is denoted~$F$, and we define~$\beta $ as
in~\eqref{eq:7}.

 Diagrams for the extended theory $\eF\:\bftb\to\sC$, are drawn according to
the rules of~\S\ref{subsubsec:2.1.4}; see \S\ref{subsec:4.3} for more detail.
Figure~\ref{fig:13} depicts a 1-morphism $b_+\:\emptyset ^0\to +$ in
$\btbf\subset \bftb$ and its right adjoint $b_+^R\:+\to\emptyset ^0$, the
latter constructed according to~\S\ref{subsubsec:4.2.5}.  The $\eF$-image
of~$b_+$ is $\beta (+)\:1\to F(+)$.  The cobordism hypothesis with
singularities \cite[\S4.3]{L1} implies that the right adjoint boundary theory
$\bR\:\tF\to 1$~is determined by~$F\p$ and~$\bR(+):=\eF (b_+^R)$, hence the
verification that $\bR$~is right adjoint to~$\beta $ proceeds by producing a
unit and counit (Figure~\ref{fig:14}) for an adjunction between~$b_+$
and~$b_+^R$ in~$\btbf$, and then using their $\eF$-images to exhibit $\bR(+)$
as the right adjoint of~ $\beta (+)$ in~$\sC$.

  \begin{figure}[ht]
  \centering
  \includegraphics[scale=.55]{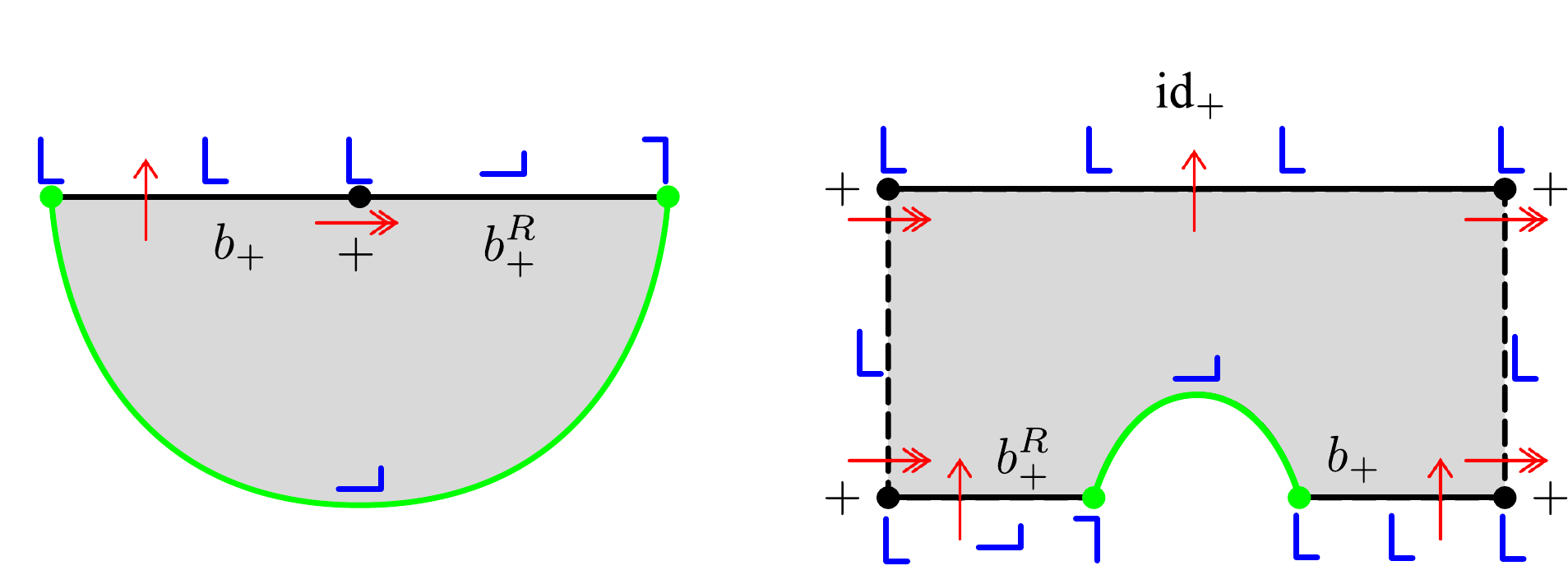}
  \vskip -.5pc
  \caption{The unit $1\to b_+^R\circ b_+$ and counit $b_+\circ b_+^R\to
  1$}\label{fig:14} 
  \end{figure}

  \begin{definition}[]\label{thm:14}
 Set 
  \begin{equation}\label{eq:35}
     \Phi = \eF\bigl(\uE R{b_+} \bigr)\;\in \OCfd\subset \cc. 
  \end{equation}
  \end{definition}

  \begin{figure}[ht]
  \centering
  \includegraphics[scale=.6]{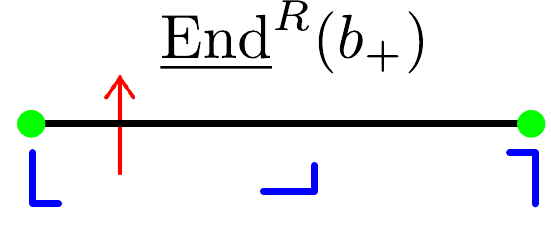}
  \vskip -.5pc
  \caption{The category~$\Phi $ is the $\eF$-image of $\uE
  R{b_+}=b_+^R\circ b_+$}\label{fig:15}
  \end{figure}

\noindent
 See Figure~\ref{fig:15} for a depiction of~$\uE R{b_+}$.  Also, note we can
write $\Phi=\uEnd^R\bigl(\beta (+) \bigr)$.  By virtue of being~$\uEnd^R$ of
a 1-morphism, $\Phi $~ is an algebra object in~$\cc$, i.e., $\Phi $~is a
tensor category.  The composition law~\eqref{eq:28} is the $\eF$-image of the
2-morphism depicted in Figure~\ref{fig:16}.

  \begin{figure}[ht]
  \centering
  \includegraphics[scale=.7]{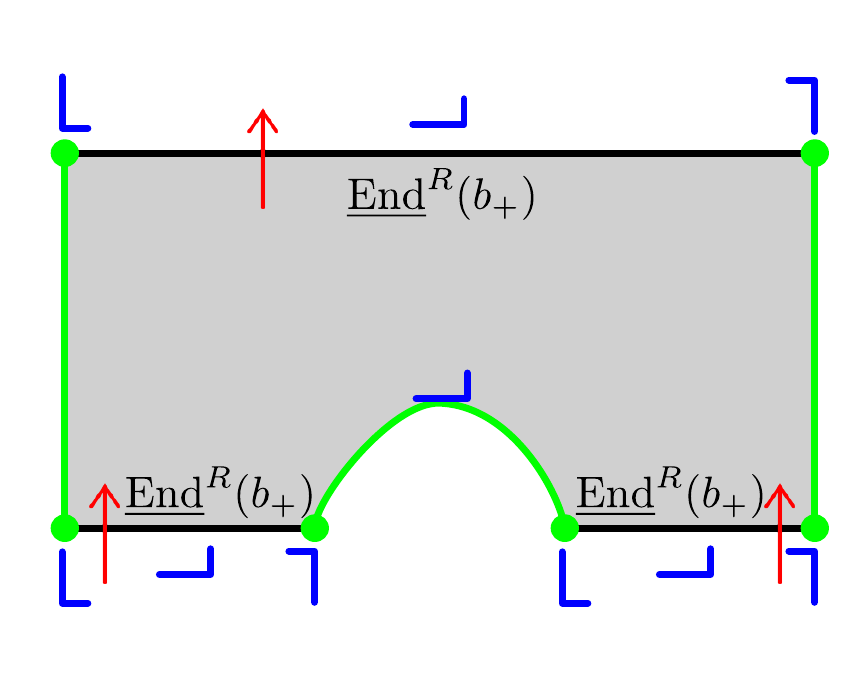}
  \vskip -.5pc
  \caption{The monoidal structure of~$\Phi $ is the $\eF$-image of this
  2-morphism}\label{fig:16} 
  \end{figure}

  \begin{proposition}[]\label{thm:20}
 $\Phi $~is a fusion category. 
  \end{proposition}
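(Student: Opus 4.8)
The plan is to show that $\Phi = \eF(\uE R{b_+})$, which we already know is a tensor category, satisfies the two conditions of Theorem~\ref{thm:46}, hence is a fusion category.

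First I would verify that $\Phi$ is $3$-dualizable in $\TC$ (equivalently in $\cc$-as-loop-category terms, $3$-dualizable in $\OCfd$, but the cleanest route is through $\TC$ itself). Since $\beta$ is a boundary theory coming via the cobordism hypothesis with singularities, the $1$-morphism $b_+\:\emptyset^0\to +$ in $\bftb$ has \emph{all} adjoints and $+$ is fully dualizable; thus $\beta(+)=\eF(b_+)$ is a $2$-dualizable $1$-morphism in $\sC$ with a two-sided adjoint $\beta(+)^R = \bR(+)$, and moreover $F(+)$ is $3$-dualizable by hypothesis. The object $\uE R{b_+}=b_+^R\circ b_+$ is an endomorphism of $\emptyset^0$ in $\bftb$, i.e.\ lives in the loop $3$-category, and being built from a $3$-dualizable object $+$ and $2$-dualizable $1$-morphism $b_+$, it is itself $3$-dualizable as an object of $\Omega \bftb$. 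Applying the symmetric monoidal functor $\eF$ and using that symmetric monoidal functors preserve dualizability (and preserve internal endomorphism algebras, Remark~\ref{thm:26}), we conclude $\Phi=\eF(\uE R{b_+})$ is $3$-dualizable in $\OCfd\subset\cc$, hence $\Phi$ as a tensor category is $3$-dualizable in $\TC$ via the identification $\Omega\TC=\cc$. This gives condition~(i).

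Next I would establish condition~(ii): the regular left $\Phi$-module $\PL$ is $2$-dualizable as a $1$-morphism in $\TC$. The key point is to identify a $1$-morphism in $\bftb$ whose $\eF$-image realizes this regular module, and to observe it has the needed adjoints. Pictorially (cf.\ Figures~\ref{fig:13}--\ref{fig:16}), the $\eF$-image of $b_+$ itself—viewed appropriately as a $1$-morphism relative to the algebra $\uE R{b_+}$—carries the regular $\Phi$-module structure: $\beta(+)$ is tautologically a module over $\uEnd^R(\beta(+))=\Phi$, and this is the regular module because the unit $1\to \uE R{b_+}$ (the $\eF$-image of the unit $2$-morphism of Figure~\ref{fig:14}) corresponds under the module structure to the identity. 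More precisely, in $\bftb$ the $1$-morphism $b_+$ has a right adjoint $b_+^R$ and a left adjoint (topological bordism categories have all adjoints, \S\ref{subsubsec:2.1.6}), so its $\eF$-image $\beta(+)$ is a $2$-dualizable $1$-morphism of $\sC$; transporting this along the identifications and using Proposition~\ref{thm:56} to recognize the adjoints as the expected $\Hom_\Phi(-,-)$ bimodule categories, one sees that the regular left $\Phi$-module is $2$-dualizable as a $1$-morphism in $\TC$.

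The main obstacle I anticipate is the second step: carefully pinning down \emph{which} $\TC$-$1$-morphism is the regular module $\PL$ and checking that the adjoints produced by the cobordism hypothesis with singularities really are adjoints in the $3$-category $\TC$ (not merely in some ambient $\sC$), since $\TC$ is not assumed to have all duals—only $\FC\subset\sCfd$ does, and we are trying to \emph{prove} $\Phi\in\FC$. One must argue that the relevant adjoint $1$-morphisms land in $\TC$ and satisfy the triangle identities there; this likely uses that $\uE R{b_+}$ and the associated bimodule categories all live in $\OCfd=\SC$, so the $2$-dualizability computation takes place inside the already-well-understood $2$-category $\SC$ of finite semisimple abelian categories. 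Once both (i) and (ii) are in hand, Theorem~\ref{thm:46} applies directly and yields that $\Phi$ is a fusion category.
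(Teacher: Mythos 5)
Your plan is circular in the paper's logical architecture, and the circularity hides the real mathematical content. You propose to deduce Proposition~\ref{thm:20} from the converse direction of Theorem~\ref{thm:46}; but the paper's proof of that converse direction is itself an application of Proposition~\ref{thm:20} (one constructs $\eF$ with $\beta(+)=\PL$, identifies $\uE R{\PL}\cong\Psi$, and then invokes Proposition~\ref{thm:20} to conclude $\Psi$ is fusion). So you would need an independent proof of the implication ``(i) and (ii) imply fusion,'' and none is supplied. Relatedly, your verification of condition (i) does not work as stated: $\eF$ lands in $\sC$, and $\uE R{b_+}$ maps to an object of the 2-category $\Omega\sC$; what condition (i) of Theorem~\ref{thm:46} requires is 3-dualizability of $\Phi$ as an algebra object of the Morita 3-category $\TC$, which is not something a symmetric monoidal functor out of $\bftb$ hands you. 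That Morita 3-dualizability is essentially the content of Theorem~\ref{thm:51}, whose hypotheses (semisimplicity and rigidity) are exactly what is at stake here.

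The paper instead verifies the two defining properties of a fusion category (Definition~\ref{thm:49}) directly. Finiteness and semisimplicity (Lemma~\ref{thm:19}) follow either immediately from $\OCfd=\SC$ under the hypotheses of Theorem~\ref{thm:5}, or in general from a dimensional-reduction argument combined with Theorem~\ref{thm:53}. Rigidity (Lemma~\ref{thm:18})---the step your proposal never addresses---is proved by explicit bordism computations: one computes the right adjoints of the unit and multiplication 2-morphisms of $\uE R{b_+}$ following the adjoint prescription of \S\ref{subsubsec:4.2.5}, checks that the resulting pairing is perfect (the Frobenius condition) and that the comultiplication is a bimodule map, and then applies the internal-duals criterion of Theorem~\ref{thm:B1}. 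Without some substitute for this rigidity argument your proof does not go through.
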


\noindent 
 The proof of Proposition~\ref{thm:20} is broken up into Lemma~\ref{thm:19}
and Lemma~\ref{thm:18}.

  \begin{lemma}[]\label{thm:19}
 $\Phi $~is a finite semisimple abelian category. 
  \end{lemma}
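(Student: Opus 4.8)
The goal is to show $\Phi = \eF\bigl(\uE R{b_+}\bigr)$ is finite semisimple abelian. The category $\Phi$ lives in $\OCfd = \SC$ by \eqref{eq:74}, so it is automatically a \emph{finite semisimple abelian} category once we know $\OCfd=\SC$---wait, that is not quite the point: the hypothesis gives $\OCfd=\SC$, and $\Phi$ is defined to lie in $\OCfd$, so in fact $\Phi\in\SC$ follows immediately. But the subtlety is that Definition~\ref{thm:14} only places $\Phi$ in $\cc$, not yet in $\OCfd$; to conclude $\Phi\in\OCfd=\SC$ one needs that $\uE R{b_+}$ is a \emph{fully dualizable} object of $\OC$, equivalently 2-dualizable in $\cc$. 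So the real content of the lemma is: $\uE R{b_+}$ is 2-dualizable in $\cc$, and then Theorem~\ref{thm:53} forces it to be finite semisimple abelian.

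\textbf{Key steps.} First I would recall that $b_+\:\emptyset^0\to +$ is a 1-morphism in $\bftb$ whose $\eF$-image $\beta(+)\:1\to F(+)$ is, by the cobordism hypothesis with singularities, required to be a 2-dualizable 1-morphism in $\sC$; that is part of what it means for $\eF$ to exist. Then $\uE R{b_+} = b_+^R\circ b_+$ is an internal endomorphism object, and by Remark~\ref{thm:26} (symmetric monoidal functors preserve internal homs) we have $\Phi = \eF(\uE R{b_+}) = \uEnd^R(\beta(+))$ in $\sC$. Next, the crucial categorical input: in the bordism category $\bftb$, the object $\uE R{b_+}\in\Omega\bftb = \bord$ (the endomorphisms of the tensor unit) is itself fully dualizable---indeed every object of a topological bordism category is fully dualizable, and taking endomorphism categories / loop spaces preserves this. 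Hence its image $\Phi\in\OC$ is 2-dualizable. Now apply the identification $\OCfd=\SC$: since $\Phi$ is 2-dualizable in $\OC$, we get $\Phi\in\OCfd$, and $\OCfd = \SC$ is precisely the 2-category of finite semisimple abelian categories. Alternatively, bypassing $\OCfd=\SC$, apply Theorem~\ref{thm:53} directly to $\Phi\in\cc$ (2-dualizable $\Rightarrow$ finite semisimple abelian). Either route closes the argument; I would present the Theorem~\ref{thm:53} route since it is self-contained given the dualizability of $\Phi$.

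\textbf{Main obstacle.} The delicate point is establishing that $\Phi$ is 2-dualizable in $\cc$. One cannot simply say "$F(S^0)$ is a fusion category, which is 3-dualizable in $\FC$, hence its loop category is 2-dualizable"---that concerns $F(S^0)$, not $\Phi=\uEnd^R(\beta(+))$. Rather, the 2-dualizability of $\Phi$ must come from the dualizability constraints that the cobordism hypothesis with singularities (\cite[Theorem~7.15]{JS}, \cite[\S4.3]{L1}) imposes on the data $(F(+),\beta(+))$: the boundary datum $\beta(+)\:1\to F(+)$ is a 2-dualizable 1-morphism, and $\uEnd^R$ of a 2-dualizable 1-morphism in a 3-category with enough duals is a 2-dualizable object of the endomorphism 2-category. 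So the real work is unwinding the adjunction structure: $\uE R{b_+} = b_+^R\circ b_+$, and one builds its dual and the evaluation/coevaluation from the adjunction data between $b_+$ and $b_+^R$ (Figure~\ref{fig:14}) together with the known dualizability of $F(+)$ inside $\sCfd$. I would organize this as: (i) $\beta(+)$ has a right adjoint $\bR(+)=\eF(b_+^R)$ and $\uEnd^R(\beta(+))\cong \bR(+)\circ\beta(+)$ via \eqref{eq:27}; (ii) since $F(+)\in\sCfd$ is 3-dualizable and $\beta(+)$ is a 2-dualizable 1-morphism (all adjoints exist), the composite $\bR(+)\circ\beta(+)$ is a 2-dualizable object of $\OCfd$; (iii) invoke $\OCfd=\SC$ or Theorem~\ref{thm:53} to conclude $\Phi$ is finite semisimple abelian.
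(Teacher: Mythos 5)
Your proposal is correct in outline and rests on the same two pillars as the paper's argument: the entire content of the lemma is the 2-dualizability of $\Phi$ in $\cc$, after which Theorem~\ref{thm:53} finishes. Where you differ is in how that 2-dualizability is certified. You argue directly that $\uE R{b_+}=b_+^R\circ b_+$ is a fully dualizable object of $\Omega\bftb$ (or, in your step (ii), that the composite of a ``2-dualizable'' 1-morphism with its right adjoint is a 2-dualizable object of the loop category) and that $\eF$ preserves this. That is true, but the second formulation is precisely the assertion that needs proof: constructing the dual of $\bR(+)\circ\beta(+)$ and the adjoints of its evaluation and coevaluation out of the adjunction data for $\beta(+)$ and the duality data for $F(+)$ is nontrivial coherence bookkeeping if done by hand. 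The paper packages this geometrically: it forms the \emph{dimensional reduction} $\bF\:\btf\to\cc$ whose value on a $k$-morphism $X$ is $F$ of the Cartesian product of $X$ with the 1-framed interval of Figure~\ref{fig:18}, colored by $\beta$ on the left and by $\bR$ on the right (this uses a two-color variant of $\bftb$). Then $\bF$ is a fully extended 2-dimensional theory with $\bF(+)=\Phi$, so the cobordism hypothesis yields 2-dualizability of $\Phi$ at once, and Theorem~\ref{thm:53} applies. (The paper also notes, as you do, that under the hypotheses of Theorem~\ref{thm:5}, where $\OCfd=\SC$, the lemma is immediate; the dimensional-reduction argument is what is needed for Theorem~\ref{thm:46}.) Your plan is viable, but to be complete you would either have to carry out the adjunction bookkeeping explicitly or supply some device, such as the reduction along the bi-colored interval, that outsources it to the cobordism hypothesis.
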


\noindent 
 The stronger hypothesis of Theorem~\ref{thm:5}, that $\FC\subset \sCfd$ is a
\emph{full} subcategory, immediately implies Lemma~\ref{thm:19} in view
of~\eqref{eq:74}.  Under the hypotheses of Theorem~\ref{thm:46} we use the
following argument.

  \begin{proof}
 The 1-morphism in $\btbf$ depicted in Figure~\ref{fig:18} has left boundary
colored with~$\beta $ and right boundary colored with~$\bR$.  It evaluates
under~$(F,\beta ,\bR)$ to~$\Phi $.  Define the \emph{dimensional reduction}
  \begin{equation}\label{eq:19}
     \bF\:\btf\longrightarrow \SC 
  \end{equation}
of~$F$ as the theory whose value on any object or morphism in $\btf$ is the
value of~$F$ on its Cartesian product with the 1-morphism $\emptyset
^0\to\emptyset ^0$ in Figure~\ref{fig:18}.  Since the 2-framing of the latter
is induced from a 1-framing, the Cartesian product is naturally equipped with
a 3-framing.  The lemma now follows since $\bF(+)$~is a 2-dualizable
category, hence is finite semisimple abelian by Theorem~\ref{thm:53}.
  \end{proof}

  \begin{figure}[ht]
  \centering
  \includegraphics[scale=.6]{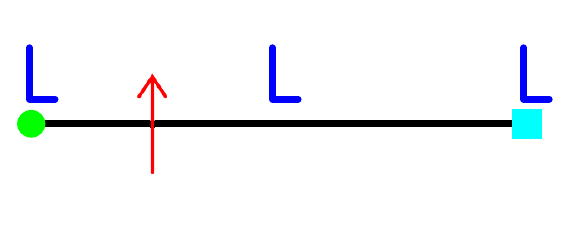}
  \vskip -.5pc
  \caption{A 1-framed bordism with boundary theories $\beta $~on the left and
  $\bR$ on the right}\label{fig:18}
  \end{figure}

  \begin{lemma}[]\label{thm:18}
 $\Phi $~is a rigid monoidal category. 
  \end{lemma}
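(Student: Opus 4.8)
The plan is to deduce rigidity of $\Phi=\eF\bigl(\uE R{b_+}\bigr)$ from the criterion for internal duals established in Appendix~\ref{sec:5}, whose hypotheses I will verify by transporting, along the symmetric monoidal functor $\eF$, structure that is automatically present in the bordism $3$-category $\bftb$.

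First I would recall the bordism-theoretic description of the tensor category $\Phi$: it is the $\eF$-image of the $1$-morphism $\uE R{b_+}=b_+^R\circ b_+$ of Figure~\ref{fig:15}, its monoidal product is the $\eF$-image of the multiplication $2$-morphism of Figure~\ref{fig:16} --- built, as in~\eqref{eq:28}, from the counit of the adjunction $b_+\dashv b_+^R$ --- and its monoidal unit is the $\eF$-image of the unit $2$-morphism $\id_{\emptyset^0}\Rightarrow b_+^R\circ b_+$ of Figure~\ref{fig:14}. Because $\bftb$ is a bordism multicategory it has all duals and all adjoints at every level: the $1$-morphism $b_+$ has a right adjoint $b_+^R$ and a left adjoint $b_+^L$, the multiplication $2$-morphism of Figure~\ref{fig:16} has a right adjoint, and the adjunctions of $2$-morphisms involved are themselves equipped with adjoints. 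I would make these explicit by drawing the $3$-dimensional bordisms in $\bftb$ that realize the right adjoint of the multiplication together with the zigzag $3$-morphisms relating it to the module structure; this is the one place in the argument where genuinely $3$-dimensional pictures appear, since that compatibility data lives one categorical level above the $2$-morphisms that represent morphisms of $\Phi$.

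Next I would apply $\eF$. By Remark~\ref{thm:26} a symmetric monoidal functor preserves internal endomorphism algebras, adjoints of $1$- and $2$-morphisms, and composition of $3$-morphisms, so $\Phi$, as an algebra object in $\cc$, inherits from $\uE R{b_+}\in\Omega\bftb$ precisely the adjointability and compatibility data required by the Appendix~\ref{sec:5} criterion for rigidity. Applying the criterion, every object of $\Phi$ acquires a left and a right internal dual, so $\Phi$ is a rigid monoidal category. Combined with Lemma~\ref{thm:19}, this exhibits $\Phi$ as a finite semisimple rigid tensor category, i.e.\ a fusion category, completing the proof of Proposition~\ref{thm:20}.

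The step I expect to be the main obstacle is verifying that the $3$-dimensional bordisms assembled from the adjunction data of $\bftb$ really satisfy the triangle identities demanded by the criterion --- equivalently, that the a priori merely lax structure on the right adjoint of the multiplication $2$-morphism is in fact strong. The surrounding bookkeeping --- keeping track of arrows of time and boundary colorings so that the candidate dual bordisms stay inside $\btbf$, and matching up the various categorical levels in $\bftb$ with the tensor-categorical structure of $\Phi$ --- is routine but delicate, and is presumably why the rigidity statement is isolated as a separate lemma.
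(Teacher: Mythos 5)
Your overall route coincides with the paper's: deduce rigidity from the internal-duals criterion of Appendix~\ref{sec:5} (Theorem~\ref{thm:B1}), verifying its hypotheses by computing adjoint bordisms in $\bftb$ and pushing them through $\eF$. But there is a genuine gap. Theorem~\ref{thm:B1} has two substantive hypotheses beyond the identification of the underlying category with the dual of its opposite (and even that identification is not free: it is a corollary of Lemma~\ref{thm:19}, which is therefore an \emph{input} to this lemma rather than something to be combined with it afterwards). The hypothesis you never address is the Frobenius condition of Definition~\ref{frob}: the pairing $B=\vep\circ\nabla$ must be \emph{perfect}, i.e., its partial duals $f,f^\vee\:\Phi\to\Phi^\vee$ from~\eqref{eq:65} must be isomorphisms. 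This is an invertibility condition, not ``adjointability and compatibility data,'' and it is not automatic from $\bftb$ having all duals and adjoints: the existence of adjoints hands you the counit $\vep=\eta^R$ and the comultiplication $\Delta=\nabla^R$, but says nothing about $f$ and $f^\vee$ being invertible. The paper checks it geometrically: it computes the right adjoint of the unit $2$-morphism (Figure~\ref{fig:28}, where the real work is tracking the induced $3$-framing, using $\pi_2\SO_3=0$), composes with the multiplication of Figure~\ref{fig:16} to obtain the bordism representing $B$, and then observes that the two bordisms obtained from it by the partial dualities~\eqref{eq:65} (Figure~\ref{fig:31}) are invertible $2$-morphisms in $\bft$. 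Without this step the criterion simply does not apply, and your argument does not produce it.

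Relatedly, the obstacle you single out --- triangle identities and whether the right adjoint of the multiplication is lax or strong --- is not where the difficulty lies; the adjunction data and their coherences come for free in the bordism multicategory. What actually has to be verified, besides the perfectness of $B$ above, is the bimodule property of Proposition~\ref{bimod} for $\Delta$ (the paper's Figure~\ref{fig:33}), which your ``zigzag $3$-morphisms relating it to the module structure'' does gesture at. The genuinely delicate part of the whole proof is the framing bookkeeping on the adjoint bordisms (Figures~\ref{fig:28} and~\ref{fig:32}); it is exactly that computation which makes the Figure~\ref{fig:31} bordisms visibly invertible and hence delivers the missing Frobenius condition.
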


\noindent 
 That is, $\Phi $~has internal left and right duals~\cite[\S2.10]{EGNO}.
See Appendix~\ref{sec:5} for a discussion and generalization of rigidity.

  \begin{proof}
 As a corollary of Lemma~\ref{thm:19}, the dual~$\Phi \dual$ to ~$\Phi $ in~$\SC$
is its opposite category.  We prove rigidity by verifying the hypotheses of
Theorem~\ref{thm:B1}.

  \begin{figure}[ht]
  \centering
  \includegraphics[scale=.55]{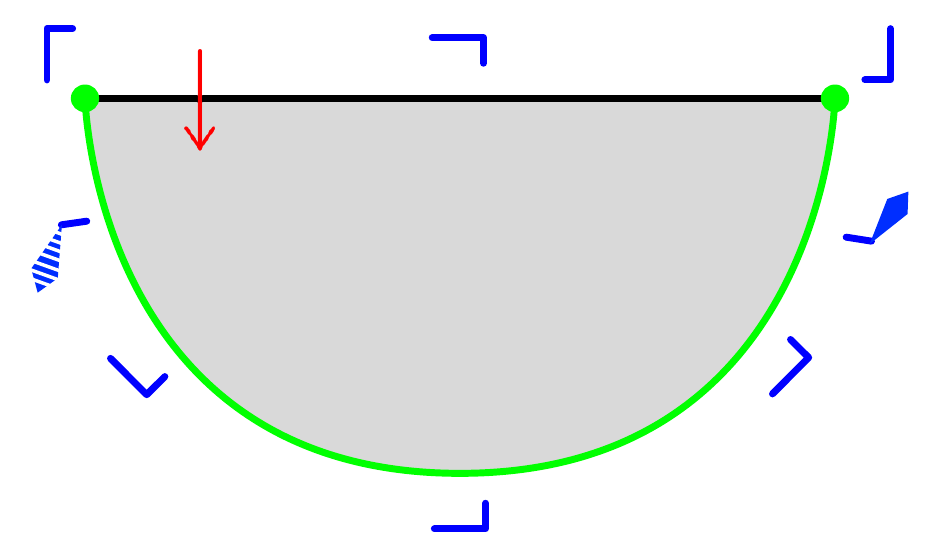}
  \vskip -.5pc
  \caption{The right adjoint to the unit in Figure~\ref{fig:14}}\label{fig:28}
  \end{figure}

The unit of~$\Phi $ is the $\eF$-image of the first 2-morphism in
Figure~\ref{fig:14}.  Its right adjoint has $\eF$-image the
counit~$\varepsilon $ of~\eqref{eq:64}.  We implement the prescription
of~\S\ref{subsubsec:4.2.5} to compute it.  The main concern is the 3-framing
which results on the boundary of the hemidisk; it necessarily extends to the
interior, and since $\pi _2SO_3=0$ that extension is unique up to isotopy.
The result is illustrated in Figure~\ref{fig:28}.  Recall
(\S\ref{subsubsec:2.1.4}) that the frame vectors are labeled\footnote{The
numbering $0,1,2$ corresponds to the numbering $-1,-2,-3$ by codimension
utilized in Appendix~\ref{sec:4}.}  $f_0,f_1,f_2$; that $f_1$~is depicted as
long, $f_2$~as short; and that in all previous pictures, such as
Figure~\ref{fig:14}, the vectors~$f_1,f_2$ lie in the plane of the
paper/screen and $f_0$~is perpendicular to that plane and points into the
paper/screen.  Now, in Figure~\ref{fig:28}, the vectors~$f_0,f_1$ rotate in
the plane perpendicular to~$f_2$ as we descend from the incoming boundary.
The dashed line in
 \raisebox{-5pt}{\includegraphics[scale=.75]{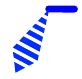}}
 indicates that $f_1$~points into the paper/screen; the solid wedge in
 \raisebox{-6pt}{\includegraphics[scale=.7]{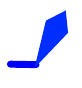}}
 indicates that $f_1$~points out of the paper/screen.  Compose this right
adjoint with the multiplication depicted in Figure~\ref{fig:16} to compute
the 2-morphism in~$\bft$ whose $\eF$-image is the pairing~$B$ of
Appendix~\ref{sec:5}.  It and the 2-morphisms obtained from it by
duality~\eqref{eq:65} are depicted in Figure~\ref{fig:31}.  The Frobenius
condition of Definition~\ref{frob} is satisfied since the latter two
2-morphisms are invertible in~$\bft$.

  \begin{figure}[ht]
  \centering
  \includegraphics[scale=.5]{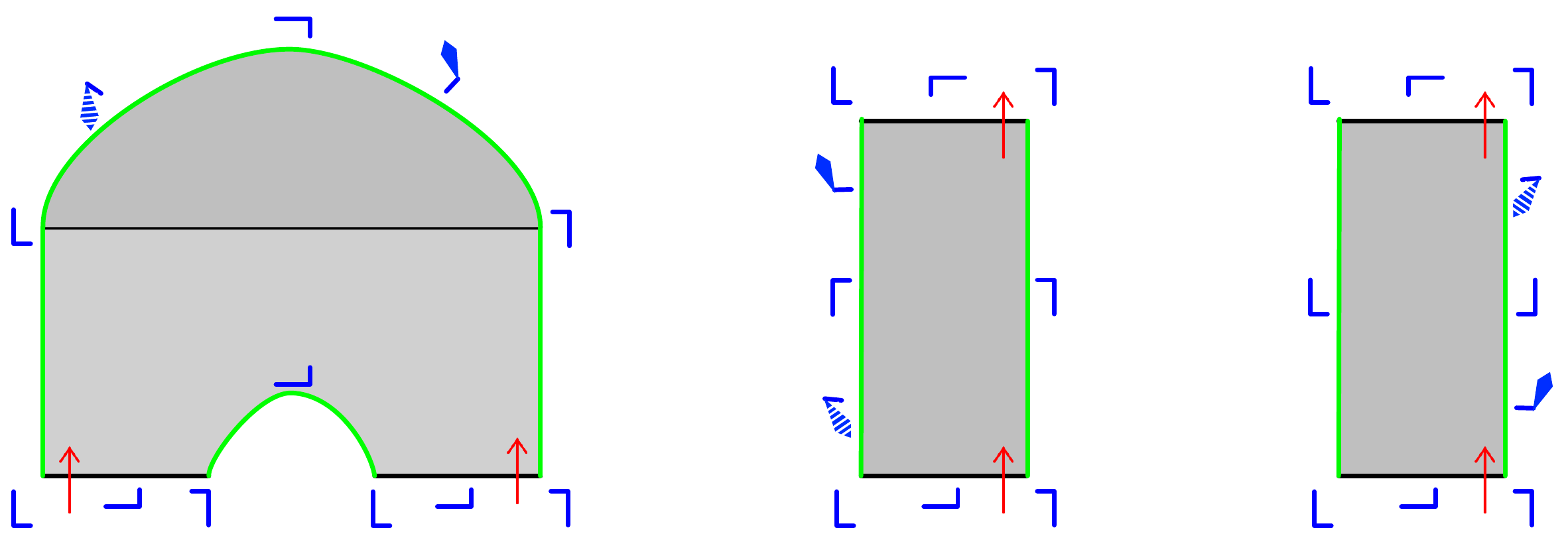}
  \vskip -.5pc
  \caption{Multiplication followed by the counit and its duals}\label{fig:31}
  \end{figure}

The right adjoint bordism to Figure~\ref{fig:16}, computed
following~\S\ref{subsubsec:4.2.5}, is depicted in Figure~\ref{fig:32}; its
$\eF$-image is the comultiplication~$\Delta $ on~$\Phi $.  Again it suffices to
compute the framing on the boundary.  Notice that the half-turns in the
framing cancel on the vertical colored edges, whereas they cohere into a full
turn on the colored half-circle.  The second condition in
Theorem~\ref{thm:B1}, that the comultiplication is a bimodule map, follows
immediately from Figure~\ref{fig:33}.
  \end{proof}

  \begin{figure}[ht]
  \centering
  \includegraphics[scale=.55]{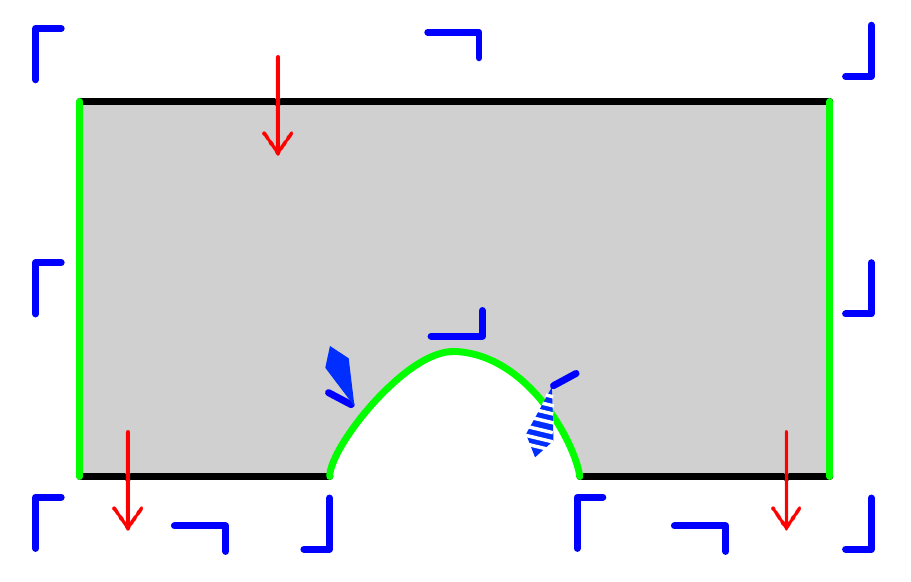}
  \vskip -.5pc
  \caption{The right adjoint to Figure~\ref{fig:16}}\label{fig:32}
  \end{figure}

  \begin{figure}[ht]
  \centering
  \includegraphics[scale=.4]{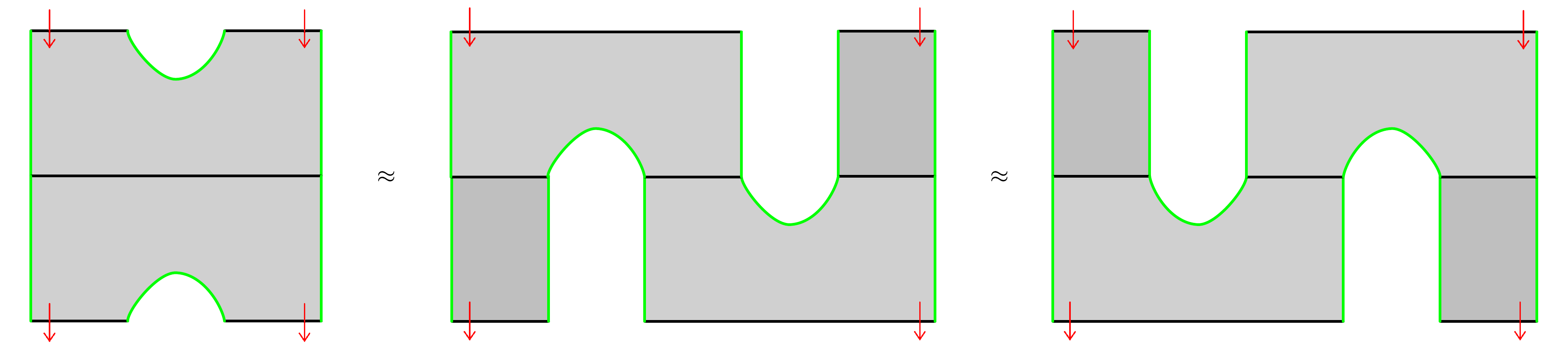}
  \vskip -.5pc
  \caption{Comultiplication is a bimodule map}\label{fig:33}
  \end{figure}

  \begin{remark}[]\label{thm:62}
 In the context of Theorem~\ref{thm:5}, identify the category~$\Phi $ with
its collection of objects $\Hom_{\FC}(1,\Phi )=\Hom_{\sCfd}(1,\Phi )$.  By
duality,
  \begin{equation}\label{eq:92}
     \Hom_{\sCfd}(1,\Phi ) \cong \Hom_{\sCfd}\bigl(1,\bR\p\circ \beta \p
     \bigr)\cong \Hom_{\sCfd}\bigl(\bp,\bp\bigr), 
  \end{equation}
which explains the last statement in Theorem~\ref{thm:5}.  
  \end{remark}

  \begin{proof}[Proof of Theorem~\ref{thm:46}]
 As pointed out earlier, the forward direction follows from
Theorem~\ref{thm:51}.  For the converse, first apply the cobordism hypothesis
to construct $F\:\bft\to\TC$ with $F(+)=\Psi $, and then apply the cobordism
hypothesis with singularities to construct an extension $\eF\:\bftb\to\TC$
whose associated boundary theory $\beta \:1\to\tF$ has $\beta (+)=\PL $, the
regular left $\Psi $-module.  The category defined in Definition~\ref{thm:14}
is $\Phi =\uE R{\PL}$.  By Proposition~\ref{thm:56} the right adjoint
to~$\PL$ is $\Hom_{\Psi }(\Psi ,\Psi )$, which may be identified with $\PR$,
the regular right $\Psi $-module.  Hence $\Phi =\uE R{\PL}\cong \Psi
\boxtimes_\Psi \Psi \cong \Psi$.  Conclude using Proposition~\ref{thm:20}.
  \end{proof}

  \begin{figure}[ht]
  \centering
  \includegraphics[scale=.65]{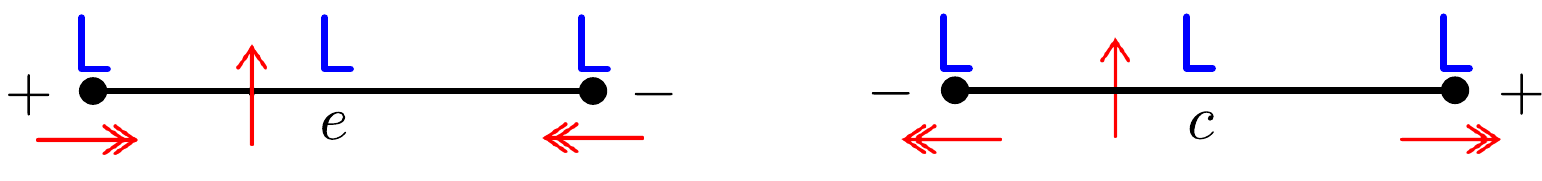}
  \vskip -.5pc
  \caption{Duality of~$+$ and~$-$: evaluation and coevaluation}\label{fig:8}
  \end{figure}

  \subsection{Proof of Theorem~\ref{thm:5}}\label{subsec:7.1}

 The $+$~point and $-$~point (Figure~\ref{fig:5}) are duals in~$\btf$.
Choose duality data as the evaluation $e\:+\amalg -\to\emptyset ^0$ and
coevaluation $c\:\emptyset ^0\to -\amalg +$ 1-morphisms depicted in
Figure~\ref{fig:8}.  One of the ``S-diagrams''
  \begin{equation}\label{eq:11}
     \left( +\xrightarrow{\;\id\otimes c\;}+\amalg -\amalg
     +\xrightarrow{\;e\otimes \id\;} +\right) \xrightarrow{\;\;\sim
     \;\;}\left( +\xrightarrow{\;\id\;} +\right) 
  \end{equation}
that proves that $(-,c,e)$ are duality data for~$+$ is the 2-morphism of
Figure~\ref{fig:9}.

  \begin{figure}[ht]
  \centering
  \includegraphics[scale=.6]{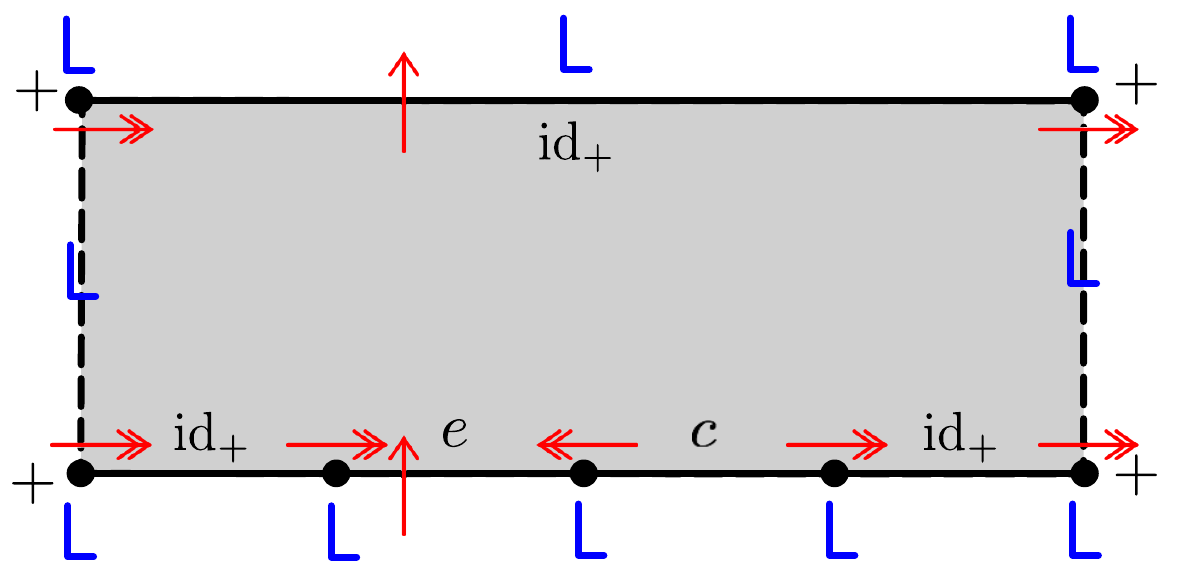}
  \vskip -.5pc
  \caption{The S-diagram~\eqref{eq:11}}\label{fig:9}
  \end{figure}

The 1-morphism~$e$ has right and left adjoints $e^R,e^L\:\emptyset ^0\to
-\amalg +$ depicted in Figure~\ref{fig:10}.  Their construction follows the
general prescription in~\S\ref{subsubsec:4.2.5}; see especially
Figure~\ref{fig:26}.  In $\btf$ they are distinct and distinct from
coevaluation: $e^R\neq c\neq e^L$.  In $\bft$ we have $e^L\cong e^R$ since $\pi
_1\SO_3\cong \zt$ with generator a full rotation of the frame $f_0,f_1,f_2$
in the $f_1$-$f_2$ plane.  In $\bft$ we have an isomorphism
  \begin{equation}\label{eq:12}
     S^1_b\cong e\circ e^L = \uE Le 
  \end{equation}
illustrated in Figure~\ref{fig:11}.  

  \begin{figure}[ht]
  \centering
  \includegraphics[scale=.65]{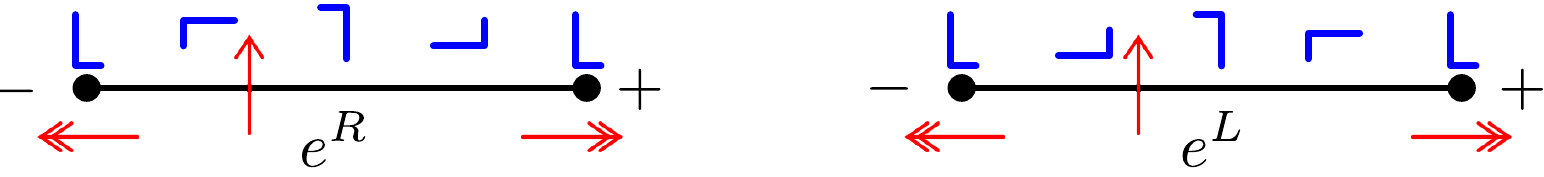}
  \vskip -.5pc
  \caption{The right and left adjoints to~$e$}\label{fig:10}
  \end{figure}

  \begin{remark}[]\label{thm:11}
 In $\bft$ the nonbounding 3-framed circle~$S^1_n$ satisfies the isomorphism 
  \begin{equation}\label{eq:13}
     S^1_n\cong e\circ c. 
  \end{equation}
  \end{remark}

  \begin{figure}[ht]
  \centering
  \includegraphics[scale=.6]{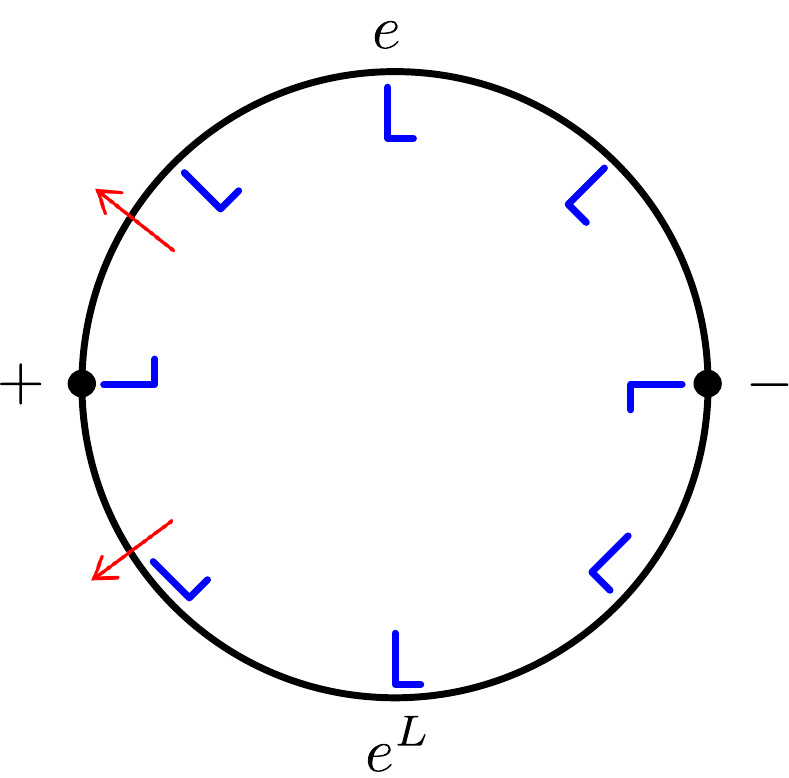}
  \vskip -.5pc
  \caption{The isomorphism $\Sb\cong e\circ e^L$ in $\btf$}\label{fig:11}
  \end{figure}

Define the 2-framed 0-sphere as $S^0=+\amalg -$.  Note $e\:S^0\to\emptyset
^0$.

  \begin{proposition}[]\label{thm:12}
 Let $F$~satisfy the hypotheses of Theorem~\ref{thm:5}.  Let $\Xi $~be a
fusion category which is isomorphic to~$F(S^0)$.  Then $\Xi $~ is Morita
equivalent to~$F(S^1_b)$ as fusion categories.
  \end{proposition}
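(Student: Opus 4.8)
The plan is to identify $F(S^1_b)$ with the dual fusion category of $\Xi$ relative to a suitable module category, and then to invoke the standard fact that a fusion category and its dual relative to a \emph{faithful} module category are Morita equivalent, the module category itself serving as the invertible bimodule. First I would set up the identification. In $\bft$ one has the evaluation $1$-morphism $e\colon S^0\to\emptyset^0$ of Figure~\ref{fig:8}, and by~\eqref{eq:12} the bounding circle satisfies $S^1_b\cong e\circ e^L=\uE Le$, an isomorphism of algebra objects in $\Omega\bft$. Since $F$ is a symmetric monoidal functor it preserves internal endomorphisms together with their multiplication (Remark~\ref{thm:26}, applied in the $2$-categorical slice $\OC=\End_\sC(1)$ as in Remark~\ref{thm:22}), so $F(S^1_b)\cong\uE L{F(e)}$ as tensor categories. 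Now $\FC\subset\sCfd$ is a full subcategory and $F(S^0)\cong\Xi\in\FC$, $F(\emptyset^0)=1\in\FC$; hence $M:=F(e)\colon\Xi\to 1$ is a finite semisimple $(1,\Xi)$-bimodule category, that is, a right $\Xi$-module category. Corollary~\ref{thm:25} then gives $\uE L{M}=\End_\Xi(M)$ — the fusion category of $\Xi$-module endofunctors of $M$ — with monoidal structure the composition of module endofunctors, so $F(S^1_b)\cong\End_\Xi(M)$.

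Next I would recall the Morita theory of module categories over fusion categories (for the multifusion case, with non-simple unit, this rests on the complete reducibility of Appendix~\ref{sec:6}; compare~\cite[\S7.12]{EGNO}): if $M$ is a faithful $\Xi$-module category, then $M$, with its tautological right $\Xi$-action and left $\End_\Xi(M)$-action, is an invertible $(\End_\Xi(M),\Xi)$-bimodule category. Indeed $M\boxtimes_\Xi\Hom_\Xi(M,\Xi)\cong\End_\Xi(M)$ holds tautologically (the $L$-analogue of~\eqref{eq:32}, using Proposition~\ref{thm:24}), while $\Hom_\Xi(M,\Xi)\boxtimes_{\End_\Xi(M)}M\cong\Xi$ is precisely the place where faithfulness enters. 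This would give $\Xi\me\End_\Xi(M)\cong F(S^1_b)$, so the proposition reduces to showing that $M=F(e)$ is a faithful $\Xi$-module category.

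The hard part — and the main obstacle — will be this faithfulness, which I expect to extract from the duality data $(e,c)$ of Figure~\ref{fig:8} together with hypothesis~(b). Since $(e,c)$ exhibit $-$ as a dual of $+$ in $\bft$, the objects $F(+)$ and $F(-)$ are dual in $\sC$, $\Xi=F(S^0)=F(+)\otimes F(-)$, and $M=F(e)$ is the evaluation of this dual pair — the analogue of the regular bimodule over $F(+)\otimes F(+)^\vee$; in particular the $S$-diagram~\eqref{eq:11} of Figure~\ref{fig:9}, applied under $F$, writes $\id_{F(+)}$ as a composite factoring through the $\Xi$-action on $F(+)$, which is the categorical source of faithfulness. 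The only way faithfulness can fail is through a "disconnectedness" of $\Xi$ compatible with $M$: writing $\mathfrak I\subset\Xi$ for the annihilator of $M$ and $\overline{\Xi}=\Xi/\mathfrak I$, one has $\End_\Xi(M)=\End_{\overline{\Xi}}(M)$ with $M$ faithful over $\overline{\Xi}$, whence by the previous paragraph $F(S^1_b)\cong\End_\Xi(M)\me\overline{\Xi}$. If $\mathfrak I\ne 0$ then, because $M$ is the evaluation module of the dual pair, the central idempotent cutting out $\mathfrak I$ splits $\overline{\Xi}$ as a direct sum of two nonzero tensor categories, hence so splits $F(S^1_b)$ — Morita equivalence preserves the Drinfeld center, and the center of a direct sum of nonzero tensor categories is again a direct sum, so has non-simple unit. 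But this contradicts hypothesis~(b): $F(S^1_b)$, being invertible in $\Alg_2(\OC)$, is a modular fusion category and in particular has simple unit (its center is braided equivalent to $F(S^1_b)\boxtimes F(S^1_b)^{\mathrm{rev}}$, which has simple unit). Therefore $\mathfrak I=0$, $M$ is faithful, and the proof is complete. The chief technical care is needed because $F(+)$ is \emph{not} assumed to be a fusion category — only $F(S^0)=F(+)\otimes F(-)$ is — so the splitting argument must be run intrinsically through the evaluation and coevaluation of the dual pair $(F(+),F(-))$ rather than through a block decomposition of $F(+)$, and one also uses that $F\ne 0$ (so $F(+)\ne 0$), which holds since the boundary theory $\beta$ is nonzero.
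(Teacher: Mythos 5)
Your first half---the identification $F(\Sb)\cong \uE L{F(e)}\cong \End_\Xi(M)$ as tensor categories via~\eqref{eq:12} and Corollary~\ref{thm:25}, and the reduction of the Morita equivalence to the \emph{faithfulness} of the right $\Xi$-module $M=F(e)$---is exactly the paper's argument. The divergence, and the gap, is in how you establish faithfulness.

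You let $\mathfrak I$ be the annihilator of~$M$ and claim that $\mathfrak I\neq 0$ forces $\overline\Xi=\Xi/\mathfrak I$ (and hence $F(\Sb)\me\overline\Xi$) to decompose as a nontrivial direct sum. That implication is false for a general non-faithful module: if $\Xi=\Xi_1\oplus\Xi_2$ and $M$ is supported on~$\Xi_1$, then $\overline\Xi=\Xi_1$ need not decompose further, and $F(\Sb)$ would be Morita equivalent to the possibly indecomposable~$\Xi_1$, yielding no contradiction with hypothesis~(b). The implication does hold in the model case $\Xi=\Phi\boxtimes\Phi\mo$, $M=\Phi$ (there the annihilator is the off-diagonal blocks and $\overline\Xi=\bigoplus_i\Phi_i\boxtimes\Phi_i\mo$ splits whenever $\Phi$ does), but you cannot assume $F(+)$ is a fusion category, and the ``intrinsic argument through evaluation and coevaluation'' that you flag as the chief technical care is precisely what is missing. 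The paper closes this gap by running the contradiction through $\Xi$ rather than through $F(\Sb)$: Lemma~\ref{thm:13} shows $\Xi$ is \emph{indecomposable}, because the double theory $\DF$ with $\DF(+)=\Xi$ has $\DF(\Sb)=Z(\Xi)\cong F(\Sb)\boxtimes F(\Sb)\rev$ invertible by hypothesis~(b), whereas the Drinfeld center of a nontrivial direct sum is a nontrivial direct sum and hence not invertible. Once $\Xi$ is indecomposable, every nonzero module category over it is automatically faithful (the remark after \cite[Definition~7.12.9]{EGNO}), and $M\neq 0$ because $F(e)$ is part of the duality data for $F(+)$. Replacing your annihilator argument with this indecomposability lemma repairs the proof.
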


Recall that a fusion category is \emph{indecomposable} if it is not a
nontrivial direct sum.  As a preliminary we prove the following.

  \begin{lemma}[]\label{thm:13}
 $\Xi $ is an indecomposable fusion category.
  \end{lemma}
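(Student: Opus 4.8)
The plan is to argue by contradiction: if $\Xi \cong F(S^0)$ were to decompose as a nontrivial direct sum $\Xi \cong \Xi_1 \oplus \Xi_2$ of fusion categories, then the idempotent decomposition of the unit $1_\Xi = e_1 \oplus e_2$ would produce a corresponding decomposition of the object $F(S^0) = F(+) \otimes F(-) \cong F(+) \otimes F(+)^\vee$ in $\sC$. First I would translate ``indecomposable'' into the statement that $\Xi$ has no nontrivial central idempotent, equivalently that $\End_\Xi(1_\Xi)$ (the endomorphisms of the tensor unit, which for a fusion category is a product of copies of $\CC$, one per block) is just $\CC$. Since $\Xi \cong F(S^0) = F(+)\otimes F(+)^\vee$ as an object of $\sC$, its endomorphism algebra as an object is $\End_{\OCfd}(F(S^0))$; the key point is that this algebra, under the identification $F(S^0)\cong \uEnd(F(+))$-type internal endomorphism data, should come out to be $\End_\sC(F(+))$, which — because $F(+)\in \sCfd$ is a fusion category and $\sCfd \supseteq \FC$ — is $\CC$ (a fusion category, being indecomposable in the appropriate sense, or at least $F(+)$ being a ``simple'' object of $\sC$ in that its endomorphism algebra is $\CC$).

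Concretely, the route I would take is via the duality $(-, c, e)$ of Figure~\ref{fig:8}: a central idempotent of $F(S^0) = F(+)\otimes F(-)$, i.e.\ an endomorphism of the object $F(S^0)$ in the 2-category $\OCfd=\SC$ that is compatible with the algebra structure, corresponds under $F(-)\cong F(+)^\vee$ and the internal-hom adjunctions of \S\ref{subsubsec:2.3.4} to an element of $\Hom_{\sCfd}(F(+), F(+)) = \End_{\sCfd}(F(+))$. Now $F(+)$ is a fusion category viewed as an object of $\sC$; since $\FC\subset \sCfd$ is a full subcategory, $\End_{\sCfd}(F(+)) = \End_{\FC}(F(+))$, and for a fusion category $A$ one has $\End_{\FC}(A) = \Hom_{\FC}(A,A)$-as-a-1-category-of-bimodules — but at the level of the tensor unit of that 1-category, i.e.\ the ``object count'' relevant to the block decomposition of $\Xi$, one gets that $F(+)$ contributes a single block. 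I would make this precise by noting that the algebra $\Xi \cong F(S^0)$ in $\OCfd$, being $\uEnd^R$ or $\uEnd^L$ of the 1-morphism $e\colon S^0 \to \emptyset^0$ (or of a related evaluation 1-morphism), has its idempotent decomposition governed by the decomposition of $e$ itself into 1-morphisms, and $e$ is an ``atomic'' evaluation 1-morphism which does not decompose — equivalently, $F(+)$ is an indecomposable object of $\sC$ because it is an indecomposable fusion category and $\FC\hookrightarrow\sCfd$ is full.

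The main obstacle I expect is pinning down exactly \emph{why} $F(+)$ is indecomposable as an object of $\sC$ — or rather, why $\End_\sC(F(+))$ has no nontrivial idempotents. Under hypothesis (a) of Theorem~\ref{thm:5}, $F(S^0)$ is isomorphic to a fusion category $\Xi$; but $F(S^0) = F(+)\otimes F(-)$, and a priori $F(+)$ itself need only be some 3-dualizable object of $\sC$, not obviously a fusion category. The resolution presumably goes: if $F(+)$ decomposed, so would $F(+)\otimes F(+)^\vee \cong F(S^0) \cong \Xi$; conversely an idempotent $f\in\End_\sC(F(+))$ yields $f\otimes f^\vee$, a \emph{central} idempotent of $\Xi$; and since fusion categories with simple (or at least with a given number of) blocks are detected by this endomorphism algebra, we reduce to showing $F(+)$ cannot be written as a direct sum in $\sC$. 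The cleanest way around this is probably to observe that any idempotent-splitting of $F(+)$ in $\sC$, tensored with $F(+)^\vee$, gives a splitting of $\Xi = \Xi_1\oplus\Xi_2$ \emph{as an algebra}, hence $\Xi_1,\Xi_2$ are themselves fusion categories with $\Xi_i \cong F(+)_i\otimes F(+)_i^\vee$ — and then invoke that $\uEnd^R(e)$ over the \emph{indecomposable} datum of a single point $+$ must be indecomposable, which is where the geometry of $\btf$ (the $+$ point is ``connected'') does the work. So the heart of the lemma is a connectedness/atomicity statement about the point $+$ and its evaluation, pushed through $F$ and the hypothesis that $\FC$ sits fully inside $\sCfd$; the rest is bookkeeping with internal homs and the duality data of Figure~\ref{fig:8}.
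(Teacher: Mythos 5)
There is a genuine gap here, and you have in fact located it yourself: the step ``$F(+)$ is indecomposable as an object of $\sC$'' (equivalently, that $\End_{\sC}(F(+))$ has no nontrivial idempotents, or that the evaluation $e$ is ``atomic'') is precisely what is not available, and the mechanism you propose for it --- connectedness of the $+$ point --- does not work. A symmetric monoidal functor can perfectly well assign a decomposable object to a point: if $\Phi =\Phi _1\oplus \Phi _2$ is a decomposable fusion category, the Turaev--Viro theory $\TP$ satisfies hypothesis~(a) with $\Xi \cong \Phi \boxtimes \Phi \mo$ decomposable, and it even admits a nonzero boundary theory. So indecomposability of $\Xi$ cannot follow from hypothesis~(a) plus the geometry of $\bft$; your argument, which never invokes hypothesis~(b), would prove a false statement. (In the example just given, what fails is exactly (b): $\TP(\Sb)=Z(\Phi _1)\oplus Z(\Phi _2)$ is not invertible.)

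The paper's proof goes through the circle rather than the point, and hypothesis~(b) is the engine. Since $\Xi \cong F(S^0)=F(+)\otimes F(+)\dual$ is a fusion category, the cobordism hypothesis produces the double theory $\DF\:\bft\to \FC$ with $\DF(+)=\Xi $. Its value on the bounding circle is on the one hand the Drinfeld center $Z(\Xi )$, and on the other hand $F(\Sb)\boxtimes F(\Sb)\rev$, which is invertible as a braided tensor category because $F(\Sb)$ is, by hypothesis~(b). The Drinfeld center of a nontrivial direct sum is the nontrivial direct sum of the Drinfeld centers, hence not invertible; therefore $\Xi $ is indecomposable. If you want to rescue your idempotent-chasing route, the invertibility of $F(\Sb)$ has to be fed in at the point where you assert atomicity --- for instance, a nontrivial central idempotent of $\Xi $ would split $Z(\Xi )$ and hence contradict invertibility --- but as written the proposal is missing its essential input.
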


  \begin{proof}
 As in Remark~\ref{thm:4} introduce the double theory
  \begin{equation}\label{eq:14}
     \DF\:\bft\longrightarrow \FC 
  \end{equation}
characterized by $\DF(+)=\Xi  \cong F(S^0)= F(+)\otimes F(-)\cong
F(+)\otimes F(+)\dual$.  Then by Hypothesis~(b) of Theorem~\ref{thm:5},
deduce that $\DF(S^1_b)\cong F(S^1_b)\boxtimes F(S^1_b)\rev$ is invertible as a
braided tensor category.  (Recall that the reverse of a braided tensor
category is the same underlying tensor category equipped with the inverse
braiding.)  On the other hand, $\DF(S^1_b)$ is the Drinfeld center
of~$\DF(+)\cong \Xi $.  Since the Drinfeld center of the direct sum of tensor
categories is the direct sum of the Drinfeld centers, and a nontrivial direct
sum is not invertible, it follows that $\Xi $~is indecomposable.
  \end{proof}

  \begin{proof}[Proof of Proposition~\ref{thm:12}]
 Define
  \begin{equation}\label{eq:37}
     M:=\Xi \xrightarrow{\;\;\sim \;\;}F(S^0)\xrightarrow{\;\;F(e)\;\;}  1, 
  \end{equation}
where $1=F(\emptyset )=\Vc\in \FC\subset \sC$ is the tensor unit.  Since
$\FC\subset \sC$ is a \emph{full} subcategory, $M$~is a 1-morphism in~$\FC$.
By~\eqref{eq:12} and~\eqref{eq:31} we have the categorical equivalences
  \begin{equation}\label{eq:15}
     \begin{aligned} F(\Sb)&\eqcat F\bigl(\uE Le \bigr)\\ &\eqcat
     \uEnd^L\bigl(F(e) \bigr) \\ &\eqcat\End\mstrut _\Xi (M).\end{aligned} 
  \end{equation}
The last assertion of Corollary~\ref{thm:25} implies that \eqref{eq:15}~is an
equivalence of {tensor categories}.

To conclude the proof of Proposition~\ref{thm:12} we must show that $M$~is a
\emph{faithful} right $\Xi $-module.  According to the remark after
\cite[Definition 7.12.9]{EGNO}, this follows from Lemma~\ref{thm:13} since
$F(e)$~ is nonzero by virtue of being part of the duality data between
$F(+)$ and~$F(-)$.
  \end{proof}

 Since $\Phi $ is a fusion category, as in~\eqref{eq:3} the cobordism
hypothesis produces
  \begin{equation}\label{eq:20}
     \TP\:\bft\longrightarrow \FC ,
  \end{equation}
a theory of Turaev-Viro type with $\TP\p=\Phi $.  Then $\TP(\Sb)$~is the
Drinfeld center
  \begin{equation}\label{eq:21}
     Z\bigl(\Phi \bigr) = \End_{\TP(S^0)}\bigl(\Phi \bigr), 
  \end{equation}
where $\TP(S^0)= \TP(+)\boxtimes \TP(-)\simeq \Phi\boxtimes
\Phi^{\textnormal{mo}}$.  (Here $\Phi^{\textnormal{mo}}$ is the monoidal
opposite to the monoidal category~$\Phi$, which is its dual in~$\FC$.)  We
identify
  \begin{equation}\label{eq:36}
     \TP\cong \uE R\beta 
  \end{equation}
as algebra objects in the 2-category of symmetric monoidal functors
$\btf\to\Omega \sC=\SC$.

  \begin{figure}[ht]
  \centering
  \includegraphics[scale=.6]{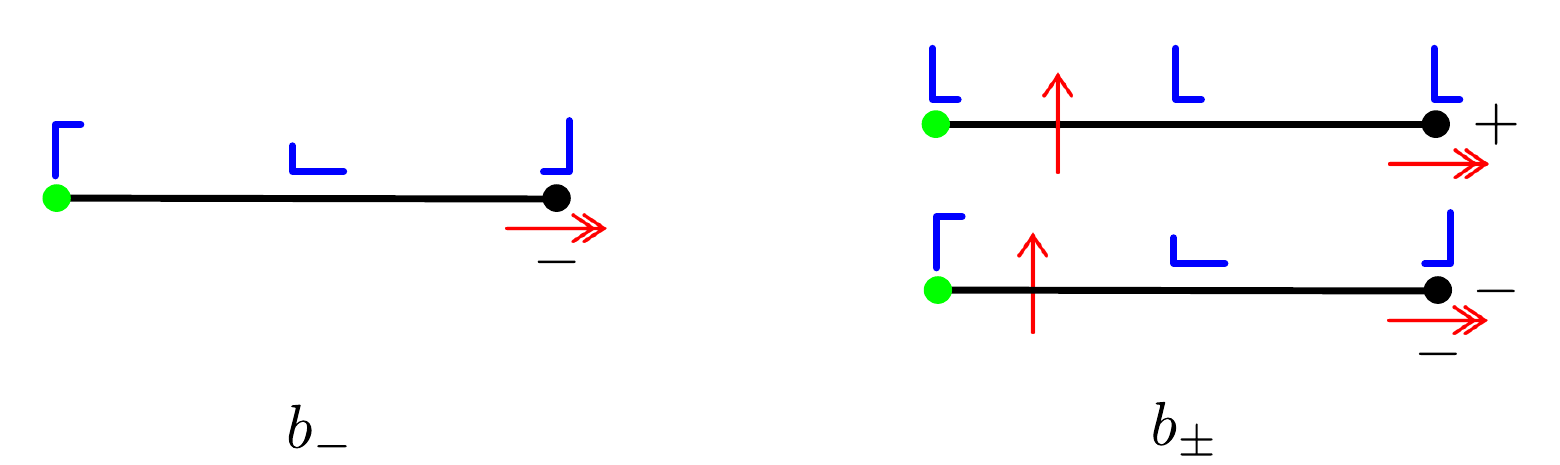}
  \vskip -.5pc
  \caption{The bordisms~$b_-$ and~$\fpm$}\label{fig:34}
  \end{figure}

Let $b_-\:\emptyset \to -$ be the morphism dual to~$b_+^R$ in $\bftb$,
obtained by reversing the double-headed arrow; see~\S\ref{subsubsec:2.1.6}
and Figure~\ref{fig:34}.  Let $\fpm = b_+\amalg b_-$.  Then the proof of
\cite[Proposition~7.10]{JS} implies that $\beta (-)\cong \eF(b_-)$.  Define
the composition
  \begin{equation}\label{eq:38}
     N\:1\xrightarrow{\;\;\eF(\fpm)\;\;} F(S^0)\xrightarrow{\;\;\sim
     \;\;}\Xi . 
  \end{equation}
Notice $\eF(\fpm)=\beta (S^0)$.  As in the proof of Proposition~\ref{thm:12},
$N$~is a 1-morphism in~$\FC$.  Apply~\eqref{eq:36} and~ \eqref{eq:31} to
deduce an equivalence of tensor categories
  \begin{equation}\label{eq:39}
     \TP(S^0)\cong \uE RN\cong \End_\Xi (N). 
  \end{equation}

  \begin{lemma}[]\label{thm:28}
 The left $\Xi $-module category~$N$ provides a Morita equivalence
$\Xi \xrightarrow{\;\sim\;}\TP(S^0)$.
  \end{lemma}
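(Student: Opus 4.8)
The plan is to show that the left $\Xi$-module category $N$ is \emph{faithful}, and then to read off the Morita equivalence from the identification $\TP(S^0)\cong\End_\Xi(N)$ already recorded in~\eqref{eq:39}. Recall that $N$ is defined in~\eqref{eq:38} as the composite of $\eF(\fpm)=\beta(S^0)$ with an isomorphism $F(S^0)\xrightarrow{\ \sim\ }\Xi$, so $N$ is a 1-morphism $1\to\Xi$ in $\FC$, i.e.\ a left $\Xi$-module category, and $\End_\Xi(N)\cong\TP(S^0)$ as \emph{tensor} categories by~\eqref{eq:36}, \eqref{eq:31}, and the last assertion of Corollary~\ref{thm:25}.

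First I would check that $N\neq0$. Since the boundary theory $\beta$ is nonzero and, by the cobordism hypothesis with singularities, is determined by the single 1-morphism $\beta(+)=\eF(b_+)$, we must have $\beta(+)\neq0$: were $\beta(+)$ the zero 1-morphism, every value of $\beta$---being built functorially from $\beta(+)$---would vanish identically, contrary to hypothesis. Its right adjoint, and hence the dual $\eF(b_-)$ of that adjoint, are nonzero as well, so $\beta(-)\cong\eF(b_-)\neq0$. As $\eF$ is symmetric monoidal, $\beta(S^0)=\eF(\fpm)=\beta(+)\otimes\beta(-)$, and the Deligne--Kelly tensor product of nonzero finite semisimple categories is nonzero; hence $N\neq0$.

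Next, $\Xi$ is an \emph{indecomposable} multifusion category by Lemma~\ref{thm:13}, so the nonzero $\Xi$-module category $N$ is automatically faithful---this is the remark after \cite[Definition~7.12.9]{EGNO}, invoked exactly as in the proof of Proposition~\ref{thm:12}. A faithful module category over a multifusion category exhibits a Morita equivalence between that category and its multifusion category of module endofunctors (\cite[\S7.12]{EGNO}): the identification $\End_\Xi(N)\cong\TP(S^0)$ makes $N$ a right $\TP(S^0)$-module, and the two commuting actions turn $N$ into the equivalence bimodule. Together with~\eqref{eq:39} this yields the asserted Morita equivalence $\Xi\xrightarrow{\ \sim\ }\TP(S^0)$.

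Given the machinery already in place, the lemma is short, and the only points needing care are bookkeeping rather than mathematics: one must be sure that the algebra structure on $\uE RN$ is genuinely the composition algebra on $\End_\Xi(N)$, so that~\eqref{eq:39} is an equivalence of tensor and not merely abelian categories, and that the left/right module conventions place the Morita dual on the correct side of the arrow. Both are settled by Corollary~\ref{thm:25} and the conventions of~\S\ref{subsubsec:2.3.1}, so no input beyond $\beta(+)\neq0$ and Lemma~\ref{thm:13} is required.
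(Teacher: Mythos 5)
Your argument is correct and is essentially the paper's: the paper likewise reduces the lemma to showing $N\neq 0$ (citing the faithfulness argument from the proof of Proposition~\ref{thm:12} together with Lemma~\ref{thm:13}), and establishes nonvanishing via $\beta(S^0)=\beta(+)\otimes\beta(-)$ plus the cobordism hypothesis, merely phrased in the contrapositive ($N=0\Rightarrow\beta(+)=0\Rightarrow\beta=0$) rather than in your forward direction. The extra details you supply (nonvanishing of $\beta(-)$ as a dual of an adjoint, nonvanishing of the Deligne--Kelly product of nonzero categories) are correct and only make explicit what the paper leaves implicit.
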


  \begin{proof} 
 As in the proof of Proposition~\ref{thm:12}, it suffices to show that $N$~is
nonzero.  But $\beta (S^0)=\beta (+)\otimes \beta (-)$, so if $N=0$ then so
too $\beta (+)=0$, and then the cobordism hypothesis would imply~$\beta =0$,
which contradicts the hypothesis in Theorem~\ref{thm:5} that $\beta $~is
nonzero.
  \end{proof}

  \begin{figure}[ht]
  \centering
  \includegraphics[scale=.6]{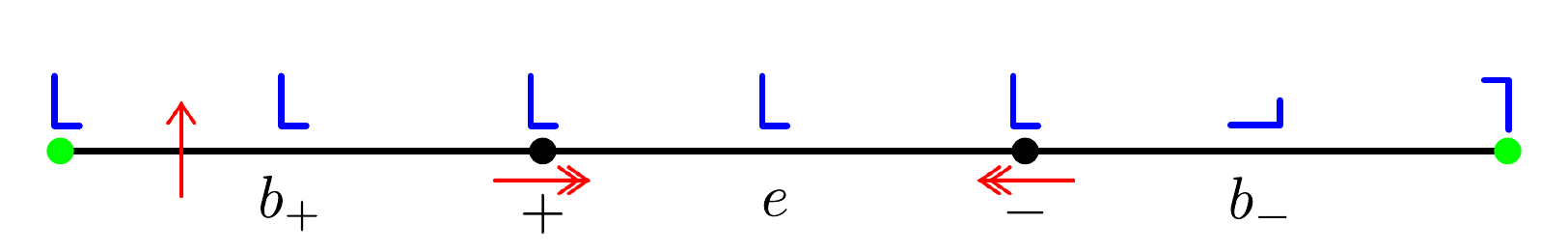}
  \vskip -.5pc
  \caption{The equivalence~\eqref{eq:17}}\label{fig:17} 
  \end{figure}

  \begin{lemma}[]\label{thm:15}
 There is an equivalence of categories 
  \begin{equation}\label{eq:17}
     \Phi \eqcat M\boxtimes \mstrut _{\,\Xi }N. 
  \end{equation}
  \end{lemma}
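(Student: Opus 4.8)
The plan is to identify both sides of \eqref{eq:17} as the $\eF$-image of a single 1-morphism in $\bftb$, obtained by gluing the bordisms whose $\eF$-images are $M$ and $N$. Recall from \eqref{eq:37} that $M$ is the $\eF$-image of the evaluation-type 1-morphism $e\:S^0\to\emptyset^0$ (precomposed with $\Xi\xrightarrow{\sim}F(S^0)$), and from \eqref{eq:38} that $N$ is the $\eF$-image of $\fpm=b_+\amalg b_-\:\emptyset^0\to S^0$ (postcomposed with $F(S^0)\xrightarrow{\sim}\Xi$), where $b_\pm$ carry the boundary coloring $\beta$. Meanwhile $\Phi$ is, by Definition~\ref{thm:14}, the $\eF$-image of $\uE R{b_+}=b_+^R\circ b_+$, a 1-morphism $\emptyset^0\to\emptyset^0$ colored with $\beta$ on both ends.

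First I would argue that, in $\bftb$, the composite $e\circ\fpm\:\emptyset^0\to\emptyset^0$ is isomorphic to $b_+^R\circ b_+$. Concretely, $\fpm$ sends the empty 0-manifold to $+\amalg-$ via two $\beta$-colored intervals $b_+$ and $b_-$, and $e$ caps off $+\amalg-$ with an uncolored ``cup''; gluing produces a single $\beta$-colored interval-with-a-fold, which one recognizes pictorially (cf.\ Figure~\ref{fig:11} for the analogous uncolored statement $\Sb\cong e\circ e^L$, together with the identification of $b_-$ as the dual of $b_+^R$ from Figure~\ref{fig:34}) as $b_+^R\circ b_+$. Since $e$ and $\fpm$ both land in $\FC\subset\sCfd$ as a \emph{full} subcategory, their $\eF$-images compose in $\FC$, and the relative Deligne--Kelly tensor product over $F(S^0)\cong\Xi$ computes that composition: $\eF(e)\circ_\Xi\eF(\fpm)=M\boxtimes_\Xi N$. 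On the other hand $\eF(e\circ\fpm)=\eF(b_+^R\circ b_+)=\Phi$ by Definition~\ref{thm:14}. Applying $\eF$ to the isomorphism $e\circ\fpm\cong b_+^R\circ b_+$ in $\bftb$ then yields the desired equivalence $\Phi\eqcat M\boxtimes_\Xi N$. (Here I am using that composition of 1-morphisms in $\FC$ is the relative tensor product, as recorded in Remark~\ref{thm:48}(3), and that $M$, $N$ are genuine 1-morphisms in $\FC$, as established in the proofs of Proposition~\ref{thm:12} and Lemma~\ref{thm:28}.)

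The main obstacle I anticipate is bookkeeping the 3-framings through the gluing: one must check that the 3-framing on the glued bordism $e\circ\fpm$ agrees, up to the contractible choices permitted by $\pi_1\SO_3\cong\zt$ and $\pi_2\SO_3=0$, with the standard 3-framing on $b_+^R\circ b_+$, and in particular that the fold introduced by $e$ does not produce an extra full twist that would change the $\eF$-image. This is exactly the kind of framing computation carried out in the proofs of Lemma~\ref{thm:18} (Figures~\ref{fig:28}, \ref{fig:32}) and in the discussion around \eqref{eq:12}; the relevant fact is that the half-turns contributed by the two colored legs $b_+,b_-$ cancel rather than add, just as in Figure~\ref{fig:11}. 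Once the framed identification $e\circ\fpm\cong b_+^R\circ b_+$ in $\bftb$ is in hand, the rest is formal: apply the symmetric monoidal functor $\eF$ and use Corollary~\ref{thm:25}/Remark~\ref{thm:48}(3) to read off the relative tensor product. Figure~\ref{fig:17} should depict precisely this glued bordism together with the framing, making the identification visually transparent.
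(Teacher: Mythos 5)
Your proposal is correct and follows essentially the same route as the paper: the key step in both is the framed identification $b_+^R\circ b_+\cong e\circ(b_-\amalg b_+)$ in $\bftb$ (the paper phrases it via the intermediate equivalence $b_+^R\cong e\circ b_-$, which you also invoke via Figure~\ref{fig:34}), followed by applying $\eF$ and reading off the composition in $\FC$ as the relative Deligne--Kelly tensor product. Your extra attention to the framing bookkeeping is a reasonable elaboration of what the paper disposes of by citing \S\ref{subsubsec:4.2.5} and inspection of Figures~\ref{fig:13}, \ref{fig:15}, and \ref{fig:17}.
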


  \begin{proof} 
 The 1-morphisms $b_+^R$ and $e\circ b_-$ are equivalent in $\bftb$, as
follows from \S\ref{subsubsec:4.2.5} or directly by inspection of
Figure~\ref{fig:13} and Figure~\ref{fig:17}.  Hence $b_+^R\circ b\mstrut _+$
is equivalent to $e\circ (b_-\amalg b_+)$; compare Figure~\ref{fig:15} and
Figure~\ref{fig:17}.  Apply~$\eF$ to deduce~\eqref{eq:17}.
  \end{proof}

Since the $\bigl(\Xi ,\TP(S^0) \bigr)$-bimodule~$N$ is invertible---by
Lemma~\ref{thm:28} it induces a Morita equivalence---from Lemma~\ref{thm:15}
we deduce that tensoring with~$\id_N$ induces a tensor equivalence
  \begin{equation}\label{eq:40}
     \alpha \:\End_\Xi (M)\xrightarrow{\;\;\boxtimes \id_N\;\;}
     \End_{\TP(S^0)}\bigl(\Phi \bigr).  
  \end{equation}
By~\eqref{eq:15} and~\eqref{eq:21} this is a tensor equivalence
  \begin{equation}\label{eq:23}
     \alpha \:\FSb\longrightarrow \TP(\Sb),
  \end{equation}
since $\TP(\Sb)$~is the Drinfeld center of~$\Phi$.  To complete the proof of
Theorem~\ref{thm:5} we need the following.

  \begin{lemma}[]\label{thm:29}
 $\alpha $~is a braided equivalence. 
  \end{lemma}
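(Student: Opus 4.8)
The plan is to show that the tensor equivalence $\alpha\colon F(S^1_b)\to \TP(S^1_b)$ of~\eqref{eq:23} respects the braidings by tracking how each braiding arises from a single geometric operation in the bordism multicategory---the ``half-twist'' or ``Lu-Moore-type'' braiding on the value of the theory on $\uE Le$, respectively $\uE R\beta$. The key point is that the equivalence $\alpha$ was built in~\eqref{eq:40} by tensoring with $\id_N$ over $\Xi$, using the equivalence $\Phi \eqcat M\boxtimes_\Xi N$ of Lemma~\ref{thm:15}, and this in turn came from the equivalence of $1$-morphisms $b_+^R\circ b_+ \simeq e\circ(b_-\amalg b_+)$ in $\bftb$. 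So $\alpha$ is the $\eF$-image of a \emph{geometric} equivalence of bordisms. Since the braiding on $F(S^1_b)$ is also the $\eF$-image of a geometric $2$-morphism---the one realizing $S^1_b\cong \uE Le = e\circ e^L$ together with the half-rotation of the framing that exchanges the two ``legs'' of $e$---it suffices to exhibit a corresponding $2$-morphism in $\bftb$ relating the braiding datum on $e\circ(b_-\amalg b_+)$ to that on $b_+^R\circ b_+$, compatibly with the identification $b_+^R\simeq e\circ b_-$.

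Concretely, I would proceed as follows. First, recall (as in Remark~\ref{thm:4} and the discussion around~\eqref{eq:12}) that the braiding on $\FSb\cong B\boxtimes B\rev$, the Drinfeld center of $B=F(S^1_b)$, is induced at the level of bordisms by the diffeomorphism of the ``pair of pants'' that rotates one input circle around the other; in the description $S^1_b = e\circ e^L$ this is implemented by the framing half-turn that interchanges the roles of the evaluation's two endpoints. Second, observe that the analogous braiding on $\TP(S^1_b)=Z(\Phi)=\End_{\TP(S^0)}(\Phi)$, under the identification~\eqref{eq:36} $\TP\cong \uE R\beta$, is the $\eF$-image of the corresponding framing half-turn on the colored bordism depicted in Figures~\ref{fig:15}--\ref{fig:16}; this is exactly the statement that $\beta(+)=b_+$ carries the structure making $\uE R{b_+}$ a braided algebra once we pass to the loop category. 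Third, I would check that the geometric equivalence $b_+^R\circ b_+ \simeq e\circ(b_-\amalg b_+)$ underlying Lemma~\ref{thm:15}, and its product with $\id$ of the relevant bordisms, intertwines these two framing half-turns: both are rotations in the $f_1$-$f_2$ plane of the same physical strands, and the equivalence of Figure~\ref{fig:13} with Figure~\ref{fig:17} is manifestly equivariant for that rotation. Applying $\eF$ and using Remark~\ref{thm:26} (symmetric monoidal functors preserve internal endomorphisms and their algebra structure, hence any braided refinement visible at the bordism level) then gives that $\alpha$ is braided.

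The main obstacle---and the place where real care is needed---is the bookkeeping of framings under the composite identification, since $\alpha$ is a chain of three equivalences ($\eqref{eq:15}$, $\eqref{eq:17}$, $\eqref{eq:40}$, $\eqref{eq:21}$) and the braiding is a ``delicate'' piece of structure that is sensitive to exactly the $\pi_1\SO_3\cong\zt$ ambiguity already flagged in the discussion of $e^L\cong e^R$. In particular one must verify that no stray full twist is introduced: the half-turns must be tracked through Figures~\ref{fig:28}, \ref{fig:32}, \ref{fig:33} (where, as noted there, half-turns on vertical colored edges cancel while those on colored half-circles cohere into full turns), and one must confirm that the full turns on the two sides match rather than differing by the nontrivial element of $\pi_1\SO_3$. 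I would handle this by reducing everything to a single master picture: draw the bordism computing $M\boxtimes_\Xi N$ with its framing, apply the braiding half-turn on the $S^1_b$-circle directly in that picture, and observe it is literally the same half-turn that computes the braiding on $\Phi$ via $\uE R\beta$, so that the two braided structures agree on the nose and $\alpha$ is braided. Granting the framing computations of Lemma~\ref{thm:18}, no further input is required, and Theorem~\ref{thm:5} follows.
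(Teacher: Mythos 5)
Your proposal takes a genuinely different route from the paper, and as written it has a real gap. The paper does not attempt to match up braiding half-turns picture by picture. Instead it doubles: the Morita equivalence $N\:\TP(S^0)\to \Xi $ of Lemma~\ref{thm:28} is an isomorphism between the \emph{point values} of the two doubled theories $\Fd$ and $\TPd$ (with $\Fd(+)=\Xi $, $\TPd(+)=\TP(S^0)$), so by the cobordism hypothesis it induces an isomorphism $\theta \:\Fd\to\TPd$ of field theories. The value $\theta (\Sb)$ is automatically a braided equivalence, one identifies $\theta (\Sb)$ with the doubled functor $\alpha ^d=-\boxtimes \id_{N\boxtimes N'}$, and then one restricts along $F(\Sb)\boxtimes 1\subset \Fd(\Sb)$ to conclude that $\alpha $ is braided. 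This maneuver is forced on you: under the hypotheses of Theorem~\ref{thm:5} alone, $F$ and $\TP$ are not yet known to be isomorphic theories (that is Theorem~\ref{thm:57}, which needs extra hypotheses), so $\alpha $ itself cannot be exhibited as the circle value of a theory isomorphism without first passing to the doubles.

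Two concrete problems with your plan. First, the braiding on $F(\Sb)$ is not a single $2$-morphism but a natural family indexed by all pairs of objects --- the full $E_2$-structure coming from the operad of framed disk embeddings --- so verifying the braided condition in ``a single master picture'' does not verify it for general objects; what makes the paper's argument work is functoriality (an isomorphism of field theories preserves \emph{all} of this structure at once), not a framing computation. Second, $\alpha $ is defined in~\eqref{eq:40} as $-\boxtimes \id_N$, an algebraic operation in the target $3$-category; it is not the $\eF$-image of any bordism, so there is nothing for your geometric equivalence of Figures~\ref{fig:13} and~\ref{fig:17} to be ``equivariant'' against until you have converted $\alpha $ (or rather its double) into the value of a natural transformation of functors out of $\bft$. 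Your appeal to Remark~\ref{thm:26} does not supply this: that remark concerns preservation of internal homs, not of braidings. You correctly identify the $\pi _1\SO_3$ bookkeeping as delicate, but the harder missing step is structural rather than a framing count.
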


  \begin{proof}
 We have already proved that $\alpha $~is a tensor equivalence; it remains to
verify the \emph{condition} that $\alpha $~preserve the braiding.
Lemma~\ref{thm:28} states that the $\bigl(\Xi ,\TP(S^0) \bigr)$-bimodule~$N$
induces an isomorphism\footnote{Our conventions are that $N$~is a 1-morphism
$\TP(S^0)\to \Xi $, but in this proof we use categories of \emph{right} modules
rather than left modules, and so the convention applies oppositely.}
$\Xi \to\TP(S^0)$ in~$\FC$.  By the cobordism hypothesis this induces an
isomorphism 
  \begin{equation}\label{eq:79}
     \theta \:\Fd\xrightarrow{\;\;\sim \;\;}\TPd 
  \end{equation}
of topological field theories $\bft\to\FC$, where $\Fd(+)=\Xi $ and
$\TPd(+)=\TP(S^0)\cong \Phi \boxtimes\Phi \mo$.  ($\Fd$~is essentially the double
theory of Remark~\ref{thm:4}.)  Let $\eFd\:\bftb\to\FC$ be the extension
of~$\Fd$ with boundary theory~$\bd$ characterized by 
  \begin{equation}\label{eq:80}
     \bd(+)\:1\xrightarrow{\;\;\beta
     (S^0)\;\;}F(S^0)\xrightarrow{\;\;\sim\;\;}\Xi . 
  \end{equation}
Repeat the arguments above for~$\eFd$: introduce $\Xi ^d=\Xi \boxtimes \Xi
\mo$, the right $\Xi ^d$-module $M^d=F^d(e)$, the left $\Xi ^d$-module
$N^d=\beta ^d(S^0)$, and the tensor equivalence
  \begin{equation}\label{eq:81}
     \alpha ^d\:\Fd(\Sb)\xrightarrow{\;\;\boxtimes \id_{N^d}\;\;}
     \TPd(\Sb). 
  \end{equation}
We have $M^d=M\boxtimes M'$ and $N^d=N\boxtimes N'$, where the primed
$\Xi \mo$-modules are computed in the theory whose value on~$+$ is~$F(-)$.
Hence in the doubled theories \eqref{eq:17}~becomes 
  \begin{equation}\label{eq:82}
     \Phi \boxtimes \Phi \mo\eqcat (M\boxtimes M')\;\boxtimes _{\,\Xi \,\boxtimes
     \,\Xi \mo}\; (N\boxtimes N'), 
  \end{equation}
and $\alpha ^d$~tensors with~$\id\mstrut _{N\boxtimes N'}$.  Since tensoring
with~$N$ is the isomorphism~\eqref{eq:79} of theories, it follows that the
induced map on Drinfeld centers is~\eqref{eq:81}, i.e., $\alpha ^d=\theta
(\Sb)$.  Therefore, $\alpha ^d$~is a \emph{braided} tensor functor, and then
so too is its restriction to
  \begin{equation}\label{eq:83}
     F(\Sb) = F(\Sb)\boxtimes 1\;\subset \;F(\Sb)\boxtimes
     F(\Sb)^{\textnormal{rev}}\;\subset \;\Fd(\Sb). 
  \end{equation}
This completes the proof of Lemma~\ref{thm:29}, and so too of
Theorem~\ref{thm:5}. 
  \end{proof}

  \subsection{Proof of Theorem~\ref{thm:57}}\label{subsec:7.2}

For the remainder of this section we put in force the stronger hypotheses
that $\sC$~is fusion tensor cocomplete.  This allows the following relative
composition in terms of the notation of Definition~\ref{thm:69}.  If $H\in
\sC(y,z)$ is a right $\Phi $-module, then define 
  \begin{equation}\label{eq:93}
     H\ciF M := (H\circ M)\;\boxtimes_{\Phi \boxtimes \Phi \mo}\,\Phi . 
  \end{equation}

  \begin{proof}[Proof of Theorem~\ref{thm:57}]
 By the preceding it suffices to prove that $F$~is isomorphic to~$\TP $, and
by the cobordism hypothesis it suffices to construct an isomorphism
$F(+)\to\Phi $ in~$\sC$.  Consider 
  \begin{equation}\label{eq:84}
     \Phi \xrightarrow{\;\;\FR\;\;}1\xrightarrow{\;\;\bp\;\;}F\p, 
  \end{equation}
where $\FR$~is the regular right $\Phi $-module.  The fusion category $\Phi
=\bp^R\circ \bp$ acts on~$\FR$ on the left and on~$\bp$ on the right.  Define
$g\:\Phi \to F\p$ and $h\:F\p\to\Phi $ as 
  \begin{align}
     g &= \bp\ciF\FR \label{eq:85}\\
     h &= \FL\ciF\bp^R. \label{eq:86}
  \end{align}
We claim that $g$~and $h$~are inverse isomorphisms.  First,  
  \begin{equation}\label{eq:87}
     \begin{aligned} h\circ g &= \FL\ciF\bp^R\;\,\circ \;\,\bp\ciF\FR \\ &=
      \FL\ciF\Phi \ciF\FR \\ &=\FL_\Phi \end{aligned} 
  \end{equation}
is the $(\Phi ,\Phi )$-bimodule which represents $\id_{\Phi }$.  In the other
direction, 
  \begin{equation}\label{eq:88}
     \begin{aligned} g\circ h &= \bp\ciF\FR\;\,\circ \;\,\FL\ciF\bp^R \\
      &=\bp\ciF\bp^R\end{aligned} 
  \end{equation}
as an endomorphism of~$F\p$.   Dualize~$\bp^R$ to transpose~$g\circ h$ to a
1-morphism 
  \begin{equation}\label{eq:89}
     \ghT\:1\longrightarrow F\p\otimes F\m=F(S^0),
  \end{equation}
and since the dual to~$\bp^R$ is~$\beta(-)$ we find
  \begin{equation}\label{eq:90}
     \ghT = \beta (S^0)\boxtimes \mstrut _{\Phi \boxtimes \Phi \mo}\Phi .
  \end{equation}
Now $g\circ h=\id_{F\p}$ if and only if $\ghT$~is the coevaluation of a
duality pairing between~$F\p$ and~$F\m$.  Recall the coevaluation 1-morphism
$c$ in $\bft$ (Figure~\ref{fig:8}) which in the theory~ $\TP$ evaluates to
$\TP(c)=\Phi \:1\to \Phi \boxtimes\Phi \mo$.  Also, $\beta (S^0)=\eF(\fpm)$
is essentially the module~$N$ in~\eqref{eq:38}.  Thus rewrite~$\ghT$ as the
composition 
  \begin{equation}\label{eq:91}
     \ghT\:1\xrightarrow{\;\;\TP(c)\;\;}\TP(S^0)\xrightarrow
     {\;\;\;N\;\;\;}\Xi \xrightarrow[\;\;\phantom{N}\;\;]{\sim } F(S^0).  
  \end{equation}
Note Lemma~\ref{thm:28} implies that $N\:\TP(S^0)\to \Xi $~is an isomorphism.
Therefore, \eqref{eq:90}~is the desired coevaluation map and so $g\circ
h=\id_{F(+)}$. 
  \end{proof}

   \section{Application to physics\medskip}\label{sec:3}

A quantum mechanical system~$S$ is \emph{gapped} if its minimum energy is an
eigenvalue of finite multiplicity of the Hamiltonian, assumed bounded below,
and is an isolated point of the spectrum.  This notion generalizes to a
relativistic quantum field theory if we understand `spectrum' to mean the
spectrum of representations of the translation group of Minkowski spacetime.
A basic question:
  \begin{equation}\label{eq:8}
     \textnormal{Does a gapped system~$S$ admit a gapped boundary theory?} 
  \end{equation}
We argue heuristically that Theorems~\ref{thm:5} and~\ref{thm:57} gives an
obstruction for certain $(2+1)$-dimensional systems.  We remark that the
chiral WZW~model is a gapless boundary theory for Chern-Simons
theory~\cite{W2}, so at least for these systems a \emph{gapless} boundary
theory exists.
 
We reduce~\eqref{eq:8} to a question in topological field theory by
application of the following two heuristic physics principles:
 
\smallskip
      \begin{enumerate}

 \item the phase of a quantum system is determined by its low energy
behavior;
	
 \item the low energy physics of a \emph{gapped} quantum system is
well-approximated by a \tstar\ field theory.

      \end{enumerate}
\smallskip\noindent 
 For now we ignore the `${}^*$' in `\tstar'.  Principle~(1) seems
incontrovertible, though unproved, whereas (2)~is more problematic.  For
example, certain ``fracton'' lattice systems seem to have no continuum limit
as a standard field theory.
Nonetheless, (2)~appears to hold in many important cases; we simply assume it
here.  Applying these principles to both the bulk and boundary systems, the
general problem~\eqref{eq:8} reduces to a question in topological field
theory: Does a topological field theory~$F$ admit a nonzero boundary
theory~$\beta $?  If not, then the answer to~\eqref{eq:8} is `no'.  If the
topological field theory~$F$ does admit a nonzero boundary theory~$\beta $,
then we need a converse to~(2) to construct a gapped boundary theory.

  \begin{remark}[]\label{thm:7}
 We suspect that the answer to~\eqref{eq:8} depends only on the \emph{phase}
of~$S$, that is, its path component in a putative moduli stack of gapped
systems.
  \end{remark}

We now explain the `${}^*$' in `\tstar' by means of an example that is a main
focus of interest.  The starting point is a quantum field theory, though one
can imagine a lattice model in its place.  Namely, let $S$~be
$(2+1)$-dimensional Yang-Mills theory with a nondegenerate Chern-Simons term.
The latter gives the gauge field a mass, which means that the system is
gapped.  Its low energy physics is thought to be well-approximated by a pure
Chern-Simons theory~$\Gamma $.  Observe that $S$~in its Wick-rotated form is
a theory of manifolds equipped with an orientation and Riemannian metric.  In
other words, it is a functor on a \emph{geometric} bordism category of
oriented Riemannian manifolds.  The naive expectation is that~$\Gamma $ ~is a
functor on the same bordism category, and this is the case.  In fact, as
discussed by Witten~\cite[\S2]{W1}, the dependence on the Riemannian metric
is mild: ``locally'' $\Gamma $~is the tensor product of a \emph{topological}
field theory~$F$ and an \emph{invertible} non-topological field theory~$\ac
$, where $c\in \RR$ is the central charge.  More precisely, the pullback
of~$\Gamma $ to the bordism category of 3-framed Riemannian manifolds splits
as $\Gamma \cong F\otimes \ac $.  The theory~$F$ is topological---it does not
depend on a Riemannian metric.  It is an example of an RT theory as described
in~\S\ref{sec:1}.  The invertible dependence of~$\Gamma $ on the metric
through~$\ac $ is the `${}^*$' in `\tstar'.
 
The invertible theory~$\ao$ descends to a theory~$\bao$ with domain the
bordism category of \emph{oriented} Riemannian manifolds.  Its partition
function on a closed oriented Riemannian 3-manifold~$X$ is the exponentiated
$\eta $-invariant $\exp(2\pi i\eta _X/2)$, where $\eta _X\in \RR/2\ZZ$ is the
secondary invariant associated to the signature operator~\cite{APS}.  The
deformation class of~$\bao$ is a generator of the abelian group $[MTSO,\Sigma
^4I\ZZ]$ of invertible theories, and at least conjecturally it can
be constructed using generalized differential cohomology.  (We refer
to~\cite{FH,F2} for notation and details.)  The deformation class of the
lift~$\ao\mstrut $ of~$\bao$ to 3-framed manifolds vanishes, since 
  \begin{equation}\label{eq:9}
     [MTSO,\Sigma ^4I\ZZ]\longrightarrow [S^0,\Sigma ^4I\ZZ] 
  \end{equation}
is the zero map.  In terms of the differential cohomology construction, the
equivalence class of~$\ao$ belongs to the subgroup of topologically trivial
theories, so is defined by a universal 3-form: one-third the ``gravitational
Chern-Simons term''.  Then for any~$c\in \RR$, the family $\alpha
_{tc},\;0\le t\le1$, is an explicit deformation of the trivial theory
to~$\ac$.  Put differently, it is a ``nonflat trivialization'' $\beta
\:1\xrightarrow{\;\cong \;} \tau \mstrut _{\le2}\ac$ of the truncation~$\tau
\mstrut _{\le2}\ac$.\footnote{An example of a nonflat trivialization is a
not-necessarily-flat section of a circle bundle with connection.  The notion
of nonflat trivialization should be part of an axiomatization of
\emph{families} of field theories.}  In other words, $\ac$~is equipped with a
boundary theory; compare~\eqref{eq:7}.  Therefore, \tstar\ boundary theories
for~$\Gamma $ correspond to topological boundary theories for~$F$.
 
Theorems~\ref{thm:5} and~\ref{thm:57} give an obstruction to the existence of
a nonzero topological boundary theory for~$F$: the theory~$F$ must be of
Turaev-Viro type.  If not, then the heuristics in this section suggest that
there are no gapped boundary theories for Yang-Mills plus Chern-Simons, nor
for a lattice system meant to represent the same phase.  It would be
interesting to construct a gapped boundary theory for Yang-Mills plus
Chern-Simons in case $F$~is of Turaev-Viro type.

  \begin{remark}[]\label{thm:6}
 One implicit assumption in Principle~(2) is that a gapped quantum system
exhibits relativistic invariance in the long-range approximation.  The
Wick-rotated manifestation is the fact that the domain bordism category is
made from manifolds whose tangential structure does not break~$O_3$ further
than the subgroup~$SO_3$.  In particular, a 3-framing breaks relativistic
invariance.  Here the 3-framing is introduced to isolate the metric
dependence of~$\Gamma $ to the invertible theory~$\ac $; the physically
relevant theory has $\SO_3$-invariance.
  \end{remark}

\appendix

   \section{Bordism Multicategories}\label{sec:4}

In this appendix we give the precise definitions behind the descriptions
in~\S\ref{subsec:2.1} and the pictures throughout~\S\ref{sec:2}
and~\S\ref{sec:7}.  Complete constructions of the bordism multicategory
appear in~\cite{CS,AF} among other references.  In these approaches an object
or morphism is equipped with a \emph{global} map to a cube or stratified
ball, and this data is used to define composition laws.  Our limited goal
here is to define objects and morphisms in~$\Bord_n$ with minimal data
\emph{localized at the boundary}; they too lead to composition laws, though
we do not pursue the latter.\footnote{Nor do we specify collaring data which
would give a smooth structure on compositions.}  Underlying a bordism is a
\emph{manifold with corners}, so we begin with a quick review
in~\S\ref{subsec:4.1}.  Then in~\S\ref{subsec:4.2} we specify the additional
data required for a morphism in the bordism multicategory.  A similar
discussion is in~\cite[\S8.1]{CS}, based in part on~\cite{La}.  We
incorporate ``colored boundaries''---morphisms in~$\Bord_{n,\partial
}$---in~\S\ref{subsec:4.3}.

  \begin{remark}[Some conventions]\label{thm:77}
 In this paper we use \emph{topological} bordism multicategories, but we take
inspiration from \emph{geometric} bordism multicategories.  In a geometric
bordism category---the domain of a \emph{non-topological} field theory---a
$k$-morphism is a $k$-dimensional compact manifold~$X$ with corners which
comes equipped with embeddings
  \begin{equation}\label{eq:94}
     X^{(0)}\supset X^{(1)}\supset \dots \supset X^{(n-k)}=X, 
  \end{equation}
where $\dim X^{(i)}=n-i$ and $X^{(0)},\dots ,X^{(n-k-1)}$ are \emph{germs} of
smooth manifolds.  The successive normal line bundles are oriented: these
orientations are ``arrows of time''.  The trivial line bundles in the
stabilization~\eqref{eq:52} are what remains of this structure in the
\emph{topological} bordism category, and their standard orientations are the
remnants of the arrows of time into the germs.\footnote{We could instead
specify a completion of~$TX$ to a flag: $TX\subset E^{(n-k+1)}\subset \dots
\subset E^{(n)}$ and orient the successive quotient line bundles, but we opt
for the simpler stabilization~\eqref{eq:52}.}  Our indexing of these trivial
line bundles is $-1,-2,\dots ,-(n-k)$, reading from left to right
in~\eqref{eq:52} and use the standard orientations.  
 
A word about `time' and `space'.  In topological field theory, which is
modeled on \emph{Wick-rotated} field theory, there is no notion of time
versus space: the passage from Lorentz geometry to Euclidean geometry
discards the unique time dimension in favor of an additional space dimension.
Still, the codimension~1 boundary of an $n$-manifold in~$\Bord_n$ plays the
role of a spatial slice, hence its normal bundle can reasonably be said to
represent time, for example as depicted by the arrows in Figure~\ref{fig:1}.
In higher codimension, for example the double-headed arrows in
Figure~\ref{fig:2}, the interpretation as a ``time'' is only figurative.  

Our convention is to order line decompositions of the inflated tangent bundle
by codimension from the top dimension of the theory, in order of increasing
codimension, and we use the labels $-1, -2,\dots ,-n$ for the summands.
(See~\S\ref{subsubsec:4.2.2} and~\S\ref{subsubsec:4.2.3}.)
\emph{Heuristically}, the first direction is ``temporal'' and the remaining
$n-1$~directions are ``spatial''.

In the main text we embed $\btf\to \bft$, which facilitates the pictures;
see~\S\ref{subsubsec:4.2.6}.  As far as we know, this does not correspond to
anything in physics; it is a convenient mathematical device.  The direction
we add is ``temporal'' in our conventions, but again that choice has no
physical meaning.  By contrast, dimensional reduction---say, along a
circle---is effected via a map
$\Bord_{n-1}\xrightarrow{\;\times\cir\;}\Omega\!  \Bord_n$, and this
Cartesian product map is easily checked to be compatible with our labeling
conventions.

In~\S\ref{subsec:4.3} we bring in ``colored'' boundaries.  They model
boundaries in space, not boundaries in time, and so the transverse direction
is ``spatial''; see~\eqref{eq:97}. 
  \end{remark}

  \subsection{{Manifolds with corners}}\label{subsec:4.1}

There are several definitions and a long history of the subject of manifolds
with corners, both of which are reviewed in Joyce~\cite{J}.  He develops the
theory in detail, and we defer to his paper and the references therein for
details. 
 
Fix~$k\in \ZZ^{\ge0}$.  A neighborhood of a point in a smooth $k$-manifold is
modeled by an open set in real affine space~$\AA^k$.  Similarly, a
neighborhood of a point in a $k$-manifold with corners is modeled by an open
set in
  \begin{equation}\label{eq:43}
     \Alez = \left\{ (x^1,\dots ,x^k)\in \AA^k: x^i\le0 \right\} . 
  \end{equation}
The usual notions of chart and atlas generalize accordingly.  A point
$x=(x^1,\dots ,x^k)\in \Alez$ has \emph{depth}~$j\in \ZZ^{\ge0}$ if precisely
$j$~of its coordinates vanish.  The depth is invariant under diffeomorphism
of open sets in~$\Alez$, so is a well-defined function 
  \begin{equation}\label{eq:44}
     \textnormal{depth}\:M\longrightarrow \ZZ^{\ge0} 
  \end{equation}
on a manifold~$M$ with corners.  For~$j\in \{0,\dots ,k\}$, let $\mM_{-j}\subset
M$ denote the $(k-j)$-manifold of points in~$M$ of depth~$j$, and let
$M_{-j}\subset M$ be the closure of~$\mM_{-j}$.  If the maximum value
of~\eqref{eq:44} is~$d\in \{0,\dots ,k\}$, we say $M$~is a \emph{manifold
with corners of depth~$\le d$} and we call~$d$ the \emph{depth} of~$M$.  If
$d=1$, then $M$~is a \emph{manifold with boundary}.  There is a canonical
filtering and partition
  \begin{equation}\label{eq:45}
     \begin{aligned} M &= M_0\supset M_{-1}\supset \cdots\supset M_{-d} \\ &=
      \mM_0\;\amalg\, \mM_{-1}\;\amalg\, \cdots\amalg\, \mM_{-d}\end{aligned} 
  \end{equation}
A \emph{face} of~$M$ is the closure of a component of~$\mM_{-1}$.

The tangent space~$T_mM$ to~$M$ at~$m\in M$ is a $k$-dimensional real vector
space.  If $m$~has depth~$j$, then there are $j$ transverse hyperplanes
$H_1,\dots ,H_j\subset T_mM$ and orientations of the lines~$T_mM/H_i$: the
positively oriented direction leads out of~$M$.
 
Variant definitions of `manifold with corners' include global constraints
and/or data in addition to the local normal form.  For example, one might
require that every point of depth~$j$ lie in $j$~distinct faces.  The bigon
in Figure~\ref{fig:21} satisfies this condition, whereas the teardrop does
not.  There are more stringent possible global specifications;
see~\cite[Remark~2.11]{J} and the references therein.  The extra data we
introduce in~\S\ref{subsec:4.2} to define a morphism in a bordism
multicategory endows the underlying manifold with corners with the
data/constraints to be of these more restricted types.

  \begin{figure}[ht]
  \centering
  \includegraphics[scale=1]{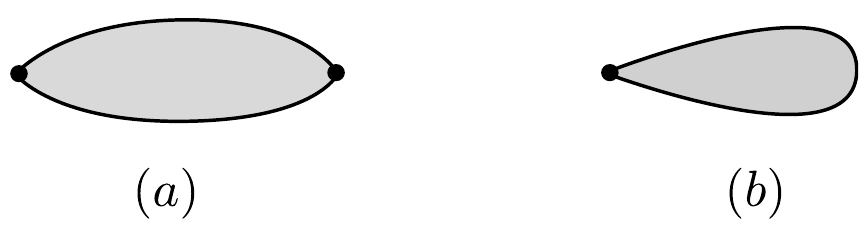}
  \vskip -.5pc
  \caption{A bigon~($a$) and a teardrop~($b$)}\label{fig:21}
  \end{figure}

There are two distinct notions of the boundary of a manifold with corners.
For our purposes we define $\partial M=M_{-1}$ as the closed subset of points
of positive depth.  This is not generally a manifold with corners, as
Figure~\ref{fig:21} illustrates.  However, there is a ``blow up'' which
surjects onto~$\partial M$ and which is a manifold with corners;
see~\cite[Definition~2.6]{J}.

  \subsection{$k$-morphisms in~$\Bord_n$}\label{subsec:4.2}

   \subsubsection{The definition}\label{subsubsec:4.2.1}
 Fix~$n\in \ZZ^{>0}$.  For $k\in \{0,\dots ,n\}$ we specify the data of a
$k$-morphism in~$\Bord_n$.  (For~$k=0$ it is an object in~$\Bord_n$.)
Tangential structures are introduced in~\S\ref{subsubsec:4.2.4}.

  \begin{definition}[]\label{thm:37}
 Fix~$n,k$ as above and suppose $d\in \{0,\dots ,k\}$.  Let $X$~be a compact
$k$-dimensional manifold with corners of depth~$\le d$.  The data of a
\emph{$k$-morphism of depth~$d$} on~$X$ are:

 \begin{enumerate}[label=\textnormal{(\roman*)}]

 \item if~$d\ge1$, closed $(k-d)$-manifolds $\Xz d,\Xo d$, not both empty;

 \item if~$d\ge2$, recursively for $j=d-1,d-2,\dots ,1$ compact
$(k-j)$-manifolds $\Xz j,\Xo j$ with corners of depth~$\le d-j$ equipped with
diffeomorphisms
  \begin{equation}\label{eq:46}
     \varphi ^\delta _{-j}\:\,\Xz{(j+1)}\;\;\cup\;\; [0,1]\times \left\{
     \Xz{(j+2)}\amalg 
     \Xo{(j+2)} \right\} \;\;\cup\;\; \Xo{(j+1)}\longrightarrow \partial (\Xd
     j),\qquad\delta \in \{0,1\}, 
  \end{equation}
where the unions are along $\{0\}\times \{ \Xz{(j+2)}\amalg \Xo{(j+2)}
\}$ and $\{1\}\times \{ \Xz{(j+2)}\amalg \Xo{(j+2)} \}$,
respectively;

 \item if~$d\ge1$, a diffeomorphism 
  \begin{equation}\label{eq:47}
     \varphi \mstrut _0\:\,\Xz{1}\;\;\cup\;\; [0,1]\times \left\{
     \Xz{2}\amalg \Xo{2}\right\} \;\;\cup\;\; \Xo{1}\longrightarrow \partial X,
  \end{equation}
where the unions are along $\{0\}\times \{ \Xz{2}\amalg \Xo{2} \}$ and
$\{1\}\times \{ \Xz{2}\amalg \Xo{2} \}$, respectively.
 \end{enumerate}
  \end{definition}

  \begin{figure}[ht]
  \centering
  \includegraphics[scale=.6]{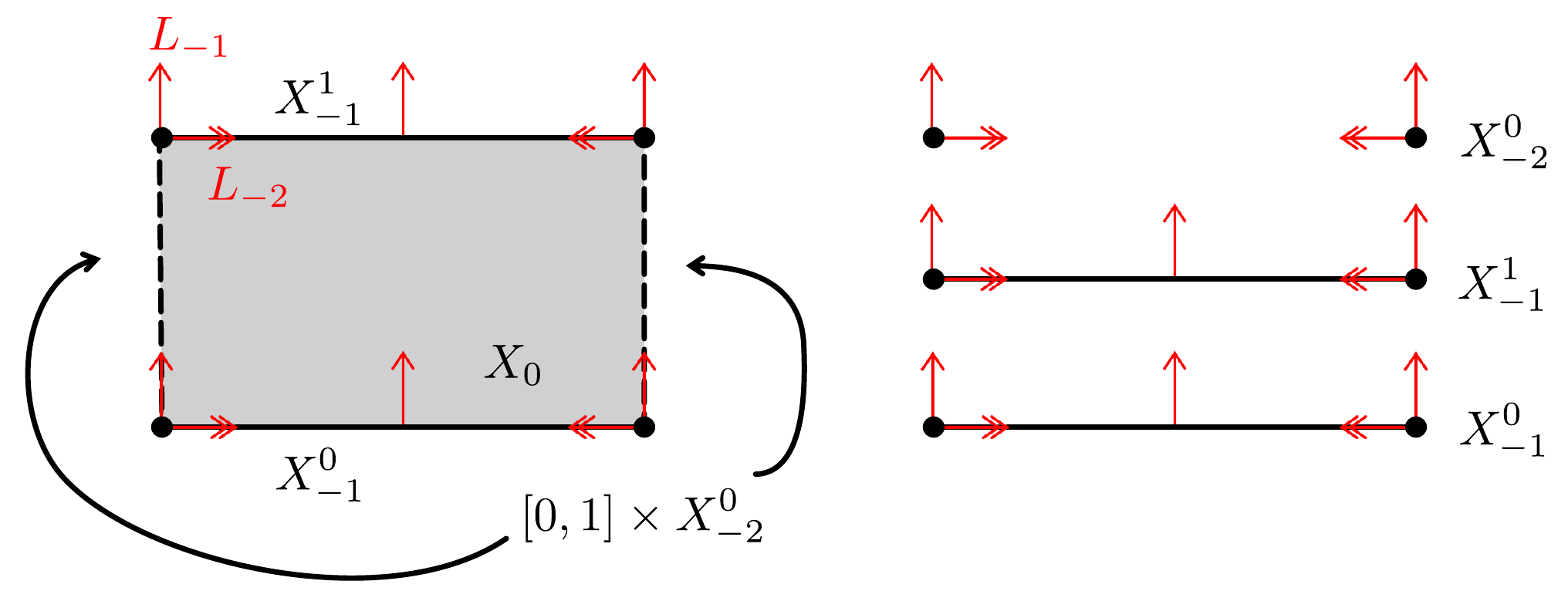}
  \vskip -.5pc
  \caption{A 2-morphism of depth 2}\label{fig:22}
  \end{figure}

  \begin{remark}[]\label{thm:38}
\ 
 \begin{enumerate}

 \item See Figure~\ref{fig:22} for an example of a 2-morphism of depth~2.  In
that example $X^0_{-2}$~consists of two points, $X^1_{-2}=\emptyset ^0$~is
the empty 0-manifold, and $X^0_{-1}\approx X^1_{-1}$~are closed intervals.

 \item To interpret~\eqref{eq:46} for~$j=d-1$, set $\Xd{(d+1)}=\emptyset $,
$\delta \in \{0,1\}$.

 \item In the categorical interpretation, $X$~is a $k$-morphism with source
and target the empty $i$-manifold~$\emptyset ^i$, $i\in \{0,\dots ,k-d-1\}$;
and source and target $i$-morphisms $\Xz{(k-i)},\Xo{(k-i)}$, respectively,
$i\in \{k-d,\dots ,k-1\}$.

 \item If $d\ge1$, the embeddings~$\varphi ^\delta _{-j},\varphi \mstrut _0$,
$j\in \{1,\dots ,d-1\}$, $\delta \in \{0,1\}$, combine to embeddings
  \begin{equation}\label{eq:48}
     \psi ^\delta _{-j}\:[0,1]^{j-1}\times \Xd j\longrightarrow \partial X,\qquad j\in \{1,\dots ,d\},\quad \delta \in \{0,1\}. 
  \end{equation}
For $j\in \{1,\dots ,d-1\}$ let $\mps$ denote the restriction of~$\psi ^\delta
_{-j}$ to $[0,1]^{j-1}\times \mX^\delta _{-j}$, and set $\mathring{\psi }^\delta
_d=\psi ^\delta _d$.  Then $\partial X$~is the disjoint union of the images
of~$\mps$, $j\in \{1,\dots ,d\}$, $\delta \in \{0,1\}$.  Heuristically, the
bordism is ``constant'' on $\mps\bigl([0,1]^{j-1}\times \{x\}\bigr)$,
$x\in \mX^\delta_{-j}$. 

 \item The pictures in~\S\ref{sec:2} and~\S\ref{sec:7} are of 1-~and
2-morphisms of various depths.  The images of the embeddings~\eqref{eq:48}
for~$j=2$ are depicted as dashed edges, as described
in~\S\ref{subsubsec:2.1.5}.

 \end{enumerate}
  \end{remark}

   \subsubsection{The tangent filtration}\label{subsubsec:4.2.2}
 The structure described in Definition~\ref{thm:37} has a tangential
implication.  Namely, let $X$~be a $k$-morphism of depth~$d$, and suppose
$x\in \bX$.  Choose the unique~$j,\delta $ and $t^1,\dots ,t^{j-1}\in [0,1]$,
$\mx\in \mX^\delta _{-j}$ such that $x=\mps(t^1,\dots ,t^{j-1};\mx)$.  Then
$T_xX$~has a decreasing filtration
  \begin{equation}\label{eq:49}
     T\mstrut _xX=T_{x,0}X\supset T_{x,-1}X\supset \cdots\supset
     T_{x,{-j}}X=T\mstrut _{\mx}\mX\mstrut _{-j} 
  \end{equation}
in which 
  \begin{equation}\label{eq:50}
     T_{x,-i}X = d\mps\bigl(0^{i-1}\oplus \RR^{j-i}\oplus
     T_{\mx}\mX_{-j}\bigr),\qquad i\in \{1,\dots ,j-1\}. 
  \end{equation}
The associated graded is a sum of real lines, which we number by codimension
\emph{in the theory},\footnote{By contrast, subscripts in~$X^\delta _{-j}$
and~$T\mstrut _{x,-i}X$ are codimensions \emph{in}~$X$, so count down
from~$k=\dim X$.}  i.e., count down from~$n$:
  \begin{equation}\label{eq:51}
     \Lx{n-k+1}\oplus \cdots\oplus \Lx{n-k+j}.
  \end{equation}
Orient~$\Lx{n-k+j}$ so that the positive direction leads into~$X$ if~$\delta
=0$ (incoming/source morphism) and leads out of~$X$ if~$\delta =1$
(outgoing/target morphism).  Orient $\Lx{n-k+i}$, $i\in \{1,\dots ,j-1\}$, so
that the positive direction points towards increasing~$t^{j-i}$.  The
orientations are constant over the image of~$\mps$.  Moreover,
Definition~\ref{thm:37}(ii) and~(iii) ensure that the orientations are
consistent as we move among the images of the various~$\mps$.

   \subsubsection{The inflated tangent bundle of a $k$-morphism}\label{subsubsec:4.2.3}
 Define the ``inflated tangent bundle'' 
  \begin{equation}\label{eq:52}
     \tT X=\underbrace{\uR\oplus \cdots\oplus \uR}_{\textnormal{$n-k$
     times}}\;\oplus \;TX\longrightarrow      X, 
  \end{equation}
where $\uR\to X$ is the constant line bundle with fiber~$\RR$.  The
orientations of the line bundles in~\eqref{eq:51} and the standard
orientations on the $n-k$ trivial line bundles in~\eqref{eq:52} are the
``arrows of time'' discussed in~\S\ref{subsubsec:2.1.1}.  We label them by
codimension: $-1,-2,\dots ,-(n-k)$.

  \begin{remark}[]\label{thm:39}
 \ 
 \begin{enumerate}

 \item For the 2-morphisms of various depths depicted in Figure~\ref{fig:22}
and in~\S\S\ref{sec:2},\ref{sec:7}, in~$\Bord_2$ the single-headed arrows
correspond to codimension~$i=1$ and the double-headed arrows correspond to
codimension~$i=2$.  In~$\Bord_3$ the codimensions should be shifted to $i=2$
and~$i=3$.

 \item Fix $i\in \{1,\dots ,n\}$.  Then the $i^{\textnormal{th}}$~duality of
the $O(1)^{\times n}$-action discussed in~\S\ref{subsubsec:2.1.6} acts
trivially if~$i\le n-k$ and exchanges~$X^0_{n-k-i}$ and $X^1_{n-k-i}$
if~$i>n-k$.  If there is a tangential structure (\S\ref{subsubsec:4.2.4}),
then for~$i\le n-k$ the tangential structure is pulled back under the
reflection in the inflated tangent bundle~\eqref{eq:52} which reverses the
sign on the summand $\uR_{-i}$.

 \item In Figure~\ref{fig:22} the arrows of time on~$X^0_{-2}$, drawn on the
upper right of the figure, carry no meaning; they merely embed the trivial
lines in~\eqref{eq:52} in the plane of the figure.  Similarly for the
single-headed arrow of time in~$X^\delta _{-1}$, $\delta \in \{0,1\}$ on the
right hand side of Figure~\ref{fig:22}.

 \item Each $\Xd j$, $j\in \{1,\dots ,d\}$, $\delta \in \{0,1\}$, has the
structure of a $(k-j)$-morphism~$Y$ of depth~$d-j$ with $Y^\epsilon
_{-i}=X^\epsilon _{-(j+i)}$, $i\in \{1,\dots ,d-j\}$, $\epsilon \in \{0,1\}$.

 \end{enumerate} 
  \end{remark}

   \subsubsection{Tangential structures}\label{subsubsec:4.2.4}
 Let $\rho _n\:\sX_n\to \BGL$ be a continuous map.  The choice of classifying
map $X\to\BGL$ for the inflated tangent bundle $\tT X\to X$ of a
$k$-dimensional manifold~$X$ with corners, $k\le n$, is a contractible choice
we assume given.  Then a \emph{tangential structure of type}~$\rho _n$
\emph{on}~$X$ is a lift of that classifying map to~$\sX_n$.  We can use rigid
models instead, such as for orientations, spin structures, or $n$-framings.
An isomorphism $X'\to X$ of manifolds with corners is a diffeomorphism $\Phi
\:X'\to X$ together with a linear isomorphism $\tT X'\to\Phi ^*\tT X$ and a
homotopy of the tangential structure on~$X'$ to the pullback of the
tangential structure on ~$X$.  If rigid models are employed, the homotopy may
be replaced by a more rigid alternative, which may be a combination of
conditions and data.
 
There is a variant of Definition~\ref{thm:37} for tangential structures of
type~$\rho _n$: each manifold~$X^\delta _{-j}$ with corners is equipped with
a tangential structure of type~$\rho _n$ and the diffeomorphisms $\varphi
^\delta _{-j},\varphi \mstrut _0$ are lifted to isomorphisms in the sense of
the previous paragraph.  The tangential structure on $[0,1]\times Y$ is taken
to be that on~$Y$, extended to be constant along the $[0,1]$-direction.  An
isomorphism $\Phi \:X'\to X$ of $k$-morphisms of depth~$d$ is an isomorphism
$X'\to X$ of manifolds with corners and tangential structures, and a
collection of isomorphisms $(X')^\delta _{-j}\to X^\delta _{-j}$ of manifolds
with corners and tangential structures, compatible with $(\varphi
')_{-j}^\delta ,\varphi _{-j} ^\delta $ and $\varphi '_0,\varphi \mstrut _0$.

   \subsubsection{Duals and adjoints}\label{subsubsec:4.2.5}
 An object in $\Bord_n$ is a finite set of points~$X$; the tangential
structure, if present, is on the trivial vector bundle $X\times \RR^n$.  The
dual object~$X\dual$ consists of the same data, but with tangential structure
pulled back via reflection $(\xi ^1,\dots ,\xi ^{n-1},\xi ^n)\mapsto (\xi
^1,\dots ,\xi ^{n-1},-\xi ^n)$ on~$\RR^n$.  Evaluation and coevaluation
morphisms are constructed from $[0,1]\times X$; see Figure~\ref{fig:8}.  The
dual of a $k$-morphism is constructed by exchanging~$X^0_{-k}$
and~$X^1_{-k}$.  More generally, a closed $k$-manifold~$X$  is an
object in~$\Omega ^k\Bord_n$.  Its dual has tangential structure pulled back
along reflection in~$\uR_{-(n-k)}$ in the inflated tangent bundle.
 
If $1\le k\le n-1$, then a $k$-morphism~$X$ in~$\Bord_n$ has both a right
adjoint~$X^R$ and a left adjoint~$X^L$.  For our purposes in this paper, we
restrict to $k$-morphisms of depth~1: manifold with boundary (no corners).
We now specify data\footnote{The triple consisting of~$X^A$, a unit, and a
counit, is unique up to unique isomorphism in an appropriate 2-category
truncation of $\Bord_n$.  Here we define~$X^A$ and only give an indication of
the construction of the unit and counit of the adjunctions.} for these
objects~$X^A$, where $A=R$ for the right adjoint and $A=L$ for the left
adjoint.  Let $X^A=X$ as a manifold with boundary.  Reverse the arrows of
time on the codimension one strata: set $(X^A)_{-1}^\delta = X_{-1}^{1-\delta
}$ for $\delta \in \{0,1\}$.  Construct a diffeomorphisms $\varphi ^A_0$ from
the corresponding diffeomorphism in the data of~$X$.  For unoriented bordisms
that is a complete specification of~$X^A$; in particular, right and left
adjoints agree.  If a tangential structure is present, define a tangential
structure on~$X^A$ according to the following procedure.  Choose collar
neighborhoods
  \begin{equation}\label{eq:60}
     \begin{aligned} c(X^A)_{-1}^0 &\approx \;\,[0,1) \;\times (X^A)_{-1}^0 \\
      c(X^A)_{-1}^1 &\approx (-1,0] \times (X^A)_{-1}^1 \\ \end{aligned} 
  \end{equation}
and let $t$~be the coordinate in the intervals $[0,1)$, $(-1,0]$.  In the
collar the tangent bundle splits off a trivial line bundle: 
  \begin{equation}\label{eq:61}
     T\bigl(c(X^A)_{-1}^\delta \bigr) \cong \uR_{-(n-k+1)}\;\oplus\;
     T(X^A)_{-1}^\delta . 
  \end{equation}
Orient the summand $\uR_{-(n-k+1)}$ according to the \emph{opposite} of the
orientation of $L_{-(n-k+1)}$ in~\eqref{eq:51}, with which it is identified
at~$t=0$.  In other words, orient it according to the arrow of time in~$X^A$.
Let $\uR_{-(n-k)}$ denote the trivial summand in the inflated tangent
bundle of~$(X^A)^\delta _{-1}$ that corresponds to codimension~$n-k$.  Let
  \begin{equation}\label{eq:62}
     V=\uR_{-(n-k)}\;\oplus \;\uR_{-(n-k+1)} 
  \end{equation}
with its direct sum orientation; $V$~ is a direct summand of the inflated
tangent bundle $\tT\bigl(c(X^A)_{-1}^\delta \bigr)$ in the collar.  Transport
the tangential structure from~$X$ to~$X^A$ as follows.  At $t=0$ transport
via the hyperplane reflection $\id\oplus -\id$ on~\eqref{eq:61}: flip the
sign on~$\uR_{-(n-k+1)}$.  Moving in the collars~\eqref{eq:60} in~$X^A$
from~$t=0$ to $t=(-1)^\delta /2$ transport via a path of rotations in~$V$
which begins at~$\id_V$ and ends at~$-\id_V$ and
turns\footnote{Counterclockwise rotation turns the positive direction in the
first summand of~\eqref{eq:62} towards the positive direction in the second
summand.}
  \begin{equation}\label{eq:63}
     \left\{\Large\substack{\textnormal{clockwise}\\[3pt]
     \textnormal{counterclockwise}}\right\} \textnormal{ according as } A =
     \left\{\Large\substack{R \\[3pt] L}\right\}. 
  \end{equation}
For $|t|\ge 1/2$ in the collar and also outside the collar,
transport the tangential structure from~$X$ to~$X^A$ via the hyperplane
reflection in the extended tangent bundle which flips the sign on~
$\uR_{-(n-k)}$. 

  \begin{figure}[ht]
  \centering
  \includegraphics[scale=.6]{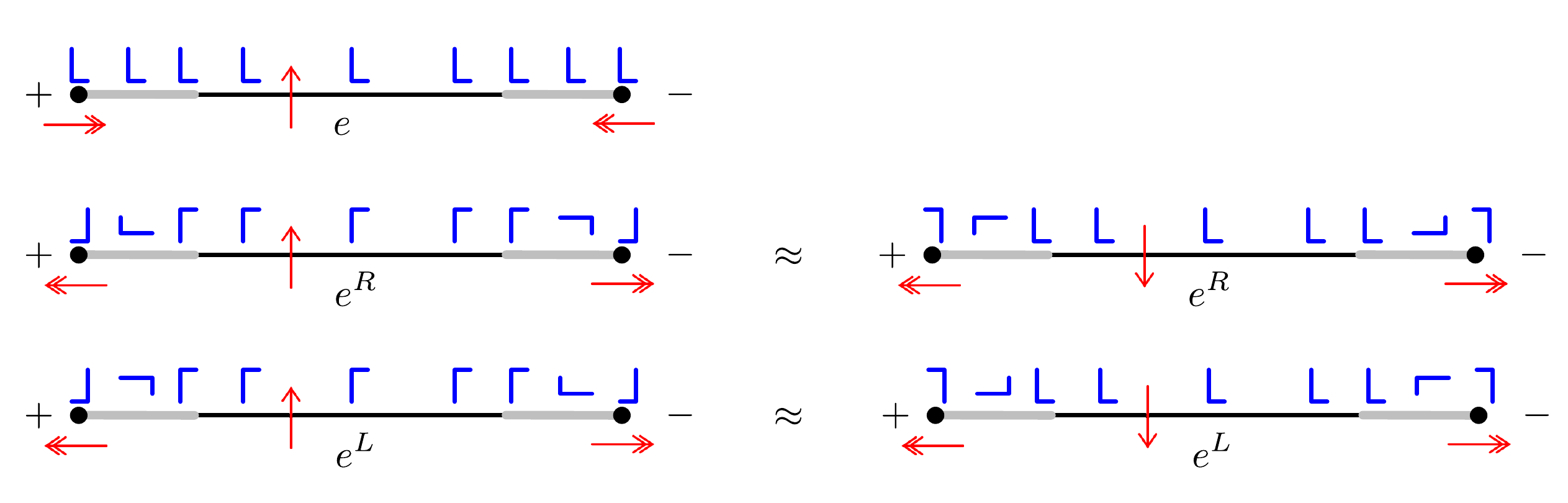}
  \vskip -.5pc
  \caption{Right and left adjoints of evaluation in~$\btf$}\label{fig:26}
  \end{figure}

Figure~\ref{fig:26}, a reworking of Figure~\ref{fig:8}, illustrates the right
and left adjoints of the evaluation map $e\:+\amalg -\to \emptyset ^0$ in
$\btf$.  In these figures the single-headed red arrows indicate the positive
direction in the summand~$\uR_{-2}$, which is necessary for the framing to
have meaning in the figure; the double-headed red arrows indicate the
orientation of~$L_{-2}$, determined by whether a boundary component is
incoming or outgoing.  The counterclockwise vs.\ clockwise
specification~\eqref{eq:63} can be checked in four adjunctions: $e^A$~as an
adjoint of~$e$ and $e$~as an adjoint of~$e^A$, each for~$A=R,L$.

  \begin{remark}[]\label{thm:44}
 A useful isomorphic representative of~$X^A$ is obtained via the identity
diffeomorphism of~$X^A$, lifted to the inflated tangent bundle~$\tT X^A$ as
the hyperplane reflection which is $-\id$ on~$\uR_{-(n-k)}$.  This is
illustrated by the diffeomorphisms in Figure~\ref{fig:26}, under which both
the framings and arrows of time have been transported.
  \end{remark}

  \begin{figure}[ht]
  \centering
  \includegraphics[scale=.7]{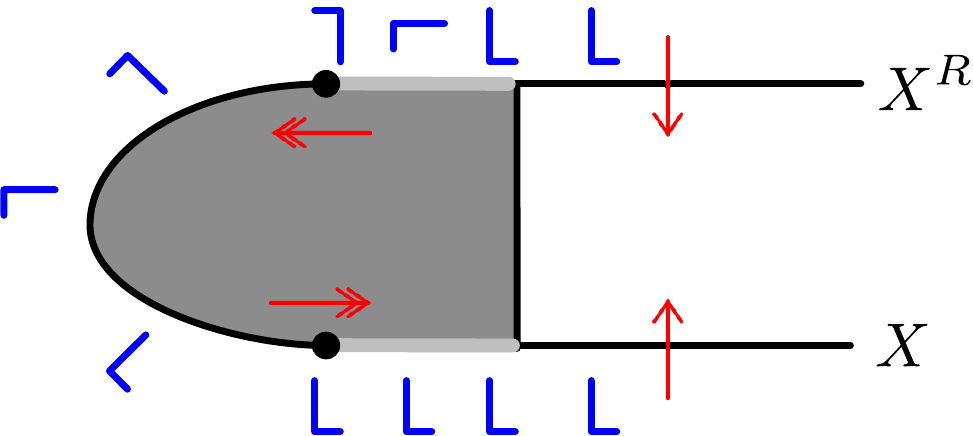}
  \vskip -.5pc
  \caption{The first move towards a counit $X\circ X^R\to\id$}\label{fig:27}
  \end{figure}

The units and counits of the adjunctions may be constructed in two stages,
which we now sketch.  The first step for the counit $X\circ X^R\to\id$ is
illustrated in Figure~\ref{fig:27}.  Glue~$(X^R)_{-1}^1$ to
$X_{-1}^0=(X^R)_{-1}^1$ by adjoining a cylinder; then the tangential
structures are such that we can push along a 2-disk in the direction of the
vector space~$V$ in~\eqref{eq:62} to construct a $(k+1)$-dimensional bordism
which eliminates the cylinder and the collared neighborhoods of those
boundary components.  For the second stage, choose a Morse function
$f\:X^R\,\setminus \,c(X^R)_{-1}^1\to[0,1)$ with $f\inv (0)=(X^R)_{-1}^0$,
and use $2-f$ as a Morse function on $X\setminus c(X_{-1}^0)$.  Do surgeries to
cancel corresponding critical points and so produce the desired
$(k+1)$-dimensional bordism to the identity $k$-morphism (cylinder) on
$(X^R)_{-1}^0$.  See Figure~\ref{fig:14} for an example of a unit and counit,
though with trivial second stage.

   \subsubsection{The inclusion $\bnon$}\label{subsubsec:4.2.6}
 If $k\in \{0,\dots ,n-1\}$ and $d\in \{0,\dots ,k\}$, then a $k$-morphism of
depth~$d$ in~$\bno$ is also a $k$-morphism of depth~$d$ in~$\bn$; see
Definition~\ref{thm:37}.  The inflated tangent bundle~\eqref{eq:52} in~$\bn$
has an extra direction, of course.  Note that the inclusion $\bnon$ increases
codimensions in the theory (from the top dimension) by~1.  We can define
general maps of tangential structures from~$\Bord_{n-1}$ to~$\Bord_n$, as
in~\S\ref{subsubsec:4.3.3} below.  For the map $\bno^{\textnormal{fr}}\to
\bn^{\textnormal{fr}}$ of framed bordism relevant to this paper, if $X$~is an
$(n-1)$-framed $k$-morphism in $\bno$, then the induced $n$-framing on~$X$
regarded as a $k$-morphism in~$\bn$ appends the standard basis vector on the
additional summand~$\uR$ in~\eqref{eq:52}.  It has label~$-1$.

  \begin{remark}[]\label{thm:75}
 The $+$ point in~$\Bord_\ell ^{\textnormal{fr}}$, $\ell \in \{n-1,n\}$, is
the manifold $X=\pt$ with the standard framing on 
  \begin{equation}\label{eq:95}
     \tT X=\underbrace{\uR\oplus \cdots\oplus \uR}_{\textnormal{$\ell $
     times}}. 
  \end{equation}
The $-$~point, defined to be dual to the $+$~point in~$\Bord_\ell
^{\textnormal{fr}}$, has the opposite framing on the \emph{last} summand
in~\eqref{eq:95}.  (The summands are ordered by increasing codimension, so
are labeled $-1,-2,\dots ,-\ell $.)  Then under the inclusion
$\bno^{\textnormal{fr}}\to \bn^{\textnormal{fr}}$ we have $+\mapsto +$ and
$-\mapsto -$: since the extra direction has label~$-1$, so is
\emph{prepended} to~\eqref{eq:95}, the last of the $n-1$ directions
in~$\Bord_{n-1}^{\textnormal{fr}}$ maps to the last of the $n$~directions in
$\Bord_n^{\textnormal{fr}}$. 
  \end{remark}

  \begin{remark}[]\label{thm:78}
 More generally, our conventions about duals and adjoints are preserved under
the map $\Bord_{n-1}\to \Bord_n$.  This justifies computing adjoints
in~$\btf$, as in Figure~\ref{fig:13}, and using the result as the adjoint
in~$\bft$.  (In these pictures we work in the bordism categories with colored
boundaries, where the same holds.) 
  \end{remark}

  \subsection{$k$-morphisms in~$\Bord_{n,\partial }$}\label{subsec:4.3}

   \subsubsection{The definition}\label{subsubsec:4.3.1}
 The bordism multicategory with boundary theory~$\Bnp$ is an extension
of~$\Bord_n$.  The boundary~$\partial X$ of a $k$-morphism of depth~$d$ has a
distinguished subset~$B_{-1}$, the ``colored'' subset
of~\S\ref{subsubsec:2.1.4}.  There are many variations of this construction,
which for example allow for multiple boundary theories and domain walls.  (We
use two boundary theories in the proof of Lemma~\ref{thm:19}.)

  \begin{definition}[]\label{thm:42}
  Fix~$n\in \ZZ^{>0}$, $k\in \{0,\dots ,n\}$ and $d\in \{0,\dots ,k\}$.  Let
$X$~be a compact $k$-dimensional manifold with corners of depth~$\le d$.  The
data of a \emph{$k$-morphism of depth~$d$ in~$\Bnp$} on~$X$ are:

 \begin{enumerate}[label=\textnormal{(\roman*)}]

 \item if~$d\ge2$, closed $(k-d)$-manifolds $\Xz d,\Xo d, \Bz d, \Bo d$, not
all empty;

 \item if~$d\ge3$, recursively for $j=d-1,d-2,\dots ,2$ compact
$(k-j)$-manifolds $\Xz j,\Xo j,\Bz j,\Bo j$ with corners of depth~$\le d-j$
equipped with diffeomorphisms
  \begin{equation}\label{eq:a46}
  \begin{gathered}
     \varphi ^\delta _{-j}\:\,\Xz{(j+1)}\;\;\cup\;\; [0,1]\times \left\{
     \Xz{(j+2)}\amalg 
     \Xo{(j+2)} \right\} \;\;\cup\;\; \Xo{(j+1)} 
     \cup\;\;\Bd{(j+1)}
     \longrightarrow \partial (\Xd
     j),\\[1ex]
     \beta ^\delta _{-j}\:\,\Bz{(j+1)}\;\;\cup\;\; [0,1]\times \left\{
     \Bz{(j+2)}\amalg \Bo{(j+2)} \right\} \;\;\cup\;\; \Bo{(j+1)}
     \longrightarrow \partial (\Bd j), 
 \end{gathered}
  \end{equation}
for~$\delta \in \{0,1\}$;

\item if~$d\ge1$, compact $(k-1)$-manifolds $\Xz 1,\Xo 1,\Bmo$ with corners
of depth~$\le d-1$ equipped with diffeomorphisms
  \begin{equation}\label{eq:aa46}
  \begin{gathered}
     \varphi ^\delta _{-1}\:\,\Xz{2}\;\;\cup\;\; [0,1]\times \left\{
     \Xz{3}\amalg 
     \Xo{3} \right\} \;\;\cup\;\; \Xo{2} 
     \cup\;\;\Bd{2}
     \longrightarrow \partial (\Xd
     1)\\[1ex]
     \beta _{-1}\:\,\Bz{2}\;\;\cup\;\; [0,1]\times \left\{
     \Bz{3}\amalg \Bo{3} \right\} \;\;\cup\;\; \Bo{2}  \longrightarrow
     \partial (\Bmo) ,
  \end{gathered}
  \end{equation}
for~$\delta \in \{0,1\}$;

 \item if~$d\ge1$, a diffeomorphism 
  \begin{equation}\label{eq:a47}
     \varphi \mstrut _0\:\,\Xz{1}\;\;\cup\;\; [0,1]\times \left\{
     \Xz{2}\amalg \Xo{2}\right\} \;\;\cup\;\; \Xo{1}
     \;\;\cup\;\; B\mstrut_{-1}\longrightarrow \partial X.
  \end{equation}

 \end{enumerate}
  \end{definition}

  \begin{figure}[ht]
  \centering
  \includegraphics[scale=.55]{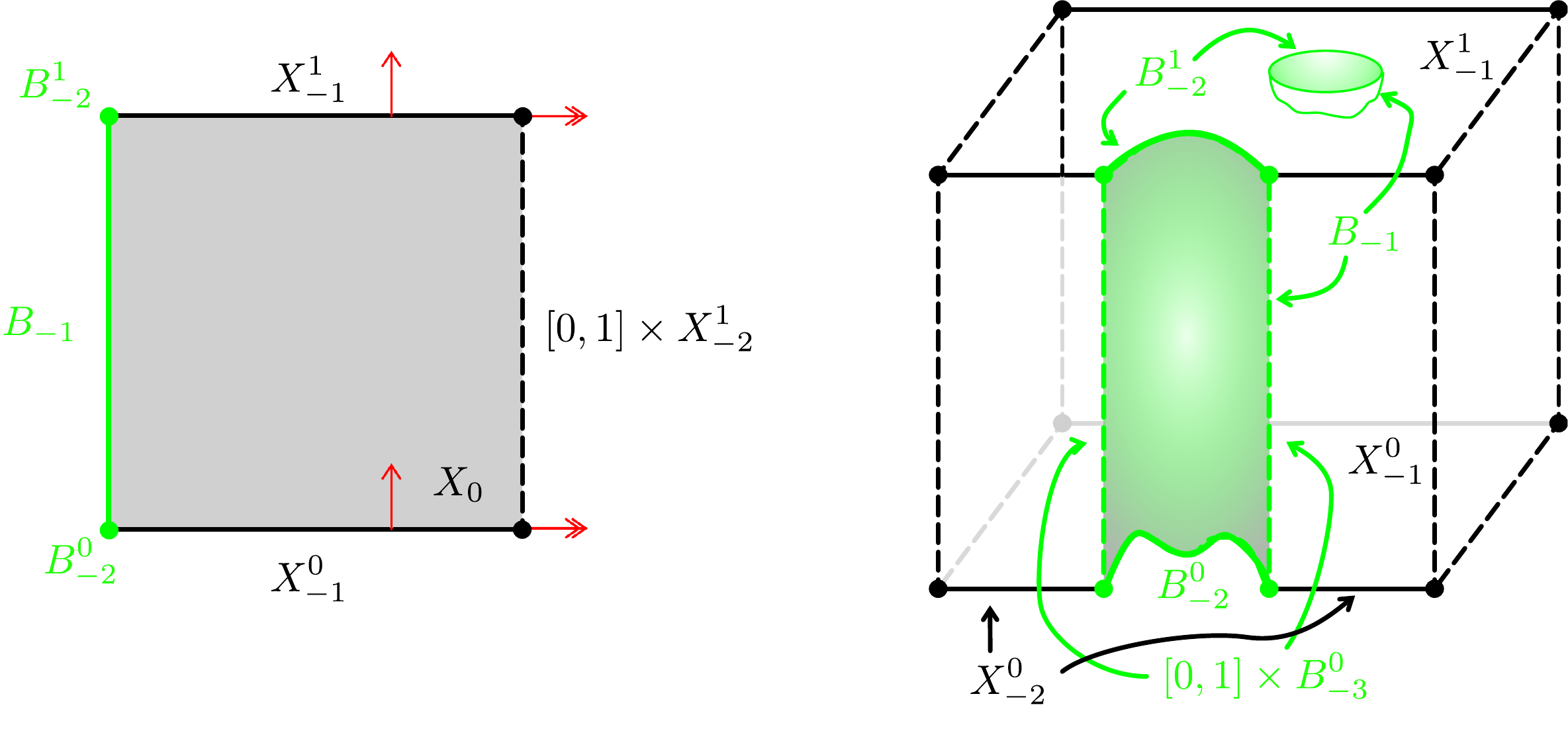}
  \vskip -.5pc
  \caption{A 2-morphism of depth 2 and a 3-morphism of depth 3}\label{fig:23}
  \end{figure}

  \begin{remark}[]\label{thm:43}
 As in Remark~\ref{thm:38}(2), set $\Xd{(d+1)}=\Bd{(d+1)}=\Bd{(d+2)}=\emptyset
$, $\delta \in \{0,1\}$.  The categorical interpretation of the bordism
described in Remark~\ref{thm:38}(3) is still valid.  The
embeddings~\eqref{eq:48} still exist, but now the embeddings~$\mps$ do not
cover~$\bX$.  Rather, the embeddings $\beta _{-j}^\delta ,\beta \mstrut
_{-1}$, $j\in \{2,\dots ,d-1\}$, $\delta \in \{0,1\}$, combine to embeddings
  \begin{equation}\label{eq:54}
     \gdj\:[0,1]^{j-2}\times \Bd j\longrightarrow \Bmo,\qquad
     j\in \{2,\dots ,d\},\quad \delta \in \{0,1\}.
  \end{equation}
Let~$\mg^\delta _{-j}$ be the restriction of~$\gdj$ to $[0,1]^{j-2}\times
\mB^\delta _{-j}$.  Then $\bX$ is the disjoint union of three sets: (i)~the
images of~$\mps$, $j\in \{1,\dots ,d\}$, $\delta \in \{0,1\}$; (ii)~the
images of~ $\mg^\delta _{-j}$, $j\in \{2,\dots ,d-1\}$, $\delta \in \{0,1\}$;
and (iii)~the $(k-1)$-manifold $\mBo$.
  \end{remark}

   \subsubsection{The tangent filtration}\label{subsubsec:4.3.2}
 For points $x\in \bX$ in the image of one of the $\mps$, the
filtration~\eqref{eq:49} and orientation of the lines~\eqref{eq:51} apply.
If $x\in \bX$ lies in the image of some $\mg^\delta _{-j}$, choose $t^1,\dots
,t^{j-2}\in [0,1]$, $\mb\in \mB^\delta _{-j}$ such that $x=\mg^\delta
_{-j}(t^1,\dots ,t^{j-2};\mb)$.  Then $T_xX$~has a decreasing filtration
  \begin{equation}\label{eq:a49}
     T\mstrut _xX=T_{x,0}X\supset T_{x,-1}X\supset \cdots\supset
     T_{x,-(j-1)}X\supset T_{x,-j}X=T\mstrut _{\mx}\mB\mstrut _{-j} 
  \end{equation}
in which 
  \begin{equation}\label{eq:a50}
  \begin{aligned}
     T_{x,-1}X&=T\mstrut _x\Bmo,\\[1.5ex]
     T_{x,-i}X &= d\mg^\delta _{-j}\bigl(0^{i-2}\oplus \RR^{j-i}\oplus
     T_{\mx}\mB_{-j}\bigr),\qquad i\in \{2,\dots ,j\}. 
  \end{aligned}
  \end{equation}
The associated graded is a sum of real lines
  \begin{equation}\label{eq:a51}
     \Lx{n-k+1}\oplus \cdots\oplus \Lx{n-k+j}.
  \end{equation}
Orient~$\Lx{n-k+j}$ so that the positive direction leads into~$X$.  (This is
for a boundary theory $1\to \tF$; for boundary theories $\tF\to 1$ choose the
opposite orientation.)  Orient~$\Lx{n-k+j-1}$ so that the positive direction
leads into~$\Bmo$ if~$\delta =0$ and leads out of~$\Bmo$ if~$\delta =1$.
Orient $\Lx{n-k+j}$, $i\in \{1,\dots ,j-2\}$, so that the positive direction
points towards increasing~$t^{j-1-i}$.  These orientations---arrows of
time---can be omitted (as in~\S\ref{sec:2} and~\S\ref{sec:7}) since they can
be deduced from the arrows of time on the rest of~$\bX$.

   \subsubsection{Tangential structures, duals, and
   adjoints}\label{subsubsec:4.3.3} 
 The distinguished boundary $B_{-1}\subset X$ has a tangential structure of
rank~$n-1$ whereas the ``bulk'' $X\setminus B_{-1}$ has a tangential
structure of rank~$n$.  The former is allowed to be ``different'' than the
latter---for example, we may have a spin boundary theory of an oriented
theory.  Following~\S\ref{subsubsec:4.2.4}, let $\rho _n\:\sX_n\to \BGL$ be
the bulk tangential structure.  A \emph{boundary tangential structure}
consists of (i)~a rank $n-1$ tangential structure $\rho _{n-1}\:\sX_{n-1}\to
\BGLo$; (ii)~an inclusion $\GL_{n-1}\RR\hookrightarrow \GL_n\RR$;
and~(iii)~a map $\phi \:\sX_{n-1}\to \sX_n$ such that the diagram 
  \begin{equation}\label{eq:96}
     \begin{gathered} \xymatrix{\sX_{n-1}\ar[r]^{\phi } \ar[d]_{\rho _{n-1}}
     & \sX_n\ar[d]^{\rho _n} \\ \BGLo\ar[r]^{} & \BGL} \end{gathered} 
  \end{equation}
commutes.  Up to homotopy, (ii)~is an element of $\O_n/\O_{n-1}\approx
S^{n-1}$, a choice of unit vector in~$\RR^n$. 

  \begin{remark}[]\label{thm:76}
 In this paper $\rho _n$~represents $n$-framings and $\rho _{n-1}$~represents
$(n-1)$-framings.  Concretely, fix a separable infinite dimensional real
Hilbert space~$\sH$, define the contractible Stiefel manifold
$\Sti_n=\{\textnormal{isometric embeddings }\RR^n\to \sH\}$, and let the
Grassmannian $\Gr_n=\Sti_n/\O_n$ be the quotient of the Stiefel manifold by
the natural right $\O_n$-action.  Then $\Gr_n\simeq \BGL$.  Our convention
in~\S\ref{subsubsec:2.1.4} uses the embedding $\O_{n-1}\hookrightarrow \O_n$
induced from the inclusion\footnote{The extra direction at a colored boundary
point is ``spatial'' in the sense of Remark~\ref{thm:77}.  The
choice~\eqref{eq:97} is made so as to be stable under the inclusion
$\bno^{\textnormal{fr}}\to \bn^{\textnormal{fr}}$.}
  \begin{equation}\label{eq:97}
     \begin{aligned} \RR^{n-1}&\longrightarrow \RR^n \\ (\xi ^1,\dots
      ,\xi ^{n-1})&\longmapsto (\xi ^1,\dots ,\xi
      ^{n-1},0)\end{aligned} 
  \end{equation}
Let $\tGr=\Sti_n/\O_{n-1}$.  A point of~$\tGr$ is an $(n-1)$-dimensional
subspace $W\subset \sH$ together with a unit vector~$\nu \in W^\perp$; the
map to the usual Grassmannian~$\Gr_{n-1}$ which forgets~$\nu $ is a homotopy
equivalence.  Let $\rho _{n-1}\:\Sti_n\to \tGr$ be the quotient map and $\phi
\:\Sti_n\to \Sti_n$ the identity map.
  \end{remark}

The constructions of duals and adjoints in \S\ref{subsubsec:4.2.5} carry over
without modification; the colored boundary components are unchanged when
forming duals and adjoints.

   \section{Semisimplicity of $2$-dualizable categories}\label{sec:8}

In this appendix we write a proof of the following folk result, stated in the
body of the paper as Theorem~\ref{thm:53} and stated here as
Theorem~\ref{thm:72}.  The theorem concerns $2$-dualizable objects in
$\mathrm{Cat} _\mathbb{C}$; see \cite{BDSV,BJSS} for related
variants.  Our proof works over an arbitrary field~${\Bbbk}$.

  \begin{theorem}[]\label{thm:72}
 If $C$ is $2$-dualizable in $\mathrm{Cat}\mstrut_{\Bbbk}$, then $C$ is finite
semisimple abelian, and $C^\vee$ may be identified with $C\op$ in such a way
that the duality pairing $\langle\ |\ \rangle: C^\vee \times C \to
\mathrm{Vect}$ is identified with $\mathrm{Hom}$: \[ \langle x\op | y\rangle
= \mathrm{Hom}_C(x,y).  \]
\end{theorem}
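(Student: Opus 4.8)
The plan is to extract a finite-dimensional algebra model of $C$ out of mere dualizability, to read off semisimplicity from the \emph{full} $2$-dualizability (adjoints of the evaluation and coevaluation), and then to deduce the $\Hom$-description of the pairing more or less formally. First I would record two standard facts in $\ck$: the monoidal unit is $1=\Vect^{\mathrm{fd}}_{\Bbbk}$, and evaluation on $\Bbbk$ gives $\Hom_{\ck}(1,D)\eqcat D$ for every $D$. Fixing duality data — a dual $C\dual$ with evaluation $\varepsilon\colon C\dual\boxtimes C\to 1$ and coevaluation $\eta\colon 1\to C\boxtimes C\dual$ — and combining the universal property of $\boxtimes$ (a $1$-morphism out of $A\boxtimes B$ is a functor out of $A\times B$ right exact in each variable) with the Yoneda lemma, I would identify $C\dual\eqcat\mathrm{Fun}^{\mathrm{rex}}(C,1)$, the category of right exact $\Bbbk$-linear functors $C\to\Vect^{\mathrm{fd}}_{\Bbbk}$, in such a way that $\varepsilon$ becomes the evaluation pairing $(F,x)\mapsto F(x)$; likewise $C\boxtimes C\dual\eqcat\End(C):=\mathrm{Fun}^{\mathrm{rex}}(C,C)$, with $\eta$ corresponding to $\id_C$. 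It then remains to show (A) that $C$ is finite semisimple abelian, after which (B) the claimed form of the pairing follows quickly.

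For step (A) I would first produce an algebra model. Using $\eta$, $\varepsilon$ and the zig-zag identities, one shows in the usual way that $C$ admits a projective generator with finite-dimensional $\Bbbk$-algebra of endomorphisms; equivalently $C\eqcat\mathrm{Mod}^{\mathrm{fp}}_A$ for a finite-dimensional $\Bbbk$-algebra $A$. (This ``dualizable $\Rightarrow$ finite algebra'' half is classical; see the survey cited after Theorem~\ref{thm:53}.) In particular $C$ is abelian, and under this model $\End(C)\eqcat{}_A\mathrm{Bimod}^{\mathrm{fd}}_A$, the finite-dimensional $A$-bimodules, with $\id_C$ corresponding to the regular bimodule ${}_AA_A$.

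The crux is to upgrade this to semisimplicity using $2$-dualizability: $\eta\colon 1\to\End(C)$, which is the functor $V\mapsto V\otimes\id_C$ (a right exact functor out of $1$ is tensoring with its value on $\Bbbk$), must have a right adjoint $\eta^{R}\colon\End(C)\to 1$. Testing the adjunction on $\Bbbk$ forces $\eta^{R}(G)=\Hom_{\End(C)}(\id_C,G)$; in the model ${}_A\mathrm{Bimod}^{\mathrm{fd}}_A$ this is automatically finite-dimensional, so the only genuine constraint is that $\eta^{R}$ be right exact, i.e. that $\id_C={}_AA_A$ be a \emph{projective} object of ${}_A\mathrm{Bimod}^{\mathrm{fd}}_A$ — equivalently, that $A$ be projective over $A\otimes_{\Bbbk}A\op$, i.e. that $A$ be a \emph{separable} $\Bbbk$-algebra. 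Hence $A$ is semisimple and $C\eqcat\mathrm{Mod}^{\mathrm{fp}}_A$ is finite semisimple abelian. I expect this step — pinning down the adjoint of $\eta$ concretely and recognizing ``$\id_C$ projective in $\End(C)$'' as separability, hence semisimplicity, of the endomorphism algebra — to be the main obstacle; the adjoints of $\varepsilon$ and the left adjoint of $\eta$ should yield nothing beyond this.

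Finally, for step (B): once $C$ is finite semisimple abelian, every object is projective, so $x\mapsto\Hom_C(x,-)$ is a right exact functor $C\to 1$, and this defines $\iota\colon C\op\to\mathrm{Fun}^{\mathrm{rex}}(C,1)\eqcat C\dual$. It is fully faithful by the Yoneda lemma, and essentially surjective because a right exact functor $C\to\Vect$ is determined by its (finitely many) values on simples and, $C$ being semisimple, is a finite direct sum of representables $\Hom_C(x_i,-)$. Therefore $C\dual\eqcat C\op$, and transporting $\varepsilon$ through $\iota$ yields $\langle x\op\mid y\rangle=\varepsilon\bigl(\Hom_C(x,-),y\bigr)=\Hom_C(x,y)$, which is exactly the assertion of Theorem~\ref{thm:72}.
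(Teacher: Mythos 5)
Your architecture --- first produce an equivalence $C\simeq \mathrm{Mod}^{\mathrm{fp}}_A$ for a finite-dimensional algebra $A$ from $1$-dualizability, then read off semisimplicity as separability of $A$ from the right adjoint of the coevaluation --- founders at its first step, and that step is where the entire difficulty of the theorem lives. You assert that the zig-zag identities alone (``mere dualizability'') yield a projective generator with finite-dimensional endomorphism algebra ``in the usual way''. In $\ck$ they do not. The coevaluation object $\eta(\Bbbk)\in C\boxtimes C\dual$ is in general only a \emph{quotient} of a pure tensor $X\boxtimes \Xi$, so the zig-zag exhibits $\id_C$ only as a quotient of the right exact functor $y\mapsto \Xi(y)\otimes X$, not as a retract of it; without a splitting you get neither projectivity of $X$, nor generation in the strong sense, nor even finite-dimensionality of the $\Hom$-spaces of $C$. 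The splitting is exactly what the paper extracts from $2$-dualizability: the existence and right exactness of the adjoints of the coevaluation force $\Hom\bigl(\Delta(\mathbf 1),-\bigr)$ to be right exact, hence $\Delta(\mathbf 1)$ to be a direct summand of $\Xi\boxtimes X$, and even then this only yields $\Hom$-finiteness (Lemma~\ref{thm:81}); abelian-ness and semisimplicity require in addition the bi-exactness of $\Hom$ (Lemma~\ref{thm:71}). The remark following Lemma~\ref{thm:70} states explicitly that the authors do not know whether $1$-dualizability plus $\Hom$-finiteness even forces $C$ to be abelian, so the ``classical'' result you invoke is not available in this generality; the variants cited after Theorem~\ref{thm:53} live in $2$-categories with stronger standing hypotheses on the objects.

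The rest of your argument is sound and genuinely different from the paper's. Granting a model $C\simeq\mathrm{Mod}^{\mathrm{fd}}_A$ with $A$ finite-dimensional, your observation that right exactness of $\eta^R=\Hom_{\End(C)}(\id_C,-)$ says precisely that ${}_AA_A$ is projective over $A\otimes_\Bbbk A\op$, i.e.\ that $A$ is separable and hence semisimple, is a clean one-line route to semisimplicity; the paper instead splits all monomorphisms and epimorphisms via bi-exactness of $\Hom$ and decomposes objects until their endomorphism rings become division rings. Your step (B) is likewise fine once semisimplicity is known. But to repair step (A) you would have to derive the algebra model from $2$-dualizability rather than $1$-dualizability, and doing so in $\ck$ essentially reproduces Lemmas~\ref{thm:81} and~\ref{thm:71}.
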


The proof is broken up into three lemmas, which we state after introducing
the following.

  \begin{definition}[]\label{thm:73}
 A $\Bbbk$-linear category~$C$ is \emph{\textnormal{Hom}-finite} if all of its
Hom spaces are finite dimensional.
  \end{definition}

  \begin{lemma}[]\label{thm:70}
 Let $C$ be 1-dualizable in $\mathrm{Cat}\mstrut_{\Bbbk}$, with $C$ and $C^\vee$ both $\mathrm{Hom}$-finite. There is then an equivalence of linear categories $C^\vee \equiv C\op$ with the duality pairing being
\[
x\op\times y \mapsto \mathrm{Hom}_C(y,x)^\vee.
\]
\end{lemma}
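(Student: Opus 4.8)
The plan is to unwind what 1-dualizability of $C$ in $\ck$ means: there is a dual object $C^\vee$, an evaluation functor $\ev\colon C^\vee\boxtimes C\to\Vect$ and a coevaluation functor $\mathrm{coev}\colon\Vect\to C\boxtimes C^\vee$, satisfying the two triangle (zig-zag) identities up to natural isomorphism. Writing $\ev$ as a functor $C^\vee\times C\to\Vect$ that is right exact (indeed bilinear, i.e.\ right exact in each variable), the Eilenberg--Watts-type classification of such functors on finitely cocomplete $\Bbbk$-linear categories identifies $\ev$ with a ``Hom against a pro-object'' construction; under the $\mathrm{Hom}$-finiteness hypothesis on $C$ one expects $\ev(x,y)$ to be represented by honest objects, so that $\ev(-,y)\cong\Hom_C(G(y),-)$ or dually $\ev(x,-)\cong\Hom_C(-,H(x))^\vee$ for some functors $G,H$. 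The first step, then, is to pin down $\ev$ in this representable form and to extract from the coevaluation and the triangle identities an adjunction/inverse-equivalence statement forcing $C^\vee\simeq C\op$.

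**Key steps, in order.** First I would record the abstract data: fix $(C^\vee,\ev,\mathrm{coev})$ exhibiting the duality, and note that under the symmetry of $\boxtimes$ the roles of the two triangle identities are symmetric, so it suffices to analyze $\ev$. Second, I would use right exactness of $\ev$ in each slot plus $\mathrm{Hom}$-finiteness of $C$ (and of $C^\vee$) to write, for each object $c^\vee\in C^\vee$, the functor $\ev(c^\vee,-)\colon C\to\Vect$ as a filtered colimit of corepresentables, and then argue that $\mathrm{Hom}$-finiteness collapses this to a single (co)representing object — giving a functor $T\colon C^\vee\to C\op$ with $\ev(c^\vee,c)\cong\Hom_C(c,Tc^\vee)^\vee$; symmetrically there is $T'\colon C\to (C^\vee)\op$ with $\ev(c^\vee,c)\cong\Hom_{C^\vee}(c^\vee,T'c)^\vee$. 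Third, I would feed the coevaluation into the triangle identities: the composite $C\xrightarrow{\mathrm{coev}\boxtimes\id}C\boxtimes C^\vee\boxtimes C\xrightarrow{\id\boxtimes\ev}C$ is naturally isomorphic to $\id_C$, and likewise on the $C^\vee$ side; translating this through the representing objects of $\ev$ shows $T$ and $T'$ are mutually inverse equivalences $C^\vee\simeq C\op$. Fourth, transporting the pairing along this equivalence gives exactly $x^\vee\times y\mapsto\Hom_C(y,x)^\vee$, which is the claimed formula (here I'd be careful that the asserted pairing is the one that arises, i.e.\ that I haven't swapped a $\Hom_C(x,y)^\vee$ for a $\Hom_C(y,x)^\vee$ — the variance is forced by which triangle identity one reads).

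**Main obstacle.** The delicate point is the second step: turning right exactness plus $\mathrm{Hom}$-finiteness into an honest (co)representability of $\ev$. A priori $\ev(c^\vee,-)$ is only right exact, so it is a colimit of corepresentables indexed by a possibly large filtered category, and one must show this colimit is \emph{essentially constant}, i.e.\ eventually represented by one object of $C$. The leverage is that $\ev$ is part of a \emph{duality}, not just a lone functor: the coevaluation provides a ``unit'' object in $C\boxtimes C^\vee$, and the triangle identity constrains $\ev$ enough to force finiteness of the representing data. I expect the cleanest route is to first extract the functor $T'\colon C\to(C^\vee)\op$ as ``$\ev$ against the coevaluation'' (so $T'$ comes for free from the duality data), check it is an equivalence via the two triangle identities, and only then deduce the representability/$\Hom$-formula for $\ev$ as a consequence — rather than trying to represent $\ev$ by bare hands first. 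A secondary bookkeeping nuisance is keeping the $(-)^\vee$ on vector spaces, $(-)\op$ on categories, and the left/right variances consistent throughout; I would fix the convention $\langle x^\vee\mid y\rangle=\Hom_C(y,x)^\vee$ at the outset and verify each isomorphism against it.
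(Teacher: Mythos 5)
Your plan correctly isolates the crux --- showing that the evaluation pairing is ``representable,'' i.e.\ that every object of $C^\vee$ is of the form $\Hom_C(-,x)^\vee$ --- but it does not actually contain an argument for it, and the mechanism you propose for it does not work. A right exact functor $C\to\Vect$ is not a filtered colimit of corepresentables in any useful sense (corepresentables $\Hom_C(a,-)$ are \emph{left} exact), and $\Hom$-finiteness of $C$ alone does not force a right exact functor with finite-dimensional values to be of the form $\Hom_C(-,x)^\vee$; that essential surjectivity is precisely the content of the lemma, not an input to it. Your fallback --- extract $T'\:C\to(C^\vee)\op$ ``from the coevaluation'' and verify it is an equivalence via the triangle identities --- would at best reprove the formal statement $C\simeq(C^\vee)^\vee$, which holds for any dualizable object and says nothing about $C\op$; the passage from $(C^\vee)^\vee$ to $C\op$ is again exactly the deferred representability. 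So as written there is a genuine gap at the key step.

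The paper closes this gap differently, and the trick is worth internalizing. Dualizability identifies $C^\vee$ with the functor category $\Hom_{\ck}(C,\Vect)$, so one can define $L\:C\op\to C^\vee$ explicitly by $L(x\op)=\cx$ with $\langle\cx\,|\,y\rangle=\Hom_C(y,x)^\vee$ (right exact, and valued in $\Vect$ by $\Hom$-finiteness of $C$), and \emph{symmetrically} define $R\:C^\vee\to C\op$ by $R(\eta)=\cet\op$ with $\langle\xi\,|\,\cet\rangle=\Hom_{C^\vee}(\xi,\eta)^\vee$, using $\Hom$-finiteness of $C^\vee$ and the identification $C=(C^\vee)^\vee$. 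A four-line computation shows $L\dashv R$. Yoneda makes $L$ fully faithful, hence the unit $\id_{C\op}\to R\circ L$ is an isomorphism; running the same Yoneda argument on the $C^\vee$ side makes $R$ fully faithful, hence the counit $L\circ R\to\id_{C^\vee}$ is an isomorphism too. Essential surjectivity of $L$ is thus obtained for free from fully faithfulness of the adjoint on the other side --- this double use of the symmetric construction (and hence of the $\Hom$-finiteness hypothesis on \emph{both} $C$ and $C^\vee$) is the leverage from duality that your sketch gestures at but never deploys.
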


\begin{remark}
 We do not know \emph{ab initio} that $C\op$ has cokernels; this is a
consequence of the Lemma. Thus the proof is executed in the world of all
$\Bbbk$-linear categories and not within $\mathrm{Cat}\mstrut_{\Bbbk}$,
whose objects are finitely cocomplete.   In particular, $C$ and $C\op$ have
kernels and cokernels
(i.e.~ are pre-abelian). One can also show that they must be balanced (all
monic epimorphisms are isomorphisms), but we do not know if they must be
abelian.
\end{remark}

  \begin{lemma}[]\label{thm:81}
 Let $C$~be 2-dualizable in~$\ck$.  Then $C$~and $C^\vee$~are Hom-finite. 
  \end{lemma}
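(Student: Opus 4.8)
The plan is to extract Hom-finiteness from the structure of a $2$-dualizable object by using the fact that $2$-dualizability packages $C$ and $C^\vee$ together with evaluation and coevaluation $1$-morphisms that themselves admit adjoints, and that the composite endomorphism $1$-morphism $\uEnd^R(\mathrm{coev})$ of the unit $\mathbf 1 = \Vect_\Bbbk$ is a dualizable algebra object, hence a \emph{finite-dimensional} algebra, in $\Vect_\Bbbk$. First I would invoke Lemma~\ref{thm:70}: once we know $C$ is Hom-finite (which is what we are after), the pairing is $\Hom$, but for the present lemma we only know $C$ is $1$-dualizable a priori, so the pairing is some bilinear functor $\langle\ |\ \rangle\colon C^\vee\times C\to \Vect_\Bbbk$. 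The key observation is that the coevaluation $c\colon \mathbf 1\to C\boxtimes C^\vee$ is a $2$-dualizable $1$-morphism in $\ck$, so it has a right adjoint $c^R$, and the internal endomorphism object
\[
\uEnd^R(c) = c^R\circ c \;\in\; \End_{\ck}(\mathbf 1) = \Vect_\Bbbk
\]
is an algebra object in $\Vect_\Bbbk$ — that is, an ordinary (possibly nonunital-looking but in fact unital) associative $\Bbbk$-algebra — and because $c$ is $2$-dualizable this algebra is itself dualizable in $\Vect_\Bbbk$, i.e.\ finite dimensional. So $\uEnd^R(c)$ is a finite-dimensional $\Bbbk$-algebra.

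Next I would identify $\uEnd^R(c)$ explicitly in terms of the Hom spaces of $C$. Tracing through the definition of evaluation and coevaluation for the dual pair $(C, C^\vee)$ in $\ck$ — the evaluation is the pairing functor, and the coevaluation is (at least after the identification of Lemma~\ref{thm:70}, or directly from the snake relations) encoded by the object of $C\boxtimes C^\vee$ that corepresents the pairing — one computes that
\[
c^R\circ c \;\cong\; \bigoplus_{\text{simple-ish pieces}} \ ?,
\]
but more robustly: for $1$-dualizable $C$ the snake/zigzag identities force, for each pair of objects $x,y\in C$, an isomorphism expressing $\Hom_C(x,y)$ as a retract of a space built out of the finite-dimensional algebra $\uEnd^R(c)$ and the (a priori arbitrary) vector spaces appearing in $c$. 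Concretely, I expect $\uEnd^R(c)$ to be $\bigoplus_{x}\Hom_C(x,x)$-like, or more precisely that the matrix coefficients of $c$ and $e$ realize every $\Hom_C(x,y)$ as a subspace of a finite-dimensional space cut out by $\uEnd^R(c)$; since subspaces of finite-dimensional spaces are finite-dimensional, every $\Hom_C(x,y)$ is finite-dimensional, i.e.\ $C$ is Hom-finite. Running the symmetric argument with the roles of $C$ and $C^\vee$ exchanged (using that $C^\vee$ is also $2$-dualizable, as $2$-dualizability of $C$ is self-dual) gives that $C^\vee$ is Hom-finite as well.

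The main obstacle I anticipate is the bookkeeping in the middle step: making precise the claim that the zigzag identities for a merely $1$-dualizable pair, combined with the finiteness of the algebra $\uEnd^R(c)$ coming from $2$-dualizability of the structure $1$-morphisms, forces each Hom space to embed into a finite-dimensional space. One has to be careful that $C$ is not yet known to be abelian or even to have the pairing equal to $\Hom$ (that is precisely Lemma~\ref{thm:70}, whose hypotheses include Hom-finiteness, so one cannot circularly invoke it), so the argument must be carried out with the evaluation and coevaluation in their raw form as right-exact bilinear functors, using only the triangle identities and the dualizability of the adjunctions. A clean way to sidestep the bookkeeping is to note that $\uEnd^R(c)$, being finite-dimensional, has only finitely many simple modules and each is finite-dimensional; the image of $c$ then lies in the image of a functor determined by this finite data, and the counit/unit exhibit $\mathrm{id}_C$ as a retract of $e\circ(\mathrm{id}\boxtimes\,?)\circ c$-type composites, from which Hom-finiteness of $C$ follows by evaluating on objects. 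I would present the finite-dimensionality of $\uEnd^R(c)$ as the crux and the transfer to Hom spaces as a short diagram chase using Remark~\ref{thm:26} (symmetric monoidal functors, here the evaluation-against-a-fixed-object functors, preserve the relevant internal-hom structure).
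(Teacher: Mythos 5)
Your starting point is sound and matches the paper's: $2$-dualizability supplies adjoints to the coevaluation $\Delta\:\mathbf 1\to C^\vee\boxtimes C$, and anything landing in $\End_{\ck}(\mathbf 1)\simeq\Vect$ is automatically finite dimensional (so, in fact, the finite dimensionality of $\uEnd^R(\Delta)$ needs no extra dualizability of the algebra — the real content of $2$-dualizability here is the existence of $\Delta^{R},\Delta^{L}$ and the formulas $\Delta^{L}=\langle\ |\ \rangle\circ(\id\boxtimes S_C)$, $\Delta^{R}=\langle\ |\ \rangle\circ(\id\boxtimes S_C^{-1})$). You also correctly avoid the circularity with Lemma~\ref{thm:70} and correctly see that the zigzag identity must carry the finiteness over to $\Hom_C(x,y)$.

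The gap is in the transfer step, and it is not mere bookkeeping. Knowing that $\Hom(\Delta(\mathbf 1),\Delta(\mathbf 1))$ is finite dimensional does not by itself bound $\Hom_C(x,y)$: after adjointing the evaluation across the Zorro diagram you land in $\Hom_{C\boxtimes C^\vee\boxtimes C}\bigl(S_C^{-1}x\boxtimes\Delta(\mathbf 1),\,\Delta(\mathbf 1)^{tr}\boxtimes y\bigr)$, and Hom spaces between \emph{general} objects of a Deligne--Kelly tensor product do not decompose as tensor products of Hom spaces in the factors, nor is an arbitrary object a retract of a product object (it is only a quotient of one). The missing idea is this: the adjunction formula for $\Delta^{R}$ shows that $\Hom(\Delta(\mathbf 1),\underline{\ \ })$ is right exact, which forces $\Delta(\mathbf 1)$ to be a \emph{direct summand} of a single product object $\Xi\boxtimes X$, cut out by a projector $P\in\End(\Xi\boxtimes X)$ whose images on $\Hom(\Xi\boxtimes X,\xi\boxtimes x)$ and $\Hom(\xi\boxtimes x,\Xi\boxtimes X)$ are the pairing values $\langle\xi|S_C^{\mp1}x\rangle$, hence finite rank. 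One then exhibits $\Hom_C(x,y)$ as the common image of two \emph{commuting} finite-rank projectors (pre- and post-composition with $P$) on $\Hom_C(x,X)\otimes\End_{C^\vee}(\Xi)\otimes\Hom_C(X,y)$; neither projector alone suffices, since that triple tensor product may be infinite dimensional, and it is the interplay of the two (one finite rank on the first two factors, the other on the last two) that pins $\Hom_C(x,y)$ inside a finite-dimensional space. Your fallback via the module theory of the finite-dimensional algebra $\uEnd^R(\Delta)$ does not substitute for this: without the retract property of $\Delta(\mathbf 1)$ there is no functor from modules over that algebra to the Hom spaces of $C$ to run the argument through.
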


  \begin{lemma}[]\label{thm:71}
 Under the assumptions of Theorem~\ref{thm:72}, the functor $\mathrm{Hom}_C:
C\op \times C \to \mathrm{Vect}$ is bi-exact.\end{lemma}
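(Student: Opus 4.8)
The plan is to deduce bi-exactness of $\mathrm{Hom}_C$ from the extra adjoints that $2$-dualizability forces on the coevaluation $1$-morphism, combined with the explicit form of duality from Lemma~\ref{thm:70}. By Lemma~\ref{thm:81} the categories $C$ and $C^\vee$ are Hom-finite, so every Hom space below is finite dimensional, and by Lemma~\ref{thm:70} and the remark after it we have $C^\vee\simeq C\op$ with both categories pre-abelian (so kernels and cokernels exist). Since $\mathrm{Hom}_C(x,-)\colon C\to\mathrm{Vect}$ is representable it already preserves kernels, and $\mathrm{Hom}_C(-,x)$ is representable on $C\op$; so bi-exactness reduces to showing that, for every object $x$, the functor $\mathrm{Hom}_C(x,-)\colon C\to\mathrm{Vect}$ preserves cokernels and the functor $\mathrm{Hom}_C(-,x)\colon C\op\to\mathrm{Vect}$ preserves cokernels of $C\op$.

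First I would treat the second variable. Dualizability of $C$ gives the standard equivalence $C\boxtimes C^\vee\simeq\mathrm{Fun}^{\mathrm{rex}}(C,C)$ under which $\mathrm{coev}(\mathbf{1})$ corresponds to $\mathrm{id}_C$. Set $h^x:=\mathrm{Hom}_C(-,x)^\vee$; left exactness of the contravariant Hom makes $h^x$ right exact, hence $h^x\in C^\vee$, and it is finite-dimensional valued by Hom-finiteness, so the copower $z\mapsto h^x(z)\otimes c$ is a genuine object of $\mathrm{Fun}^{\mathrm{rex}}(C,C)$. A co-Yoneda computation gives a natural isomorphism $\mathrm{Hom}_C(x,c)\cong\mathrm{Nat}\bigl(\mathrm{id}_C,\;h^x(-)\otimes c\bigr)$. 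Thus $\mathrm{Hom}_C(x,-)$ is the composite of $c\mapsto c\boxtimes h^x$, which is right exact by the universal property of $\boxtimes$, with the functor $w\mapsto\mathrm{Hom}_{C\boxtimes C^\vee}(\mathrm{coev}(\mathbf{1}),w)$; the latter is precisely the right adjoint $\mathrm{coev}^R$ of the coevaluation, which exists as a $1$-morphism of $\ck$ exactly because $C$ is $2$-dualizable, and is therefore right exact. Hence $\mathrm{Hom}_C(x,-)$ preserves cokernels.

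For the first variable I would apply the same argument to $C^\vee$, which is again $2$-dualizable with $(C^\vee)^\vee\simeq C$; this yields that $\mathrm{Hom}_{C^\vee}(\eta,-)\colon C^\vee\to\mathrm{Vect}$ preserves cokernels for every $\eta$. Transporting along the equivalence $C^\vee\simeq C\op$ of Lemma~\ref{thm:70}, under which $\mathrm{Hom}_{C^\vee}(x\op,-)$ becomes $\mathrm{Hom}_C(-,x)$ viewed on $C\op$, shows that $\mathrm{Hom}_C(-,x)$ preserves cokernels of $C\op$. Together with the previous paragraph this proves the lemma, and bi-exactness (plus pre-abelianness) is then exactly what is needed to finish Theorem~\ref{thm:72}.

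The hard part will be the middle step. Concretely, one has to (i)~identify the abstractly produced right adjoint $\mathrm{coev}^R$ with the concrete functor $w\mapsto\mathrm{Nat}(\mathrm{id}_C,\Theta_w)$ on $\mathrm{Fun}^{\mathrm{rex}}(C,C)$, which follows from $\mathrm{coev}(\mathbf{1})\leftrightarrow\mathrm{id}_C$ together with the copower adjunction $\mathrm{Hom}_D(V\otimes d,w)\cong\mathrm{Hom}_{\mathrm{Vect}}(V,\mathrm{Hom}_D(d,w))$ applied to $\mathrm{coev}\dashv\mathrm{coev}^R$; and (ii)~verify the co-Yoneda isomorphism, unwinding $\mathrm{Nat}\bigl(\mathrm{id}_C,h^x(-)\otimes c\bigr)$ to $\mathrm{Nat}_z\bigl(\mathrm{Hom}_C(z,x),\mathrm{Hom}_C(z,c)\bigr)$ and then to $\mathrm{Hom}_C(x,c)$ by Yoneda, where finite dimensionality of $h^x(z)$ is what legitimizes the copower and the end. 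Everything else — left exactness of representables, right exactness of $1$-morphisms of $\ck$, and the stability of $2$-dualizability under passing to the dual — I expect to be routine.
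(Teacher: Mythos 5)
Your proof is correct and rests on exactly the same mechanism as the paper's: $2$-dualizability makes the right adjoint $\Delta^R=\mathrm{coev}^R$ of the coevaluation simultaneously corepresentable by $\Delta(\mathbf{1})$ (hence left exact) and a $1$-morphism internal to $\ck$ (hence right exact), and precomposing with the right-exact external product recovers the $\mathrm{Hom}$ pairing. The paper packages this slightly differently---it treats both variables at once using the formula $\Delta^R=\langle\ |\ \rangle\circ(\mathrm{Id}\boxtimes S_C^{-1})$ together with $\langle\cx\,|\,y\rangle=\mathrm{Hom}_C(y,x)^\vee$, absorbing the Serre twist that your co-Yoneda computation and the symmetric treatment of $C^\vee$ avoid---but the content is the same.
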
 \noindent In
particular, all epimorphisms and monomorphisms are split.

We briefly defer the proofs of Lemma~\ref{thm:70}, Lemma~\ref{thm:81}, and
Lemma~\ref{thm:71} in favor of the following.

  \begin{proof}[Proof of Theorem~\ref{thm:72}]
 Lemmata~\ref{thm:70} and ~\ref{thm:81} imply that $C$ has kernels and
cokernels, and their splitting, from Lemma~\ref{thm:71}, implies that $C$~ is
abelian.  Semisimplicity follows from $\mathrm{Hom}$-finiteness and
Lemma~\ref{thm:71}.  Decompose objects using nonzero noninvertible
endomorphisms until their endomorphism algebras become division rings.
Finiteness of the number of simple isomorphism classes follows from the
equivalence of $C\op$ with~$C\dual$; otherwise the latter would contain
infinite products of nonisomorphic simple objects.  The remainder of
Theorem~\ref{thm:72} requires an identification of the $\mathrm{Hom}$ pairing
with the vector space dual of its opposite, which is immediate from
semisimplicity.
  \end{proof}

Now to the lemmas.

\begin{proof}[Proof of Lemma~\ref{thm:70}]
We construct a left/right adjoint pair of linear functors 
\[
L:C\op\rightleftarrows C^\vee : R
\]
which we prove to be inverse equivalences. In fact, $R$ 
is the opposite of the $C^\vee$-counterpart of~ $L$, so that $(R\op, L\op)$ is the corresponding pair of
functors if we start with the category $C^\vee$ instead 
of~ $C$.

Define $L$ as the functor $x\op\to \cx$, where 
given $x\in C$, we define $\cx\in C^{\vee}$ by
\[
\langle \cx  | y\rangle = \mathrm{Hom}_C(y,x)^\vee, 
\qquad y\in C.
\] 
Note that $\cx $ is a right exact functional on $C$, 
so it defines an object of $C^\vee=\Hom_{\ck}(C,\Vect)$. (Moreover, the assignment $x\op\mapsto \cx $ is right exact, although this does not mean much before $C\op$ is shown to have cokernels.)

As advertised, $R$ sends $\eta\in C^\vee$ 
to the object ${\cet}\op$ of $C\op$, where $\cet\in C$ is defined by
\[
\langle \xi | \cet  \rangle = \mathrm{Hom}_{C^\vee}(\xi,\eta)^\vee, 
\qquad \xi\in C^\vee.
\] 
Now for $x\in C,\; \eta\in C^\vee$ we have the desired adjunction
  \begin{equation}\label{adj}
  \Hom_{C\op}(x\op,R\eta )=
\mathrm{Hom}_C(\cet , x) = \langle \cx  | \cet  \rangle^\vee =
  \mathrm{Hom}_{C^\vee} (\cx , \eta)=\Hom_{C\dual}(Lx\op,\eta ).
  \end{equation}

The Yoneda embedding asserts that $L$ is fully faithful; a formal consequence is that the adjunction unit $\mathrm{Id}_{C\op} \to R\circ L$ is an isomorphism.   Similarly, $R\op$, and therefore $R$, is also fully faithful, so the evaluation $L\circ R \to \mathrm{Id}_{C^\vee}$ is an isomorphism as well. 
\end{proof}

  \begin{proof}[Proof of Lemma~\ref{thm:81}]
 Denote by $\Delta: \mathrm{Vect}\to C^\vee\boxtimes C$ the coevaluation of
the duality, and let $S_C$~be the Serre automorphism.  The second of the
adjunctions  
  \begin{equation}\label{eq:100}
     \Delta^L = \langle\ |\ \rangle \circ(\mathrm{Id}\boxtimes S_C), \quad
     \Delta^R = \langle\ |\ \rangle \circ(\mathrm{Id}\boxtimes S_C^{-1}), 
  \end{equation}
combined with the right exactness of $\langle\ |\ \rangle $, shows that the
functor  
  \begin{equation}\label{eq:99}
     \mathrm{Hom}_{C^\vee\boxtimes C}(\Delta(\mathbf{1}), \underline{\ \ }):
     C^\vee\boxtimes C \to \mathrm{Vect} 
  \end{equation}
is also right exact.
Since $\Delta(\mathbf{1})$ is the quotient of a product $\Xi\boxtimes X\in
C^\vee\boxtimes C$ --- as is any object in $C^\vee\boxtimes C$ --- the right exactness of~\eqref{eq:99} implies that $\Delta (\mathbf{1})$~ must
therefore 
be a direct summand of~$\Xi\boxtimes X$, i.e., the image of a projector $P$ 
in $\mathrm{End}(\Xi\boxtimes X)$.
For all $\xi\in C^\vee$ and $x \in C$,
this $P$ induces \emph{finite-rank} projectors on all spaces 
$\mathrm{Hom}(\Xi\boxtimes X, \xi\boxtimes x )$ and 
$\mathrm{Hom}(\xi\boxtimes x, \Xi\boxtimes X)$, because the respective images are the finite-dimensional spaces
 \[
\mathrm{Hom}(\Delta(\mathbf{1}), \xi\boxtimes x) = 
\langle \xi | S_C^{-1}x\rangle, \quad 
\mathrm{Hom}(\xi\boxtimes x, \Delta(\mathbf{1})) = 
\langle \xi | S_Cx\rangle^\vee.
\] 

Given now $x,y\in C$, let's compute $\mathrm{Hom}_C(x,y)$ via the Zorro diagram, 
where we denote by \emph{tr} the transposition in the two variables:
\[
\mathrm{Hom}_{C}(x,y) = \mathrm{Hom}_{C}\Bigl(\bigl[\,\langle\ |\ \rangle^{tr} \boxtimes \mathrm{Id}\bigr]\bigl[x\boxtimes \Delta(\mathbf{1})\,\bigr],y\Bigr) = \mathrm{Hom}_{C \boxtimes C^\vee \boxtimes C}\Bigl(S_C^{-1}x\boxtimes \Delta(\mathbf{1}), \Delta(\mathbf{1})^{tr}\boxtimes y\Bigr)
\]
In the last step, we have used the first adjunction in~\eqref{eq:100}.
The last $\mathrm{Hom}$ space is the common image of the two commuting projectors acting by pre- and post-composition with $P$ on the space
\[
\mathrm{Hom}_{C\boxtimes C^\vee \boxtimes C}(x\boxtimes \Xi\boxtimes X, X\boxtimes \Xi\boxtimes y) = 
\mathrm{Hom}_C(x,X)\otimes \mathrm{Hom}_{C^\vee}(\Xi,\Xi)\otimes \mathrm{Hom}_C(X,y):= U\otimes V\otimes W.
\]
Post-composition with $P$ acts on $U\otimes V$ (and as 
the identity on $W$), 
and its finite rank implies that the image is contained in $F\otimes V\otimes W$, 
for some finite-dimensional $F\subset U$. But pre-composition by~ $P$
now acts with finite rank on $V\otimes W$, which proves that
$\mathrm{Hom}_C(x,y)$ is finite dimensional.

The Hom-finiteness of~$C^\vee$ is proved by a similar argument.
  \end{proof}

\begin{proof}[Proof of Lemma~\ref{thm:71}]
 Let $\Delta:\mathrm{Vect}\to C^\vee\boxtimes C$ be the unit for duality. Its right adjoint $\Delta^R$ satisfies
\[
\Delta^R(X) = \mathrm{Hom}\mstrut _{C\dual\boxtimes C} \bigl(\Delta(\mathbf{1}), X\bigr),\qquad
X\in C\dual\boxtimes C,
\] 
which implies $\Delta ^R$~is left exact. It is also right exact, being a $1$-morphism internal to 
$\mathrm{Cat}\mstrut_{\Bbbk}$.  Recall too the formula~\eqref{eq:100} for~$\Delta
^R$.  Now
the structural functor $C^\vee\times C\to C^\vee\boxtimes C$, $\xi\times x \mapsto \xi\boxtimes x$, is bi-exact. Following it with $\Delta^R$ leads to the bi-exact functor from $C\op\times C \to \mathrm{Vect}$ 
\[
x\op\times y \to \langle \cx  | S_C^{-1}y\rangle = \mathrm{Hom}(S_C^{-1}y,x)^\vee,
\] 
which proves the bi-exactness of $\mathrm{Hom}$.
\end{proof}

   \section{Internal Duals}\label{sec:5}

We describe here an abstract notion of internal duals, generalizing from a
tensor category (Definition~\ref{duals}) to an algebra object in a
$2$-category (Theorem~\ref{final}).  In particular, we show that our TFT $F$
with nonzero boundary condition $\beta$ leads to a fusion category $\Phi=
\underline{\mathrm{End}}^R\bigl(\beta(+)\bigr)$
(Definition~\ref{thm:14}). Since our knowledge of $\Phi$ comes from TFT
calculus, we must avoid unpictorial internal structures (for example, the use
of contravariant functors such as $x\mapsto x^*$) in describing internal
duality. The main application is
  \begin{theorem}\label{thm:B1} A tensor category whose underlying category
is dual to its opposite category and which satisfies the Frobenius condition
of Definition~ \ref{frob} and the bimodule property of
Proposition~\ref{bimod}, has internal left and right duals.
\end{theorem}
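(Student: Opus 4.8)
The plan is to realize, for every object $x$ of $\Phi$, internal duality data built entirely from the structure maps, and then to verify the zigzag identities; in the write-up it is cleanest to do this once in the generality of an algebra object in an arbitrary $2$-category (Theorem~\ref{final}), of which Theorem~\ref{thm:B1} is the case $\cM=\cc$. First I would make the hypotheses concrete. Write $m$, $u$ for the multiplication and unit of $\Phi$ as a tensor category, $\varepsilon\colon\Phi\to\Vect$ for the counit and $\Delta$ for the comultiplication, so that the Frobenius pairing of Definition~\ref{frob} is $B=\varepsilon\circ m\colon\Phi\boxtimes\Phi\to\Vect$ and the copairing is $\eta=\Delta\circ u$. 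The hypothesis that the underlying category of $\Phi$ is dual to $\Phi\op$ in $\cc$, together with Lemma~\ref{thm:70} identifying the pairing with $\Hom$, says exactly that $\Phi$ admits a self-duality in $\cc$; the content of Definition~\ref{frob} is that $(B,\eta)$ is a \emph{nondegenerate} such pairing — this is what the two invertibility statements underlying that definition encode — and the content of Proposition~\ref{bimod} is that the trivalent vertex $\Delta$ (and likewise $m$) may be slid past a tensor factor.

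Next I would write down the candidate duals. Given $x\in\Phi$, regarded as a $1$-morphism $\Vect\to\Phi$ in $\cc$, bend its strand around the self-duality $(B,\eta)$: this produces an object $x^\vee\in\Phi$, as a right exact — hence genuinely covariant, not contravariant — assignment, which is precisely the pictorial construction of Definition~\ref{duals}. Together with it one obtains evaluation and coevaluation $\ev_x\colon x^\vee\otimes x\to\mathbf1$ and $\coev_x\colon\mathbf1\to x\otimes x^\vee$ assembled from $B$, $\eta$ and the vertices $m,\Delta$, and symmetrically one defines ${}^\vee x$ together with $\widetilde\ev_x,\widetilde\coev_x$.

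The main step — and the step I expect to be the real obstacle — is verifying that $(\ev_x,\coev_x)$ and $(\widetilde\ev_x,\widetilde\coev_x)$ satisfy the zigzag identities. Here both remaining hypotheses enter. The Frobenius condition of Definition~\ref{frob} lets the $B$–$\eta$ portion of a zigzag be straightened by the snake identity for the self-duality of $\Phi$; the bimodule property of Proposition~\ref{bimod} is then exactly what is needed to move the $m$- and $\Delta$-vertices past the straightened strand, so that the residual diagram collapses to an identity by the (co)associativity and (co)unit axioms for $(\Delta,\varepsilon)$. Making the pictorial calculus precise and checking that the orientations and framings match is the bookkeeping that dominates the argument; morally the claim is that a tensor category carrying a nondegenerate invariant pairing together with a compatible comultiplication which is a bimodule map is rigid.

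Finally I would observe that every hypothesis is invariant under replacing $\Phi$ by $\Phi\op$ (equivalently $\Phi\rev$): "dual to its opposite category" is self-opposite, and Definition~\ref{frob} and Proposition~\ref{bimod} are formulated together with their duals — which is why Figure~\ref{fig:31} records $B$ "and the 2-morphisms obtained from it by duality." Hence the construction applied to $\Phi\op$ produces right duals there, i.e.\ left duals in $\Phi$, and therefore $\Phi$ has internal left and right duals, as claimed.
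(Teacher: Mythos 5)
Your plan is the classical route---exhibit $\Phi$ as a Frobenius algebra object in $\cc$, so that $B=\vep\circ\nabla$ together with $\Delta\circ\eta$ is a self-duality of the underlying category, and then bend strands to get evaluation/coevaluation data for each object and check the zigzags. That would work, but it is genuinely different in presentation from what the paper does. The paper never constructs $\mathrm{ev}_x$, $\mathrm{coev}_x$ or verifies zigzag identities. Instead it dualizes $\nabla$ and $\Delta^\vee$ in one variable to get the ``multiplication'' functors $\lambda,\rho\:\Phi\to\Phi\boxtimes\Pd$ and $\lambda',\rho'\:\Pd\to\Pd\boxtimes\Phi$, shows in Theorem~\ref{final} that the bimodule condition is equivalent to $\lambda'\cong\tau\circ\lambda\circ f^{-1}$ and $\rho'\cong\tau\circ\rho\circ(f^\vee)^{-1}$, and unwinds these in the categorical case directly into the $\Hom$-adjunctions of Definition~\ref{duals}, reading off the explicit formulas $x^{*}=f^{-1}(x\op)$ and ${}^{*}x=(f^\vee)^{-1}(x\op)$. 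This buys closed formulas for the duals and (deliberately) avoids any contravariant assignment $x\mapsto x^{*}$; your version buys the familiar graphical picture at the cost of having to produce the coevaluations and verify the snakes by hand.

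Two points in your outline need tightening before it is a proof. First, ``bend the strand around the self-duality'' is ambiguous: it does not determine whether the candidate dual is $f(x)$, $f^\vee(x)$, $f^{-1}(x\op)$ or $(f^\vee)^{-1}(x\op)$, and these differ in general (the $f$ versus $f^\vee$ discrepancy is the Serre automorphism $\St=(f^\vee)^{-1}\circ f$). Only the inverses give objects satisfying Definition~\ref{duals}, with $f^{-1}$ producing one-sided duals and $(f^\vee)^{-1}$ the other; pinning this down is exactly the content of the paper's computation. Second, you credit the Frobenius condition of Definition~\ref{frob} with supplying the snake identity for $(B,\Delta\circ\eta)$, but that condition only asserts that $f,f^\vee$ are invertible, i.e.\ that $B$ is perfect. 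The snake identities for this particular copairing are instead consequences of the Frobenius relation $(\id\boxtimes\nabla)\circ(\Delta\boxtimes\id)\cong\Delta\circ\nabla\cong(\nabla\boxtimes\id)\circ(\id\boxtimes\Delta)$---which is precisely the bimodule property of Proposition~\ref{bimod}(i)---together with the counit law for $(\Delta,\vep)$, the latter holding because $\Delta=\nabla^R$ and $\vep=\eta^R$. So the bimodule hypothesis carries the weight in both halves of your verification, and the ``bookkeeping'' you defer is not incidental: it is the substance of the paper's proof of Theorem~\ref{final}.
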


\noindent
 For our~$\Phi$, these conditions are checked in Lemma~\ref{thm:18}.  

We refer to~\cite{BJS} for another discussion of rigidity and dualizability. 

\begin{remark}
In the setting of TFT, the conditions separate neatly into a Frobenius-bimodule 
condition and an adjunction condition, reflecting two different geometric properties of a 
TFT with boundary generated by an algebra object and its regular boundary conditions. 
The logic of our application  to $\Phi$ compels a different path; 
we will return to the more natural  statements in a future paper. 
\end{remark}

Let $\Phi, \Pd$ be a dual pair of objects in a symmetric monoidal $2$-category $\left(\cM,\boxtimes\right)$. 
We are mostly interested in the \emph{categorical case} when $\Phi,\Pd$ are an opposite couple of linear 
categories paired by Hom, and can even restrict to semisimple categories, but the algebra below is agnostic 
about that, unless we explicitly flag it. It is convenient to denote the duality pairing $\Pd\boxtimes 
\Phi \to \mathbf{1}$ by writing $\langle \xi \,|\,y\rangle$, as in the categorical case, when $y\in \Phi$,
$\xi\in \Pd= \Phi\op$ (the opposite category). This convention is symmetric under simultaneous swapping of the arguments 
and of $\Phi$ with $\Pd$. When checking identities, conversion to the formalism of arrows is 
straightforward.\footnote{At any rate, we can reduce to the categorical case by passing to the functors on 
$\cM$ represented by $\Phi,\Pd$.} 
Equalities stand for canonical isomorphisms of $1$-morphisms.

Assume given an $E_1$ structure on $\Phi$, with strict unit $\eta:\mathbf{1}\to \Phi$ and multiplication 
$\nabla:\Phi\boxtimes \Phi\to \Phi$. When $\Phi$ is a category and when no confusion ensues, we also 
write $x\cdot y$ for $\nabla(x,y)$ and $1$ for the tensor unit.  The dual object $\Pd$ is a 
$\Phi\text{-}\Phi$ bimodule. This bimodule is invertible, if $\Phi$ is $2$-dualizable as an algebra object, 
and represents then the \emph{Serre autofunctor} of the (category of modules
over the) $E_1$ object $\Phi$.

We shall not adopt the \emph{a priori} assumption of $2$-dualizability here; however, we will 
require that $\eta$ and $\nabla$ have right adjoints $\vep:\Phi\to\mathbf{1}$ and $\Delta:\Phi\to  
\Phi\boxtimes \Phi$. This condition is always met in the categorical case, with explicit formulas for 
$\vep$ and for the dual functor $\Dd: \Pd\boxtimes \Pd \to \Pd$: 
  \begin{equation}\label{eq:64}
     \vep(z) = \mathrm{Hom}_\Phi(1,z); \quad \Dd(x\op\boxtimes y\op) =
        \nabla(x,y)\op, \quad \text{for }x,y,z \in \Phi. 
  \end{equation}
With this structure, $\Pd$ becomes a tensor category with unit $1\op =\vep^\vee(1)$. 
More generally, the dual object $\Pd$ is an $E_1$ object with the same features as $\Phi$: 
the dual arrow $\nabla^\vee$ defines a comultiplication which is right adjoint to the multiplication 
$\Dd$, and the latter has unit $\vep^\vee$, with right adjoint $\eta^\vee$.

\begin{remark}\label{ops}
 This interpretation of the dual  right adjoint of $\nabla$ holds for any
functor $\varphi:X\to Y$ between categories which are in duality with
their opposites: namely,  $\varphi\op:X\op\to Y\op$ is $\varphi \op=(\varphi^R)^\vee
=\left(\varphi^\vee\right)^L$. In particular, adjoints exist.  Recall
also that, when $X, Y$ are $2$-dualizable, with (additive) Serre
automorphisms $S^+_X, S^+_Y$, the left and right adjoints of $\varphi$ are
related by $S^+_Y\circ\varphi^L = \varphi^R\circ S^+_X$.  Commuting duals
with adjoints will therefore bring out additive Serre functors.
 \end{remark}

Define now the pairing $B: \Phi\boxtimes \Phi \to \mathbf{1}$ as $B = \vep\circ \nabla$. 
When $\Phi$ is a category, $B(x, y) = \mathrm{Hom}_\Phi(1,x\cdot y)$, for two general 
objects $x,y$. From $B$, we define a dual pair of functors, by dualizing separately with 
respect to each variable: 
  \begin{equation}\label{eq:65}
     f, f^\vee: \Phi\to \Pd, \quad f(x) := B(x, -), \quad f^\vee(y) := B(-,
     y) 
  \end{equation}

\begin{proposition}
$f$ is a right, and $f^\vee$ a left $\Phi$-module morphism. 
\end{proposition}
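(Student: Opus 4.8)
The plan is to identify $f$ and $f^\vee$ with the two orbit maps of the monoidal unit $\vep^\vee\:\mathbf1\to\Pd$ of $\Pd$ under the right, respectively left, action of $\Phi$ on the bimodule $\Pd$, and then to read off the module-morphism property from associativity of those actions. First I would record the relevant structure maps. Recall that $\Pd$ carries the $(\Phi,\Phi)$-bimodule structure dual to the regular bimodule $\Phi$: its right action $\rho\:\Pd\boxtimes\Phi\to\Pd$ is characterized under the duality pairing by $\langle\xi\cdot a\,|\,y\rangle=\langle\xi\,|\,a\cdot y\rangle$, and its left action $\lambda\:\Phi\boxtimes\Pd\to\Pd$ by $\langle a\cdot\xi\,|\,y\rangle=\langle\xi\,|\,y\cdot a\rangle$. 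Since $\vep^\vee$ is dual to $\vep$, pairing $\vep^\vee$ against $z\in\Phi$ returns $\vep(z)$; combined with $B=\vep\circ\nabla$ this gives $B(x,y)=\vep(x\cdot y)=\langle\vep^\vee\,|\,x\cdot y\rangle$.

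Next I would curry $B$ in each of its variables. Currying the second variable and using the formula for $\rho$, the equality $\langle f(x)\,|\,y\rangle=\langle\vep^\vee\,|\,x\cdot y\rangle=\langle\vep^\vee\cdot x\,|\,y\rangle$ holds for every $y$, so nondegeneracy of the pairing gives $f=\rho\circ(\vep^\vee\boxtimes\id_\Phi)$; currying the first variable and using the formula for $\lambda$ gives, symmetrically, $f^\vee=\lambda\circ(\id_\Phi\boxtimes\vep^\vee)$. (In the general, non-categorical setting these two identities are read off from the universal property of the duality by the identical manipulation, or one first passes to the functors on $\cM$ represented by $\Phi$ and $\Pd$.) The module properties are now immediate: $\rho\circ(f\boxtimes\id_\Phi)=\rho\circ(\rho\boxtimes\id_\Phi)\circ(\vep^\vee\boxtimes\id_{\Phi\boxtimes\Phi})$, which by associativity of the right action equals $\rho\circ(\id_\Pd\boxtimes\nabla)\circ(\vep^\vee\boxtimes\id_{\Phi\boxtimes\Phi})=\rho\circ(\vep^\vee\boxtimes\id_\Phi)\circ\nabla=f\circ\nabla$, and since $\nabla$ is the regular right $\Phi$-action on $\Phi$ this says exactly that $f$ intertwines the right $\Phi$-actions. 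The computation for $f^\vee$ is the mirror image, using $\lambda\circ(\nabla\boxtimes\id_\Pd)=\lambda\circ(\id_\Phi\boxtimes\lambda)$ to obtain $\lambda\circ(\id_\Phi\boxtimes f^\vee)=f^\vee\circ\nabla$.

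The only point that genuinely needs care --- the \emph{main obstacle}, modest as it is --- is the bookkeeping in the first two steps: matching the two currying conventions for $B$ to the two sides of the bimodule $\Pd$, and keeping track of which evaluation and coevaluation of the dual pair $(\Phi,\Pd)$ are used, so that the symmetry exchanging the arguments of $B$ is threaded through consistently. Once the presentations $f=\rho\circ(\vep^\vee\boxtimes\id_\Phi)$ and $f^\vee=\lambda\circ(\id_\Phi\boxtimes\vep^\vee)$ are in hand, everything else is formal manipulation of $\boxtimes$ and the associativity axioms, valid up to the canonical isomorphisms that equalities stand for throughout this section.
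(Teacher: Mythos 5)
Your proof is correct and is essentially the paper's argument: both reduce to associativity of $\nabla$ read through the duality pairing against a test object, since unwinding your presentation $f=\rho\circ(\vep^\vee\boxtimes\id_\Phi)$ via the pairing reproduces the paper's chain $\langle f(x\cdot y)\,|\,z\rangle=\vep(x\cdot y\cdot z)=\langle f(x).y\,|\,z\rangle$. The extra observation that $f$ is the orbit map of the unit $1\op=\vep^\vee(1)$ is a nice repackaging (the paper records it only later, in Proposition~\ref{stuff}(ii)--(iii)), but it does not change the substance.
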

\begin{proof}
$\langle f(x\cdot y) \,|\, z\rangle = B\left(x\cdot  y, z\right) = \vep( x\cdot y\cdot z ) = 
B(x, y \cdot z) = \langle f(x) \,|\, y\cdot z \rangle =\langle f(x). y \,|\, z\rangle$,  
so that $f(x\cdot y) =  f(x). y$, and similarly for $f^\vee$.
\end{proof}

\begin{definition} \label{frob}
We say that $\Phi$ satisfies the \emph{(non-symmetric) adjoint Frobenius condition} when 
$B$ ~is a perfect pairing: that is, $f$ and $f^\vee$ are isomorphisms.  If
so, we
define the Serre automorphism of~$\Phi $ as $S^\otimes =(f^\vee)\inv \circ f.$
\end{definition}

\begin{proposition}\label{stuff}
Assume that $\Phi$ satisfies the Frobenius condition. The following natural isomorphisms apply: 
\begin{enumerate}[label=\textnormal{(\roman*)}]
\item $B(x,y) = B(y, \St x)$. In particular, symmetry of $B$ is equivalent to a trivialization of $\St $.
\item  $f\circ\eta = f^\vee\circ\eta = \vep^\vee$. For a category, $f(1) = f^\vee(1) = 1^{op}$. 
\item As functors $\Phi\to\Phi^\vee$, we have  $\vep^\vee.(\underline{\ }) =\St (\underline{\ }) .\vep^\vee $. 
In the categorical case, $1^{op}. x = \St (x). 1^{op}$.
\item  $\St $ is naturally a tensor automorphism of $\Phi$, and twisting the $\Phi$-action by $\St $ induces 
the Serre autofunctor  $M\mapsto \Phi^\vee\boxtimes_\Phi M$ on the $2$-category of 
left $\Phi$-modules. 
\end{enumerate}
\end{proposition}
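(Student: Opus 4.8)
The plan is to derive all four items from two elementary facts: the defining relations $\langle f(x)\,|\,y\rangle = B(x,y) = \langle f^\vee(y)\,|\,x\rangle$, and the identity $f = f^\vee\circ\St$, which is just the definition $\St = (f^\vee)\inv\circ f$ read backwards. Throughout I treat the pairing notation as shorthand for composites of $1$-morphisms, so that ``$\langle\alpha\,|\,-\rangle\cong\langle\beta\,|\,-\rangle$ implies $\alpha\cong\beta$'' is exactly nondegeneracy of the duality $\Pd\boxtimes\Phi\to\mathbf{1}$ (if one prefers, reduce to the categorical case by passing to the functors on $\cM$ represented by $\Phi$ and $\Pd$, as noted above).

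Item (i) is the chain $B(x,y) = \langle f(x)\,|\,y\rangle = \langle f^\vee(\St x)\,|\,y\rangle = B(y,\St x)$. For the ``in particular'': if $B$ is symmetric then $B(y,x) = B(x,y) = B(y,\St x)$ for all $y$, so nondegeneracy in the first slot gives $\St\cong\id$, and conversely $\St\cong\id$ makes $B$ symmetric. Item (ii): using the strict unit, $\langle f(1)\,|\,y\rangle = B(1,y) = \vep(1\cdot y) = \vep(y) = \langle\vep^\vee\,|\,y\rangle$, whence $f\circ\eta = \vep^\vee$; the argument for $f^\vee$ is identical with $x\cdot 1 = x$ in place of $1\cdot y = y$. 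In particular $\St(1) = (f^\vee)\inv f(1) = (f^\vee)\inv\vep^\vee = 1$. Item (iii): first record how $\Phi$ acts on $\Pd$ in these terms --- right $\Phi$-linearity of $f$ gives $\langle\xi.b\,|\,y\rangle = \langle\xi\,|\,b\cdot y\rangle$ and left $\Phi$-linearity of $f^\vee$ gives $\langle a.\xi\,|\,y\rangle = \langle\xi\,|\,y\cdot a\rangle$ (verify on $\xi$ in the image of $f$, resp.\ of $f^\vee$, then invoke surjectivity). Then $\langle\vep^\vee.x\,|\,y\rangle = \langle\vep^\vee\,|\,x\cdot y\rangle = \vep(x\cdot y) = B(x,y)$ while $\langle\St(x).\vep^\vee\,|\,y\rangle = \langle\vep^\vee\,|\,y\cdot\St x\rangle = \vep(y\cdot\St x) = B(y,\St x)$, and (i) identifies the two; nondegeneracy gives $\vep^\vee.x\cong\St(x).\vep^\vee$.

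Item (iv) is the substantive one. Monoidality of $\St$ follows by iterating (i) together with associativity of $\nabla$: for every $z$,
\[
B(z,\St(x\cdot y)) = B(x\cdot y,z) = B(x,y\cdot z) = B(y,z\cdot\St x) = B(z,\St x\cdot\St y),
\]
so nondegeneracy produces natural isomorphisms $\St(x\cdot y)\cong\St(x)\cdot\St(y)$; with $\St(1)\cong 1$ from (ii), the unit and associativity coherences are forced by those of $\nabla$ (and the pentagon), and $\St$ is invertible since $f$ and $f^\vee$ are. For the Serre-functor identification, transport the $\Phi$-$\Phi$ bimodule $\Pd$ to $\Phi$ along the left-module isomorphism $f^\vee$: the left action becomes ordinary left multiplication, while $f = f^\vee\circ\St$ and right $\Phi$-linearity of $f$ give the right action as $n\triangleleft b = \St\bigl((\St)\inv(n)\cdot b\bigr)$. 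Hence $\Pd\boxtimes_\Phi M\cong\Phi\boxtimes_\Phi M\cong M$ as objects, and tracing the surviving left $\Phi$-action through this equivalence --- where one uses monoidality of $\St$ --- shows it is the original $\Phi$-action precomposed with $(\St)\inv$; that is, $M\mapsto\Pd\boxtimes_\Phi M$ is precisely the $\St$-twist of left $\Phi$-modules, matching the Serre autofunctor up to the convention for ``twisting by $\St$''. The routine pairing manipulations in (i)--(iii) I would not belabor; the real work, and the main obstacle, is in (iv): producing the isomorphisms $\St(x\cdot y)\cong\St(x)\cdot\St(y)$ from nondegeneracy is easy, but checking they are natural and satisfy the monoidal coherence axioms takes care, and the bimodule-transport step must be carried out at the level of $1$-morphisms in $\cM$, not merely in the categorical shorthand, since the handedness of the composites is what pins the final twist to $(\St)\inv$ rather than $\St$.
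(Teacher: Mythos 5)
Your proposal is correct and follows essentially the same route as the paper: items (i)--(iii) by unwinding the pairing via $f=f^\vee\circ\St$ and the (co)unit, multiplicativity of $\St$ from these identities, and the Serre statement by exhibiting $\Pd$ as a twisted regular bimodule via $f$ or $f^\vee$. The only (cosmetic) differences are that you prove multiplicativity by a direct chain of $B$-identities rather than via (iii), and you transport the bimodule structure along $f^\vee$ (twisting the right action) where the paper uses $f$ (twisting the left action); both land on the same $\St^{\pm1}$-twist.
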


\begin{remark}
Promoting $\St $  to a tensor functor means equipping it with isomorphisms, compatible with the 
associativity and unit laws on $\Phi$,
\[
\St \circ \nabla \cong \nabla \circ (\St \times \St ), \quad \St \circ \eta \cong \eta;
\] 
while not evident from the expression $(f^\vee)^{-1}\circ f$, they do follow from Parts~(i)-(iii), 
as in the proof below. On the other hand, reduction of $\Phi^\vee\boxtimes_\Phi\underline{\ }$ to a 
tensor automorphism of $\Phi$ is a formal consequence of the isomorphy of $f^\vee$. 
\end{remark}

\begin{proof}
Parts (i)-(iii) are immediate from the  properties of $f,f^\vee, B$; thus, 
\[\begin{aligned}
B(x,y) &= \langle f(x) \,|\, y\rangle = \langle f^\vee\circ \St (x) \,|\, y \rangle = \langle f(y) \,|\, \St (x) \rangle
	&\text{for (i)}, \\
\langle f(1) \,|\, x\rangle &= B(1,x) = \vep(x) = B(x,1) =\langle f^\vee(1) \,|\, x\rangle &\text{for (ii),} \\
\St (x). 1^{op} &= f^\vee\left(\St (x)\right) = f(x) = 1^{op}. x &\text{for (iii)}. 
\end{aligned}
\]
Multiplicativity of $\St $ now follows:
\[
f^\vee\left( \St (xy) \right) = \St (xy).1^{op} = 1^{op}.xy = \St (x).1^{op}.y = \St (x)\cdot \St (y).1^{op} = 
	f^\vee\left(\St (x)\cdot \St (y)\right),
\]  
using categorical notation for simplicity. To complete (iv), consider the following diagram 
of right $\Phi$-modules, with left multiplication in the bottom row: 
\[
\xymatrix{\Phi\boxtimes\Phi \ar[r]^{\nabla} \ar[d]^{\St \boxtimes f} & \Phi \ar[d]^{f} \\
	\Phi \boxtimes \Phi^\vee \ar[r] & \Phi^\vee
}
\] 
We claim this commutes naturally. Assuming this, let us interpret the diagram: the right 
vertical arrow $f$ gives an isomorphism of the identity with the Serre autofunctor on 
$\Phi$-modules, while the left arrow exhibits the necessary intertwining twist by $\St $ in 
the left $\Phi$-action. 

Exploiting the right $\Phi$-module structure, it suffices to check commutativity on 
$\Phi\boxtimes\eta$, when this becomes the isomorphism $\St (x). f(1) = f(1).x = f(x)$, 
from (ii) and (iii).
\end{proof}

\begin{remark}
The Serre functor $\St $ above need not agree with the additive Serre automorphism $\Spl _\Phi$ of 
Remark~\ref{ops}, which is independently defined whenever the object $\Phi\in \cM$ is 
$2$-dualizable. However, the two will agree for a fusion category $\Phi$, because of 
its $3$-dualizability. See also Remark~\ref{twoserre} below for a general relation between 
the two. 
\end{remark}

The isomorphisms $f,f^\vee$ allow us to transport the structure tensors $\eta,\nabla,\Delta,\vep$ 
to a matching structure on $\Phi^\vee$, denoted by overbars. Choosing either $f$ or $f^\vee$ results 
in isomorphic structures on $\Phi^\vee$, because all structure tensors commute with $\St $. Dualizing 
them gives a new structure $\bar{\vep}^\vee, \bar{\Delta}^\vee, \bar{\nabla}^\vee, \bar{\eta}^\vee$ 
on $\Phi$. We get the following diagram, in which the bottom row maps are related to 
the top row maps by duality and adjunction using uniform rules, $\vep = \eta^R, \Delta=\nabla^R$,
$\bar{\vep}=\bar{\eta}^R, \bar{\Delta} = \bar{\nabla}^R$ and all ensuing relations:
\begin{equation}\label{mults}
\begin{gathered}
\xymatrix@C+12pt{
 \hspace{-20pt}\Phi\boxtimes \Phi  \ar@/^/[r]|{\nabla} & 
 	\Phi \ar@/^/[l] |\Delta \ar@/^/[r] | \vep & \mathbf{1} \ar@/^/[l] |
\eta \ar@<1ex>@{~>}[drr]+R^<<<<<<<<<{f}  &&\ar@{~>}[dll]_<<<<<<{f^\vee}&
 	\hspace{-35pt} \Phi\boxtimes \Phi  \ar@/^/[r]|{\overline{\Delta}^\vee}& 
		\Phi \ar@/^/[l] |{\overline{\nabla}^\vee}  \ar@/^/[r] | {\overline{\eta}^\vee}
			& \mathbf{1} \ar@/^/[l] | {\overline{\vep}^\vee}
	\\
 \hspace{-30pt}\Pd\boxtimes \Pd  \ar@/^/[r]|{\Dd} &
 	\Pd \ar@/^/[l] | {\nabla^\vee} \ar@/^/[r] | {\eta^\vee} & \mathbf{1} \ar@/^/[l] | {\vep^\vee}
 	& &  &
 	\hspace{-30pt}\Pd\boxtimes \Pd    
	\ar@/^/[r]|{\overline{\nabla}} & \Pd \ar@/^/[l] |{\overline{\Delta}} \ar@/^/[r] | {\overline{\vep}}
		& \mathbf{1} \ar@/^/[l] | {\overline{\eta}}
}
\end{gathered}
\end{equation}
The dual corners are related by the morphism $f^\vee$. Because $\bar{\eta} = f\circ\eta$, etc., 
we find from Proposition~\ref{stuff}.ii that
\begin{proposition} 
In the diagram above, units and traces match in each row: $\bar{\eta} = \vep^\vee$, 
$\bar{\vep} = \eta^\vee$. \qed
\end{proposition}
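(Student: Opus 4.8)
\emph{Proof sketch.} The plan is to read both equalities straight off Proposition~\ref{stuff}(ii) together with the definition of the overbarred structure. Recall that $\bar\eta,\bar\nabla,\bar\Delta,\bar\vep$ are obtained by transporting $\eta,\nabla,\Delta,\vep$ along the module isomorphism $f$ (equivalently along $f^\vee$, the two transports giving isomorphic, in fact equal, structures). Concretely $\bar\eta=f\circ\eta$, and $\bar\vep=\vep\circ f\inv$ — the latter because $\bar\vep=\bar\eta^R$ and $\vep=\eta^R$, so $\vep$ transports in the same way once $f$ is an isomorphism. Thus the Proposition amounts to the two identities $f\circ\eta=\vep^\vee$ and $\vep\circ f\inv=\eta^\vee$.

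The first is verbatim one of the clauses of Proposition~\ref{stuff}(ii), so $\bar\eta=f\circ\eta=\vep^\vee$ with nothing further to do. For the second I would dualize. The functor $f^\vee$ introduced in~\eqref{eq:65} is by construction the categorical transpose of $f$: unwinding $\langle f^\vee(y)\,|\,x\rangle=B(x,y)=\langle f(x)\,|\,y\rangle$ shows $(f^\vee)^\vee=f$. Apply $(-)^\vee$ to the identity $f^\vee\circ\eta=\vep^\vee$ (again Proposition~\ref{stuff}(ii)); since $(-)^\vee$ reverses composition and $(\vep^\vee)^\vee=\vep$, this yields $\eta^\vee\circ f=\vep$, hence $\bar\vep=\vep\circ f\inv=\eta^\vee$. (One can alternatively route this through the adjoint/dual interchange $\varphi\op=(\varphi^R)^\vee=(\varphi^\vee)^L$ of Remark~\ref{ops} applied to $\bar\vep=\bar\eta^R=(\vep^\vee)^R$, but the bare dualization is shorter.)

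I do not expect a genuine obstacle here: the statement is a formal corollary of Proposition~\ref{stuff}. The only care required is conventional bookkeeping. First, one should confirm that transport by $f$ and transport by $f^\vee$ give the same $\bar\vep$, so that $\bar\vep$ is unambiguous before comparing it with $\eta^\vee$; this follows from $\vep\circ\St=\vep$, which is Proposition~\ref{stuff}(i) specialized to $y=1$ (together with $\St=(f^\vee)\inv\circ f$). Second, one must keep the identification $\Phi^{\vee\vee}\cong\Phi$ fixed throughout, so that $(f^\vee)^\vee=f$ and $(\vep^\vee)^\vee=\vep$ hold on the nose and the dualization step is legitimate. With those conventions in place the two equalities drop out of Proposition~\ref{stuff}(ii) as claimed. \qed
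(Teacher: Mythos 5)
Your argument is correct and matches the paper's (extremely terse) proof, which likewise reads both identities off Proposition~\ref{stuff}(ii) together with the definition $\bar{\eta}=f\circ\eta$, $\bar{\vep}=\vep\circ f\inv$; you have merely spelled out the dualization step $(f^\vee\circ\eta)^\vee=\eta^\vee\circ f=\vep$ and the well-definedness of $\bar\vep$ (which the paper already notes when it says the $f$- and $f^\vee$-transports agree because the structure tensors commute with $\St$). Nothing further is needed.
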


\begin{proposition}\label{bimod}
Under the Frobenius assumption, the following conditions are equivalent:
\begin{enumerate}[label=\textnormal{(\roman*)}]\itemsep0ex
\item The coproduct $\Delta$ is a $\Phi\text{-}\Phi$ bimodule map (for the outer $\Phi$-actions on the 
two $\Phi$-factors).
\item The multiplication $\Delta^\vee$ is a $\Phi\text{-}\Phi$ bimodule map (for the \emph{inner} 
$\Phi$-actions on the two $\Phi^\vee$-factors).
\item The two structures on $\Phi$ in the top row of \eqref{mults} are transpose-isomorphic.
\item The two structures on $\Phi^\vee$ in the bottom row of \eqref{mults} are transpose-isomorphic.
\end{enumerate}
\end{proposition}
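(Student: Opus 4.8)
The plan is to use the evident symmetry of diagram~\eqref{mults}. Under the duality functor $(-)^\vee$ the top row (two Frobenius-type structures on~$\Phi$) goes to the bottom row (two such on~$\Pd$), while under transport along~$f$ the left-hand structure in each row goes to the right-hand one; and by the preceding Proposition, units and counits already agree in each row, so conditions~(iii) and~(iv) are each only an assertion about the two (co)multiplications in a row. Applying $(-)^\vee$ to an isomorphism witnessing~(iii) therefore produces one witnessing~(iv) and conversely (using $(-)^{\vee\vee}=\mathrm{id}$ and that $(-)^\vee$ carries transpose-isomorphisms to transpose-isomorphisms), so (iii)$\Leftrightarrow$(iv) is formal. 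Likewise, dualizing the bimodule-map condition in~(i)---using that $(-)^\vee$ swaps the two tensor factors and hence turns the \emph{outer} $\Phi$-actions on $\Phi\boxtimes\Phi$ into the \emph{inner} actions on $\Pd\boxtimes\Pd$, and that the induced bimodule structure on~$\Pd$ is the regular one after the $\St$-twist of Proposition~\ref{stuff}(iv)---gives exactly~(ii), so (i)$\Leftrightarrow$(ii) is formal as well. It then remains to prove one implication joining the two pairs, for which I would take (i)$\Leftrightarrow$(iii).

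First I would unwind~(iii). Transporting $(\eta,\nabla,\vep,\Delta)$ along~$f$ to~$\Pd$ and dualizing back to~$\Phi$ gives the right-hand structure $(\overline{\eta}^\vee,\overline{\nabla}^\vee,\overline{\vep}^\vee,\overline{\Delta}^\vee)$ of the top row; using $\Delta^\vee=\nabla\op$ (Remark~\ref{ops}) and the definition of~$f$ from the perfect pairing~$B$, one checks that the right-hand multiplication $\overline{\Delta}^\vee\colon\Phi\boxtimes\Phi\to\Phi$ is canonically the composite of~$\nabla$ with the transposition of the two factors and a twist by the Serre automorphism~$\St$, and that $\overline{\nabla}^\vee\colon\Phi\to\Phi\boxtimes\Phi$ is the comultiplication canonically manufactured from~$B$ and~$\nabla$. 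Hence~(iii) says exactly that $\Delta=\nabla^R$ coincides, after that $\St$-twisted transposition, with the $B$-canonical comultiplication---equivalently, that $(\Phi,\nabla,\eta,\Delta,\vep)$ satisfies the Frobenius relation $\Delta\circ\nabla=(\mathrm{id}\boxtimes\nabla)\circ(\Delta\boxtimes\mathrm{id})$ together with its left-right mirror (coassociativity and counitality of~$\Delta$ being automatic from associativity and unitality of~$\nabla$ via the adjunction $\nabla\dashv\Delta$). The bridge to~(i) is then the standard fact, valid in any symmetric monoidal $2$-category: for an algebra object with $\Delta=\nabla^R$, $\vep=\eta^R$, the Frobenius relation holds iff $\Delta$ is a map of $\Phi$-$\Phi$-bimodules for the outer actions---one direction by precomposing the relation with $\eta\boxtimes\mathrm{id}$, the other from the fact that~$\nabla$ is a bimodule map (associativity) combined with the universal property of~$\Delta$ as a right adjoint, pushing the two module actions through the unit and counit of $\nabla\dashv\Delta$.

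The hard part will be the bookkeeping of the Serre twists: whenever~$B$ is not symmetric, ``transpose'' genuinely introduces~$\St$, so the identification of $\overline{\Delta}^\vee$ with the transposed~$\nabla$ carries an~$\St$-factor, and I must verify that it is precisely the one that appears when~$\Delta$ is compared with the $B$-canonical comultiplication; Proposition~\ref{stuff}(i),(iii),(iv) is tailored for this---it records that $\St$ is a tensor automorphism, that $1\op\cdot x=\St(x)\cdot 1\op$, and that twisting by~$\St$ realizes the Serre autofunctor on left $\Phi$-modules---so once the twists are tracked carefully the remainder is a finite diagram chase rather than a new idea. The one place genuine care is needed is the converse implication above: the adjoint of a bimodule map need not be a bimodule map, so that step must use the explicit description of the module actions via~$B$ and~$B^\vee$ and not abstract nonsense.
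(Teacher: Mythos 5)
Your proposal is correct in outline, and its first two steps coincide with the paper's: both dispose of (i)$\Leftrightarrow$(ii) by duality and of (iii)$\Leftrightarrow$(iv) via the isomorphisms $f,f^\vee$. Where you genuinely diverge is the bridge between the two pairs. The paper argues on the bottom row of \eqref{mults}: the diagonal arrow $f$ is compatible with the right module structures (and $f^\vee$ with the left ones), the units already match ($\bar{\eta}=\vep^\vee$ by the preceding proposition), and since $1\op$ freely generates $\Phi^\vee$ over $\Phi$, a multiplication subject to the inner-bimodule condition is determined by its value on the unit --- so the bimodule condition forces the two multiplications (and hence all remaining structure maps) in each row to agree, and conversely. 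You instead identify (iii) with the classical Frobenius relation and then pass to (i). Two remarks on that route. First, the equivalence [Frobenius relation] $\Leftrightarrow$ [(i)] is a tautology --- the Frobenius relation and the outer-bimodule condition on $\Delta$ are the same equations written differently --- so the two-directional derivation you sketch for it is doing no work; the entire content of your bridge sits in the deferred identification of $\overline{\Delta}^\vee$ with the $\St$-twisted transposed $\nabla$ and of $\overline{\nabla}^\vee$ with the $B$-canonical comultiplication (the classical ``unique Frobenius coproduct, with Nakayama twist'' statement). That identification is believable and can be carried out, but it is precisely the computation the paper's free-generation argument avoids. Second, your closing caution is apt: adjoints of bimodule maps are not automatically bimodule maps outside the semisimple setting (the paper only proves this in Appendix~D under semisimplicity), and the paper's proof sidesteps the issue by never taking adjoints of the candidate bimodule map. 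Net comparison: your route explains conceptually why all four conditions encode ``$\Phi$ is Frobenius,'' at the cost of explicit Serre-twist bookkeeping; the paper's is shorter and computation-free.
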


\begin{proof}
Parts (i) and (ii) are equivalent by duality, (iii) and (iv) are so via the isomorphisms induced by $f$. 
Note further that the diagonal arrow is compatible with the $\Phi\text{-}\Phi$ bimodule structures: 
on the right, because $f$ is a right module map, an on the left, because we could equally well have 
used the left module isomorphism $f^\vee$ instead. In light of the matching units, which are free 
generators of $\Phi^\vee$ over $\Phi$,  
the bimodule condition determines the multiplication maps and forces the agreement of the 
remaining structure maps on each row.
\end{proof}
 
\begin{definition}\label{duals}
When $\Phi$ is a category, the internal right and left duals ${}^*x,x^*$ of an object 
$x\in \Phi$ are the objects characterized (up to unique isomorphism, if they exist) by 
the functorial (in $y,z$) identities 
\begin{equation}
\mathrm{Hom}(x\cdot y,z) = \mathrm{Hom}(y, x^*\cdot z),\quad 
 	\mathrm{Hom}(y\cdot x,z) = \mathrm{Hom}(y, z\cdot {}^*x).
\end{equation}
\end{definition}
It turns out that the conditions in \eqref{bimod} force the existence of internal duals and 
their expression in terms of $f^\vee$ and $f$. To see this, we first give an abstract formulation.

Dualizing the product $\nabla$ in the second argument gives the \emph{left multiplication map} 
$\lambda: \Phi\to \Phi\boxtimes\Phi^\vee$. In the categorical case, $\lambda(x)$ represents the 
left multiplication  by $x\in \Phi$.  Similarly, for the first argument we get the right 
multiplication map $\rho: \Phi\to \Phi\boxtimes\Phi^\vee$. Repeating this for $\Delta^\vee$ leads to the two maps 
$\lambda', \rho':  \Phi^\vee\to \Phi^\vee\boxtimes\Phi$. 
In the abusive but readable argument notation, with Greek arguments living in $\Phi^\vee$,
\begin{equation}\label{primes}
\langle \lambda' (\xi_1) \,|\, y\boxtimes \xi_2 \rangle =
	\langle \Delta^\vee(\xi_1, \xi_2) \,|\, y \rangle, \quad 
\langle \rho' (\xi_2) \,|\, y\boxtimes \xi_1 \rangle =
	\langle \Delta^\vee(\xi_1, \xi_2) \,|\, y \rangle. 
\end{equation}
The maps $\lambda',\rho'$ will be the abstract versions of the `tensoring with duals' 
\[
x^{op} \mapsto \left(z\mapsto x^* \cdot z\right), \quad x^{op} \mapsto \left(z\mapsto z \cdot {}^*x\right).
\]

\begin{remark}
$\lambda', \rho'$ are related to the op-conjugates $\lambda^{op}, \rho^{op}: 
\Phi^\vee\to \Phi^\vee\boxtimes\Phi$ as follows: 
\[
\lambda'\cong(\mathrm{Id}\boxtimes {\Spl _\Phi}^{-1})\circ \lambda^{op}, \quad 
	\rho'\cong (\mathrm{Id}\boxtimes {\Spl _\Phi}^{-1})\circ \rho^{op}
\] 
The source of the additive Serre correction $\Spl _\Phi$ is described in Remark~\ref{ops}.
\end{remark}

Denote by $\tau$ the symmetry $\Phi\boxtimes\Pd\to\Pd\boxtimes\Phi $.

\begin{theorem}\label{final}
The equivalent conditions of Proposition~\ref{bimod} are also equivalent to:
\begin{enumerate}[label=\textnormal{(\roman*)}]\itemsep0ex
\item $ \lambda' \cong\tau\circ\lambda\circ f^{-1}$. 
\item $\rho' \cong\tau\circ\rho\circ (f^\vee)^{-1}$.
\item In the categorical case: $\Phi$ has internal left and right duals. 
\end{enumerate}
\end{theorem}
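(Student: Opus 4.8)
The plan is to prove the cyclic chain of equivalences \textnormal{(i)}$\Leftrightarrow$Proposition~\ref{bimod}, \textnormal{(ii)}$\Leftrightarrow$Proposition~\ref{bimod}, and \textnormal{(iii)}$\Leftrightarrow$\textnormal{(i)}\&\textnormal{(ii)}, so that everything is pinned to the already-established equivalences of Proposition~\ref{bimod}. First I would unwind the defining adjunctions. The map $\lambda\colon\Phi\to\Phi\boxtimes\Pd$ is the adjoint of $\nabla$ in its second argument, so it satisfies $\langle\xi\,|\,\lambda(x)\circ(\underline{\ }\boxtimes\underline{\ })\rangle$-type identities that in the categorical case read $\lambda(x)\colon z\mapsto x\cdot z$, paired against $\Phi^\vee$ in the $\Pd$-slot; dually $\lambda'\colon\Phi^\vee\to\Phi^\vee\boxtimes\Phi$ comes from $\Dd^\vee=\overline\nabla^\vee$ via \eqref{primes}. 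The composite $\tau\circ\lambda\circ f^{-1}\colon\Pd\to\Pd\boxtimes\Phi$ is thus a candidate for $\lambda'$, and asserting they agree is exactly asserting that the transport of the $\Phi$-structure to $\Pd$ via $f$ (top-right of \eqref{mults}) matches the native dualized structure on $\Pd$ (bottom-left of \eqref{mults}) — i.e., condition \textnormal{(iv)} of Proposition~\ref{bimod}, hence all of them. Spelling this out is a diagram chase in the symmetric monoidal $2$-category $\cM$: I would write both sides as $1$-morphisms $\Pd\to\Pd\boxtimes\Phi$, pair them against an arbitrary test object $y\boxtimes\xi$ using the duality pairing $\langle\ |\ \rangle$, and reduce to the identity $\langle\overline\nabla^\vee(\xi_1,\xi_2)\,|\,y\rangle=\langle\Delta^\vee(\xi_1,\xi_2)\,|\,y\rangle$ under the relabeling $f$, which is precisely the transpose-isomorphy of the two $\Pd$-row structures. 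The mirror argument with $f^\vee$ and the right-multiplication map $\rho$ gives \textnormal{(ii)}; here one must be a little careful, since $f$ is a \emph{right} module map and $f^\vee$ a \emph{left} module map, so \textnormal{(i)} naturally couples to the left-multiplication data and \textnormal{(ii)} to the right.

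Next, \textnormal{(iii)}. Granting \textnormal{(i)} and \textnormal{(ii)}, I would read off the internal duals directly: in the categorical case, $\lambda'(x^{op})$ is the functor $z\mapsto x^*\cdot z$ by the intended meaning of $\lambda'$, so \textnormal{(i)} says $z\mapsto x^*\cdot z$ is isomorphic to $\tau\circ\lambda\circ f^{-1}$ evaluated at $x^{op}$, and chasing the definition of $f$ (namely $\langle f(x)\,|\,z\rangle=B(x,z)=\vep(x\cdot z)=\mathrm{Hom}_\Phi(1,x\cdot z)$) identifies this with the functor $z\mapsto$ (the object representing $y\mapsto\mathrm{Hom}(x\cdot y,\underline{\ })$ against) — i.e., exactly the object $x^*$ characterized by $\mathrm{Hom}(x\cdot y,z)=\mathrm{Hom}(y,x^*\cdot z)$ in Definition~\ref{duals}. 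So \textnormal{(i)} produces the right dual $x^*$ and \textnormal{(ii)} the left dual ${}^*x$. Conversely, if $\Phi$ has internal left and right duals, the assignments $x\mapsto x^*$ and $x\mapsto{}^*x$ assemble (by Yoneda, using Hom-finiteness which is automatic here since we are in the semisimple/finite categorical case) into functors $\Phi^{op}\to\Phi$ that manifestly give $\lambda',\rho'$ the form in \textnormal{(i)},\textnormal{(ii)}; this is where I would invoke the Frobenius condition to guarantee $f,f^\vee$ are invertible so that the formulas even make sense.

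The main obstacle I expect is bookkeeping the three layers of duality simultaneously: the \emph{internal} duality pairing $\langle\ |\ \rangle\colon\Pd\boxtimes\Phi\to\mathbf 1$, the \emph{adjunction} duality $(\nabla,\Delta)$, $(\eta,\vep)$ and their $\Pd$-analogues, and the op-conjugation $\varphi\mapsto\varphi^{op}=(\varphi^R)^\vee$ from Remark~\ref{ops}, which introduces the additive Serre automorphism $\Spl_\Phi$ as a correction term relating $\lambda'$ to $\lambda^{op}$. One must check that these Serre corrections are exactly absorbed by the Frobenius Serre automorphism $\St=(f^\vee)^{-1}\circ f$ — or rather that they do not obstruct the isomorphisms claimed, since the statements are only up to isomorphism of $1$-morphisms. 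The cleanest route is to never expand $\St$ but instead use Proposition~\ref{stuff}\textnormal{(i)--(iv)} — in particular that all four structure tensors commute with $\St$, so that transporting via $f$ versus $f^\vee$ yields isomorphic structures — to show the two possible normalizations of $\lambda'$ coincide, and then match against \textnormal{(i)} of Proposition~\ref{bimod}. I would also lean on the observation, already used in the proof of Proposition~\ref{bimod}, that the matching units $\overline\eta=\vep^\vee$ are free generators of $\Pd$ over $\Phi$, reducing every commutativity check to its value on $\Phi\boxtimes\eta$, which collapses the diagram chase to the single identity $\St(x).f(1)=f(1).x=f(x)$ from Proposition~\ref{stuff}\textnormal{(ii)},\textnormal{(iii)}.
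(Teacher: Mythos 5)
Your proposal is correct and follows essentially the same route as the paper's proof: pair both sides of (i) and (ii) against test arguments $(\xi_1,y,\xi_2)$, use the right-module property of $f$ (resp.\ left-module property of $f^\vee$) to reduce the resulting identity $\xi_2.f^{-1}(\xi_1)=\Delta^\vee(\xi_1,\xi_2)$ to condition (iv) of Proposition~\ref{bimod}, and then read off $x^*=f^{-1}(x\op)$ and ${}^*x=(f^\vee)^{-1}(x\op)$ against Definition~\ref{duals} for part (iii). The extra worries about Serre corrections and the explicit converse for (iii) are harmless elaborations of the same argument.
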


\begin{proof}
We check the two sides by pairing against a triple of arguments $(\xi_1, y, \xi_2) \in \Phi^\vee
\times\Phi\times \Phi^\vee$, leaving to the reader the unenviable task of convert this to identities between 
morphisms, duals and adjoints. Having written out the left sides in \eqref{primes} above, 
we start with the right side of (i):
\[
\langle \tau\circ\lambda\circ f^{-1}(\xi_1) \,|\, y\boxtimes \xi_2 \rangle =  
	\langle \lambda\circ f^{-1}(\xi_1) \,|\,  \xi_2 \boxtimes y\rangle = 
	\langle  \xi_2 \,|\, f^{-1}(\xi_1)\cdot y \rangle = 
	\langle \xi_2. f^{-1}(\xi_1) \,|\, y \rangle
\]
where the middle line is the definition of $\lambda$, while dot represents the right multiplication 
action of $f^{-1}\xi_1$ upon $\xi_2\in \Phi^\vee$. Agreement with $\lambda'$ is then equivalent to
\[
 \xi_2. f^{-1}(\xi_1) = \Delta^\vee(\xi_1, \xi_2);
\]
but using the right module property of $f$, we have 
\[
\xi_2. f^{-1}(\xi_1) = f\left[f^{-1}(\xi_2) \cdot f^{-1} (\xi_1)\right],
\]
thus reaching Condition (iv) in Proposition~\ref{bimod}.

Similarly, for the right side of (ii),
\[
\langle \tau\circ\rho\circ (f^\vee)^{-1}(\xi_2) \,|\, y\boxtimes \xi_1 \rangle =  
	\langle \rho\circ (f^\vee)^{-1}(\xi_2) \,|\,  \xi_1 \boxtimes y\rangle = 
	\langle \xi_1 \,|\, y\cdot (f^\vee)^{-1}(\xi_2) \rangle = 
	\langle (f^\vee)^{-1}(\xi_2). \xi_1 \,|\, y \rangle
\]
and identity (ii) is equivalent to
\[
	 (f^\vee)^{-1}(\xi_2). \xi_1 = \Delta^\vee(\xi_1, \xi_2),
\]
which follows as before, this time from the left-module property of $f^\vee$.

Finally, for Part (iii) we must convert the identities into the recognizable form \eqref{duals}. 
For this, we let $\xi_{1,2}$ be opposites of objects $x_{1,2}\in \Phi$; then,  
$\Delta^\vee(\xi_1,\xi_2)$ is the opposite object to $x_1\cdot x_2$, and we can rewrite
\[
\begin{aligned}
\langle \xi_2 \,|\, f^{-1}(\xi_1)\cdot y \rangle &= \mathrm{Hom}_\Phi\left(x_2, f^{-1}(\xi_1)\cdot y\right), \\
\langle \xi_1 \,|\, y\cdot (f^\vee)^{-1}(\xi_2) \rangle &= 
	\mathrm{Hom}_\Phi\left(x_1, y\cdot (f^\vee)^{-1}(\xi_2) \right),\\
\langle \Delta^\vee(\xi_1, \xi_2) \,|\, y \rangle &=  \mathrm{Hom}_\Phi\left(x_1\cdot x_2, y\right)
\end{aligned}
\]  
exhibiting $f^{-1}(\xi_1)$ as $x_1^*$ and $(f^\vee)^{-1}(\xi_2)$ as ${}^*x_2$ in Definition~\ref{duals}.
\end{proof}

\begin{remark}\label{twoserre}
Under the assumptions of \eqref{bimod}, and if, in addition, $\Phi$ is
$2$-dualizable, one can prove~\cite{FT} that the additive Serre 
functor $\Spl _\Phi$ is related to $\St $:
\begin{equation}\label{AS}
\Spl (x\cdot y) = \St (x)\cdot \Spl (y) = \Spl (x) \cdot {\St} ^{-1}(y).
\end{equation} 
 In particular, we have 
\[
\Spl (x) = \St (x)\cdot \Spl (1) = \Spl (1)\cdot {\St} ^{-1}(x), 
\]
and, as the functor $\Spl $ is invertible, $\Spl (1)$ must be a unit. In the categorical case, $\St (x) = x^{**}$, 
and the relations follow by applying Serre duality to the adjunction relations in Definition~\ref{duals}. 

One instance of \eqref{AS} is when $\Spl =\St ={\St} ^{-1}$, which happens in the case of fusion categories ~
\cite{DSS}, but that is not the only option. Thus, if $\Phi$ is the derived
category of bounded complexes of coherent 
sheaves on a projective manifold with the obvious internal duals, the multiplicative Serre functor 
$\St $ is the identity, while the functor $\Spl $ is tensoring 
with the canonical line bundle of~$X$ in degree $(-\dim X)$. 
\end{remark}

   \section{Complete reducibility of fusion categories}\label{sec:6}
 
A fusion category whose unit is simple cannot be decomposed as a direct sum,
even after passing to a Morita equivalent model: otherwise, we would split
the unit.  The following converse follows easily from several statements in
\cite{EGNO}, but we give a complete proof, at the price of rehashing some
basic facts. Throughout, $\Phi$ will denote a fusion category.

\begin{theorem}[Complete Reducibility]\label{thm:cr}
$\Phi$ is Morita equivalent to a direct sum of fusion categories with simple 
unit.
\end{theorem}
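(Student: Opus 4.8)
The plan is to decompose $\Phi$ along its unit object. First I would analyze the unit $\mathbf{1}\in\Phi$: since $\Phi$ is finite semisimple, $\mathbf{1}$ is a finite direct sum of simple objects, and the endomorphism algebra $\mathrm{End}_\Phi(\mathbf{1})$ is a commutative finite-dimensional semisimple $\CC$-algebra, hence a product of copies of $\CC$; write $\mathrm{End}_\Phi(\mathbf{1})\cong \CC^r$, giving a decomposition $\mathbf{1}=\bigoplus_{i=1}^r e_i$ into orthogonal central idempotents. Using the associativity and unit constraints, the objects $e_i$ are idempotent ($e_i\otimes e_i\cong e_i$), mutually annihilating ($e_i\otimes e_j\cong 0$ for $i\neq j$), and the tensor product of any object $x$ with $\mathbf{1}$ on each side produces a bigrading $x\cong\bigoplus_{i,j}(e_i\otimes x\otimes e_j)$. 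The hard part of this step is checking that the $e_i\otimes x\otimes e_j$ assemble compatibly with the tensor structure — i.e.\ that $\Phi$ becomes an $r\times r$ ``matrix category'' of bimodule-like pieces $\Phi_{ij}:=e_i\otimes\Phi\otimes e_j$, with composition $\Phi_{ij}\otimes\Phi_{jk}\to\Phi_{ik}$. This is the multifusion structure theory, essentially \cite[\S4.3]{EGNO}; I would cite it rather than rederive it, and note rigidity of $\Phi$ guarantees $\Phi_{ij}$ is nonzero iff $\Phi_{ji}$ is (duality swaps them), so the relation ``$i\sim j$ iff $\Phi_{ij}\neq 0$'' is symmetric; it is reflexive since $\mathbf{1}\in\Phi_{ii}$, and transitive because $\Phi_{ij}\otimes\Phi_{jk}\to\Phi_{ik}$ is nonzero on a simple summand whenever both factors are nonzero (again using rigidity: a nonzero object and its dual compose to something containing $\mathbf{1}$).

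Next, partition $\{1,\dots,r\}$ into equivalence classes $I_1,\dots,I_m$ under $\sim$. Setting $f_\alpha=\bigoplus_{i\in I_\alpha}e_i$, these are orthogonal idempotents summing to $\mathbf{1}$, and the ``off-block'' pieces $f_\alpha\otimes\Phi\otimes f_\beta$ vanish for $\alpha\neq\beta$ by construction, so $\Phi\simeq\bigoplus_\alpha \Phi_\alpha$ as tensor categories, where $\Phi_\alpha=f_\alpha\otimes\Phi\otimes f_\alpha$ is a fusion category with unit $f_\alpha$ whose endomorphism algebra is $\CC^{|I_\alpha|}$. Thus $\Phi$ is already a direct sum of \emph{indecomposable} fusion categories, and it remains to show each indecomposable fusion category is Morita equivalent to one with simple unit.

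So I would reduce to the case $\Phi$ indecomposable with $\mathbf{1}=\bigoplus_{i=1}^r e_i$, all $e_i$ in a single $\sim$-class. The standard move is to pass to the Morita-equivalent category of module functors: pick any one idempotent, say $e_1$, and consider the fusion category $\Phi':=\Phi_{11}=e_1\otimes\Phi\otimes e_1$ with unit $e_1$. Because all indices are equivalent, the $(\Phi,\Phi')$-bimodule $\Phi\otimes e_1=\bigoplus_i(e_i\otimes\Phi\otimes e_1)$ is a faithful, invertible bimodule (each summand $e_i\otimes\Phi\otimes e_1$ is a nonzero indecomposable $(\Phi_{ii},\Phi')$-bimodule, and together they realize a Morita equivalence), so $\Phi$ and $\Phi'$ are Morita equivalent. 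Now $\mathrm{End}_{\Phi'}(e_1)=e_1\mathrm{End}_\Phi(\mathbf{1})e_1=\CC$, so $e_1$ is simple as the unit of $\Phi'$. The main obstacle I anticipate is verifying the invertibility (not merely nonvanishing) of that bimodule — i.e.\ that the composite $(\Phi\otimes e_1)\otimes_{\Phi'}(e_1\otimes\Phi)\to\Phi$ and the reverse composite are equivalences; this uses rigidity and the fact that $e_1\otimes\Phi\otimes e_j$ is an \emph{invertible} $(\Phi_{jj},\Phi')$-bimodule, which in turn is where indecomposability ($j\sim 1$) is essential. Combining the two reductions — splitting $\Phi$ into indecomposable summands, then Morita-trivializing each to simple-unit form — yields the theorem. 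Throughout I would lean on \cite{EGNO} for the multifusion block decomposition and only spell out the Morita-equivalence step in detail, as that is the content \cite{EGNO} leaves ``implicit.''
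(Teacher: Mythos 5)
Your overall strategy is the same as the paper's: decompose the unit $\mathbf{1}=\bigoplus_i p_i$ into simple summands, observe they are orthogonal self-adjoint projectors giving a matrix decomposition $\Phi\cong\bigoplus_{i,j}\Phi_{ij}$, split $\Phi$ into indecomposable blocks, and then Morita-trivialize each block against a diagonal entry $\Phi_{11}$ using the first column $C=\bigoplus_j\Phi_{j1}$ and first row $R=\bigoplus_i\Phi_{1i}$ as the candidate inverse bimodules. The easy composite $R\boxtimes_\Phi C\simeq\Phi_{11}$ you would get for free (it is just the matrix decomposition of $\Phi\boxtimes_\Phi\Phi$). Your transitivity argument for the relation $\Phi_{ij}\neq 0$ is fine (and the paper sidesteps it by taking the generated equivalence relation).

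However, there is a genuine gap at exactly the point you flag as ``the main obstacle I anticipate'': showing that the reverse composite $\mu\:C\boxtimes_{\Phi_{11}}R\to\Phi$ is an equivalence. You reduce this to the assertion that each $\Phi_{j1}$ is an \emph{invertible} $(\Phi_{jj},\Phi_{11})$-bimodule, but that is essentially the same claim restated componentwise, and you offer no argument for it; ``nonzero'' plus ``indecomposable'' does not by itself yield invertibility, and even granting it you would still need $\Phi_{j1}\boxtimes_{\Phi_{11}}\Phi_{1k}\simeq\Phi_{jk}$ for the off-diagonal blocks. The paper closes this gap with a specific mechanism that your plan lacks: the left adjoint $\mu^L$ is automatically a bimodule functor, so $\mu\circ\mu^L$ is an idempotent bimodule endomorphism of $\Phi$, hence is multiplication by the object $p=\mu\circ\mu^L(\mathbf{1})$; one then proves $p$ is a \emph{self-adjoint} projector (Lemma~\ref{sa}), that every self-adjoint projector is a direct summand of $\mathbf{1}$ (Lemma~\ref{allproj}), and that indecomposability forces $p\cong\mathbf{1}$, whence $\mu\circ\mu^L\cong\mathrm{Id}_\Phi$ and the relation $B\boxtimes_\Phi B\cong B$ kills the complementary summand of $B=C\boxtimes_{\Phi_{11}}R$. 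Some argument of this kind (or an explicit citation of the corresponding result on invertibility of component bimodules of an indecomposable multifusion category, e.g.\ via \cite{EO}) is needed; as written, your plan assumes the theorem's hardest step.
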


  \begin{corollary}[]\label{thm:34}
 $\Phi $~is Morita equivalent to a fusion category~$\Phi _0$ with simple
unit if and only if the Drinfeld center of~$\Phi $ is invertible.
  \end{corollary}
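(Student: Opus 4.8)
The plan is to reduce the corollary to the Complete Reducibility Theorem~\ref{thm:cr} together with two structural facts about the Drinfeld center: that it is a Morita invariant of a fusion category, i.e.\ $Z(\Phi)\simeq Z(\Phi^*_{\cM})$ as braided fusion categories for any module category $\cM$ implementing a Morita equivalence; and that it carries direct sums to direct sums, $Z\bigl(\bigoplus_i\Phi_i\bigr)\simeq\bigoplus_i Z(\Phi_i)$ as braided tensor categories, which is immediate from the half-braiding description of the center. Both are standard and I would simply cite~\cite{EGNO,DSS}.

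First I would dispatch the ``only if'' direction. If $\Phi\me\Phi_0$ with $\Phi_0$ a fusion category with simple unit, then $Z(\Phi)\simeq Z(\Phi_0)$ as braided fusion categories. The Drinfeld center of a fusion category with simple unit is a nondegenerate (factorizable) braided fusion category, hence invertible in the $4$-category $\BC$ of braided tensor categories---by Remark~\ref{thm:48}(5) in the modular case, and in general by the characterization of invertible braided fusion categories as exactly the nondegenerate ones in~\cite{BJSS}. So $Z(\Phi)$ is invertible.

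For the ``if'' direction, apply Theorem~\ref{thm:cr} to write $\Phi\me\bigoplus_{i=1}^{k}\Phi_i$ with each $\Phi_i$ a fusion category with simple unit, so that $Z(\Phi)\simeq\bigoplus_{i=1}^{k}Z(\Phi_i)$, each summand nondegenerate and with simple unit (in particular nonzero). A nontrivial direct sum of tensor categories is never invertible: the tensor unit of $\bigoplus_i Z(\Phi_i)$ has endomorphism algebra $\CC^{k}$, which is not $\CC$ once $k\ge2$, whereas the monoidal unit $\Vc$ of $\BC$ has $\End(\mathbf 1)=\CC$---this is precisely the observation used in the proof of Lemma~\ref{thm:13}. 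Since $Z(\Phi)$ is assumed invertible, we conclude $k=1$, and $\Phi$ is Morita equivalent to the single fusion category $\Phi_0:=\Phi_1$, which has simple unit.

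The only real obstacle is bookkeeping rather than mathematics: one must be careful that ``invertible'' for the Drinfeld center is the right condition, namely that a center---automatically nondegenerate---is invertible in $\BC$ iff it has simple unit, and in particular that the center of a fusion category with simple unit (which need not be spherical \emph{a priori}) is nonetheless invertible. This is exactly the point at which the nondegeneracy characterization of invertibility from~\cite{BJSS} is needed, rather than the narrower modular statement of Remark~\ref{thm:48}(5).
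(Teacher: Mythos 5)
Your proof is correct and follows essentially the same route as the paper's: the forward direction via Morita invariance of the center plus nondegeneracy (hence invertibility by~\cite{S-P,BJSS}), and the converse via Theorem~\ref{thm:cr} together with the observation, already used in Lemma~\ref{thm:13}, that a nontrivial direct sum is never invertible. Your extra remark about the endomorphism algebra of the unit just makes explicit the reason behind that last observation.
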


\noindent 
 We prove Corollary~\ref{thm:34} at the end of the appendix.

  \begin{remark}[]\label{thm:63}
 A closely related statement is used in~\cite[Remark~5.2]{DMNO}: if $\Phi $
is an indecomposable fusion category, then there exists a fusion
category~$\Phi '$ with simple unit and a braided equivalence $Z(\Phi
')\xrightarrow{\;\simeq\;}Z(\Phi )$ of the Drinfeld centers.  The proof is
based on Lemma~3.24 and Corollary~3.35 in~\cite{EO}. 
  \end{remark}

We break up the proof of Theorem~\ref{thm:cr} into small steps. Let
$\mathbf{1}=\sum_i p_i$ be the decomposition of the unit of $\Phi$ into
simple objects. Call an object $x$ \emph{self-adjoint} if it is isomorphic
with $x^*$.  \begin{lemma}\label{project} Each $p_i$ is a self-adjoint
projector: $p_i^*\cong p_i$, $p_i^2= p_i$, $\mathrm{End}(p_i)=\bC$.  In addition,
$p_ip_j=0$ if $i\neq j$.
\end{lemma}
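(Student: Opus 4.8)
The plan is to extract everything from two elementary structural facts about the unit object: that $\mathrm{End}_\Phi(\mathbf 1)$ is a commutative algebra, and that the left and right unit constraints on $\mathbf 1\otimes\mathbf 1$ coincide. Rigidity of $\Phi$ is used only at the very last step.

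First I would analyze $\mathrm{End}_\Phi(\mathbf 1)$. It is commutative (a general property of the unit of a monoidal category), and it is finite dimensional over $\bC$ and semisimple since $\Phi$ is, so $\mathrm{End}_\Phi(\mathbf 1)\cong\bC^{\,n}$. Let $e_1,\dots,e_n$ be its primitive idempotents and $p_i=\mathrm{im}(e_i)$. Then the $p_i$ are pairwise non-isomorphic simple objects, each occurring once in $\mathbf 1=\bigoplus_i p_i$, and $\mathrm{End}_\Phi(p_i)\cong e_i\,\mathrm{End}_\Phi(\mathbf 1)\,e_i\cong\bC$; if one starts from a prescribed decomposition of $\mathbf 1$ into simples, this identifies those simples with the $\mathrm{im}(e_i)$. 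This already establishes $\mathrm{End}(p_i)=\bC$ and sets up the idempotent calculus needed below.

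Next, for the multiplicative assertions $p_i\otimes p_j\cong p_i$ when $i=j$ and $p_i\otimes p_j=0$ otherwise, I would invoke the coherence fact $\lambda_{\mathbf 1}=\rho_{\mathbf 1}\colon\mathbf 1\otimes\mathbf 1\xrightarrow{\;\sim\;}\mathbf 1$. Naturality of $\lambda$ and of $\rho$ shows that, transported along this isomorphism, the endomorphisms $\mathrm{id}_{\mathbf 1}\otimes e_j$ and $e_i\otimes\mathrm{id}_{\mathbf 1}$ of $\mathbf 1\otimes\mathbf 1$ become $e_j$ and $e_i$ in $\mathrm{End}_\Phi(\mathbf 1)$; by the interchange law $e_i\otimes e_j=(e_i\otimes\mathrm{id}_{\mathbf 1})\circ(\mathrm{id}_{\mathbf 1}\otimes e_j)$ therefore corresponds to $e_i\circ e_j=\delta_{ij}e_i$. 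Taking images (in a semisimple category the image of $e_i\otimes e_j$ acting on $\mathbf 1\otimes\mathbf 1$ is $p_i\otimes p_j$) yields $p_i\otimes p_j\cong\delta_{ij}\,p_i$, which gives both $p_i^2\cong p_i$ and $p_ip_j=0$ for $i\neq j$.

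Finally, for self-adjointness $p_i^*\cong p_i$ I would use rigidity. The duality functor is additive, sends the unit to itself up to isomorphism, and (being an anti-equivalence) carries simple objects to simple objects, so $\bigoplus_i p_i^*\cong\bigl(\bigoplus_i p_i\bigr)^*\cong\mathbf 1\cong\bigoplus_i p_i$; by Krull--Schmidt each $p_i^*$ is isomorphic to some $p_j$. On the other hand the coevaluation $\mathbf 1\to p_i\otimes p_i^*$ is nonzero (otherwise a zig-zag identity would force $\mathrm{id}_{p_i}=0$, impossible as $p_i\neq 0$), so $p_i\otimes p_i^*\neq 0$; substituting $p_i^*\cong p_j$ and applying the vanishing from the previous step forces $j=i$. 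I do not expect a genuine obstacle: the only point requiring care is the bookkeeping around the identification $\mathrm{End}(\mathbf 1\otimes\mathbf 1)\cong\mathrm{End}(\mathbf 1)$ induced by $\lambda_{\mathbf 1}=\rho_{\mathbf 1}$ and the check that the image of $e_i\otimes e_j$ is exactly $p_i\otimes p_j$, but both are routine given semisimplicity and the standard unit-coherence theorem.
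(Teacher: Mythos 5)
Your proof is correct, and for the self-adjointness claim it is essentially the paper's argument in different clothing: the paper deduces $p_i^*p_i\neq 0$ from $\mathrm{Hom}(\mathbf{1},p_i^*p_i)\cong\mathrm{End}(p_i)\neq 0$, while you deduce $p_ip_i^*\neq 0$ from the nonvanishing of coevaluation via the zig-zag identity; both then conclude by combining $\mathbf{1}^*\cong\mathbf{1}$ with the orthogonality $p_ip_j=0$ for $i\neq j$. For the multiplicative claims, however, your route is genuinely different. The paper argues directly at the level of objects: $p_i=p_i\cdot\mathbf{1}=\sum_j p_ip_j$ exhibits the simple object $p_i$ as a direct sum, so all but one summand vanishes and the survivor is $p_i$; then $p_j=\mathbf{1}\cdot p_j=\sum_k p_kp_j$ contains that survivor, forcing $p_i\cong p_j$ by simplicity. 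You instead work in the commutative semisimple algebra $\mathrm{End}(\mathbf{1})\cong\bC^n$, identify the $p_i$ with images of primitive idempotents $e_i$, and use unit coherence plus the interchange law to compute that $e_i\otimes e_j$ corresponds to $\delta_{ij}e_i$, whence $p_i\otimes p_j\cong\delta_{ij}p_i$ by taking images. Your version is longer but buys two things the paper handles implicitly: it yields $\mathrm{End}(p_i)=\bC$ and the multiplicity-one, pairwise non-isomorphic nature of the summands of $\mathbf{1}$ as outputs of the Eckmann--Hilton-type structure of $\mathrm{End}(\mathbf{1})$, rather than as consequences of Schur's lemma and the orthogonality relations; the paper's version is the more economical two-line counting argument. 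Both are complete proofs.
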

\begin{proof}
We have $p_i = p_i\cdot\mathbf{1} = 
\sum_j p_ip_j$, so $p_ip_j=0$ except for a single $j$, when it equals $p_i$. 
On the 
other hand, $p_j = \mathbf{1}\cdot p_j = \sum_k p_kp_j$, but the sum contains $p_ip_j=p_i$, 
so $p_i=p_j$, proving the multiplicative claims. Further, $\mathbf{1}^* =\mathbf{1}$, and 
$p_i^*p_i\neq 0$ because $\mathrm{Hom}(\mathbf{1},p_i^*p_i) \cong  \mathrm{End}(p_i)\neq0$, 
so we must have $p_i^*\cong p_i$.
\end{proof}

Lemma~\ref{project} gives a ``matrix decomposition'' of $\Phi$ as  
\[
\Phi \cong  \bigoplus\nolimits_{i,j} p_i\cdot\Phi\cdot p_j =: \bigoplus\nolimits_{i,j} \Phi_{ij},
\]
with fusion categories $\Phi_{ii}$ having simple units $p_i$ on the diagonal, $\Phi_{ii}\text{-}\Phi_{jj}$ 
bimodule categories $\Phi_{ij}$ (identified with $\Phi_{ji}^{op}$ under internal duality), and 
multiplication compatible with matrix calculus: 
\[
\Phi_{ij}\times \Phi_{jl} \to \Phi_{il}, \quad\Phi_{ij}\cdot\Phi_{kl} =0 \text{ if } j\neq k. 
\]
The equivalence classes of indices generated by the condition $\Phi_{ij}\neq 0$ gives a direct sum decomposition 
of $\Phi$, matching 
the block-decomposition of the matrix. Call $\Phi$ \emph{indecomposable} if a single block is present. 
We claim that an indecomposable $\Phi$ is Morita equivalent to any of its diagonal entries, 
selecting $\Phi_{11}$ for our argument. 

The  equivalence is induced by the first row and the first column of $\Phi$:  
the $\Phi_{11}\text{-}\Phi$ bimodule $R:=\bigoplus_i \Phi_{1i}$ and the $\Phi\text{-}\Phi_{11}$ 
bimodule $C: =\bigoplus_j \Phi_{j1}$. We check it in the following two lemmata.

\begin{lemma}\label{mult}
The multiplication map $R\boxtimes_\Phi C \to \Phi_{11}$ is an equivalence of $\Phi_{11}\text{-}\Phi_{11}$ 
bimodule categories.
\end{lemma}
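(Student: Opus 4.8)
The plan is to recognize $R=\bigoplus_i\Phi_{1i}$ and $C=\bigoplus_j\Phi_{j1}$ as the ``first row'' $p_1\cdot\Phi$ and ``first column'' $\Phi\cdot p_1$ of the matrix decomposition, and to compute the relative tensor product by the elementary algebra $(p_1\cdot\Phi)\boxtimes_\Phi(\Phi\cdot p_1)\simeq p_1\cdot\Phi\cdot p_1=\Phi_{11}$, with the equivalence being precisely multiplication.

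First I would record that, by Lemma~\ref{project}, the functor $-\cdot p_j\colon\Phi\to\Phi$ (right multiplication by~$p_j$) commutes with the left $\Phi$-action, is idempotent since $p_jp_j\cong p_j$, and satisfies $\bigoplus_j(-\cdot p_j)\cong-\cdot\mathbf{1}\cong\mathrm{Id}_\Phi$. Hence these functors split $\Phi$, as a left $\Phi$-module category, as $\Phi\cong\bigoplus_j\Phi\cdot p_j$, where $\Phi\cdot p_j$ is the direct-summand subcategory cut out by the projector $-\cdot p_j$. In particular $C=\bigoplus_j\Phi_{j1}\cong\Phi\cdot p_1$ as a left $\Phi$-module, while $R=\bigoplus_i\Phi_{1i}\cong p_1\cdot\Phi$ as a right $\Phi$-module carrying its commuting left $\Phi_{11}=p_1\cdot\Phi\cdot p_1$-action.

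Next I would invoke that $M\boxtimes_\Phi\Phi\cong M$ for any right $\Phi$-module~$M$ (the unit law for the relative Deligne--Kelly tensor product, which exists over a fusion category; see \cite{H,JS}) and that $M\boxtimes_\Phi(-)$ preserves finite direct sums of left modules. Applying this with $M=R$ to the decomposition above gives
\[
R\;\cong\;R\boxtimes_\Phi\Phi\;\cong\;\bigoplus_j R\boxtimes_\Phi(\Phi\cdot p_j),
\]
and I would check that this identification is the one realizing, on $R\cong p_1\cdot\Phi$, the decomposition $p_1\cdot\Phi\cong\bigoplus_j p_1\cdot\Phi\cdot p_j=\bigoplus_j\Phi_{1j}$ by right multiplication by the $p_j$. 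Comparing the $j=1$ summands then exhibits the multiplication map $R\boxtimes_\Phi C=R\boxtimes_\Phi(\Phi\cdot p_1)\xrightarrow{\;\sim\;}\Phi_{11}$ as an equivalence. Finally, since the left $\Phi_{11}$-action on~$R$ and the right $\Phi_{11}$-action on~$C$ both commute with the $\Phi$-action being glued, and the whole chain of equivalences is manifestly compatible with these outer actions, the equivalence intertwines them with left and right multiplication in~$\Phi_{11}$; hence it is an equivalence of $\Phi_{11}\text{-}\Phi_{11}$ bimodule categories.

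The hard part is not the algebra but checking that it is legitimate in the weak $2$-categorical setting: that $\boxtimes_\Phi$ exists and satisfies the unit law, that it distributes over finite direct sums in each variable, and that an orthogonal decomposition $\mathbf{1}=\bigoplus_j p_j$ genuinely splits the regular module $\Phi$ into $\bigoplus_j\Phi\cdot p_j$. All of these are standard for semisimple fusion categories --- $\Phi\cdot p_j$ is just the idempotent-complete direct summand determined by the projector $-\cdot p_j$ --- so this amounts to careful bookkeeping with idempotents rather than a genuine obstacle. An alternative that avoids relative tensor products altogether would be to write down an explicit quasi-inverse, $x\mapsto x\boxtimes p_1\cong p_1\boxtimes x$ for $x\in\Phi_{11}$, and verify the two triangle identities, but the direct-summand argument seems cleaner.
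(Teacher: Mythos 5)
Your proof is correct and follows essentially the same route as the paper: both rest on the unit law $\Phi\boxtimes_\Phi\Phi\cong\Phi$ together with the splitting of the regular module by the orthogonal projectors $p_j$, the paper decomposing both factors of $\Phi\boxtimes_\Phi\Phi$ into rows and columns and extracting the $(1,1)$ block, while you first replace the left factor by $R$ and then decompose only the right factor --- a purely presentational difference. Your explicit attention to distributivity of $\boxtimes_\Phi$ over finite direct sums and to the outer $\Phi_{11}$-actions is a reasonable elaboration of what the paper leaves implicit.
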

\begin{proof}
We have $\Phi\boxtimes_\Phi \Phi =\Phi$, and splitting the left factor $\Phi$ into its rows $R_i$ 
and the right factor into its columns $C_j$ gives a direct sum decomposition of $\Phi$ as 
$R_i\boxtimes_\Phi C_j$. Examining the action of the projectors $p_k$, on $R_i$ on the left 
and on $C_j$ on the right, identifies this with the $\Phi_{ij}$ decomposition of $\Phi$.
\end{proof}

\begin{lemma}\label{mult2}
The multiplication map $\mu :C\boxtimes_{\Phi_{11}} R \to \Phi$ is an equivalence of $\Phi\text{-}\Phi$ 
bimodule categories.
\end{lemma}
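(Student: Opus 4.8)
The plan is to use the ``matrix'' block decomposition $\Phi\cong\bigoplus_{i,j}\Phi_{ij}$ already in force, exactly as in the proof of Lemma~\ref{mult}. Writing $C=\bigoplus_i\Phi_{i1}$ as a direct sum of right $\Phi_{11}$-module categories and $R=\bigoplus_j\Phi_{1j}$ as a direct sum of left $\Phi_{11}$-module categories, and using that $\boxtimes_{\Phi_{11}}$ distributes over finite direct sums, one gets $C\boxtimes_{\Phi_{11}}R\cong\bigoplus_{i,j}\Phi_{i1}\boxtimes_{\Phi_{11}}\Phi_{1j}$, under which $\mu$ becomes $\bigoplus_{i,j}\mu_{ij}$ with $\mu_{ij}\colon\Phi_{i1}\boxtimes_{\Phi_{11}}\Phi_{1j}\to\Phi_{ij}$ the multiplication $a\boxtimes b\mapsto a\cdot b$ (it lands in $\Phi_{ij}=p_i\Phi p_j$ because of the collapsing of repeated projectors). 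So it is enough to show that each $\mu_{ij}$ is an equivalence of $(\Phi_{ii},\Phi_{jj})$-bimodule categories.

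For full faithfulness of $\mu_{ij}$ I would compute the Hom spaces on both sides directly, using rigidity of $\Phi$ and semisimplicity. For $a,a'\in\Phi_{i1}$ and $b,b'\in\Phi_{1j}$, sliding an internal dual $b^\vee\in\Phi_{j1}$ across gives $\mathrm{Hom}_\Phi(a\cdot b,a'\cdot b')\cong\mathrm{Hom}_\Phi\bigl(a,a'\cdot(b'\cdot b^\vee)\bigr)$; decomposing $b'\cdot b^\vee\in\Phi_{11}$ into simples $a_k$ with multiplicity spaces $\mathrm{Hom}_{\Phi_{11}}(a_k,b'\cdot b^\vee)\cong\mathrm{Hom}_{\Phi_{1j}}(a_k\cdot b,b')$ then yields
\[
\mathrm{Hom}_\Phi(a\cdot b,a'\cdot b')\;\cong\;\bigoplus_k\mathrm{Hom}_{\Phi_{i1}}(a,a'\cdot a_k)\otimes\mathrm{Hom}_{\Phi_{1j}}(a_k\cdot b,b'),
\]
which is precisely the coend computing $\mathrm{Hom}$ in the relative Deligne product $\Phi_{i1}\boxtimes_{\Phi_{11}}\Phi_{1j}$ (the coend over the semisimple $\Phi_{11}$ being a sum over its simples), and one checks this identification is the one induced by $\mu_{ij}$. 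For essential surjectivity I would invoke the hypothesis that $\Phi$ is indecomposable: the blocks are ``connected'', so $\Phi_{i1}\neq0$ for every $i$. First note that in a multifusion category $x\cdot z\neq0$ whenever $x,z\neq0$ --- if $x\cdot z=0$ then $x^\vee\cdot x\cdot z=0$, but $x^\vee\cdot x$ contains the relevant simple unit $p$ as a summand, so $z=p\cdot z=0$. Now fix $0\neq a\in\Phi_{i1}$ and a simple $y\in\Phi_{ij}$, pick an internal dual $a^\vee\in\Phi_{1i}$, and set $b:=a^\vee\cdot y\in\Phi_{1j}$, which is nonzero by the no-zero-divisor property. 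Since the coevaluation exhibits $p_i$ as a summand of $a\cdot a^\vee$, the object $\mu_{ij}(a\boxtimes b)=a\cdot a^\vee\cdot y$ contains $p_i\cdot y=y$ as a summand; as $\Phi_{ij}$ is semisimple and every simple is a retract of an object in the image, full faithfulness forces $\mu_{ij}$ to be essentially surjective. Therefore each $\mu_{ij}$, and hence $\mu$, is an equivalence of bimodule categories.

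The step I expect to be the main obstacle is the essential surjectivity, because the purely ring-theoretic analogue --- ``$R\boxtimes_\Phi C\cong\Phi_{11}$ implies $C\boxtimes_{\Phi_{11}}R\cong\Phi$'' --- is false in general (take $\Phi_{11}=\bC$, $\Phi=\bC\times\bC$), so the proof must genuinely use the indecomposability hypothesis; it enters through the nonvanishing of every off-diagonal block $\Phi_{i1}$ together with the no-zero-divisor property of multifusion categories (itself a consequence of rigidity and the simplicity of the $p_i$ from Lemma~\ref{project}). The full-faithfulness computation, by contrast, is routine once one is careful with the bookkeeping of left/right internal duals and verifies that the resulting isomorphism of Hom spaces is the one induced by the multiplication functor.
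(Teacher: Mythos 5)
Your proof is correct, but it takes a genuinely different route from the paper's. You argue block by block: after decomposing $C\boxtimes_{\Phi_{11}}R\cong\bigoplus_{i,j}\Phi_{i1}\boxtimes_{\Phi_{11}}\Phi_{1j}$, you establish full faithfulness of each $\mu_{ij}$ by matching the coend/simple-sum formula for Hom in the relative Deligne product over the fusion category $\Phi_{11}$ against a rigidity computation in $\Phi$, and essential surjectivity by exhibiting every simple $y\in\Phi_{ij}$ as a summand of $a\cdot a^\vee\cdot y$; indecomposability enters through the nonvanishing of the blocks $\Phi_{i1}$ (note that the single equivalence class of indices is a priori only \emph{generated} by the relation $\Phi_{ij}\neq 0$, so concluding $\Phi_{i1}\neq 0$ directly does require the transitivity supplied by your no-zero-divisor observation --- worth making explicit). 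The paper instead argues formally: setting $B=C\boxtimes_{\Phi_{11}}R$, it uses Lemma~\ref{mult} to obtain the idempotency $B\boxtimes_\Phi B\cong B$ compatibly with $\mu$, shows via Lemmas~\ref{moduleadjoint} and~\ref{sa} that $\mu\circ\mu^L$ is an idempotent bimodule endomorphism of $\Phi$, i.e.\ multiplication by the self-adjoint projector $\mu\circ\mu^L(\mathbf{1})$, which Lemma~\ref{allproj} and indecomposability force to be $\mathbf{1}$; the idempotency of $B$ then kills the complementary summand. Your route is the categorified version of classical Morita theory for a full idempotent and makes the equivalence visible object by object, at the cost of invoking the explicit Hom description of the relative Deligne product and careful left/right dual bookkeeping; the paper's route avoids all Hom computations and reuses its preparatory machinery on adjoints and self-adjoint projectors, but is correspondingly less transparent about what the equivalence does.
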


\noindent 
 Lemma~\ref{mult2} concludes the proof of Theorem~\ref{thm:cr}.

The proof of this direction requires some preliminary facts.

\begin{lemma}[Linearity of adjoints]\label{moduleadjoint}
The adjoints $\varphi^L, \varphi^R$ of an $\Phi$-linear map $\varphi:M\to N$ between 
right or left finite semisimple\footnote{We only use semisimplicity here to ensure the existence 
of adjoints.} $\Phi$-module categories have a natural $\Phi$-linear structure. 
\end{lemma}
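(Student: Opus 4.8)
The plan is to deduce the $\Phi$-linearity of $\varphi^L$ and $\varphi^R$ from two ingredients: finite semisimplicity, which guarantees that $\varphi$ has adjoints at all, and rigidity of $\Phi$, which makes the action functors on a module category two-sided adjointable and thereby \emph{forces} those adjoints to be linear. I will carry this out for the right adjoint $\varphi^R$; the case of $\varphi^L$, and of left $\Phi$-module categories, follows by the same argument with left and right interchanged.

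\textbf{Step 1: adjoints exist.} Since $M$ and $N$ are finite semisimple, every functor between them is exact and admits both a left and a right adjoint. Fix a right adjoint $\varphi^R$, with unit $\iota\colon\mathrm{Id}_M\to\varphi^R\varphi$ and counit $\epsilon\colon\varphi\varphi^R\to\mathrm{Id}_N$.

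\textbf{Step 2: action functors are two-sided adjointable.} For $x\in\Phi$ write $R_x^M\colon M\to M$ for the action functor $m\mapsto m\cdot x$ (and $R_x^N$ on $N$). Using the rigidity adjunctions of Definition~\ref{duals}, namely $\Hom_M(m\cdot x,m')\cong\Hom_M(m,m'\cdot{}^*x)$ and $\Hom_M(m'\cdot x^*,m)\cong\Hom_M(m',m\cdot x)$ (valid in any $\Phi$-module category because the action is associative and $\Phi$ is rigid), the functor $R_x^M$ has right adjoint $R_{{}^*x}^M$ and left adjoint $R_{x^*}^M$, the units and counits being induced by the evaluation and coevaluation of $\Phi$ through the associativity constraint of the action; likewise on $N$.

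\textbf{Step 3: transport the module structure through adjunction.} The data making $\varphi$ a $\Phi$-module functor is a coherent natural isomorphism $s_x\colon(\varphi m)\cdot x\xrightarrow{\ \sim\ }\varphi(m\cdot x)$, i.e. an isomorphism of functors $R_x^N\circ\varphi\xrightarrow{\ \sim\ }\varphi\circ R_x^M$. Passing to right adjoints, and using that a natural isomorphism of functors induces one of their right adjoints together with the composite-adjoint identities $(R_x^N\circ\varphi)^R=\varphi^R\circ R_{{}^*x}^N$ and $(\varphi\circ R_x^M)^R=R_{{}^*x}^M\circ\varphi^R$, we obtain a natural isomorphism $\varphi^R\circ R_{{}^*x}^N\xrightarrow{\ \sim\ }R_{{}^*x}^M\circ\varphi^R$; explicitly this is the mate of $s_x$, assembled from $s_x$, $\iota$, $\epsilon$ and the (co)units of $R_x\dashv R_{{}^*x}$. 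Since every object of $\Phi$ is of the form ${}^*x$ (take $x=y^*$), this yields for each $y\in\Phi$ a natural isomorphism $s^R_y\colon\varphi^R(n)\cdot y\xrightarrow{\ \sim\ }\varphi^R(n\cdot y)$.

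\textbf{Step 4: coherence.} It remains to verify that $(\varphi^R,s^R)$ is a genuine $\Phi$-module functor: that $s^R_y$ is natural in $y$ and satisfies the associativity and unit axioms. This is a diagram chase. The point is that the coherence diagrams for $s^R$ are the images of those for $s$ under the passage to mates, which is $2$-functorial and hence carries commuting diagrams of natural isomorphisms to commuting diagrams; one only has to propagate the canonical isomorphisms ${}^*(x\otimes y)\cong{}^*y\otimes{}^*x$ and ${}^*\mathbf{1}\cong\mathbf{1}$ from rigidity through the mate calculus. Reconciling this order-reversal of the internal dual with the order appearing in the module-functor axioms is the only real obstacle in the argument; the conceptual content — semisimplicity supplies the adjoint, rigidity makes it linear — is immediate. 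Running the same construction with the adjunction $R_{x^*}^M\dashv R_x^M$ and left adjoints gives the corresponding statement for $\varphi^L$, completing the proof.
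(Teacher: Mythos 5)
Your proof is correct and is essentially the paper's argument: the paper establishes the isomorphism $\varphi^R(x.n)\cong x.\varphi^R(n)$ by the Hom-set chain (adjunction for $\varphi$, rigidity of $\Phi$ to move $x$ across as ${}^*x$, linearity of $\varphi$, adjunction again), which is exactly the mate construction you describe in Step 3 written out via representability. Your Step 4 on coherence of the mate is a welcome addition that the paper's proof elides, but the underlying mechanism is the same.
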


\begin{proof} Choosing left modules and the right adjoint, we write a
functorial isomorphism  
  $$  \mathrm{Hom}_M\left(m,\varphi^R(x. n)\right) =
     \mathrm{Hom}_M\left(m, x. \varphi^R(n)\right) $$
by  rewriting  the  left  side  as  \[  \mathrm{Hom}_N\left(\varphi(m),x.
n\right)     =    \mathrm{Hom}_N\left({}^*x.     \varphi(m),n\right)    =
\mathrm{Hom}_M\left(\varphi({}^*x.              m),n\right)             =
\mathrm{Hom}_M\left({}^*x. m, \varphi^R(n)\right) \] and finish by moving
$x$ back to the right. The other cases are similar.
\end{proof}

\begin{lemma}\label{sa}
If $N = \Phi$ above, then $\varphi\circ\varphi^L(\mathbf{1})$ is self-adjoint in $\Phi$.
\end{lemma}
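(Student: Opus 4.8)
The plan is to pin down the object $a:=\varphi\circ\varphi^L(\mathbf 1)\in\Phi$ explicitly and then prove $a\cong a^*$ by comparing multiplicities of simple objects. Write $m:=\varphi^L(\mathbf 1)\in M$. Since here $N=\Phi$ is the regular $\Phi$-module and $\varphi^L$ is $\Phi$-linear by Lemma~\ref{moduleadjoint}, a $\Phi$-linear functor out of the regular module is determined by its value at $\mathbf 1$, so $\varphi^L(x)\cong x\cdot m$ naturally in $x\in\Phi$. Feeding this into the adjunction $\varphi^L\dashv\varphi$ yields, naturally in $x$,
\[
\Hom_\Phi(x,a)=\Hom_\Phi\bigl(x,\varphi(m)\bigr)\cong\Hom_M\bigl(\varphi^L(x),m\bigr)\cong\Hom_M(x\cdot m,m).
\]
(Equivalently $a\cong\uEnd(m)$, the internal endomorphism object of $m$, but only the displayed identity is needed below.)

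Next I would run a short chain of identifications of dimensions, for arbitrary $y\in\Phi$, using rigidity of $\Phi$, the module rigidity of $M$, and the elementary fact that in a finite semisimple $\bC$-linear category one has $\dim\Hom(A,B)=\dim\Hom(B,A)$:
\[
\dim\Hom_\Phi(y,a^*)=\dim\Hom_\Phi(y\otimes a,\mathbf 1)=\dim\Hom_\Phi(a,{}^*y)=\dim\Hom_\Phi({}^*y,a)=\dim\Hom_M({}^*y\cdot m,m),
\]
and then $\dim\Hom_M({}^*y\cdot m,m)=\dim\Hom_M(m,{}^*y\cdot m)=\dim\Hom_M(y\cdot m,m)=\dim\Hom_\Phi(y,a)$, where the penultimate equality is the adjunction $y\cdot(-)\dashv{}^*y\cdot(-)$ transported from $\Phi$ to $M$, and the others are the displayed formula together with symmetry of Hom-dimensions. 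Thus $\dim\Hom_\Phi(y,a)=\dim\Hom_\Phi(y,a^*)$ for every $y$; letting $y$ run over the simple objects of $\Phi$ shows $a$ and $a^*$ have the same simple constituents with the same multiplicities, hence $a\cong a^*$, i.e.\ $a$ is self-adjoint.

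The part needing the most care is the bookkeeping of left versus right internal duals in that middle chain, and checking that the functors $y\cdot(-)$ and ${}^*y\cdot(-)$ on $M$ genuinely form the asserted adjoint pair. This is exactly where rigidity of $\Phi$ and semisimplicity of $M$ (for the symmetry of Hom-dimensions) enter, and it is also what lets the argument avoid any pivotality hypothesis on $\Phi$ --- we never need to identify $y$ with $y^{**}$. Everything else is a routine string of adjunction isomorphisms, the only genuine input being the $\Phi$-linearity of $\varphi^L$ supplied by Lemma~\ref{moduleadjoint}.
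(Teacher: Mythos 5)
Your argument is correct and is essentially the paper's own proof: both are chains of $\dim\Hom$ equalities driven by the symmetry $\dim\Hom(u,v)=\dim\Hom(v,u)$ in the semisimple categories $\Phi$ and $M$, the adjunction $\varphi^L\dashv\varphi$, and $\Phi$-linearity, with your detour through $m=\varphi^L(\mathbf 1)$ and module rigidity on $M$ being a repackaging of the paper's direct manipulation of $\varphi\circ\varphi^L$. The left-versus-right dual bookkeeping you flag is genuinely harmless here, since in a semisimple rigid category over $\CC$ the symmetry of Hom-dimensions forces ${}^*x\cong x^*$ for every object $x$ --- a fact the paper's own reduction to ``$h(x^*,y)=h(x,y^*)$'' also uses silently.
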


\begin{proof}
Let $h(x,y):=\dim\mathrm{Hom}(x,y)$. In our semisimple case, $h(x,y) = h(y,x)$, as we are only 
counting multiplicities of simple objects. Moreover, $x$ is determined up to isomorphism by 
its multiplicities, so $x$ is self-adjoint iff $h(x,y) = h(x^*,y)$ for all $y$; the latter 
is also $h(x,y^*)$. We show this for 
$x=\varphi\circ\varphi^L(\mathbf{1})$:
\[
\begin{aligned}
h(x,y^*) &= h(y\cdot x,\mathbf{1}) = h(\varphi\circ\varphi^L(y),\mathbf{1}) = 
	h(\mathbf{1},\varphi\circ\varphi^L(y)) \\
	&= h(\varphi^L(\mathbf{1}),\varphi^L(y)) = h(\varphi^L(y),\varphi^L(\mathbf{1})) =
		h(y,\varphi\circ\varphi^L(\mathbf{1})) = h(x,y).
\end{aligned}
\]
  \vskip-2pc\qedhere\bigskip
  \renewcommand{\qedsymbol}{}
\end{proof}

\begin{lemma}\label{allproj}
Every self-adjoint projector $\varpi$ is isomorphic to a sum of distinct
$p_i$ from Lemma~\ref{project}, i.e., is a direct summand of the unit~1.
\end{lemma}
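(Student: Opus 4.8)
The plan is to show that any self-adjoint projector $\varpi$ decomposes as a sum of the $p_i$, exploiting semisimplicity and the multiplicative/orthogonality relations already established in Lemma~\ref{project}. First I would observe that since $\Phi$ is finite semisimple, the object $\varpi$ decomposes as a finite direct sum of simple objects $\varpi \cong \bigoplus_a s_a^{\oplus m_a}$ with $s_a$ pairwise non-isomorphic simples and $m_a \ge 1$. The goal is to prove that each $s_a$ appearing is one of the $p_i$ and that it appears with multiplicity one.

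The key computation is to analyze $\mathrm{Hom}(\mathbf 1, \varpi)$ and $\mathrm{End}(\varpi)$ using the projector relation $\varpi^2 \cong \varpi$ together with self-adjointness $\varpi^* \cong \varpi$. From $\varpi \cdot \varpi \cong \varpi$ one gets, for any simple $s$, that $\mathrm{Hom}(s, \varpi) \cong \mathrm{Hom}(s, \varpi\cdot\varpi) \cong \bigoplus \mathrm{Hom}(s, \varpi)\otimes(\text{stuff})$ — more precisely I would use rigidity (internal duals, which exist since $\Phi$ is fusion) to rewrite $\mathrm{Hom}(s, \varpi\cdot\varpi) \cong \mathrm{Hom}(\varpi^*\cdot s, \varpi) \cong \mathrm{Hom}(\varpi\cdot s, \varpi)$, using $\varpi^*\cong\varpi$. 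Iterating this kind of adjunction, and testing against $s = \mathbf 1$, should force: $\mathrm{Hom}(\mathbf 1, \varpi)$ is nonzero (since $\varpi \cong \varpi\cdot\varpi$ has a nonzero map from $\mathbf 1$ iff $\varpi$ is nonzero, via $\mathrm{Hom}(\mathbf 1,\varpi\cdot\varpi) \cong \mathrm{Hom}(\varpi^*,\varpi) = \mathrm{End}(\varpi) \ne 0$). So $\mathbf 1$ is a summand of $\varpi$, hence some $p_i$ is a summand of $\varpi$ (since $\mathbf 1 = \sum_i p_i$). Then $p_i \cdot \varpi$ is a summand of $\varpi \cdot \varpi \cong \varpi$, and I would argue that peeling off $p_i$ and passing to $(\mathbf 1 - p_i)\cdot\varpi\cdot(\mathbf 1 - p_i)$ — or more cleanly, decomposing $\varpi = \sum_{i,j} p_i \varpi p_j$ in the matrix picture of $\Phi$ and using that $\varpi$ is a projector — shows $\varpi$ is block-diagonal and each diagonal block is a self-adjoint projector in $\Phi_{ii}$, whose unit $p_i$ is simple; a self-adjoint idempotent in a category with simple unit and $\mathrm{End}(p_i) = \mathbb C$ must be $0$ or $p_i$ itself.

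I would organize the argument as: (1) reduce to $\varpi$ indecomposable as a self-adjoint projector, i.e. its ``support'' of indices $\{i : p_i\varpi p_i \ne 0\}$ is a single block (using $\varpi^2 \cong \varpi$ to kill off-diagonal mixing — $p_i \varpi p_j \cdot p_j \varpi p_i$ contributes to $p_i\varpi p_i$, and self-adjointness pairs the $(i,j)$ and $(j,i)$ blocks); (2) within a single block $\Phi_{ii}$ with simple unit $p_i$, show a nonzero self-adjoint projector equals $p_i$ by the $\mathrm{End}(p_i) = \mathbb C$ argument applied after noting $\mathrm{Hom}(p_i, \varpi)\ne 0$ forces $p_i$ as a summand and then $p_i\cdot\varpi \cong \varpi$ forces equality. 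The main obstacle I anticipate is step (1): cleanly ruling out that $\varpi$ genuinely mixes several blocks — one must be careful that $\varpi^2\cong\varpi$ as objects (not as a chosen idempotent in some algebra) still gives enough rigidity. The resolution is that in a semisimple category an isomorphism class of object is determined by its simple-multiplicity vector, so the identity $\varpi \cong \varpi\cdot\varpi$ is an identity of multiplicity vectors, and combined with the orthogonality $p_ip_j = 0$ ($i\ne j$) and $p_i^2 = p_i$ from Lemma~\ref{project}, a short counting/support argument closes it. Everything else is routine manipulation of semisimple Hom-spaces and adjunctions.
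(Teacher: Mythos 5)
Your overall strategy contains the right ingredients, and your opening observation (that $\mathrm{End}(\varpi)\cong\mathrm{Hom}(\mathbf{1},\varpi^*\cdot\varpi)\neq 0$ forces some $p_i$ to be a summand of a nonzero $\varpi$) is correct. But the decisive step is asserted rather than proved, and the justification you offer for it would fail. The claim that ``a self-adjoint idempotent in a block $\Phi_{ii}$ with simple unit must be $0$ or $p_i$'' is essentially the lemma itself in the simple-unit case, and your argument for it --- that $\mathrm{Hom}(p_i,\varpi)\neq 0$ forces $p_i$ to be a summand and then ``$p_i\cdot\varpi\cong\varpi$ forces equality'' --- proves nothing: $p_i$ is the unit of $\Phi_{ii}$, so $p_i\cdot w\cong w$ holds for \emph{every} object $w$ of that block. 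Knowing $p_i$ is a summand does not preclude $\varpi\cong p_i\oplus y$ with $y\neq 0$, nor does it rule out $p_i$ appearing with multiplicity greater than one.

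What is missing is the multiplicity count that the paper runs, and which closes exactly this gap. Write $\varpi\cong p\oplus x$, where $p$ collects the unit summands ($p_i$ with multiplicity $m_i$) and $\mathrm{Hom}(\mathbf{1},x)=0$. Comparing the multiplicity of $p_i$ on both sides of $\varpi\cong\varpi^2\cong p^2\oplus p\cdot x\oplus x\cdot p\oplus x^2$ gives $m_i\geq m_i^2$, hence $m_i\leq 1$; and since the $p_i$-multiplicities of $p$ and $p^2$ then agree, the remaining terms contribute nothing, so in particular $\mathrm{Hom}(\mathbf{1},x^2)=0$. Self-adjointness gives $x\cong x^*$ (duality fixes each $p_i$, hence preserves the splitting $p\oplus x$), whence $\mathrm{End}(x)\cong\mathrm{Hom}(\mathbf{1},x^*\cdot x)\cong\mathrm{Hom}(\mathbf{1},x^2)=0$ and $x=0$. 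Note that this argument is global: it needs neither your step (1) nor the block decomposition $\varpi=\sum_{i,j}p_i\varpi p_j$. If you prefer the blockwise route, it can be made to work, but the same count must then be carried out on the components $p_i\varpi p_j$ (comparing $\dim\mathrm{Hom}(p_i,p_i\varpi p_i)$ with $\sum_k\dim\mathrm{End}(p_i\varpi p_k)$) to kill the off-diagonal blocks and pin down the diagonal ones; it cannot be replaced by the unit-absorption identity you invoke.
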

\begin{proof}
Let $\varpi = p +x$, where $p$ collects all the $p_i$ appearing in $\varpi$. Writing the relation 
$\varpi^2\cong \varpi$ as 
\[
p+x \cong p^2+p\cdot x +x\cdot p +x^2,
\] 
we see that each $p_i$ appears at most once, otherwise its multiplicity in $p^2$ exceeds the one 
in $p$. Moreover, an isomorphism $x\cong x^*$ gives an identification $\mathrm{Hom}(\mathbf{1},x^2) = 
\mathrm{End}(x)$, while $\mathrm{Hom}(\mathbf{1},x)=0$ by assumption; comparing left and right sides  
shows that $\mathrm{End}(x)=0$ and therefore $x=0$.
\end{proof}

\begin{proof}[Proof of Lemma~\ref{mult2}]
Writing $B$ for the $\Phi\text{-}\Phi$ bimodule category $C\boxtimes_{\Phi_{11}} R$, 
Lemma~\ref{mult} gives an equivalence $B\boxtimes_\Phi B \cong B$, which is $\mu $-compatible with the  identification 
$\Phi\boxtimes_\Phi \Phi =\Phi$. The left adjoint $\mu ^L$ is also a bimodule map, by Lemma~\ref
{moduleadjoint}, and because $\mu \boxtimes_\Phi \mu \cong \mu $ and
$\boxtimes_\Phi $ is composition of 1-morphisms in the 3-category~$\FC$, we
obtain an equivalence $\mu ^L\boxtimes_\Phi   \mu ^L \cong \mu ^L$. 
Then, $\mu \circ \mu ^L$ is an idempotent bimodule endomorphism of $\Phi$,
since $\circ$~and $\boxtimes_\Phi $ commute. It is 
the multiplication by the object $p:=\mu \circ \mu ^L(\mathbf{1})$ --- on the left, or on the right 
--- which must then be a projector in $\Phi$. Moreover, $p$ is self-adjoint by Lemma~\ref{sa}. 
Lemma~\ref{allproj} identifies it as a sum of $p_i$. If $p\not\cong \mathbf{1}$, it would split  
the image $\Phi\cdot p \cong  p\cdot\Phi$ as a block of $\Phi$, contradicting indecomposability.

It follows that $\mu \circ \mu ^L \cong  \mathrm{Id}_\Phi$,  splitting $B$ 
into $\Phi$ and a complementary bimodule. But the relation 
$B\boxtimes_\Phi B \cong B$ can only hold if this complement is zero, so $\mu $ is an 
equivalence.
\end{proof}

  \begin{proof}[Proof of Corollary~\ref{thm:34}]
 First, Morita equivalent fusion categories have braided tensor equivalent
Drinfeld centers~\cite[\S8.12]{EGNO}.  If $\Phi _0$~is a fusion category with
simple unit, then its Drinfeld center~$Z(\Phi _0)$ is
nondegenerate~\cite[\S8.20]{EGNO}.  Therefore, by~\cite{S-P,BJSS} the Drinfeld
center is invertible.  Conversely, by Theorem~\ref{thm:cr} any fusion
category is Morita equivalent to a finite direct sum of fusion categories
with simple unit.  Then, as in the proof of Lemma~\ref{thm:13}, the Drinfeld
center of a direct sum is the direct sum of the Drinfeld centers, and if the
Drinfeld center is invertible, it follows that the direct sum has a single
summand.
  \end{proof}

\providecommand{\bysame}{\leavevmode\hbox to3em{\hrulefill}\thinspace}
\providecommand{\MR}{\relax\ifhmode\unskip\space\fi MR }
\providecommand{\MRhref}[2]{%
  \href{http://www.ams.org/mathscinet-getitem?mr=#1}{#2}
}
\providecommand{\href}[2]{#2}


\begin{thebibliography}{BDSPV15}

\bibitem[AF]{AF}
David Ayala and John Francis, \emph{The cobordism hypothesis},
  \href{http://arxiv.org/abs/arXiv:1705.02240}{{\tt arXiv:1705.02240}}.

\bibitem[APS]{APS}
M.~F. Atiyah, V.~K. Patodi, and I.~M. Singer, \emph{Spectral asymmetry and
  {R}iemannian geometry. {I}}, Math. Proc. Cambridge Philos. Soc. \textbf{77}
  (1975), 43--69.

\bibitem[BD]{BD}
John~C. Baez and James Dolan, \emph{Higher-dimensional algebra and topological
  quantum field theory}, J. Math. Phys. \textbf{36} (1995), no.~11, 6073--6105,
  \href{http://arxiv.org/abs/arXiv:q-alg/950300}{{\tt arXiv:q-alg/950300}}.

\bibitem[BDSV]{BDSV}
Bruce Bartlett, Christopher~L. Douglas, Christopher~J. Schommer-Pries, and
  Jamie Vicary, \emph{Modular categories as representations of the
  3-dimensional bordism 2-category},
  \href{http://arxiv.org/abs/arXiv:1509.06811}{{\tt arXiv:1509.06811}}.
  preprint.

\bibitem[BE]{BE}
Jonathan Brundan and Alexander~P. Ellis, \emph{Monoidal supercategories},
  \href{http://dx.doi.org/10.1007/s00220-017-2850-9}{Comm. Math. Phys.
  \textbf{351} (2017)}, no.~3, 1045--1089,
  \href{http://arxiv.org/abs/arXiv:1603.05928}{{\tt arXiv:1603.05928}}.

\bibitem[BHMV]{BHMV}
C.~Blanchet, N.~Habegger, G.~Masbaum, and P.~Vogel, \emph{Topological quantum
  field theories derived from the {K}auffman bracket}, Topology \textbf{34}
  (1995), no.~4, 883--927.

\bibitem[BJS]{BJS}
Adrien Brochier, David Jordan, and Noah Snyder, \emph{On dualizability of
  braided tensor categories}, \href{http://arxiv.org/abs/arXiv:1804.07538}{{\tt
  arXiv:1804.07538}}.

\bibitem[BJSS]{BJSS}
Adrien Brochier, David Jordan, Pavel Safronov, and Noah Snyder,
  \emph{Invertible braided tensor categories},
  \href{http://arxiv.org/abs/2003.13812}{{\tt 2003.13812}}.

\bibitem[BK]{BK}
Bojko Bakalov and Alexander Kirillov, Jr., \emph{Lectures on tensor categories
  and modular functors}, University Lecture Series, vol.~21, American
  Mathematical Society, Providence, RI, 2001.

\bibitem[BM]{BM}
Marcel B\"okstedt and Ib~Madsen, \emph{The cobordism category and
  {W}aldhausen's {$K$}-theory}, An alpine expedition through algebraic
  topology, Contemp. Math., vol. 617, Amer. Math. Soc., Providence, RI, 2014,
  pp.~39--80. \href{http://arxiv.org/abs/arXiv:1102.4155}{{\tt
  arXiv:1102.4155}}.

\bibitem[BS-P]{BS-P}
Clark Barwick and Christopher Schommer-Pries, \emph{On the Unicity of the
  Homotopy Theory of Higher Categories},
  \href{http://arxiv.org/abs/arXiv:1112.0040}{{\tt arXiv:1112.0040}}.

\bibitem[BZBJ]{BZBJ}
David Ben-Zvi, Adrien Brochier, and David Jordan, \emph{Integrating quantum
  groups over surfaces}, \href{http://dx.doi.org/10.1112/topo.12072}{J. Topol.
  \textbf{11} (2018)}, no.~4, 874--917,
  \href{http://arxiv.org/abs/arXiv:1501.04652}{{\tt arXiv:1501.04652}}.

\bibitem[CS]{CS}
Damien Calaque and Claudia Scheimbauer, \emph{A note on the
  {$(\infty,n)$}-category of cobordisms},
  \href{http://dx.doi.org/10.2140/agt.2019.19.533}{Algebr. Geom. Topol.
  \textbf{19} (2019)}, no.~2, 533--655,
  \href{http://arxiv.org/abs/arXiv:1509.08906}{{\tt arXiv:1509.08906}}.

\bibitem[D]{D}
P.~Deligne, \emph{Cat\'{e}gories tannakiennes}, The {G}rothendieck
  {F}estschrift, {V}ol. {II}, Progr. Math., vol.~87, Birkh\"{a}user Boston,
  Boston, MA, 1990, pp.~111--195.

\bibitem[DMNO]{DMNO}
Alexei Davydov, Michael M\"{u}ger, Dmitri Nikshych, and Victor Ostrik,
  \emph{The {W}itt group of non-degenerate braided fusion categories},
  \href{http://dx.doi.org/10.1515/crelle.2012.014}{J. Reine Angew. Math.
  \textbf{677} (2013)}, 135--177,
  \href{http://arxiv.org/abs/arXiv:1009.2117}{{\tt arXiv:1009.2117}}.

\bibitem[DSS]{DSS}
Christopher~L. Douglas, Christopher Schommer-Pries, and Noah Snyder,
  \emph{Dualizable tensor categories},
  \href{http://arxiv.org/abs/arXiv:1312.7188}{{\tt arXiv:1312.7188}}.

\bibitem[DW]{DW}
Robbert Dijkgraaf and Edward Witten, \emph{Topological gauge theories and group
  cohomology}, Comm. Math. Phys. \textbf{129} (1990), no.~2, 393--429.

\bibitem[EGNO]{EGNO}
Pavel Etingof, Shlomo Gelaki, Dmitri Nikshych, and Victor Ostrik,
  \href{http://dx.doi.org/10.1090/surv/205}{\emph{Tensor categories}},
  Mathematical Surveys and Monographs, vol. 205, American Mathematical Society,
  Providence, RI, 2015.

\bibitem[EO]{EO}
Pavel Etingof and Viktor Ostrik, \emph{Finite tensor categories},
  \href{http://dx.doi.org/10.17323/1609-4514-2004-4-3-627-654}{Mosc. Math. J.
  \textbf{4} (2004)}, no.~3, 627--654, 782--783,
  \href{http://arxiv.org/abs/arXiv:math/0301027}{{\tt arXiv:math/0301027}}.

\bibitem[F1]{F1}
Daniel~S. Freed, \emph{Higher algebraic structures and quantization}, Comm.
  Math. Phys. \textbf{159} (1994), no.~2, 343--398,
  \href{http://arxiv.org/abs/arXiv:hep-th/9212115}{{\tt arXiv:hep-th/9212115}}.

\bibitem[F2]{F2}
\bysame, \emph{Lectures on Field Theory and Topology}, Conference Board of the
  Mathematical Sciences Regional Conference Series in Mathematics, vol. 133,
  American Mathematical Society, Providence, RI, 2019.

\bibitem[FH]{FH}
D.~S. Freed and M.~J. Hopkins, \emph{Reflection positivity and invertible
  topological phases}, Geometry \& Topology,
  \href{http://arxiv.org/abs/arXiv:1604.06527}{{\tt arXiv:1604.06527}}. to
  appear.

\bibitem[Fr]{Fr}
Ignacio L\'{o}pez~Franco, \emph{Tensor products of finitely cocomplete and
  abelian categories},
  \href{http://dx.doi.org/10.1016/j.jalgebra.2013.08.015}{J. Algebra
  \textbf{396} (2013)}, 207--219,
  \href{http://arxiv.org/abs/arXiv:1212.1545}{{\tt arXiv:1212.1545}}.

\bibitem[FST]{FST}
Daniel~S. Freed, Claudia~I. Scheimbauer, and Constantin Teleman. In
  preparation.

\bibitem[FSV]{FSV}
J\"{u}rgen Fuchs, Christoph Schweigert, and Alessandro Valentino,
  \emph{Bicategories for boundary conditions and for surface defects in 3-d
  {TFT}}, \href{http://dx.doi.org/10.1007/s00220-013-1723-0}{Comm. Math. Phys.
  \textbf{321} (2013)}, no.~2, 543--575,
  \href{http://arxiv.org/abs/arXiv:1203.4568}{{\tt arXiv:1203.4568}}.

\bibitem[FT]{FT}
D.~S. Freed and C.~Teleman, \emph{Fusion supercategories}. In preparation.

\bibitem[GK]{GK}
Davide Gaiotto and Anton Kapustin, \emph{Spin TQFTs and fermionic phases of
  matter}, International Journal of Modern Physics A \textbf{31} (2016),
  no.~28n29, 1645044, \href{http://arxiv.org/abs/arXiv:1505.05856}{{\tt
  arXiv:1505.05856}}.

\bibitem[GS]{GS}
Owen Gwilliam and Claudia Scheimbauer, \emph{Duals and adjoints in higher
  Morita categories}, \href{http://arxiv.org/abs/arXiv:1804.10924}{{\tt
  arXiv:1804.10924}}.

\bibitem[H]{H}
Rune Haugseng, \emph{The higher {M}orita category of {$\Bbb{E}_n$}-algebras},
  \href{http://dx.doi.org/10.2140/gt.2017.21.1631}{Geom. Topol. \textbf{21}
  (2017)}, no.~3, 1631--1730, \href{http://arxiv.org/abs/arXiv:1412.8459}{{\tt
  arXiv:1412.8459}}.

\bibitem[He]{He}
Andr\'{e}~G. Henriques, \emph{What {C}hern-{S}imons theory assigns to a point},
  \href{http://dx.doi.org/10.1073/pnas.1711591114}{Proc. Natl. Acad. Sci. USA
  \textbf{114} (2017)}, no.~51, 13418--13423.

\bibitem[J]{J}
Dominic Joyce, \emph{On manifolds with corners}, Advances in geometric
  analysis, Adv. Lect. Math. (ALM), vol.~21, Int. Press, Somerville, MA, 2012,
  pp.~225--258.

\bibitem[JS]{JS}
Theo Johnson-Freyd and Claudia Scheimbauer, \emph{(Op)lax natural
  transformations, twisted quantum field theories, and ``even higher'' Morita
  categories}, Advances in Mathematics \textbf{307} (2017), 147--223,
  \href{http://arxiv.org/abs/arXiv:1502.06526}{{\tt arXiv:1502.06526}}.

\bibitem[K]{K}
G.~M. Kelly, \emph{Basic concepts of enriched category theory}, Repr. Theory
  Appl. Categ. (2005), no.~10, vi+137. Reprint of the 1982 original [Cambridge
  Univ. Press, Cambridge; MR0651714].

\bibitem[KK]{KK}
Alexei Kitaev and Liang Kong, \emph{{Models for Gapped Boundaries and Domain
  Walls}}, \href{http://dx.doi.org/10.1007/s00220-012-1500-5}{Commun. Math.
  Phys. \textbf{313} (2012)}, no.~2, 351--373,
  \href{http://arxiv.org/abs/arXiv:1104.5047}{{\tt arXiv:arXiv:1104.5047
  [cond-mat.str-el]}}.

\bibitem[KS]{KS}
Anton Kapustin and Natalia Saulina, \emph{{Topological boundary conditions in
  abelian Chern-Simons theory}},
  \href{http://dx.doi.org/10.1016/j.nuclphysb.2010.12.017}{Nucl. Phys. B
  \textbf{845} (2011)}, 393--435,
  \href{http://arxiv.org/abs/arXiv:1008.0654}{{\tt arXiv:arXiv:1008.0654
  [hep-th]}}.

\bibitem[KZ]{KZ}
Liang Kong and Hao Zheng, \emph{{A mathematical theory of gapless edges of 2d
  topological orders. Part I}},
  \href{http://dx.doi.org/10.1007/JHEP02(2020)150}{JHEP \textbf{20} (2020)},
  150, \href{http://arxiv.org/abs/1905.04924}{{\tt arXiv:1905.04924
  [cond-mat.str-el]}}.

\bibitem[L1]{L1}
Jacob Lurie, \emph{On the classification of topological field theories},
  Current developments in mathematics, 2008, Int. Press, Somerville, MA, 2009,
  pp.~129--280. \href{http://arxiv.org/abs/arXiv:0905.0465}{{\tt
  arXiv:0905.0465}}.

\bibitem[L2]{L2}
Jacob Lurie, \emph{Higher Algebra}.
  \url{https://www.math.ias.edu/~lurie/papers/HA.pdf}.

\bibitem[La]{La}
Gerd Laures, \emph{On cobordism of manifolds with corners},
  \href{http://dx.doi.org/10.1090/S0002-9947-00-02676-3}{Trans. Amer. Math.
  Soc. \textbf{352} (2000)}, no.~12, 5667--5688.

\bibitem[Le]{Le}
Michael Levin, \emph{{Protected edge modes without symmetry}},
  \href{http://dx.doi.org/10.1103/PhysRevX.3.021009}{Phys. Rev. X \textbf{3}
  (2013)}, no.~2, 021009, \href{http://arxiv.org/abs/arXiv:1301.7355}{{\tt
  arXiv:arXiv:1301.7355 [cond-mat.str-el]}}.

\bibitem[MaSi]{MaSi}
J.~P. May and J.~Sigurdsson,
  \href{http://dx.doi.org/10.1090/surv/132}{\emph{Parametrized homotopy
  theory}}, Mathematical Surveys and Monographs, vol. 132, American
  Mathematical Society, Providence, RI, 2006.

\bibitem[MR]{MR}
G.~Masbaum and J.~D. Roberts, \emph{On central extensions of mapping class
  groups}, \href{http://dx.doi.org/10.1007/BF01444490}{Math. Ann. \textbf{302}
  (1995)}, no.~1, 131--150.

\bibitem[MS]{MS}
Gregory Moore and Nathan Seiberg, \emph{Classical and quantum conformal field
  theory}, Comm. Math. Phys. \textbf{123} (1989), no.~2, 177--254.

\bibitem[RT1]{RT1}
N.~Yu. Reshetikhin and V.~G. Turaev, \emph{Ribbon graphs and their invariants
  derived from quantum groups}, Comm. Math. Phys. \textbf{127} (1990), no.~1,
  1--26.

\bibitem[RT2]{RT2}
N.~Reshetikhin and V.~G. Turaev, \emph{Invariants of {$3$}-manifolds via link
  polynomials and quantum groups}, Invent. Math. \textbf{103} (1991), no.~3,
  547--597.

\bibitem[S-P]{S-P}
Christopher Schommer-Pries, \emph{Tori detect invertibility of topological
  field theories}, Geometry \& Topology \textbf{22} (2018), no.~5, 2713--2756,
  \href{http://arxiv.org/abs/arXiv:1511.01772}{{\tt arXiv:1511.01772}}.

\bibitem[Se]{Se}
G.~B. Segal, \emph{Stanford Lectures, Lecture 3: Braided tensor categories},
  \url{http://web.math.ucsb.edu/~drm/conferences/ITP99/segal/stanford/lect3.pdf}.

\bibitem[T]{T}
V.~G. Turaev, \emph{Quantum invariants of knots and 3-manifolds}, de Gruyter
  Studies in Mathematics, vol.~18, Walter de Gruyter \& Co., Berlin, 1994.

\bibitem[Ti]{Ti}
Ulrike Tillmann, \emph{{$\mathscr S$}-structures for {$k$}-linear categories
  and the definition of a modular functor},
  \href{http://dx.doi.org/10.1112/S0024610798006383}{J. London Math. Soc. (2)
  \textbf{58} (1998)}, no.~1, 208--228.

\bibitem[TV]{TV}
V.~G. Turaev and O.~Ya. Viro, \emph{State sum invariants of {$3$}-manifolds and
  quantum {$6j$}-symbols},
  \href{http://dx.doi.org/10.1016/0040-9383(92)90015-A}{Topology \textbf{31}
  (1992)}, no.~4, 865--902.

\bibitem[U]{U}
Robert Usher, \emph{Fermionic {$6j$}-symbols in superfusion categories},
  \href{http://dx.doi.org/10.1016/j.jalgebra.2018.02.015}{J. Algebra
  \textbf{503} (2018)}, 453--473,
  \href{http://arxiv.org/abs/arXiv:1606.03466}{{\tt arXiv:1606.03466}}.

\bibitem[W]{W}
R.~J. Wood, \emph{Abstract proarrows. {I}}, Cahiers Topologie G\'{e}om.
  Diff\'{e}rentielle \textbf{23} (1982), no.~3, 279--290.

\bibitem[W1]{W1}
Edward Witten, \emph{Quantum field theory and the {J}ones polynomial}, Comm.
  Math. Phys. \textbf{121} (1989), no.~3, 351--399.

\bibitem[W2]{W2}
\bysame, \emph{On holomorphic factorization of {WZW} and coset models}, Comm.
  Math. Phys. \textbf{144} (1992), no.~1, 189--212.

\end{thebibliography}
  \end{document}